\def\Z{\mathbb Z}
\def\R{\mathbb R}
\def\N{\mathbb N}
\def\Cl{\text{Cl}(N,\Om)}
\def\Rn{\R^n}
\def\cal{\mathcal}
\def\E{{\cal E}}
\def\F{{\cal F}}
\def\G{{\cal G}}
\def\H{{\cal H}}
\def\L{{\cal L}}
\def\T{{\cal T}}
\def\a{\alpha}
\def\b{\beta}
\def\g{\gamma}
\def\de{\delta}
\def\e{\varepsilon}
\def\l{\lambda}
\def\om{\omega}
\def\s{\sigma}
\def\vt{\vartheta}
\def\G{\Gamma}
\def\Om{\Omega}
\def\La{\Lambda}
\def\S{\Sigma}
\def\ov{\overline}
\def\PSI{\mathbf{\Psi}}
\newenvironment{sistema}%
{\left\lbrace\begin{array}{@{}l@{}}}%
{\end{array}\right.}
\def\pa{\partial}
\def\Id{{\rm Id}\,}
\def\Div{{\rm div}\,}
\def\d{\, \mathrm{d}}
\def\dist{{\rm dist}}
\def\bd{{\rm bd}}
\def\arc{{\rm arc}}
\def\ca{\mathbbmss{1}}
\def\pared{\partial^{*}}
\def\PHI{\mathbf{\Phi}}
\def\Lip{{\rm Lip}\,}
\def\INT{{\rm int}\,}
\def\ttau{\boldsymbol{\tau}}
\def\00{{\bf 0}}
\def\dive{{\rm div}}
\def\mez{\left(\frac{1}{2}\right)}
\def\uno{(1)}
\def\zero{(0)}
\newcommand{\cc}{\subset\subset}
\newcommand{\vol}{\mathrm{vol}\,}
\newcommand{\hd}{\mathrm{hd}}
\def\k{\kappa}
\newcommand{\facciatabianca}{\newpage\shipout\null\stepcounter{page}}
\DeclareMathOperator*{\spt}{spt}
\DeclareMathOperator*{\diam}{diam}
\DeclareMathOperator*{\freccia}{\longrightarrow}
\DeclareMathOperator*{\frecciad}{\rightharpoonup}
\def\big{\bigskip}
\newtheorem{theorem}{Theorem}[section]
\newtheorem{corollary}[theorem]{Corollary}
\newtheorem{proposition}[theorem]{Proposition}
\newtheorem{lemma}[theorem]{Lemma}
\theoremstyle{definition}
\newtheorem{remark}[theorem]{Remark}
\newtheorem{definition}[theorem]{Definition}
\numberwithin{equation}{section}
\numberwithin{figure}{section}
\author{M. Caroccia}
\title{On the isoperimetric properties of planar $N$-clusters}
\begin{document}

\maketitle
\facciatabianca
\begin{flushright}
\begin{scriptsize}
\textit{Devenere locos ubi nunc ingentia cernes}\\ 
\textit{Moenia surgentemque novae Karthaginis arcem,} \\
\textit{Mercatique solum, facti de nomine Byrsam,} \\
\textit{Taurino quantum possent circumdare tergo.}\\
\end{scriptsize}
\small{Virgilio, Eneide, Libro I, vv 365-368.}
\end{flushright}

\begin{flushright}
\begin{scriptsize}
\textit{They came to this spot, where today you can behold the mighty\\
Battlements and the rising citadel of New Carthage,\\
And purchased a site, which was named 'Bull's Hide' after the bargain,\\
By which they should get as much land as they could enclose with a bull's hide.\\}
\end{scriptsize}
\small{Virgil, Aeneid, Book I, vv 365-368.}
\end{flushright}
\facciatabianca
\tableofcontents

\chapter*{Notations}
\addcontentsline{toc}{chapter}{Notations} 

\begin{align*}
&B_r(x)        & &\text{Ball in $\R^{n}$ centered at $x$ and with radius $r$}\\
&B_r             & &\text{$=B_r(0)$ ball in $\R^{n}$ centered at $0$ and with radius $r$}\\
&S^{n-1}      & &\text{$=\pa B_1$, $(n-1)$-dimensional sphere in $\R^{n}$}\\
&  E\Delta F   & &\text{$= (E\setminus F)\cup (F\setminus E)$, symmetric difference between $E$ and $F$}\\ 
&  |E|  & & \text{Lebesgue measure of the set $E$}\\ 
&  \H^{s}(E)  & & \text{$s$-dimensional Hausdorff measure of the set $E$}\\ 
& \displaystyle \freccia^{X }   & &\text{Convergence wrt the topology induced from the metric space $X$}\\
& \displaystyle \frecciad^{*}   & &\text{Weak-star convergence}\\
& L^{p}_{loc}(A;\R^k)  & &\text{Space of the functions $f:A\rightarrow \R^k$ with values in $\R^k$}\\
& & &\text{and which are $p$-summable on every compact set strictly}\\
& & &\text{contained in the set $A\subseteq \R^{n}$}\\
& L^{p}_{loc}(A)  & &=L^{p}_{loc}(A;\R),\text{ space of the $\R$-valued functions which are $p$-summable}\\
& & & \text{on every compact set strictly contained in the set $A\subseteq \R^{n}$}\\
& L^{p}(A;\R^k)  & &\text{Space of the functions $f:A\rightarrow \R^k$ with values in $\R^k$}\\
& & & \text{which are $p$-summable on $A\subseteq \R^{n}$}\\
& L^{p}(A)  & &=L^{p}(A;\R), \text{ space of the $\R$-valued functions which are}\\
& & &\text{$p$-summable on $A\subseteq \R^{n}$}\\
&C_c^{k}(A;\R^k)      & &\text{Space of the  of the $k$-differentiable functions $f:A\rightarrow \R^k$ with values }\\
& & & \text{in $\R^k$ and which are compactly supported on the set $A$}\\
&C_c^{k}(A) & &C_c^{1}(A;\R),\text{ space of the $k$-differentiable, $\R$-valued functions which are}\\
& & & \text{compactly supported on the set $A$}\\
& \ca_E(x) & &\text{Characteristic function of the set $E$}\\
& \dive(T) & &\text{Divergence of the vector field $T$}\\
&P(E;F) & &\text{Relative perimeter of the Borel set $E$ inside $F$}\\
&P(E) & &\text{$=P(E;\R^{n})$, global perimeter of the Borel set $E$}\\
&|\mu|(A) & &\text{Total variation of the Radon measure $\mu$ on the set $A\subseteq \R^{n}$, }\\
&\mu\llcorner E & &\text{Radon measure obtained as the restriction of the Radon}\\
& & & \text{measure $\mu$ to the Borel set $E$, $\mu\llcorner E (F)=\mu(E\cap F)$ }\\
\end{align*}   
\begin{align*}
&\vartheta_n(x,E) & &\text{$n$-dimensional density of the set $E$ at the point $x$}\\
&E^{(t)} & &\text{$=\{x\in \R^{n} \ | \ \vt_n(x,E)=t\}$}\\
&\pa^{*}E & &\text{Reduced boundary of the set $E$}\\
&\pa^{e}E & &\text{$=\R^{n}\setminus (E^{(0)}\cup E^{(1)})$, essential boundary of the set $E$}\\
&\nu_E(x) & &\text{Measure-theoretic outer unit normal to $E$ at $x\in \pa^{*} E$}\\
&\E & &\text{$=\{\E(i)\}_{i=1}^{N}$, $N$-cluster of $\R^{n}$}\\
&\E(h,k) & & \text{$=\pared \E(k)\cap \pared \E(h)$, interface between the chambers $\E(k)$ and $\E(h)$,} \\
&\pa\E & & \text{$=\bigcup_{i=1}^{N}\pa \E(i)$, topological boundary of the cluster $\E$,} \\
&\pared \E & & \text{$=\bigcup_{0\leq h<k\leq N}^{N}\E(h,k)$, reduced boundary of the cluster $\E$}, \\
& \E\Delta\F & &\text{$=\bigcup_{i=0}^{N}\E(i)\Delta\F(i),$ symmetric difference between the $N$-clusters $\E$ and $\F$},\\
& \approx & & \text{Equal up to an $\H^{n-1}$-negligible set},\\
& P(\E;F) & & \text{Relative perimeter of the cluster $\E$ inside $F$},\\
\end{align*}

\chapter*{Introduction}

Isoperimetric problems have fascinated the human being since the ancient times, starting from the legend of Dido \textit{who left the city of Tyre to escape her brother} and whose problem was to enclose \textit{as much land as could be enclosed with a bull's hide} up to the present days, with Joseph Plateau who experimented with soap films in order to figure out what is the surface with the smallest amount of area among all the surfaces that share a fixed boundary in the three dimensional space. In general a problem can be defined \textit{isoperimetric} whenever we seek for objects attaining the smallest (or the largest) amount of (a suitable notion of) area (or volume) among all those objects satisfying a given constraint. The most famous one (so famous that is called just \textit{the isoperimetric problem}) is the one that Dido solved in the planar case when she built New Carthage with a bull's hide. It can be stated, in modern mathematical language, as follows: what is the $n$-dimensional object having the smallest perimeter ($(n-1)$ dimensional area of the boundary) among all the sets with a fixed amount of volume? Or, equivalently: what is the $n$-dimensional object having the biggest amount of volume among all the sets with a fixed amount of perimeter ($(n-1)$ dimensional area of the boundary)?\\

The reasons that led scientists and mathematicians to be attracted by this kind of questions might rely on the fact that the energy needed in a number of physical processes is related with the surface area or the mass. For example it is a well known fact that the shapes of crystals are polyhedral (see \cite{FMP10}) because they solve a variant of the classical isoperimetric problem (let us recall for the sake of completeness that the solution to the classical isoperimetric problem is the $n$-dimensional euclidean ball, see \cite{DeGiorgiisoperi}). The techniques and the ideas developed in order to approach this kind of questions turn out to be a useful equipment for the treatment of various type of problems concerning geometry and optimization process, as in the case of image recovery (see \cite{mumford1989optimal} and \cite{ambrosio1990approximation}). That is another reason that explain why these issues have been so fruitfully studied in the past and why they are, still today, a central topic in Calculus of Variations.\\

This Thesis aims to highlight some isoperimetric questions involving the so-called $N$-clusters. The term \textit{cluster} has been used in many different areas of mathematics to denote ``a family of objects that share a precise property and that are combined and connected in a specific way". This points out that a \textit{cluster} is not just a \textit{set} but it is somehow an agglomerate. In our context an $N$-cluster $\E$ is a generic family $\{\E(i)\}_{i=1}^N$ of \textit{$N$ sets with disjoint interiors} (called chambers) that compete in some variational (isoperimetric) problem as a unique object. We refer to Chapter One where the main definitions and tools are recalled. The main problem leading to define these objects, is the natural extension of the classical isoperimetric problem: the multi-chamber isoperimetric problem. This problem can be easily stated as follows. Among all families of $N$ sets $\E(1),\ldots, \E(N)\subset \R^n$ with disjoint interiors and with fixed volume $|\E(i)|=v_i$, what is the family that minimizes the $(n-1)$ dimensional area of the boundary, being careful to count once every possible common boundary between two sets? Technically speaking, given a vector of positive numbers $(v_1,\ldots,v_N)\in \R^N$ we look for a family of $N$-disjoint (up to a negligible set) Borel sets $\E(1),\ldots,\E(N)$ such that $|\E(i)|=v_i$ and
	\begin{equation}\label{introduction problems}
	P(\E)=\inf\left\{ P(\F) \ | \ \text{$\F$ is an $N$-cluster with } |\F(i)|=v_i \right\}
	\end{equation}
where 
	\[
	P(\E):=\sum_{i=0}^N \frac{P(\E(i))}{2}, \ \ \ \ \ \E(0)=\R^n\setminus \bigcup_{i=1}^N \E(i)
	\]
and where $P(\cdot)$ denotes the distributional perimeter (see \cite{DeGiorgiSOFP1}), that here could be intended as the $(n-1)$ dimensional area of the boundary. The addition of the \textit{external chamber $\E(0)$} allows us to define the perimeter in a very natural way in order to count once every piece of boundary shared by two different set from the family (see Figure \ref{perimetrocluster}). 
\begin{figure}
\centering
\includegraphics[scale=0.6]{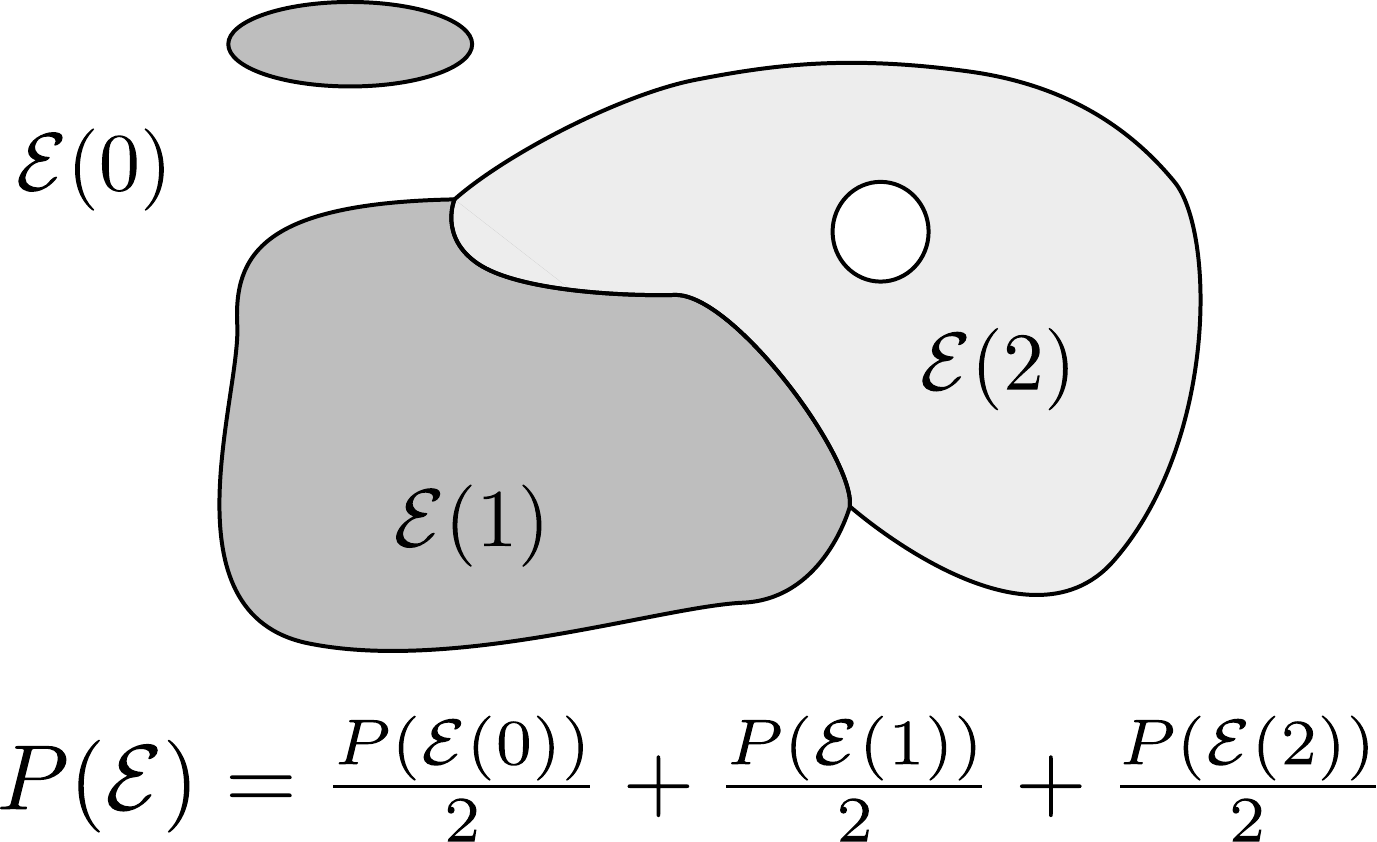}\caption{\small{An example of a $2$-cluster in the plane. The introduction of the external chamber $\E(0)$ allows us to define naturally the perimeter of the cluster $P(\E)$ as the half-sum of the perimeter of each chamber, so that each piece of boundary is counted just once.}}\label{perimetrocluster}
\end{figure}
The existence of such objects (see Figure \ref{esempiminimalcluster}), called \textit{perimeter-minimizing $N$-clusters}, was proved by \cite{almgren1975}, together with a partial regularity theorem (see Chapter One for details). 
\begin{figure}
\centering
\includegraphics[scale=0.7]{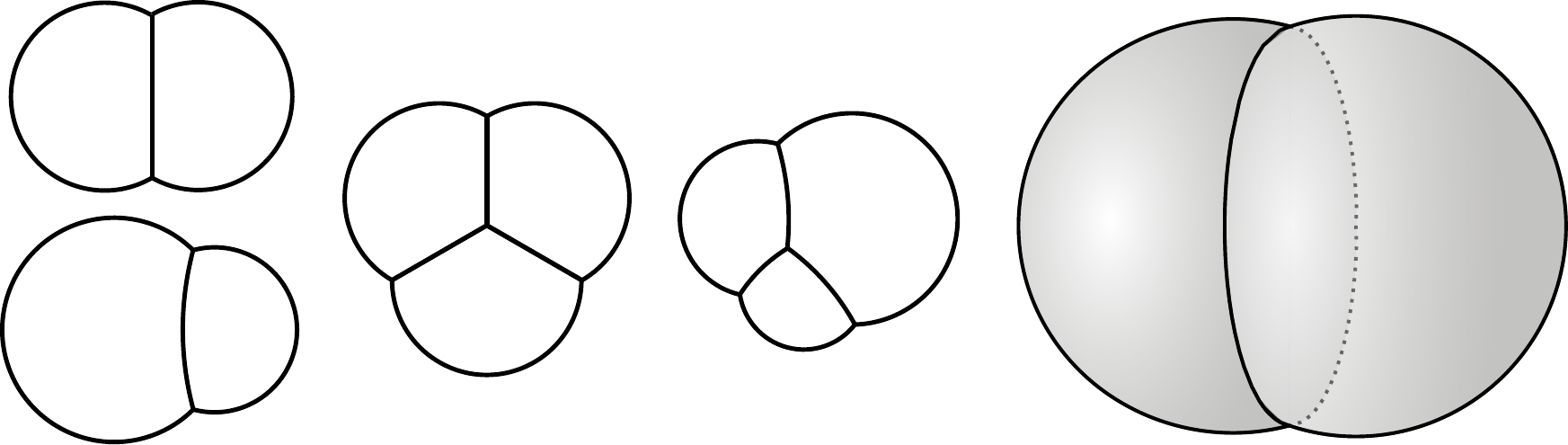}\caption{\small{Some examples of perimeter-minimizing $N$-clusters for $N=2,3$ in dimension $n=2$ and $n=3$. The $2$-clusters on the left are, respectively, the minimizer for the problem \eqref{introduction problems} with equal-volume (equal-area) chambers  $v_1=v_2$ and the minimizer for the problem \eqref{introduction problems} when different volumes $v_1\neq v_2$ have been assigned. The same situation is the central one, for $N=3$, while the right-hand picture is the perimeter-minimizing $2$-cluster for equal-volume chambers in $\R^3$.} }\label{esempiminimalcluster}
\end{figure}
Since the chambers of a perimeter-minimizing $N$-cluster will try to share as much boundary as possible these objects in general will presents some ``angle point" that we call singularity. The collection of the singularity of an $N$-cluster $\E$ is called \textit{singular set} and is usually denoted by $\S(\E)$. It is worth to remark that a complete characterization of the singular set of a perimeter-minimizing $N$-cluster, so far, is known only in dimension $n=2$ (see \cite{Morgan}) and $n=3$ (see \cite{ta76}), depicted in Figure \ref{figura singolarita}. Also a precise characterization of the minimizers is well-known only for few values of $N$. Essentially the ones depicted in Figure \ref{esempiminimalcluster} are, so far, the only perimeter-minimizing known. The case $N=2$ in the plane was solved in 1993 by Foisy,  Alfaro, Brock, Hodges, Zimba and Jasonin in \cite{foisy1993standard} while a proof for the case $N=3$ was obtained by Wichiramala and it appeared first in 2002 in \cite{wichiramala2002planar}. The case $N=2$ in the space was solved in 2002 by Hutchings, Morgan, Ritorè and Ros in \cite{hutchings2002proof}. The proof for $N=2$ in higher dimension was obtained by Reichardt in \cite{reichardt2008proof} as a generalization of the proof given by Hutchings, Morgan, Ritorè and Ros for the 3-dimensional case. In every situation listed above it has been proved that the minimizer is unique up to an isometry of the space.  In 2002 Cicalese, Leonardi and Maggi in \cite{cicalese2012sharp} proves what is called a \textit{quantitative inequality} for the case $N=2, n=2$. They showed, by exploiting that every standard double bubble $\mathcal{B}$ is the only minimizer for the problem relative to its own volumes (problem \eqref{introduction problems} with $v_1=|\mathcal{B}(1)|, v_2=|\mathcal{B}(2)|$), that every other $2$-cluster $\E$ having the same volumes of $\mathcal{B}$ and perimeter close to $P(\mathcal{B})$ must be diffeomorphic and close (in a suitable sense) to $\mathcal{B}$. \\

\begin{figure}
\centering
\includegraphics[scale=0.7]{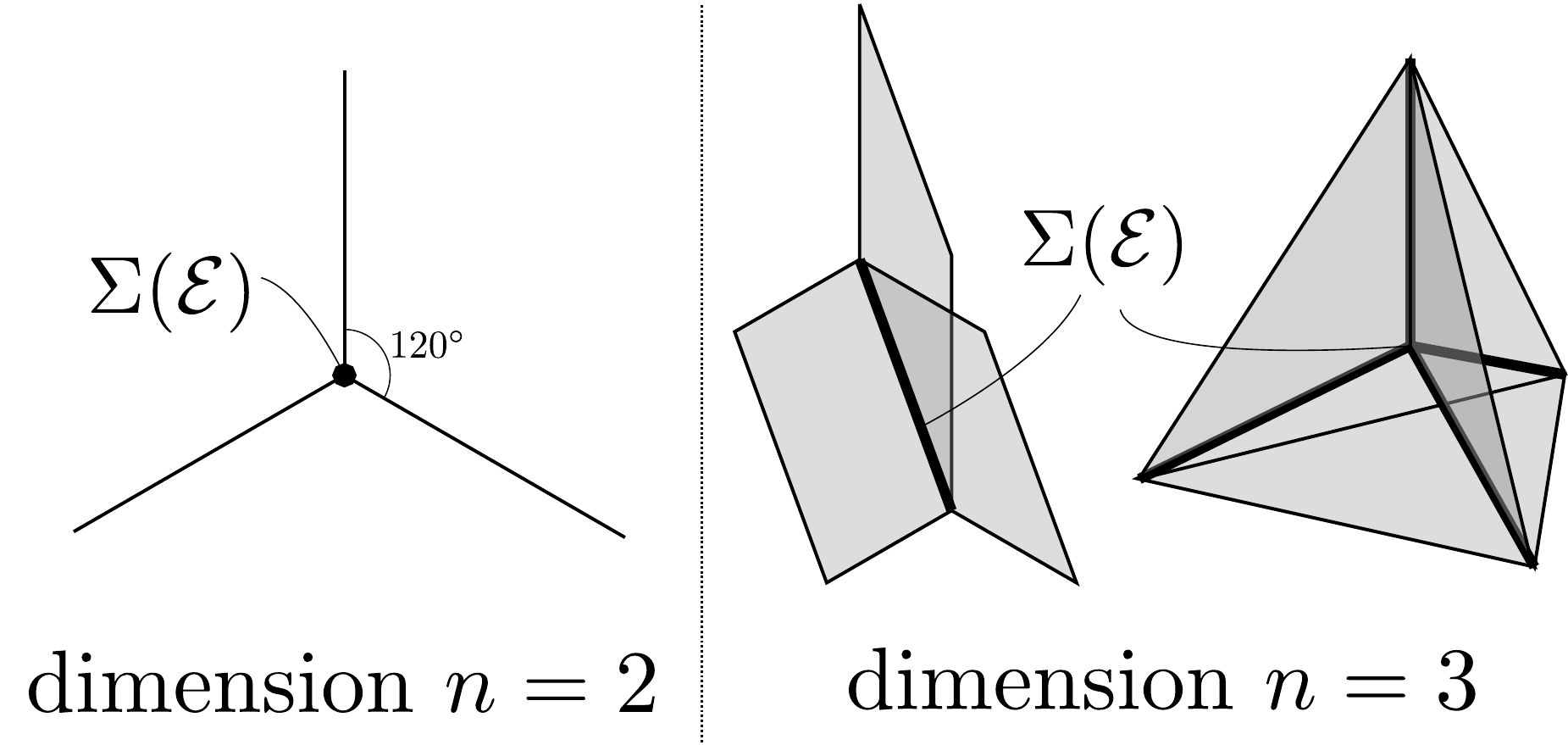}\caption{\small{The singular set of a perimeter-minimizing $N$-cluster in dimension $n=2$ and $n=3$. In dimension $n=2$ each singular point is an isolated point where three curves meet in three at equal angles. In dimension $n=3$ the singular set $\S(\E)$ consists of Hölder continuous curves along which three sheets of the surface meet in three at equal angles together with isoleted points at which four of such curves meet.}}\label{figura singolarita}
\end{figure}
Let us briefly expose what are the topics treated in each chapter. We do not focus on the details, since every chapter has its own introduction where the main questions are clarified and exposed. We limit to give a brief overview.\\

In Chapter Two we provide an asymptotic result concerning perimeter-minimizing $N$-clusters with fixed boundary. Since the detailed study of perimeter-minimizing $N$-clusters for a fixed value of $N$ seems to be a hard task, it could make sense to approach the problem from an asymptotic point of view, namely: is there some recognizable trend in the structure of these objects as $N$ approaches $+\infty$? In 2001 in \cite{hales}, Thomas Hales provided a proof of the hexagonal honeycomb conjecture: the regular hexagonal tiling (a tiling can be viewed as an $\infty$-cluster) provides the only partition of the plane in equal-area chambers having the minimum amount of localized perimeter (see Figure \ref{tilings}). His result provides an answer for the case $N=\infty$ and it turns out to be a powerful instrument for the study of the asymptotic behavior of perimeter-minimizing planar N-clusters.\\
\begin{figure}
\centering
\includegraphics[scale=0.8]{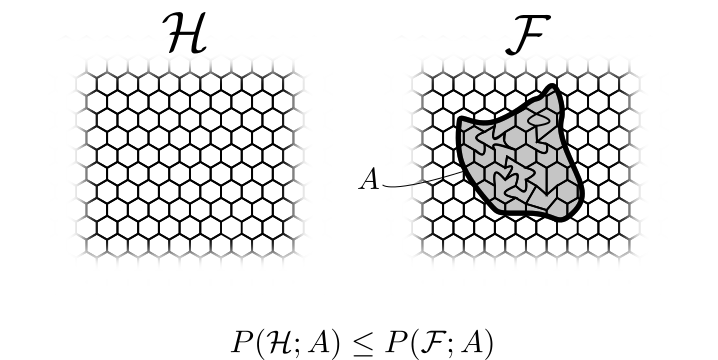}\caption{\small{The hexagonal honeycomb theorem (proved in 2001) states that the hexagonal tiling minimizes the localized perimeter among all its own compactly supported and mass preserving perturbations. }}\label{tilings}
\end{figure}

Starting from Hales's result, it is natural to expect that the interior chambers of a minimizing planar $N$-cluster with equal-volume chambers try to get closer and closer to regular hexagons as $N$ increase. However, we are still quite far from proving this fact. Another interesting question involves the external chamber of a minimizing planar $N$-cluster and it appears in \cite{HepMor05}: does the boundary of the external chamber try to look like a circle (the isoperimetric profile for the case $N=1$ in the plane) in order to save perimeter? We postpone this discussion to Chapter Two where we examine in depth questions concerning the global and local shape of perimeter-minimizing $N$-clusters. In particular, we provide a \textit{uniform distribution-type theorem}, in the spirit of the one obtained in \cite{AlChOt09}, stating that, under some reasonable assumption on the structure of these objects and far away from the boundary of the external chamber the localized perimeter is uniformly distributed. Moreover we show that the localized perimeter is equal to the localized perimeter of the hexagonal tiling, up to a remainder that is a second order term. This seems to suggest that from an energetic point of view, the interior chambers of these objects are close to regular hexagons. This result was obtained in collaboration with prof. Giovanni Alberti.\\

Chapter Three is devoted to a quantitative version of the hexagonal honeycomb theorem. We show that if $\E$ is a compactly supported and mass preserving perturbation of the hexagonal tiling $\H$ and its localized perimeter is close to the localized perimeter of $\H$ then $\E$ must be diffeomorphic and close (in a suitable sense) to $\H$. This result is obtained by exploiting the techniques developed by Cicalese, Leonardi and Maggi in \cite{CiLeMaIC1} (starting from an idea contained in \cite{CL10}) and used by the authors to prove the sharp quantitative inequality for the planar double-bubble. This result was obtained in collaboration with prof. Francesco Maggi while I was visiting the University of Texas at Austin Fall 2014. This result, if combined with the energetic estimates contained in Chapter two, seems to suggest that the interior chambers of a perimeter-minimizing planar $N$-cluster with equal-volume chambers are close to regular hexagons, providing an answer to the initial question involving the asymptotic trend of these objects. The main obstacle that arises in developing this argument is that we actually have the strong information involving the shape of the chambers only when we deal with tilings and, in order to apply the quantitative version of the hexagonal honeycomb theorem, we need to be able to ``convert" a cluster into a compactly supported perturbation of an hexagonal tiling without adding too much perimeter. \\

In Chapter Four we move to another type of isoperimetric problem concerning $N$-clusters that can be viewed as a generalization of the Cheeger problem (see \cite{Ch70}, \cite{Pa11} and \cite{Leo15} for more details about Cheeger problem). Given an open set $\Om$ we look for the solution to the variational problem
	\[
	H_N(\Om)=\inf\left\{\sum_{i=1}^N\frac{P(\E(i))}{|\E(i)|} \ \Big{|} \ \E\subseteq \Om, \ \text{$\E$ $N$-cluster} \right\}.
	\]
This variational problem turns out to be related to spectral problem of the Laplacian with Dirichlet boundary conditions. We mainly focus on the regularity of the solution to this kind of problems in order to lay the basis for future investigations (see Chapter Five where some interesting directions of research in these topics are briefly exposed). The structure of these objects is slightly different from the one of perimeter-minimizing $N$-clusters. The reason is that there is no advantage, in this variational problem, in sharing boundary and thus the chambers will try to separate as much as possible (they are constrained into $\Om$). However $\frac{P(\cdot)}{|\cdot|}$ is not scale invariant and, in particular, it makes convenient to have chambers as big as possible. Hence there are two factors that compete in opposite directions leading to non-trivial solutions. These facts imply that the boundary of each chamber is locally a $C^1$ surface inside $\Om$ and that no ``angle points" (in the planar case) are attained. After we have discussed in detail the regularity of these objects we move to study their asymptotic behavior as $N$ approaches $+\infty$. It is reasonable to expect some kind of periodicity in the asymptotic trend of these objects as $N$ increases.  \\

In Chapter Five we briefly point out how these topics could be related and we highlight some interesting issues related to these questions.\\

Let us conclude by saying that, probably, the main reason for which mathematicians are attracted by an isoperimetric problem relies in the wonderful symmetries that arise in seeking a solution. We cannot be emotionless as we become acquainted with a wonderful structure and every human being, rich or poor, quieten his incessant research of perfection standing in front of the symmetry. Probably we will never know whether God made men or not, but what we know for sure is that Symmetry made them equal.\\

{\bf\noindent Acknowledgments} The work of the author was partially supported by the project 2010A2TFX2-Calcolo delle Variazioni, funded by\textit{ the Italian Ministry of Research and University} and by NSF Grant DMS-1265910. He also acknowledges the hospitality at the UT Austin during the 2014 Fall semester where part of the present work has been done. 

\newpage
\begin{center}
{\bf\noindent {\large Special acknowledgments}}
 \end{center}
 
I wish to thank professors \textit{Giovanni Alberti} and \textit{Francesco Maggi} for the interesting discussions about this subject as well as for the support that they gave me along my path through all these delicate questions: professor \textit{Giovanni Alberti} helped me to develop the argument contained in Chapter Two while with professor \textit{Francesco Maggi} I had the pleasure to study the topic to which Chapter Three is devoted. \\ 

Thanks to \textit{Enea Parini} for the useful observations and for the interesting discussion that we have had in Levico about the topic contained in Chapter Four. \\

I am grateful to both the referees \textit{Gian Paolo Leonardi} and \textit{Aldo Pratelli} for their useful remarks and comments.\\

A special thanks goes also to \textit{Fulvio Gesmundo} that has read many different parts of this work and has helped me to edit it with a lot of interesting notes.\\

I wish to thank all the beautiful people that I have had the privilege to meet in these three years and who made my time in Pisa really enjoyable: \textit{Filippo Cerocchi}, my flatmates \textit{Augusto Gerolin}, \textit{Iga Zatorska}, \textit{Simone Bastreghi}, \textit{Fabio Tonini}, \textit{Danilo Calderini} and \textit{Ivan Martino}. \\

I would like to thank \textit{Virginia Billi}, \textit{Alex Papini}, \textit{Ilaria Viviani} and \textit{Ruzbeh Hadavandi} with whom I have had the chance to spend a wonderful time in Florence.\\

Last but not least, I wish to thank all those people that have always been there since a long time: \textit{Simone Naldi}, \textit{Michela Di Giannantonio}, \textit{Niccolò Ristori}, \textit{Niccolò Basetti Sani Vettori (Cencetti)}, \textit{Sara Mancigotti}, \textit{Martina Bellinzona} and \textit{Tommaso Poli}. And, of course, my brother \textit{Riccardo}, my father \textit{Giuseppe} and my mother \textit{Lucia}.\\

My Ph.D thesis would have never been completed without the infinite support of all these people or without their friendship.\\

\chapter{Sets of finite perimeter and $N$-clusters}\label{cpt Sets of finite perimeter and $N$-clusters}
In this chapter we define the general context of the theory of sets of finite perimeter without entering into the details of the proofs. We briefly recall the basic concepts about sets of finite perimeter we are going to use in the sequel. Here and in the sequel every set $E\subset \R^n$ will always be a Borel set. We denote, as usual, with $\overline{E}$, $\mathring{E}$ and $\pa E$ respectively the \textit{interior}, the \textit{closure} and the \textit{topological boundary} of the set $E$. We write $E\cc F$ and say $E$ is \textit{compactly contained} in $F$ if $\overline{E}\subset F$. \\ 
\text{}\\
The proof of the results that are recalled in this section, besides more details about sets of finite perimeter and Radon measures, can be found in \cite{Morgan}, \cite{maggibook}, \cite{Federerbook}. The original works \cite{DeGiorgiSOFP1} and \cite{DeGiorgiSOFP2} from Ennio De Giorgi, where the foundational part of the theory of sets of finite perimeter is developed, are in italian. The english versions of such works can be found in the book \cite{Selectedpapers} at pp. 58-78 and 112-127. 

\section{Radon measures}
The concept of Radon measures, more precisely of vector-valued Radon measures, plays a key role in the theory of sets of finite perimeter. We do not need to explain in detail what a vector-valued Radon measure is (for a complete overview on such a topic we refer to \cite{maggibook} pp. 1-62) and so we just recall  that vector measures can be represented as positive measures multiplied by a (summable) vector-valued density.
\subsection{Definition of Radon measures}\label{chpt 1 subct: Radon measures}
A measure $\mu:\mathcal{P}(\R^n)\rightarrow \R^+$ is a \textbf{positive Radon measure on $\R^n$} (or simply a \textbf{Radon measure}) if 
\begin{itemize}
\item[a)] any Borel set $E$ is a $\mu$-measurable set;
\item[b)] for any set $F\subset \R^n$ there exists a Borel set $E\supseteq F$ such that $\mu(E)=\mu(F)$;
\item[c)] $\mu(K)<+\infty$ for every $K\subset\R^n$ compact. 
\end{itemize} 
Property a) is ensuring that the family of all $\mu$-measurable sets will be not trivial. Property b) gives us some sort of regularity, since it allows us to consider just the Borel's algebra, while property c) guarantees the local finiteness of $\mu$. \\

We say that a measure $\mu:\mathcal{P}(\R^n)\rightarrow \R^m$ is an \textbf{$\R^m$-valued Radon measure} (we sometimes simply write vector-valued Radon measure) if there exists a positive Radon measure $\mu_0$ and Borel map $f:\R^n\rightarrow \R^m$ with $|f(x)|=1$ $\mu_0$-almost everywhere such that
$$\mu(E)=\int_E f(x) d\mu_0(x)$$
for every Borel set $E\subset \R^n$. 
Given a vector-valued Radon measure $\mu$, the measure $\mu_0$ associated to $\mu$ is uniquely identified and the \textbf{total variation of $\mu$}:
\begin{equation}\label{variazione totale}
|\mu|:=\mu_0,
\end{equation}
is well defined. Since also the density $f$ is unique up to a $|\mu|$-negligible set, in the sequel we are always adopting the notation 
$$\mu=f|\mu|.$$
Note that $|\mu|=0$ implies $\mu=0$. Given a Radon measure $\mu$ we define the \textbf{support of $\mu$} as the set
\begin{equation}\label{eqn: chapter intro support}
\spt(\mu)=\left\{x\in \R^n \ | \ |\mu|(B_r(x))>0 \ \ \text{for all $r>0$}\right\}.
\end{equation}

Every function $h\in L_{loc}^1(\R^n)$ induce a positive Radon measure on $\R^n$ by defining
$$\mu(E)=\int_E |h(x)|\d x,$$
for every Borel set $E$. In this case we write 
$$\mu=h\L^n.$$ 
In particular this implies that, having defined \textit{the characteristic function of a Borel set $E$} as
\begin{equation}\label{definizione funzione caratteristica}
\ca_E(x):=\left\{ 
\begin{array}{cc}
1 \ \ &\text{if $x\in E$};\\
0 \ \ &\text{otherwise},
\end{array}
\right. 
\end{equation}
if $h(x)=\ca_{E}(x)$ for some $E$ with $\L^n(E)=|E|<+\infty$, then the measure 
$$\L^n\llcorner E(F)=\L^n(E\cap F)=(\ca_E\L^n)(F)$$
is a positive Radon measure. \\

In general, if $\mu$ is a positive Radon measure and $h\in L_{loc}^1 (\R^n,\mu)$ is a function, we have that $\nu=|h|\mu$ is a positive Radon measure. In particular if $E$ is a Borel set with $\mu(E)<+\infty$ we have that \textit{the restriction of $\mu$ to $E$} defined on every Borel set $F$ as
$$\mu \llcorner E(F)=\mu(E\cap F)=(\ca_E\mu)(F)$$
is a positive Radon measure.\\

An example of vector-valued Radon measure can be obtained by setting $|\mu|=\mu_0=\L^n$ and by choosing a generic Borel vector field $f:\R^n\rightarrow \R^m$ with $|f(x)|=1$ almost everywhere. Note that if $g:\R^n\rightarrow \R^m$ is a Borel vector field, then $\mu=g\L^n$ is an $\R^m$-valued Radon measure. Indeed, by defining
\begin{eqnarray*}
|\mu|&=& |g(x)|\L^n,\\
f(x)&=&\frac{g(x)}{|g(x)|}\ca_{\{ g(x)\neq 0\}}(x),
\end{eqnarray*}
we have that
$$\mu=f|\mu|$$
with $|f(x)|=1$ for $|\mu|-$almost every $x\in \R^n$.

\subsection{Weak-star convergence of Radon measures}\label{subsection Weak-star convergence of Radon measures}
In order to speak of compactness and semi-continuity of perimeter we need to briefly introduce the weak-star convergence of Radon measures. A sequence of Radon measures $\{\mu_h\}_{h\in \N}$  on $\R^{n}$ with values in $\R^{m}$ is said to be \textbf{convergent in the weak-star sense} to a Radon measure $\mu$, and we write $\mu_h\displaystyle \frecciad^{*} \mu$, if and only if for every $\varphi\in C_c^{0}(\R^{n};\R^{m})$ it holds
$$\int_{\R^{n}} \varphi\cdot \d \mu=\lim_{h\rightarrow +\infty} \int_{\R^{n}} \varphi \cdot \d \mu_h.$$
The following equivalences about convergence of positive Radon measures are very useful, (see \cite[Proposition 4.26]{maggibook} for a detailed proof).
\begin{proposition}\label{on the weak star convergence}
If $\{\mu_h\}_{h\in\N}$ and $\mu$ are positive Radon measures on $\R^{n}$, then the following three statements are equivalent.
\begin{itemize}
\item[(i)] $\mu_h\displaystyle \frecciad^{*} \mu$.
\item[(ii)] If $K$ is compact and $A$ is open, then 

\begin{equation}
\mu(K) \geq \limsup_{h\rightarrow +\infty} \mu_h(K),
\end{equation}

\begin{equation}
\mu(A) \leq \liminf_{h\rightarrow +\infty} \mu_h(A).
\end{equation}
\item[(iii)] If $E$ is a Borel set with $\mu(\pa E)=0$, then
$$\mu(E)=\lim_{h\rightarrow +\infty}\mu_h(E).$$
\end{itemize}
\end{proposition}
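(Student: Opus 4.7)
The plan is to establish the three equivalences cyclically, $(i)\Rightarrow(ii)\Rightarrow(iii)\Rightarrow(i)$, by standard portmanteau-style approximation arguments adapted to the local finiteness of Radon measures. The key devices are Urysohn-type sandwiching of characteristic functions by $C_c^0$ test functions for the first implication, squeezing a Borel set $E$ between $\mathring E$ and $\overline E$ for the second, and a layer-cake decomposition for the third.

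\textbf{Step 1: $(i)\Rightarrow(ii)$.} For compact $K$, I would use outer regularity of $\mu$ to choose a decreasing family of open sets $U_k\supset K$ with $\mu(U_k)\downarrow\mu(K)$, and Urysohn's lemma to produce $\varphi_k\in C_c^0(\R^n)$ with $\ca_K\leq\varphi_k\leq\ca_{U_k}$. Since $\mu_h(K)\leq\int\varphi_k\,\d\mu_h$, $(i)$ yields $\limsup_h\mu_h(K)\leq\int\varphi_k\,\d\mu\leq\mu(U_k)$, and $k\to+\infty$ closes the first inequality. Dually, for open $A$ pick $\varphi_k\in C_c^0(A)$ with $\varphi_k\uparrow\ca_A$ pointwise and $\int\varphi_k\,\d\mu\uparrow\mu(A)$ (inner regularity); then $\int\varphi_k\,\d\mu_h\leq\mu_h(A)$, and $(i)$ together with $k\to+\infty$ gives $\mu(A)\leq\liminf_h\mu_h(A)$.

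\textbf{Step 2: $(ii)\Rightarrow(iii)$.} Treat first $E$ bounded, so that $\overline E$ is compact and $\mathring E$ is open. Then $(ii)$ gives
\[
\mu(\mathring E)\leq\liminf_h\mu_h(\mathring E)\leq\liminf_h\mu_h(E)\leq\limsup_h\mu_h(E)\leq\limsup_h\mu_h(\overline E)\leq\mu(\overline E).
\]
Since $\overline E\setminus\mathring E\subseteq\pa E$ and $\mu(\pa E)=0$, the outer terms both equal $\mu(E)$ and the chain collapses. The general case is handled by intersecting $E$ with a ball $B_R$ satisfying $\mu(\pa B_R)=0$ (only countably many radii are excluded, since the spheres are pairwise disjoint and $\mu$ is locally finite), noting that $\pa(E\cap B_R)\subseteq\pa E\cup\pa B_R$ is $\mu$-negligible, applying the bounded case to $E\cap B_R$, and letting $R\to+\infty$.

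\textbf{Step 3: $(iii)\Rightarrow(i)$.} Given $\varphi\in C_c^0(\R^n)$, first I extract a local uniform mass bound: pick $R$ with $\spt\varphi\subset B_R$ and $\mu(\pa B_R)=0$, so $(iii)$ applied to $B_R$ yields $\mu_h(B_R)\to\mu(B_R)$ and hence $C:=\sup_h\mu_h(B_R)<+\infty$. Decompose $\varphi=\varphi^+-\varphi^-$ and invoke the layer-cake formula
\[
\int\varphi^{\pm}\,\d\mu_h=\int_0^{\|\varphi\|_\infty}\mu_h(\{\varphi^{\pm}>t\})\,\d t,
\]
and analogously for $\mu$. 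For every $t>0$, the open superlevel set $\{\varphi^{\pm}>t\}$ has topological boundary contained in the level set $\{\varphi^{\pm}=t\}$; since these level sets are pairwise disjoint in $\spt\varphi$ and $\mu(\spt\varphi)\leq C<+\infty$, only countably many of them can carry positive $\mu$-mass. For all remaining $t$, $(iii)$ delivers pointwise convergence $\mu_h(\{\varphi^{\pm}>t\})\to\mu(\{\varphi^{\pm}>t\})$, and the integrand is uniformly dominated by $C$. Lebesgue's dominated convergence theorem on $[0,\|\varphi\|_\infty]$ then yields $\int\varphi^{\pm}\,\d\mu_h\to\int\varphi^{\pm}\,\d\mu$, and subtraction proves $(i)$.

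The main obstacle is Step 3: one needs to manufacture, from the purely pointwise statement $(iii)$, a uniform mass bound in order to legally invoke dominated convergence. This is resolved by the density in $(0,+\infty)$ of the $\mu$-null spherical radii $R$, which is also the mechanism that makes the localization in Step 2 work. Once these uniform bounds are in place, the rest reduces to careful bookkeeping about the countable set of "bad" levels and the standard layer-cake reduction of weak-star convergence to convergence on good Borel sets.
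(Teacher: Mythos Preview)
The paper does not supply its own proof of this proposition; it simply refers the reader to \cite[Proposition 4.26]{maggibook}. Your argument is the standard portmanteau-type proof one finds in that reference, and Steps~1 and~3 are correct as written.

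The one genuine issue is in Step~2, where you attempt to pass from bounded $E$ to arbitrary Borel $E$ by intersecting with $B_R$ and letting $R\to+\infty$. This step hides an unjustified interchange of limits: from $\mu(E\cap B_R)=\lim_h\mu_h(E\cap B_R)$ for each good $R$ you cannot conclude $\mu(E)=\lim_h\mu_h(E)$ without controlling the tail uniformly in $h$. In fact the extension is simply false: with $\mu_h=\delta_h$ on $\R$ one has $\mu_h\frecciad^{*}0$, yet for $E=\R$ (whose boundary is empty) $\mu_h(E)=1\not\to 0=\mu(E)$. The statement in Maggi's book is for \emph{bounded} Borel sets $E$ (equivalently, $E$ with compact closure), and the paper's formulation should be read with that qualifier understood. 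With that reading your ``bounded case'' argument in Step~2 already suffices, and the localization to $B_R$ is unnecessary. Note also that Step~3 only ever invokes $(iii)$ on bounded sets ($B_R$ and $\{\varphi^\pm>t\}\subset\spt\varphi$), so the cycle closes correctly once $(iii)$ is restricted to bounded $E$.
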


\section{Sets of finite perimeter}

\subsection{Hausdorff measures and Hausdorff dimension}
For every $s,\de>0$ the $s$-dimensional Hausdorff measure of step $\de$ of a set $E\subset \R^n$ is defined as:
\begin{equation}\label{hasudorff measure delta}
\H^s_{\de}(E)=\inf_{\F_{\de}}\left\{\sum_{F\in \F_{\de}} \om_s \left(\frac{\diam(F)}{2} \right)^s \right\}
\end{equation}
where 
$$\om_s=\frac{\pi^{s/2}}{\G(1+s/2)},  \ \ \ \ \G(s)=\int_{0}^{+\infty} t^{s-1}e^{-t}\d t $$
and where the infimum in \eqref{hasudorff measure delta} is taken among all $\F_{\de}$, namely countable coverings of $E$ by Borel sets $F\subset \R^n$ with $\diam(F)\leq \de$. If $s=k$ is an integer then $\om_k$ is exactly the Lebesgue measure of a $k$-dimensional ball in $\R^k$. The \textbf{$s$-dimensional Hausdorff measure of a set $E\subset \R^n$} is then defined as:
\begin{equation}
\H^s(E):=\sup_{\de>0}\{\H^s_{\de}(E)\}=\lim_{\de\rightarrow 0^+}\H^{s}_{\de}(E).
\end{equation}
From the definition it follows that the Hausdorff measure $\H^s$ is invariant under isometries and that
$$\H^s(\l E)=\l^s\H^s(E), \ \ \ \ \forall \ \l,s>0, \ E\subset \R^n.$$
Furthermore the following properties hold:
\begin{itemize}
\item[1)] $\H^n(E)=\L^n(E)$ for every $E\subset \R^n$;
\item[2)] $\H^s(E)<+\infty$ implies $\H^t(E)=0$ for every $t>s$;
\item[3)] $\H^s(E)>0$ implies $\H^t(E)=+\infty$ for every $t<s$.
\end{itemize} 
Thanks to property 2) and 3) above it is well defined the \textbf{Hausdorff dimension of a Borel set $E$} as
\begin{eqnarray}
\dim(E)&:=&\inf\{s\in [0,+\infty] \ | \ \H^s(E)=0\}\label{Hausdorff dimension1}\\
&=&\sup\{s\in [0,+\infty] \ | \ \H^s(E)=+\infty\}\label{Hausdorff dimension2}.
\end{eqnarray}
We underline that if $1\leq k\leq n-1$ and $M$ is a $k$-dimensional $C^1$-surface in $\R^n$ then $\H^k(M)$ coincides with the classical $k$-dimensional area of $M$ and dim$(M)=k$ (we refer the reader to \cite[Chapter 3]{maggibook}). In the sequel, whenever we talk about the \textit{dimension of a set $E$} we are always meaning the Hausdorff dimension of the set $E$. \\

Let us point out that property 1) and 3) tells us that $\H^s$ is not a Radon measure in $\R^n$ unless $s\geq n$ (and in this case, for $s>n$ it is trivial thanks to property 2) ). Indeed $\H^s(A)=+\infty$ for every $s<n$ and every open set $A\subset \R^n$. Anyway, if $E$ is such that $\H^s(E)<+\infty$ the measure  $\H^s\llcorner E$, given by the restriction of  $\H^s$ to $E$, is a Radon measure on $\R^n$. 
\subsection{$L^{1}$ topology} \label{subsection $L^1$ topology}
Given a subset $\Om\subseteq \R^{n}$ we need first to specify the topology that we are considering on the Borel's algebra of $\Om$. The correct one for this framework is the one induced by the $L^{1}_{loc}$ convergence of the characteristics function. More precisely a sequence of Borel sets $\{E_h\}_{h\in \N}$ is converging to a set $E$ (in $L^{1}_{loc}$) if and only if:
\begin{equation*}\label{easy}
\ca_{E_h} \freccia^{L^{1}_{loc}} \ca_E\, 
\end{equation*}
or equivalently if for every compact set $K\subset\Om$ it holds
\begin{equation}\label{peasy}
 \lim_{h\rightarrow +\infty} |(E\Delta E_h) \cap K|\rightarrow 0.
\end{equation}
Clearly, if the convergence of the characteristic functions is stronger, say $L^{1}$, we speak of $L^{1}$ convergence instead of $L^{1}_{loc}$ and \eqref{peasy} becomes just
\begin{equation*}
 \lim_{h\rightarrow +\infty} |E\Delta E_h|\rightarrow 0.
\end{equation*}

\subsection{Sets of finite perimeter and Gauss-Green measure}\label{subsection Sets of finite perimeter and Gauss-Green measure}
A Borel set $E$ of $\R^{n}$ is said to be a \textit{set of locally finite perimeter} if there exists an $\R^n$-valued Radon measure $\mu_E$ such that:
\begin{equation}\label{sofp definizione}
\int_E \dive(T)\d x=\int_{\R^{n}} T\cdot \d \mu_E, \ \ \forall \ T\in C_c^{1}(\R^{n};\R^{n}).
\end{equation}
Notice that \eqref{sofp definizione} just means that the characteristic function of $E$ admits as distributional derivative the vector-valued Radon measure $\mu_E$. In other words $D\ca_E(x)=\mu_E$ in the sense of distributions.  
The measure $\mu_E$ is also called the \textit{Gauss-Green measure of $E$} and we define the relative perimeter of $E$ in the Borel set $F\subset \R^n$:
\begin{equation}\label{perimetro relativo}
P(E;F)=|\mu_E|(F),
\end{equation} 
where $|\mu_E|(F)$ denotes the total variation of $\mu_E$ defined in \ref{chpt 1 subct: Radon measures}, formula \eqref{variazione totale}.\\

The \textit{perimeter of a set $E$} is defined as 
$$P(E):=P(E;\R^{n}).$$ 
The reason why $\mu_E$ is called Gauss-Green measure is that whenever $E$ is a set with $C^{1}$ boundary, 
the Gauss-Green Theorem implies
$$\mu_E=\nu_E\H^{n-1}\llcorner \pa E $$
where $\nu_E$ denotes the outer unit normal of $\pa E$. 
Notice that in this case $P(E;F)=\H^{n-1}(\pa E\cap F) $, $P(E)=\H^{n-1}(\pa E)$. \\

By exploiting \eqref{sofp definizione} and \eqref{perimetro relativo} we reach also the useful alternative definition of relative perimeter 
\begin{equation}\label{perimetro definizione vera aperti}
P(E;A)= \sup\left\{\int_E \dive(T) \d x \ \Big{|} \ T\in C_c^{1}(A;B_1) \right\}, 
\end{equation}
when $A$ is open and
\begin{equation}\label{perimetro definizione vera borel}
P(E;F)= \inf\left\{P(E;A) \ | \  \text{A open and }F \subseteq A\right\}, 
\end{equation}
when $F$ is a generic Borel set.

\subsection{An equivalent definition of sets of finite perimeter}\label{subsection Equivalent definitions of sets of finite perimeter}
We sometimes make use of an equivalent definition of sets of finite perimeter introduced first by De Giorgi in \cite{DeGiorgiSOFP1} by exploiting regularizing kernels. More precisely, having defined $E_{\e}(x):=\ca_E\star \rho_{\e}(x)$, where $E$ is a given a Borel set and $\{\rho_{\e}(x)\}_{\e>0}$ is a regularizing kernel, if $E$ has locally finite perimeter then
\begin{equation}\label{perimetro approssimanti}
-(\nabla E_{\e}) \L^{n} \frecciad^{*} \mu_E\,,\ \  \ \ |\nabla E_{\e}|\L^{n}\frecciad^{*} |\mu_E|
\end{equation}
and conversely if $E$ is such that 
\begin{equation}\label{perimetro approssimanti2}
\limsup_{\e \rightarrow 0} \int_K|\nabla E_{\e}(x)| \d x<\infty \ \ \text{for all compact sets $K$}
\end{equation}
then $E$ has locally finite perimeter. 
\subsection{Compactness and semicontinuity with respect to the $L^{1}$ topology}\label{subsection Compactness and semicontinuity with respect to the $L^1$ topology}
In order to ensure existence of solutions in many variational problems we need a suitable compactness property of finite perimeter sets together with the semi-continuity of the functional perimeter (see \cite[Proposition 12.15, Theorem 12.26]{maggibook} for detailed proofs).
\begin{theorem}[Compactness theorem for sets of finite perimeter]\label{compactness theorem}
Let $\{E_h\}_{h \in \N}$ be a sequence of sets of finite perimeter such that
\begin{eqnarray*}
a)& & \sup_{h\in\N} \{P(E_h)\}<+\infty \\
b)& & \text{there exists $R>0$ such that}\ \  E_h\subset B_R \text{ for all $h\in \N$}.
\end{eqnarray*}
Then, there exists a subsequence $\{E_{h_j}\}_{j\in \N}\subseteq \{E_h\}_{h\in \N}$ and a set of finite perimeter $E\subset B_R$ such that
$$E_{h_j}\freccia^{L^{1}} E\,, \ \ \ \ \ \ \mu_{E_{h_j}}\frecciad^{*} \mu_E.$$
\end{theorem}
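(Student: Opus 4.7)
The strategy is to promote the uniform perimeter bound to an $L^1$-compactness statement for the characteristic functions $\ca_{E_h}$, and then to pass to the limit in the distributional Gauss--Green identity to identify both the limit set and its Gauss--Green measure.

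First I would observe that hypotheses (a) and (b) together force $\{\ca_{E_h}\}$ to be uniformly bounded in $BV(\R^n)$: indeed $\|\ca_{E_h}\|_{L^1}=|E_h|\le|B_R|$, the total variation of the distributional gradient equals $P(E_h)\le C$, and every function vanishes outside the fixed compact set $\ov{B_R}$. By the compact embedding of $BV$ into $L^1$ on bounded domains---equivalently, by the Fr\'echet--Kolmogorov criterion combined with the standard translation estimate $\|\ca_{E_h}(\cdot+y)-\ca_{E_h}\|_{L^1}\le|y|\,P(E_h)$---there exist a subsequence $\{E_{h_j}\}$ and $u\in L^1(\R^n)$, supported in $\ov{B_R}$, with $\ca_{E_{h_j}}\freccia^{L^{1}} u$. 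Passing to a further subsequence I get pointwise a.e.\ convergence, so $u$ takes values in $\{0,1\}$ almost everywhere; thus $u=\ca_E$ for a Borel set $E\subset B_R$, and $E_{h_j}\freccia^{L^{1}} E$.

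Next I would recover the weak-star convergence of the Gauss--Green measures. By (a) and (b) the vector-valued measures $\mu_{E_{h_j}}$ are supported in $\ov{B_R}$ with total variations uniformly bounded by $C$. The Banach--Alaoglu theorem applied in the dual of $C_c^{0}(\R^n;\R^n)$ then yields a further subsequence (not relabeled) and a vector-valued Radon measure $\mu$ with $\mu_{E_{h_j}}\frecciad^{*}\mu$. To identify $\mu=\mu_E$, for every $T\in C_c^1(\R^n;\R^n)$ I pass to the limit in the identity
\begin{equation*}
\int_{E_{h_j}}\dive(T)\,\d x=\int_{\R^{n}} T\cdot\d\mu_{E_{h_j}};
\end{equation*}
the left-hand side converges to $\int_{E} \dive(T)\,\d x$ because $\ca_{E_{h_j}}\to\ca_E$ in $L^1$ while $\dive(T)$ is bounded with compact support, and the right-hand side converges to $\int T\cdot\d\mu$ by weak-star convergence. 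The limiting identity is precisely the defining property of the Gauss--Green measure of $E$; by uniqueness $\mu=\mu_E$, and in particular $E$ has finite perimeter.

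The main technical hurdle is the $BV$-to-$L^1$ compactness used in the first step: the perimeter bound controls only the distributional gradient as a measure, not any classical Sobolev derivative, so one cannot invoke Rellich--Kondrachov for $W^{1,1}$ directly. One must either appeal to the De Giorgi-type mollification scheme recalled earlier---regularizing $\ca_{E_h}$ by convolution with a kernel $\rho_\e$, using the uniform $L^1$-bound on $\nabla(\ca_{E_h}*\rho_\e)$ inherited from $P(E_h)$ via the approximation property of $\mu_{E_h}$, and extracting a diagonal $L^1$-limit---or verify translation-equicontinuity of $\{\ca_{E_h}\}$ directly and invoke Fr\'echet--Kolmogorov. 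Everything subsequent, i.e.\ the identification of the $L^1$-limit with a characteristic function and the passage to the limit in the Gauss--Green identity, is a soft consequence of this compactness together with the weak-star compactness of bounded Radon measures.
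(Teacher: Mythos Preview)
The paper does not give its own proof of this theorem: it is stated as background and the reader is referred to \cite[Proposition 12.15, Theorem 12.26]{maggibook} for the details. Your argument is correct and is essentially the standard proof one finds in such references: uniform $BV$ bounds plus containment in a fixed ball give $L^1$-compactness of the characteristic functions (via Fr\'echet--Kolmogorov or mollification), weak-star compactness of the Gauss--Green measures follows from the uniform total-variation bound, and the limit measure is identified by passing to the limit in the distributional Gauss--Green identity.
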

\begin{theorem}[Lower semicontinuity of the perimeter]\label{semicontinuity theorem}
If $\{E_h\}_{h\in \N}$ is a sequence of sets of locally finite perimeter in $\R^{n}$ such that
$$E_h\freccia^{L^{1}_{loc}} E \,, \ \ \ \ \ \limsup_{h\rightarrow +\infty} P(E_h;K)< +\infty$$
for every compact set $K$ in $\R^{n}$, then $E$ is of locally finite perimeter in $\R^{n}$, $\mu_{E_h}\frecciad^{*}\mu_E$ and, for every open set $A\subset \R^{n}$ we have
\begin{equation}\label{semicontinuity formula}
P(E;A)\leq \liminf_{h\rightarrow +\infty} P(E_h;A).
\end{equation}
\end{theorem}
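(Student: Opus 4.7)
The plan is to first establish the semicontinuity inequality directly from the dual (supremum) characterization of the perimeter in \eqref{perimetro definizione vera aperti}, and then deduce both the local finite perimeter property of $E$ and the weak-star convergence of the Gauss-Green measures as consequences.

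First I would fix an open set $A\subseteq \R^n$ and pick an arbitrary test field $T\in C_c^1(A;B_1)$. By \eqref{sofp definizione},
\[
\int_{E_h} \dive(T)\d x = \int_{\R^n} T\cdot \d\mu_{E_h},
\]
and since $|T|\leq 1$ and $\spt(T)\subset A$, this quantity is bounded in absolute value by $|\mu_{E_h}|(A)=P(E_h;A)$. On the other hand, writing $K:=\spt(T)$, the elementary estimate
\[
\left|\int_{E_h}\dive(T)\d x - \int_E \dive(T)\d x\right| \leq \|\dive(T)\|_\infty\, |(E_h\Delta E)\cap K|
\]
together with the $L^1_{\loc}$ convergence hypothesis gives $\int_{E_h}\dive(T)\d x\to \int_E\dive(T)\d x$. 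Hence
\[
\int_E \dive(T)\d x \leq \liminf_{h\to +\infty} P(E_h;A).
\]
Taking the supremum over all admissible $T$ and invoking \eqref{perimetro definizione vera aperti} yields the semicontinuity bound $P(E;A)\leq \liminf_h P(E_h;A)$. Applied with $A$ a bounded open neighborhood of any compact $K$ (and using the hypothesis $\limsup_h P(E_h;K')<+\infty$ for $K'=\overline{A}$), this also produces the conclusion that $E$ is of locally finite perimeter, so the Gauss-Green measure $\mu_E$ is well defined.

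For the weak-star convergence $\mu_{E_h}\frecciad^{*}\mu_E$, the plan is to argue by compactness plus a uniqueness-of-limit argument. The local bound on $|\mu_{E_h}|$ gives a subsequence $\{\mu_{E_{h_j}}\}$ converging weakly-star to some $\R^n$-valued Radon measure $\mu$ (standard Banach--Alaoglu for Radon measures). For any $T\in C_c^1(\R^n;\R^n)$ one has
\[
\int_{\R^n} T\cdot \d\mu \;=\; \lim_{j\to +\infty}\int_{\R^n} T\cdot \d\mu_{E_{h_j}} \;=\; \lim_{j\to +\infty} \int_{E_{h_j}} \dive(T)\d x \;=\; \int_E \dive(T)\d x \;=\; \int_{\R^n} T\cdot \d\mu_E,
\]
the middle equality being the dominated-convergence step used above applied on the compact set $\spt(T)$. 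Density of $C_c^1$ in $C_c^0$ (with respect to the sup norm, combined with the uniform local bound on $|\mu_{E_{h_j}}|$ and $|\mu_E|$) upgrades this to all $T\in C_c^0(\R^n;\R^n)$, giving $\mu=\mu_E$. Since every weak-star cluster point equals $\mu_E$, the whole sequence converges.

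The main technical obstacle I foresee is not the semicontinuity inequality itself (which is essentially a one-line duality argument) but rather the weak-star convergence of the entire sequence $\mu_{E_h}$: one must carefully use the uniform local bounds on $|\mu_{E_h}|$ to pass from $C_c^1$ test fields to general $C_c^0$ test fields and then invoke uniqueness of the limit to avoid merely extracting a subsequence. Everything else boils down to combining the Riesz-type definition of the Gauss-Green measure in \eqref{sofp definizione} with the $L^1_{\loc}$ hypothesis.
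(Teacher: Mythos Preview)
Your proof is correct and follows the standard duality argument. Note that the paper does not actually give its own proof of this theorem; it simply cites \cite[Proposition 12.15, Theorem 12.26]{maggibook}, and your argument is essentially the one found there.
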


\subsection{The structure of the Gauss-Green measure}\label{subsection The structure of the Gauss-Green measure and Federer's Theorem}
For every set $E$ of locally finite perimeter the \textbf{reduced boundary} $\pa^{*} E$  is defined as the set of points $x\in \spt\mu_E$ such that the limit
\begin{equation}
\lim_{r\rightarrow 0} \frac{\mu_E(B_r(x))}{|\mu_E|(B_r(x))} \ \ \ \text{exists and belongs to $S^{n-1}=\pa B_1$}.
\end{equation}
For every point $x\in \pared E$ we set:
$$\nu_E(x):=\lim_{r\rightarrow 0} \frac{\mu_E(B_r(x))}{|\mu_E|(B_r(x))}.$$
The vector field $\nu_E$ is called \textbf{measure-theoretic outer unit normal to $E$} and by the Besicovitch-Lebesgue differentiation theorem we have that
$$\mu_E=\nu_E|\mu_E| \llcorner \pared E.$$ 
Note that if $E$ is a set of finite perimeter with reduced boundary $\pared E$ then
\begin{align*}
 \pared E^c&=\pared E,\\
\nu_{E^c}(x)&=-\nu_E(x), \ \ \ \ \forall \ x\in \pared E^c.
\end{align*}
A key tool in the whole theory of sets of finite perimeter is the following theorem due to De Giorgi about the structure of the Gauss-Green measure (see \cite{DeGiorgiSOFP2}, \cite[pp. 111-127]{Selectedpapers}, \cite[Theorem 15.5, Theorem 15.9]{maggibook}).
\begin{theorem}[De Giorgi's structure Theorem] 
If $E$ is a set of locally finite perimeter in $\R^{n}$, then the following properties hold. 
\begin{itemize}
\item[1)] The Gauss-Green measure $\mu_E$ of $E$ satisfies 
\begin{equation}\label{structure of Gauss-Green measure}
 |\mu_E|=\H^{n-1}\llcorner \pared E\,, \ \ \ \ \mu_E=\nu_E \H^{n-1}\llcorner \pared E\,, 
\end{equation}
and the generalized Gauss-Green formula holds true:
\begin{equation}\label{generalized Gauss-Green formula}
\int_E \nabla \varphi \d x=\int_{\pared E} \varphi\nu_E\d \H^{n-1}\,, \ \ \ \ \forall \ \varphi\in C_c^{1}(\R^{n});
\end{equation}
\item[2)] There exists countably many $C^1$-hypersurfaces $\{M_h\}_{h\in \N}\subset \R^n$, compact sets $K_h\subset M_h$ and a Borel set $F$ with $\H^{n-1}(F)=0$ such that 
$$\pared E=F\cup \bigcup_{h\in \N} K_h\,,$$
and for every $x\in K_h$, $\nu_E(x)^{\perp}=T_x M_h$ is the tangent space of $M_h$ at $x$;
\item[3)] For every $x\in \pared E$ the sequence of sets $\left\{E_{x,r}=\frac{E-x}{r}\right\}_{r>0}$ locally converges, (as \mbox{$r\rightarrow 0^+$}), to the half space
$$H_{\nu_E(x)}:=\left\{y\in \R^n \ | \ y\cdot \nu_E(x)\leq 0\right\}$$
and it holds:
$$\mu_{E_{x,r}}\frecciad^{*} \nu_E(x) \H^{n-1}\llcorner \pa H_{\nu_E(x)}, \ \ \ |\mu_{E_{x,r}}| \frecciad^{*} \H^{n-1}\llcorner \pa H_{\nu_E(x)}.$$ 
\end{itemize}
\end{theorem}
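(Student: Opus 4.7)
The plan is to prove part (3) first---this blow-up classification is the heart of the matter---and then to deduce part (1) by Besicovitch--Lebesgue differentiation and part (2) via a rectifiability criterion for sets admitting approximate tangent planes. The preliminary step is to establish uniform density estimates of the form $c_{n} r^{n-1} \leq |\mu_{E}|(B_{r}(x)) \leq C_{n} r^{n-1}$ for every $x \in \pared E$ and every sufficiently small $r$. These rest on the relative isoperimetric inequality applied to $E \cap B_{r}(x)$ (for the lower bound) and on a carefully chosen cut-off test field in \eqref{sofp definizione}, combined with the asymptotic $\mu_{E}(B_{r}(x)) \cdot \nu_{E}(x) = (1+o(1)) |\mu_{E}|(B_{r}(x))$ as $r \to 0^{+}$ (for the upper bound). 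As a byproduct both $|E_{x,r} \cap B_{1}|$ and $|B_{1} \setminus E_{x,r}|$ stay uniformly bounded away from $0$. Applying Theorem \ref{compactness theorem} to the rescaled sets, from every sequence $r_{k} \to 0^{+}$ one extracts a subsequence along which $E_{x,r_{k}} \freccia^{L^{1}_{\loc}} F$ and $\mu_{E_{x,r_{k}}} \frecciad^{*} \mu_{F}$ for some set $F$ of locally finite perimeter, with $0 \in \spt |\mu_{F}|$.

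The main obstacle is the classification of any such blow-up limit as the half-space $H_{\nu_{E}(x)}$. The defining property of $\pared E$, combined with weak-star convergence of the rescaled Gauss--Green measures and the pointwise bound $|\nu_{E_{x,r}}| \equiv 1$, forces the Radon--Nikodym density of $\mu_{F}$ with respect to $|\mu_{F}|$ to equal the constant vector $\nu_{E}(x)$, that is $\mu_{F} = \nu_{E}(x) |\mu_{F}|$. Then for every unit vector $e$ orthogonal to $\nu_{E}(x)$ the scalar measure $\mu_{F} \cdot e$ vanishes, which says that the distributional partial derivative $\pa_{e} \ca_{F}$ is zero; hence $\ca_{F}$ is invariant under translations along the hyperplane $\nu_{E}(x)^{\perp}$. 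This forces $F = \{y \in \R^{n} : y \cdot \nu_{E}(x) \leq c\}$ for some $c \in \R$, and the condition $0 \in \spt |\mu_{F}|$ pins down $c = 0$. Since every subsequential limit equals $H_{\nu_{E}(x)}$, the full limit as $r \to 0^{+}$ exists, so (3) holds.

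Part (1) now follows: the density estimates combined with (3) yield $\lim_{r \to 0^{+}} |\mu_{E}|(B_{r}(x)) / (\om_{n-1} r^{n-1}) = 1$ at every $x \in \pared E$, so the Besicovitch--Lebesgue differentiation theorem identifies $|\mu_{E}|$ with $\H^{n-1} \llcorner \pared E$; the equality $\mu_{E} = \nu_{E} |\mu_{E}|$ is the very definition of $\nu_{E}$, and the Gauss--Green formula \eqref{generalized Gauss-Green formula} is merely \eqref{sofp definizione} rewritten through (1). For (2), the blow-up statement (3) says precisely that $\pared E$ admits the approximate tangent hyperplane $\nu_{E}(x)^{\perp}$ at each of its points, and Federer's rectifiability criterion then produces the countable cover of $\pared E$ by $C^{1}$-hypersurfaces $\{M_{h}\}_{h \in \N}$ up to an $\H^{n-1}$-negligible set, concluding the proof.
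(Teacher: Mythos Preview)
The paper does not prove this theorem: it is stated in the preliminaries chapter as a foundational result and referenced out to De Giorgi's original paper and to \cite[Theorem 15.5, Theorem 15.9]{maggibook}. Your sketch follows precisely the classical strategy of those references---density estimates, blow-up compactness, the constant-normal classification of blow-up limits as half-spaces, then Besicovitch differentiation for (1) and rectifiability for (2)---so there is nothing to compare against in the paper itself, and your outline is the standard one.

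Two minor remarks on the write-up. First, the compactness result you cite, Theorem~\ref{compactness theorem}, is stated in the paper for sets contained in a fixed ball $B_R$, whereas the rescalings $E_{x,r}$ are unbounded; you are implicitly using the local version (which is of course true and is what the paper calls ``locally finite perimeter'' convergence), but it would be cleaner to say so rather than invoke that theorem directly. Second, attributing the lower density bound to the relative isoperimetric inequality is a bit nonstandard---the usual route (and the one in the references the paper cites) obtains both bounds by testing \eqref{sofp definizione} against radial cut-offs times $\nu_E(x)$ and exploiting $\mu_E(B_r(x))\cdot\nu_E(x)=(1+o(1))|\mu_E|(B_r(x))$; your description of the upper bound already uses this, and the same mechanism gives the lower bound more directly than an isoperimetric detour.
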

\subsection{Essential boundary}
The $n$-dimensional density of a set $E$ at the point $x$ is the quantity
\begin{equation}\label{density of a point}
\vt_n(x,E)=\lim_{r\rightarrow 0}\frac{|E\cap B_r(x)|}{|B_r(x)|},
\end{equation}
whenever it exists. We notice that, thanks to the Besicovitch-Lebesgue differentiation theorem applied to the Radon measure $\L^{n}\llcorner E $, the limit in \eqref{density of a point} exists for almost every $x$ in $\R^{n}$. Given a set $E$ we can define the set of points of $\R^{n}$ having the same $n$-dimensional density $t\in[0,1]$:
$$E^{(t)}=\{x\in \R^{n} \ | \ \vt_n(x,E)=t\}.$$
Note that $E^{(0)}=(\R^n\setminus E)^{(1)}$.\\

By denoting with $Q_r(x)$ a cube centered at $x$ and with side-length $r$, we could have defined the $n$-dimensional density of a set $E$ at the point $x$ also as the limit
$$\bar{\vt}_n(x,E)=\lim_{r\rightarrow 0}\frac{|E\cap Q_r(x)|}{|Q_r(x)|},$$
whenever it exists. This two definitions are equivalent on the points of density $0$ and $1$. Indeed on every ball $B_r(x)$ it holds 
$$Q_{\frac{2r}{\sqrt{n}}}(x)\subset B_r(x)\subset Q_2r,$$ 
and thus
$$\vt_n(x,E)=0  \ \ \Leftrightarrow \ \ \bar{\vt}_n(x,E)=0,$$
$$\vt_n(x,E)=1 \ \ \Leftrightarrow  \ \ \bar{\vt}_n(x,E)=1.$$

However in the sequel, unless it is not specified, we are always making use of Definition \eqref{density of a point} since it is the most common one in literature.\\

With these notation the \textbf{essential boundary} $\pa^{e}E$ of a Borel set is defined as:
\begin{equation}\label{essential boundary}
\pa^{e}E:= \R^{n}\setminus\left(E^{(0)}\cup E^{(1)}\right)=\{x\in \R^{n} \ |\ 0<\vt_n(x,E)<1\}.
\end{equation}
The following theorem clarifies the relation between the essential boundary and the reduced boundary of a set of finite perimeter $E$ (see \cite[Theorem 16.2]{maggibook}).
\begin{theorem}\label{equivalence of reduced and essential boundary}
If $E$ is a set of locally finite perimeter in $\R^{n}$ then $\pared E \subset E^{(\frac{1}{2})}\subset \pa^{e}E$ and 
\begin{equation}
\H^{n-1}(\pa^{e}E\setminus \pared E)=0.
\end{equation}
\end{theorem}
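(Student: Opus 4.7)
The two inclusions are direct. For $\pared E\subset E^{(1/2)}$, I would invoke item 3) of De Giorgi's structure theorem: at every $x\in \pared E$ the rescalings $E_{x,r}=(E-x)/r$ converge in $L^1_{\loc}$ to the half-space $H_{\nu_E(x)}$, so
\[
\vt_n(x,E)=\lim_{r\rightarrow 0^+}\frac{|E_{x,r}\cap B_1|}{|B_1|}=\frac{|H_{\nu_E(x)}\cap B_1|}{|B_1|}=\frac{1}{2}.
\]
The inclusion $E^{(1/2)}\subset \pa^{e}E$ is immediate from the definition of $\pa^e E$, since $0<1/2<1$.

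The measure-theoretic identity $\H^{n-1}(\pa^{e}E\setminus \pared E)=0$ is the nontrivial part. My plan is to deduce it from the following \emph{density estimate}: for every $x\in \pa^{e}E$ there is $\delta=\delta(x)>0$ with
\[
\limsup_{r\rightarrow 0^+}\frac{|\mu_E|(B_r(x))}{r^{n-1}}\geq \delta.
\]
Setting $f(r)=|E\cap B_r(x)|/|B_r(x)|$, the definition of $\pa^{e}E$ translates into $\limsup_{r\rightarrow 0^+}f(r)>0$ and $\liminf_{r\rightarrow 0^+}f(r)<1$; since $f$ is continuous in $r$, the set of accumulation points of $f$ at $0$ is the closed interval $[\liminf f,\limsup f]$, which therefore contains some $\alpha\in(0,1)$. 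Hence there exists a sequence $r_k\rightarrow 0^+$ and a constant $c_\alpha>0$ with $c_\alpha\leq f(r_k)\leq 1-c_\alpha$. Applying the relative isoperimetric inequality on the ball to $E\cap B_{r_k}(x)$ gives
\[
\min\bigl(|E\cap B_{r_k}(x)|,\,|B_{r_k}(x)\setminus E|\bigr)^{(n-1)/n}\leq C_n\,|\mu_E|(B_{r_k}(x)),
\]
and combining this with the lower bounds $|E\cap B_{r_k}(x)|,|B_{r_k}(x)\setminus E|\geq c_\alpha\,\om_n r_k^n$ yields $|\mu_E|(B_{r_k}(x))\geq \delta\, r_k^{n-1}$ for a suitable $\delta=\delta(x)>0$.

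Once the density estimate is in place, the standard Vitali-type comparison between a Radon measure of strictly positive upper $(n-1)$-density and $\H^{n-1}$ produces, on each of the Borel sets $A_j=\{x\in \pa^{e}E\ |\ \delta(x)>1/j\}$, an estimate of the form $\H^{n-1}(A_j\cap B)\leq C\,j\,|\mu_E|(A_j\cap B)$ for every Borel set $B$. Since De Giorgi's structure theorem asserts that $|\mu_E|=\H^{n-1}\llcorner \pared E$, one gets $|\mu_E|(A_j\setminus \pared E)=0$, hence $\H^{n-1}(A_j\setminus \pared E)=0$, and summing in $j$ gives the desired identity in view of $\pa^{e}E=\bigcup_j A_j$.

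The main obstacle is the density estimate: the delicate point is to verify it at \emph{every} point of $\pa^{e}E$ and not merely at $\H^{n-1}$-almost every such point, which is where the connectedness of the range of the continuous function $f$ plays the key role in producing a sequence of radii along which both $E$ and its complement occupy a non-negligible portion of $B_{r_k}(x)$. The uniformity of the dimensional constant $C_n$ in the relative isoperimetric inequality on a ball is what ultimately allows the covering comparison to close the argument.
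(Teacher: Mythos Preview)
The paper does not prove this theorem; it simply records it as background and refers to \cite[Theorem 16.2]{maggibook}. Your argument is correct and is essentially the standard proof one finds there: the inclusion $\pared E\subset E^{(1/2)}$ from the blow-up statement in De Giorgi's structure theorem, the trivial inclusion $E^{(1/2)}\subset\pa^e E$, the lower perimeter--density estimate at every point of $\pa^e E$ via the relative isoperimetric inequality in balls (using continuity of $r\mapsto |E\cap B_r(x)|/|B_r(x)|$ to locate radii where both $E$ and $E^c$ occupy a definite fraction of the ball), and then the density--comparison theorem to transfer $|\mu_E|=\H^{n-1}\llcorner\pared E$ into $\H^{n-1}(\pa^e E\setminus\pared E)=0$.

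Two minor points of phrasing. First, you should apply the relative isoperimetric inequality to $E$ \emph{in} the ball $B_{r_k}(x)$ (so that the right-hand side is $P(E;B_{r_k}(x))=|\mu_E|(B_{r_k}(x))$), rather than ``to $E\cap B_{r_k}(x)$''. Second, the comparison you invoke is cleanest stated as: for any Borel $S\subset A_j$ one has $\H^{n-1}(S)\le C\,j\,|\mu_E|(S)$; applying this with $S=A_j\setminus\pared E$ and $|\mu_E|(S)=0$ gives the conclusion directly, without introducing an auxiliary set $B$.
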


A useful consequence of Theorem \ref{equivalence of reduced and essential boundary} is the following Lemma \ref{tecnico}. The proof can be obtained as a consequence of \cite[Theorem 4.1]{leonardi2002metric} or \cite[Theorem 2.4]{Leo02Partition} on the structures of the Caccioppoli partitions combined with Theorem \ref{equivalence of reduced and essential boundary}. Since Lemma \ref{tecnico} will be repeatedly used in Chapter 4 and since we have not been able to find a direct (and easy) proof of this fact in literature we provide a proof.
\begin{lemma}\label{tecnico}
If $E_1,\ldots,E_k$ are $k$ sets of locally finite perimeter such that 
$$|E_i\cap E_j|=0 \ \ \ \ \forall \ i\neq j,$$
then the following holds:
\begin{equation}\label{spaghetto}
\begin{split}
\pa^* \left(\bigcup_{i=1}^k E_i\right)&\approx \left( \bigcup_{i=1}^k \pa^*E_i\right) \setminus \left(\bigcup_{\substack{ i,j=1 \\ j\neq i}}^k \pared E_j\cap \pared E_i\right)\\
&=\left[ \bigcup_{i=1}^k \pa^*E_i \setminus \left(\bigcup_{\substack{ j=1 \\ j\neq i}}^k \pared E_j\cap \pared E_i\right)\right]
\end{split}
\end{equation}
where the symbol $\approx$ means \textit{equal up to an $\H^{n-1}$-negligible set}. In particular for every ball $B_r=B_r(x)$ it holds:
\begin{equation}\label{peri N-cluster}
P\left(\bigcup_{i=1}^k E_i ;B_r\right)=\sum_{i=1}^k P(E_i;B_r)-\sum_{\substack{i,j=1, \\ j\neq i}}^k \H^{n-1}(\pa^* E_i\cap \pa^*E_j \cap B_r). 
\end{equation}
\end{lemma}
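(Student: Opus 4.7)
The plan is to exploit Federer's theorem (Theorem \ref{equivalence of reduced and essential boundary}) to reduce everything to a computation of densities, then use the essential disjointness hypothesis $|E_i\cap E_j|=0$ to control how the densities of the $E_i$'s add up to the density of $U:=\bigcup_{i=1}^k E_i$. Throughout I would work up to $\H^{n-1}$-negligible sets so that $\pa^* E_i$ may be replaced by $E_i^{(1/2)}$ and $\pa^* U$ by $U^{(1/2)}$.

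First I would observe that, by essential disjointness, for every $x\in\R^n$ where all the relevant densities exist one has $\vt_n(x,U)=\sum_{i=1}^k\vt_n(x,E_i)$. This is the arithmetic identity on which everything turns. It has two immediate consequences:
\begin{itemize}
\item If $x\in\pa^*E_i\cap\pa^*E_j$ with $i\neq j$, then $\vt_n(x,E_i)=\vt_n(x,E_j)=1/2$, so $\vt_n(x,E_l)=0$ for all $l\neq i,j$, and $\vt_n(x,U)=1$. Hence $x\notin\pa^e U$, which by Federer means such a point is in $\pa^*U$ only on an $\H^{n-1}$-null set.
\item The same argument shows that triple intersections $\pa^*E_i\cap\pa^*E_j\cap\pa^*E_l$ (with $i,j,l$ distinct) would force three densities equal to $1/2$ and the sum to exceed $1$, so these triple intersections are empty (or $\H^{n-1}$-null, modulo the Federer passage).
\end{itemize}

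Next I would treat the reverse inclusion. Pick $x\in\pa^*E_{i_0}$ for some fixed $i_0$, outside $\bigcup_{j\neq i_0}\pa^*E_j$; by Federer this characterises $\H^{n-1}$-a.e.\ point of $\pa^*E_{i_0}\setminus\bigcup_{j\neq i_0}\pa^*E_j$ up to passing through the essential boundary. For each $j\neq i_0$ one then has $\vt_n(x,E_j)\in\{0,1\}$. If any such density equalled $1$, essential disjointness with $E_{i_0}$ would force $\vt_n(x,E_{i_0})=0$, contradicting $x\in\pa^*E_{i_0}\subset E_{i_0}^{(1/2)}$. Hence $\vt_n(x,E_j)=0$ for every $j\neq i_0$, and the additivity above gives $\vt_n(x,U)=1/2$, so $x\in U^{(1/2)}\approx\pa^*U$. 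Together with the first bullet this proves \eqref{spaghetto}.

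For the perimeter identity \eqref{peri N-cluster} I would simply apply $\H^{n-1}\llcorner B_r$ to \eqref{spaghetto} and use inclusion–exclusion on the sets $A_i:=\pa^*E_i\cap B_r$. Since triple intersections $A_i\cap A_j\cap A_l$ are $\H^{n-1}$-null by the argument above,
\[
\H^{n-1}\Bigl(\bigcup_{i}A_i\Bigr)=\sum_{i}\H^{n-1}(A_i)-\sum_{i<j}\H^{n-1}(A_i\cap A_j),
\]
and $\bigcup_{i\neq j}(A_i\cap A_j)$ has $\H^{n-1}$-measure $\sum_{i<j}\H^{n-1}(A_i\cap A_j)$ for the same reason. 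Subtracting and recalling that $2\sum_{i<j}=\sum_{i\neq j}$ yields \eqref{peri N-cluster}.

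The main obstacle is the careful bookkeeping in the density calculation: one has to justify the additivity $\vt_n(x,U)=\sum_i\vt_n(x,E_i)$ at $\H^{n-1}$-a.e.\ point of the various boundaries involved, and one has to invoke Federer's theorem to pass freely between $\pa^*$ and $\pa^e$ so that the set-theoretic identities implied by the density inequalities actually translate into equalities ``up to $\H^{n-1}$-negligible sets''. Once this bridge is in place the rest is elementary.
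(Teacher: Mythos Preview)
Your approach is essentially the same as the paper's---both proofs rest on Federer's theorem and a density analysis---but your use of the additivity identity $\vt_n(x,U)=\sum_i\vt_n(x,E_i)$ (valid because $|U\cap B_r(x)|=\sum_i|E_i\cap B_r(x)|$ exactly, by essential disjointness) is cleaner than the paper's explicit case-by-case decomposition of $\R^n$ into $E_i^{(0)}\cup E_i^{(1/2)}\cup E_i^{(1)}\cup R_i$.

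There is, however, one direction you have not actually written down. Your first bullet shows that the pairwise intersections $\pa^*E_i\cap\pa^*E_j$ lie in $U^{(1)}$ and hence miss $\pa^*U$ up to a null set; your ``reverse inclusion'' shows that each $\pa^*E_{i_0}\setminus\bigcup_{j\neq i_0}\pa^*E_j$ lies in $\pa^*U$ up to a null set. Together these give only the inclusion $\text{RHS}\subseteq\pa^*U$ (and the consistency check that what you removed was genuinely absent from the left). You still need $\pa^*U\subseteq\bigcup_i\pa^*E_i$ up to a null set. This is an immediate consequence of your own additivity observation: if $x\in\pa^*U\subseteq U^{(1/2)}$ then, off an $\H^{n-1}$-null set where Federer applies to every $E_i$, the densities $\vt_n(x,E_i)$ all lie in $\{0,1/2,1\}$ and sum to $1/2$, forcing exactly one of them to equal $1/2$; hence $x\in E_i^{(1/2)}\approx\pa^*E_i$ for that $i$. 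Add this sentence and the argument is complete.

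Your derivation of \eqref{peri N-cluster} via inclusion--exclusion (with triple intersections genuinely empty, since three densities of $1/2$ would sum to more than $1$) is correct and more transparent than simply saying the perimeter formula ``follows straightforwardly'' from \eqref{spaghetto}.
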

\begin{proof}
Relation \eqref{peri N-cluster} follows straightforwardly from \eqref{spaghetto}.
We recall from Theorem \ref{equivalence of reduced and essential boundary} that $\pared E\approx E^{\left(\frac{1}{2}\right)}$ for every locally finite perimeter set $E$. Hence, by setting
$E_0=\bigcup_{i=1}^k E_i,$
it is enough to prove that there exist two $\H^{n-1}$-negligible set $M_1,M_2$ such that
\begin{equation}\label{quello che vorrei dire}
E_0^{\mez} \subseteq  M_1\cup\left[  \left(\bigcup_{i=1}^k E_i^{\left(\frac{1}{2}\right)}\right) \setminus\left(\bigcup_{\substack{i,j=1\\ j\neq i} }^k E_i^{\mez}\cap E_j^{\mez}\right)\right]\subseteq (E_0^{\mez}\cup M_2).
\end{equation}
Let us also point out that, if $E$ is a set of locally finite perimeter, Theorem \ref{equivalence of reduced and essential boundary} implies that there exists an $\H^{n-1}$-negligible set $R$ with following property
$$\R^n=E^{\zero}\cup E^{\mez}\cup E^{\uno}\cup R.$$
Thus, for every $i=0,\ldots,k$, we choose $R_i$ be the $\H^{n-1}$-negligible set such that
	\begin{equation}\label{STAR}
	 \R^n=E_i^{\zero}\cup E_i^{\mez}\cup E_i^{\uno}\cup R_i,
	 \end{equation}	
and we set 
$$M_1:=\left(E_0^{\mez}\cap \bigcup_{i=1}^k R_i\right), \ \ \ M_2:= \bigcup_{i=1}^k R_i.$$
We prove that \eqref{quello che vorrei dire} holds with this choice of $M_1,M_2$ (note that $\H^{n-1}(M_1)=\H^{n-1}(M_2)=0$ is immediate). Let us set, for the sake of brevity
$$F:=M_1\cup \left[ \left(\bigcup_{i=1}^k E_i^{\left(\frac{1}{2}\right)}\right) \setminus\left(\bigcup_{\substack{i,j=1\\ j\neq i} }^k E_i^{\mez}\cap E_j^{\mez}\right)\right],$$
and divide the proof in two steps.\\

\textit{Step one: $E_0^{\mez}\subseteq F$}. In particular we prove that if $x\notin F$ then $x\notin E_0^{\mez}$. For $x\notin F$ one of the following must be in force
\begin{itemize}
		\item[a)] $\displaystyle x\notin M_1 \text{\ and \ }x\in \left(\bigcup_{i=1}^k E_i^{\left(\frac{1}{2}\right)}\right)\cap \left(\bigcup_{\substack{i,j=1\\ j\neq i} }^k E_i^{\mez}\cap E_j^{\mez}\right)$.
		\item[b)] $\displaystyle x\notin M_1 \text{\ and \ } x\notin  \left(\bigcup_{i=1}^k E_i^{\left(\frac{1}{2}\right)}\right) $ and in this case either:
             		 \begin{itemize}
							\item[b.1)] $\displaystyle x \notin E_0^{\mez}  \text{\ and \ } x\in \bigcup_{i=1}^kR_i \text{\ and \ } x\notin  \left(\bigcup_{i=1}^k E_i^{\left(\frac{1}{2}\right)}\right) $;
							\item[b.2)] $\displaystyle x \notin E_0^{\mez}  \text{\ and \ } x\notin \bigcup_{i=1}^kR_i \text{\ and \ } x\notin  \left(\bigcup_{i=1}^k E_i^{\left(\frac{1}{2}\right)}\right) $;
							\item[b.3)] $\displaystyle x \in E_0^{\mez}  \text{\ and \ } x\notin \bigcup_{i=1}^kR_i \text{\ and \ } x\notin  \left(\bigcup_{i=1}^k E_i^{\left(\frac{1}{2}\right)}\right) $.
						\end{itemize}
\end{itemize}
If situation a) is in force we immediately have that $x\in E_i^{\mez}\cap E_j^{\mez}$ for some $i\neq j$ which leads to $x\in E_0^{\uno}$ (since $|E_i\cap E_j|=0$) and thus $x\notin E_0^{\mez}$. Since b.1) and b.2) implies straightforwardly $x\notin E_0^{\mez}$, we need just to verify that situation b.3) cannot be attained. Assume b.3) is in force and note that, for every $i=1,\ldots,k$, thanks to \eqref{STAR} it must hold $x\in E_{i}^{\uno}\cup E_{i}^{\zero}$. If $x\in  E_{i}^{\zero}$ for all $i$ we have $x\in E_0^{\zero}$. If, instead, $x\in E_i^{\uno}$ for some $i$ then $x\in E_0^{\uno}$. In both cases we reach a contradiction because of $x\in E_0^{\mez}$.\\

\textit{Step two: $F\subseteq (E_0^{\mez}\cup M_2)$}. For every $x\in F$ one of the following must be in force.
\begin{itemize}
\item[a)] $\displaystyle x\in M_1$;
\item[b)] $\displaystyle x\in\left(\bigcup_{i=1}^k E_i^{\left(\frac{1}{2}\right)}\right) \setminus\left(\bigcup_{\substack{i,j=1\\ j\neq i} }^k E_i^{\mez}\cap E_j^{\mez}\right) \text{\ and \ }  \displaystyle x\notin  M_1$ and in this case either:
             		 \begin{itemize}
							\item[b.1)]$\displaystyle x\in\left(\bigcup_{i=1}^k E_i^{\left(\frac{1}{2}\right)}\right) \setminus\left(\bigcup_{\substack{i,j=1\\ j\neq i} }^k E_i^{\mez}\cap E_j^{\mez}\right) \text{\ and \ }  \displaystyle x\notin \bigcup_{i=1}^k R_i$;
							\item[b.2)] $\displaystyle x\in \left(\bigcup_{i=1}^k E_i^{\left(\frac{1}{2}\right)}\right) \setminus\left(\bigcup_{\substack{i,j=1\\ j\neq i} }^k E_i^{\mez}\cap E_j^{\mez}\right) \text{\ and \ }  \displaystyle x\notin E_0^{\mez}$;
					\end{itemize}
\end{itemize}
If a) is the case, then $x\in M_1\subset E_0^{\mez}$ and we are done. If b.1) is in force then there exists exactly one $j$ such that $x\in E_j^{\mez}$ and $x\in E_{i}^{\zero} $ for $i\neq 0, j$ since the sets $\{E_h\}_{h=1}^k$ are disjoint up to an $\L^n$-negligible set. Thus
\begin{align*}
\frac{|(\R^n\setminus E_0) \cap B_r(x)|}{\om_n r^n}&=1-\frac{|E_j \cap B_r(x)|}{\om_n r^n}-\sum_{\substack{i=1, \\ i\neq j}}^k \frac{|E_i \cap B_r(x)|}{\om_n r^n},
\end{align*}
which, passing to the limit as $r$ goes to $0^+$ implies $x\in (\R^n\setminus E_0)^{\mez}=E_0^{\mez}$. Finally, by considering situation b.2) we deduce that there exists exactly one $j\in \{1,\ldots,k\}$ such that $x\in E_{j}^{\mez}$ and $x\in E_i^{\zero}\cup R_i$ for $i\neq j$. If $x\in E_{i}^{(0)}$ for all $i\neq 0, j$ then, as above $x\in E_0^{\mez}$ and this is a contradiction (in this situation we are assuming $x\notin E_0^{\mez}$). Hence there is an index $i\neq 0$ such that $x\in R_i$ which means $x\in M_2$. The proof is complete.
\end{proof}

\subsection{Topological boundary}\label{subsct: topological boundary}
If $A$ is an open set and $E$ and $F$ are sets of finite perimeter in $A$ with $|(E\Delta F)\cap A|=0$ then 
$$P(E;A)=P(F;A).$$
Indeed considered a generic map $T\in C_c^{1}(A;B_1)$, by exploiting definition \eqref{perimetro definizione vera aperti}, we have
\begin{eqnarray*}
P(E;A)&\geq & \int_{E} \dive(T)\d x=\int_{E\cap A} \dive(T)\d x= \int_{F\cap A} \dive(T)\d x= \int_{F} \dive(T)\d x.
\end{eqnarray*}
By taking the supremum among all $T\in C_c^1(A;B_1)$ we conclude $P(E;A)\geq P(F;A)$. The reverse inequality follows in the same way. Hence the distributional perimeter of a set $E$ depends only on its $L^{1}$ equivalence class.\\

In particular this implies that the $L^{1}$ equivalence class of a set of finite perimeter contains a lot of set with very irregular topological boundary. For example if $E$ is a set of finite perimeter, we can always find another set $F$ with $\pa F=\R^n$ and $|E\Delta F|=0$ so that $P(E)=P(F)$. The following proposition is what we need for select a ``good" representative (see \cite[Proposition 12.19]{maggibook}).
\begin{proposition}\label{choice}
If $E$ is a set of locally finite perimeter in $\R^{n}$, then 
$$\spt(\mu_E)=\left\{x\in \R^{n} \ | \ 0<|E\cap B_r(x)|<\om_n r^{n},  \  \ \forall \ r>0\right\}\subset \pa E.$$
Moreover there exists a Borel set $F$ such that
\begin{equation}\label{choice equation}
|E\Delta F|=0, \ \ \ \ \ \spt(\mu_F)=\pa F.
\end{equation}
\end{proposition}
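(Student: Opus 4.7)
The plan is to prove the two claims in the statement separately.  For the characterization of $\spt(\mu_E)$, I will establish two inclusions.  If $x$ does not belong to the right-hand side, then for some $r_0>0$ either $|E\cap B_{r_0}(x)|=0$ or $|B_{r_0}(x)\setminus E|=0$; in either case $\ca_E$ is essentially constant on $B_{r_0}(x)$, so \eqref{perimetro definizione vera aperti} gives $P(E;B_{r_0}(x))=0$, forcing $x\notin\spt(\mu_E)$.  Conversely, the relative isoperimetric inequality in $B_r(x)$ yields, for every $r>0$ and some dimensional constant $c_n>0$,
\[
\min\bigl\{|E\cap B_r(x)|,\,|B_r(x)\setminus E|\bigr\}^{(n-1)/n}\le c_n\,|\mu_E|(B_r(x)),
\]
so that $0<|E\cap B_r(x)|<\om_n r^n$ for all $r>0$ forces $|\mu_E|(B_r(x))>0$ for all $r>0$, i.e.\ $x\in\spt(\mu_E)$.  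The inclusion $\spt(\mu_E)\subset\pa E$ is then immediate: whenever both $E$ and $\R^n\setminus E$ meet every ball around $x$ in a set of positive Lebesgue measure, $x$ lies in $\overline{E}\cap\overline{\R^n\setminus E}=\pa E$.

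For the second claim, my plan is to strip from $E$ its measure-theoretically isolated points and to fill in its essential interior.  Precisely, set
\[
U^{+}:=\{x\in\R^n\ |\ \exists\,r>0,\ |B_r(x)\setminus E|=0\},\qquad U^{-}:=\{x\in\R^n\ |\ \exists\,r>0,\ |E\cap B_r(x)|=0\};
\]
both $U^{\pm}$ are open and disjoint, and by the first claim $\R^n\setminus(U^{+}\cup U^{-})=\spt(\mu_E)$.  I then define the candidate good representative
\[
F:=U^{+}\cup(E\setminus U^{-}),
\]
for which $F\setminus E=U^{+}\setminus E$ and $E\setminus F=E\cap U^{-}$.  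By the Lindel\"of property of $\R^n$, the open set $U^{+}$ is covered by countably many balls $B_i$ with $|B_i\setminus E|=0$, hence $|U^{+}\setminus E|\le\sum_i|B_i\setminus E|=0$; analogously $|E\cap U^{-}|=0$.  It follows that $|E\Delta F|=0$, so $\mu_F=\mu_E$ and $\spt(\mu_F)=\spt(\mu_E)$.

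It then remains to verify $\spt(\mu_F)=\pa F$.  Since $U^{+}\subset F$ with $U^{+}$ open, $U^{+}\subset\mathring F$, and since $F\cap U^{-}=\emptyset$ with $U^{-}$ open, $U^{-}\subset(\overline F)^c$; combining these, $\pa F\subset\R^n\setminus(U^{+}\cup U^{-})=\spt(\mu_E)$.  Conversely, for $x\in\spt(\mu_E)$ one has $0<|F\cap B_r(x)|<\om_n r^n$ for every $r>0$ (because $|E\Delta F|=0$), so $B_r(x)$ meets both $F$ and $\R^n\setminus F$, placing $x$ in $\pa F$.  The only delicate point is the check that $U^{+}\setminus E$ and $E\cap U^{-}$ are Lebesgue-null: the definitions of $U^{\pm}$ only supply a ball with null (co)intersection at each point, and the Lindel\"of property of $\R^n$ is exactly what is needed to turn the uncountable collection of such balls into a countable cover.
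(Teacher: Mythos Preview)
Your proof is correct. The paper does not actually prove this proposition but defers to \cite[Proposition~12.19]{maggibook}; your argument is the standard one and is essentially what appears there, so there is nothing to compare.
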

By \eqref{choice equation} the set $F$ given in Proposition \ref{choice} has perimeter equal to $P(E)$ and has a precise characterization of its topological boundary.\\

In the sequel, whenever we speak of a set of finite perimeter $E$, we implicitly assume $E$ to be a representative of its own $L^{1}$ equivalence class satisfying $\spt(\mu_E)=\pa E$.
\subsection{Union, intersection and difference of finite perimeter sets}\label{sbst Union, intersection, differences}
Let $E$ and $F$ be sets of locally finite perimeter. Then the intersection $E\cap F$, the union $E\cup F$ and the difference $E\setminus F$, $F\setminus E$ are sets of locally finite perimeter and the following properties hold:
\begin{eqnarray*}
\mu_{E\cap F}&=&\mu_E \llcorner F^{(1)} + \mu_F \llcorner E^{(1)} + \nu_E\H^{n-1} \llcorner \{\nu_E=\nu_F\}\\
\mu_{E\cup F}&=&\mu_E \llcorner F^{(0)} + \mu_F \llcorner E^{(0)} + \nu_E\H^{n-1} \llcorner \{\nu_E=\nu_F\}\\
\mu_{E\setminus F}&=&\mu_E \llcorner F^{(0)} - \mu_F \llcorner E^{(1)} + \nu_E\H^{n-1} \llcorner \{\nu_E= -\nu_F\},
\end{eqnarray*}
where
$$\{\nu_E=\nu_F\}=\{x\in \pared E\cap \pared F \ | \ \nu_E(x)=\nu_F(x)\}\,,$$
$$\{\nu_E=-\nu_F\}=\{x\in \pared E\cap \pared F \ | \ \nu_E(x)=-\nu_F(x)\}\,.$$
Moreover the reduced boundaries satisfy
\begin{align}
\pared (E\cap F)&\approx (F^{(1)} \cap \pared E)\cup (E^{(1)} \cap \pared F) \cup \{\nu_E=\nu_F\} \label{eqn: frontiera ridotta intersezione}\\
\pared (E\cup F)&\approx (F^{(0)} \cap \pared E)\cup (E^{(0)} \cap \pared F) \cup \{\nu_E=\nu_F\}\label{eqn: frontiera ridotta unione}\\
\pared (E\setminus F)&\approx (F^{(0)} \cap \pared E)\cup (E^{(1)} \cap \pared F) \cup \{\nu_E= -\nu_F\},\label{eqn: frontiera ridotta differenza}
\end{align}
where ``$\approx$" means \textit{equal up to an $\H^{n-1}$-negligible set}. It follows that, for every Borel set $G\subseteq \R^n$,  the following hold:

\begin{align}
P(E\cap F;G)&=P(E; F^{(1)} \cap G)+ P(F;E^{(1)}\cap G)+\H^{n-1}(\{\nu_E=\nu_F\}\cap G)  \label{eqn: perimetro intersezione} \\
P(E\cup F;G)&=P(E; F^{(0)} \cap G)+ P(F;E^{(0)}\cap G)+\H^{n-1}(\{\nu_E=\nu_F\}\cap G)\label{eqn: perimetro unione}\\
P(E\setminus F;G)&=P(E; F^{(0)} \cap G)+ P(F;E^{(1)}\cap G)+\H^{n-1}(\{\nu_E= -\nu_F\}\cap G).\label{eqn: perimetro differenza}
\end{align}
We refer the reader to \cite[Theorem 16.3]{maggibook} for the proof of these assertions.

\subsection{Indecomposable sets of finite perimeter}
The notion of connectedness sets it is not relevant in the context of finite perimeter sets, since, by adding a suitable null set, we can always make a Borel set $E$ connected. The correct notion in this context is that of \textit{indecomposable set}. A set of finite perimeter $E$ is said to be \textbf{decomposable} if there exists two sets $E_1,E_2\subseteq E$ with $0<|E_1|,|E_2|$ and $|E_1\cap E_2|=0$ such that
$$P(E)=P(E_1)+P(E_2).$$

A set of finite perimeter $E$ is said to be \textbf{indecomposable} if it is not decomposable, namely if for every $E_1,E_2\subset E$ with $|E_1\cap E_2|=0$ and such that 
$$P(E)=P(E_1)+P(E_2)$$
then either $|E_1|=0$ or $|E_2|=0$.\\

The following theorem allows us to define the indecomposable components of a set of finite perimeter $E$. We refer the reader to \cite{ambrosio2001} for a detailed proof.

\begin{theorem}\label{componenti indecomponibili}
Let $E$ be a set with finite perimeter in $\R^n$. Then there exists a unique finite or countable family of pairwise disjoint indecomposable set $\{E_i\}_{i\in \N}$ such that $|E_i|>0$ and $P(E)=\sum_i P(E_i).$  Moreover
\[
\H^{n-1}\left( E^{(1)} \setminus \bigcup_i E_i^{(1)}\right)=0
\]
and the $E_i$’s are maximal indecomposable sets, i.e. any indecomposable set $F \subset E$ is contained up to an  $\L^n$-negligible set in some set $E_i$.
\end{theorem}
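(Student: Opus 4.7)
The plan is to reformulate decomposability as a reduced-boundary separation condition, construct the components by iterative splitting controlled by the isoperimetric inequality, and then deduce uniqueness and maximality from the structural formulas of Subsection~\ref{sbst Union, intersection, differences}.

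First I would observe that for disjoint sets $E_1,E_2$ of finite perimeter with $E=E_1\cup E_2$ up to $\L^n$-null sets, the union formula \eqref{eqn: perimetro unione} reduces to
$$
P(E)=P(E_1)+P(E_2)-2\H^{n-1}(\pared E_1\cap\pared E_2),
$$
because disjointness forces opposite outer normals at points of $\pared E_1\cap \pared E_2$. Hence $E=E_1\sqcup E_2$ is an admissible decomposition precisely when the common reduced boundary is $\H^{n-1}$-negligible. Since $|E|<+\infty$ any disjoint family of pieces of positive measure is at most countable; combined with $\sum_i P(F_i)=P(E)<+\infty$ and the isoperimetric inequality $P(F)\geq c_n|F|^{(n-1)/n}$, this tells us that in any admissible decomposition only finitely many pieces exceed a prescribed volume threshold.

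Second, I would construct the components by iterative refinement. Starting from $\{E\}$, at stage $k$ I would inspect each current piece of volume $\geq 2^{-k}$ and, if it admits a non-trivial decomposition (in the sense of the previous step), split it. The isoperimetric bound ensures that only finitely many such pieces arise at each stage and that the total perimeter across each stage equals $P(E)$. Passing to the limit via Theorems~\ref{compactness theorem}--\ref{semicontinuity theorem} one obtains a countable family $\{E_i\}$ of pairwise disjoint indecomposable sets with $\sum_i|E_i|=|E|$ and $\sum_i P(E_i)=P(E)$, since the leaves of the refinement tree can no longer be split.

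Third, for the maximality statement, let $F\subset E$ be indecomposable with $|F|>0$ and fix a component $E_i$. Using the formulas \eqref{eqn: perimetro intersezione}, \eqref{eqn: perimetro differenza} applied to $F\cap E_i$ and $F\setminus E_i$, combined with the global additivity $P(E)=\sum_j P(E_j)$, I would verify that $P(F)=P(F\cap E_i)+P(F\setminus E_i)$; indecomposability of $F$ then forces $|F\cap E_i|=0$ or $|F\setminus E_i|=0$, so $F$ is $\L^n$-contained in a unique $E_i$. Uniqueness of the decomposition then follows by applying this to the components of any other admissible decomposition. The essential-interior identity $\H^{n-1}(E^{(1)}\setminus \bigcup_i E_i^{(1)})=0$ is a direct consequence of Lemma~\ref{tecnico} applied to $\{E_i\}$.

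The main obstacle is the rigorous execution of the iterative construction: one must show that the leaves of the refinement tree are genuinely indecomposable, that no residual mass is lost in the countable limit, and that the sum $\sum_i P(E_i)$ actually equals $P(E)$ rather than being only a lower bound. This requires a careful interplay between the interface reformulation of the first step, the isoperimetric inequality bounding the number of large pieces, and lower semicontinuity of the perimeter along the limiting process.
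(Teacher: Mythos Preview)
The paper does not prove this theorem at all: immediately after the statement it says ``We refer the reader to \cite{ambrosio2001} for a detailed proof.'' So there is no in-paper argument to compare your proposal against; the result is imported wholesale from Ambrosio--Caselles--Masnou--Morel.

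That said, a brief comment on your sketch relative to what is actually done in \cite{ambrosio2001}. Your first and third steps (the interface reformulation of admissible splittings via \eqref{eqn: perimetro unione}, and the maximality/uniqueness argument via \eqref{eqn: perimetro intersezione}--\eqref{eqn: perimetro differenza}) are essentially the right ingredients and match the spirit of that paper. The part that diverges, and where your own caveat is well placed, is the construction step: the iterative ``split large pieces at stage $k$'' scheme is not how the reference proceeds, and making it rigorous is genuinely delicate. One has to rule out that a piece gets refined infinitely often with its descendants shrinking to a null set while still carrying perimeter, and one has to show that the limiting leaves are indecomposable rather than merely not further split by your particular algorithm. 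The argument in \cite{ambrosio2001} avoids this by working instead with a direct extremality construction (maximal indecomposable subsets obtained via a monotone limit, using that increasing unions with controlled perimeter preserve indecomposability), which sidesteps the tree-limit issues you flag as the main obstacle.
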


We say that each set $E_i$ given by Theorem \ref{componenti indecomponibili} is an \textit{indecomposable component of $E$}. In particular note that a set $E$ is indecomposable if and only if it has only one indecomposable component. \\

The set $E$ made by the union of two tangent ball $B_1$ and $B_2$ will be decomposable by setting $E_1=B_1$, $E_2=B_2$, since in this way 
$$P(E)=P(E_1)+P(E_2).$$
In this case $B_1$ and $B_2$ are the indecomposable components of $E$.\\

A cube $Q$ in $\R^n$ instead is an example of indecomposable set. \\ 

A very useful relation is attained between perimeter and diameter in the class of indecomposable planar sets of finite perimeter.

\begin{proposition}\label{indeco piano}
 If $E\subset \R^2$ is an indecomposable set of finite perimeter with $|E|<+\infty$, then
\begin{equation}\label{eqn: chapter intro peridia}
P(E)\geq 2\diam(E^{(1)}).
\end{equation}
\end{proposition}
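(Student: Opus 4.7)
The plan is to project $E^{(1)}$ onto the line through a near-diameter pair of points and combine a one-dimensional slicing inequality with the indecomposability hypothesis. Fix $\varepsilon>0$ and pick $x,y\in E^{(1)}$ with $|y-x|\geq \diam(E^{(1)})-\varepsilon$. Set $\nu:=(y-x)/|y-x|$ and $u(z):=z\cdot\nu$, so that $u(x)<u(y)$ and $u(y)-u(x)=|y-x|$.

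The first ingredient is the standard one-dimensional slicing inequality for sets of finite perimeter in $\R^2$: for $\L^1$-a.e.\ $s\in\R$ the slice $E\cap\{u=s\}$ is a set of finite perimeter inside the line $\{u=s\}$ (hence, up to negligible sets, a countable disjoint union of intervals), and
\[
P(E)\;\geq\;\int_{\R}\H^0\!\bigl(\partial^{*}_{\{u=s\}}(E\cap\{u=s\})\bigr)\,ds.
\]
Whenever the slice has positive $\H^1$ measure, its $0$-dimensional reduced boundary contains at least two points, so the integrand at such an $s$ is $\geq 2$.

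The second ingredient, where indecomposability enters, is the claim: for $\L^1$-a.e.\ $t\in(u(x),u(y))$, one has $\H^1(E\cap\{u=t\})>0$. Fix any such $t$ and set $E_t^-:=E\cap\{u<t\}$, $E_t^+:=E\cap\{u>t\}$; the density properties of $x$ and $y$ yield $|E_t^-|,|E_t^+|>0$. Applying the intersection formula \eqref{eqn: perimetro intersezione} (and its analogue for the difference) to $E$ and to the half-plane $F=\{u<t\}$, whose reduced boundary is the line $\{u=t\}$ with outer normal $\nu$, then for a.e.\ $t$ the tangential terms $\H^1(\partial^{*}E\cap\{u=t\})$ and $\H^1(\{\nu_E=\pm\nu\}\cap\{u=t\})$ vanish (this holds automatically by the $1$-rectifiability of $\partial^{*}E$) and one obtains
\[
P(E_t^-)+P(E_t^+)\;=\;P(E)+2\,\H^1\!\bigl(E^{(1)}\cap\{u=t\}\bigr).
\]
Were $\H^1(E^{(1)}\cap\{u=t\})$ to vanish for some such $t$, the splitting $E=E_t^-\sqcup E_t^+$ would be a nontrivial decomposition of $E$ with additive perimeter, contradicting indecomposability. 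Since $E^{(1)}=E$ up to Lebesgue-negligible sets, this proves the claim.

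Combining the two steps, the integrand in the slicing inequality is $\geq 2$ for $\L^1$-a.e.\ $s\in(u(x),u(y))$, yielding $P(E)\geq 2|y-x|\geq 2(\diam(E^{(1)})-\varepsilon)$; letting $\varepsilon\to 0$ finishes the proof. The main technical ingredient is the slicing inequality of the first step, which is a classical consequence of the BV slicing theorem but is not recalled explicitly in this chapter; everything else is a direct application of the additivity formulas summarized in the previous subsection and of the definition of indecomposability.
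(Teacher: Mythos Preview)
Your proof is correct. The paper itself does not supply a proof of this proposition; it only records that the statement ``can be deduced as a consequence of \cite[Proposition 19.22]{maggibook}''. That reference is the structure theorem for indecomposable planar sets of finite perimeter, which represents $\pared E$ (when $|E|<\infty$) essentially as a rectifiable Jordan curve; the inequality then follows because a closed curve enclosing two points at distance $d$ has length at least $2d$.

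Your route is genuinely different and more self-contained relative to the chapter: rather than invoking the Jordan--curve structure, you slice $E$ by lines orthogonal to a near-diameter direction and use the perimeter-addition formulas \eqref{eqn: perimetro intersezione}--\eqref{eqn: perimetro differenza} together with indecomposability to force every such slice to have positive (and, by $|E|<\infty$ and Fubini, finite) $\H^1$-measure, hence to contribute at least two points to the one-dimensional essential boundary count. The only tool you import from outside the chapter is the BV/Vol'pert slicing inequality, which you flag explicitly. So the two arguments trade one black box (the planar structure theorem) for another (one-dimensional slicing of BV functions); yours has the advantage that the indecomposability hypothesis is used in a completely transparent way, while the cited approach packages more geometry into the external reference.
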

The validity of this fact can be deduced as a consequence of \cite[Proposition 19.22]{maggibook}. Relation \eqref{eqn: chapter intro peridia} combined with Theorem \ref{compactness theorem} is very useful since gives us  the compactness of a sequence of indecomposable sets of finite perimeter $\{E_h\}_{h\in\N}$ whenever a uniform bound on $P(E_h)$ holds. 

\subsection{First variation of perimeter and of the potential energy}\label{Cpt 1 sbs: First variation of perimeter}
A  one-parameter family of diffeomorphisms $\{f_t \ | \ -\e<t<\e\}$ of $\R^n$  is a \textit{local variation in an open set $A$} if
\begin{equation}
\begin{array}{rll}
f_0(x)=x&  \ \ \ \ & \forall \ x\in \R^n,\\
\{x\in \R^n \ | \ f_t(x)\neq x\}\cc  A& \ \ \ \ &\forall \ |t|<\e.
\end{array}
\end{equation}
A map $T$ is said to be the \textit{initial velocity of a local variation $\{f_t\}_{|t|<\e}$ in $A$ } if
$$T=\frac{\partial f_t}{\pa t}\Big{|}_{t=0}.$$ 
The following theorem allows us to compute the first variation of a perimeter for a finite perimeter set $E$ (see \cite[Theorem 17.5]{maggibook})
\begin{theorem}\label{teo: chapter intro first variation of the perimeter}
Given an open set $A$, a set of finite perimeter $E$ and a local variation $\{f_t \ | \ -\e<t<\e\}$  in $A$, then
\begin{equation}\label{eqn: chapter intro teo first variation derivata del perimetro}
P(f_t(E);A)=P(E;A)+t\int_{\pared E} \dive_E T(x)\d \H^{n-1}(x)+o(t),
\end{equation}
where $T$ is the initial velocity of the local variation and 
$$\dive_E T(x)=\dive T(x)-\nu_E(x) \cdot \nabla T(x)\nu_E(x) \ \ \ \ \text{for $x\in\pared E$,} $$
is the tangential divergence of $T$ on $\pared E$.
\end{theorem}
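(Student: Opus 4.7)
The plan is to reduce the perimeter difference to an area-formula computation on the reduced boundary $\pared E$ and then Taylor-expand the resulting integrand to first order in $t$.

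First I would exploit locality. Since $\{f_t\}$ is a local variation, fix a compact set $K\cc A$ outside of which $f_t=\Id$ for every $|t|<\e$. Because each $f_t$ is a diffeomorphism of $\R^n$ that equals the identity off $K$, the image $f_t(E)$ is again a set of locally finite perimeter and its reduced boundary satisfies $\pared(f_t(E))\approx f_t(\pared E)$ (up to $\H^{n-1}$-null sets); in particular $P(f_t(E);A\setminus K)=P(E;A\setminus K)$. Hence only what happens inside $K$ matters, and De Giorgi's structure theorem guarantees $\H^{n-1}(\pared E\cap K)=P(E;K)<+\infty$.

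Next I would apply the area formula. By De Giorgi's structure theorem $\pared E$ is $(n-1)$-rectifiable, with approximate tangent plane $\nu_E(x)^{\perp}$ at $\H^{n-1}$-a.e.\ point. The area formula for Lipschitz maps on rectifiable sets therefore gives
\[
P(f_t(E);A)=\int_{\pared E\cap A} J^{n-1}(f_t|_{\pared E})(x)\,\d \H^{n-1}(x),
\]
where the tangential Jacobian $J^{n-1}(f_t|_{\pared E})(x)$ is the square root of the determinant of the restriction of $\nabla f_t(x)^{T}\nabla f_t(x)$ to $\nu_E(x)^{\perp}$.

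Then I would Taylor-expand. Since $T\in C_c^1(A;\R^n)$, uniformly on $K$ we have $\nabla f_t(x)=\Id+t\,\nabla T(x)+o(t)$, so on any orthonormal basis $\{e_1,\dots,e_{n-1}\}$ of $\nu_E(x)^{\perp}$ the matrix $\nabla f_t(x)^{T}\nabla f_t(x)$ restricted to the tangent plane reads $I_{n-1}+t\,S(x)+o(t)$ with $S_{ij}(x)=e_i\cdot\nabla T(x)e_j+e_j\cdot\nabla T(x)e_i$. Using the standard expansion $\sqrt{\det(I_{n-1}+tS)}=1+\tfrac{t}{2}\trace S+o(t)$ one obtains
\[
J^{n-1}(f_t|_{\pared E})(x)=1+t\sum_{i=1}^{n-1}e_i\cdot \nabla T(x) e_i+o(t)=1+t\,\dive_E T(x)+o(t),
\]
where the last equality follows from $\dive T=\trace\nabla T=\sum_{i=1}^{n-1}e_i\cdot\nabla T e_i+\nu_E\cdot\nabla T\,\nu_E$.

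The main obstacle is to ensure the $o(t)$ above is uniform in $x$, so that it can be pulled out of the integral after integration. This is exactly what the $C^1$-regularity of $T$ together with the compactness of $\spt T\cc A$ provides: the remainder is uniform in $x\in K$, and since $P(E;K)<+\infty$ the exchange of limit and integral is legitimate. Integrating the pointwise expansion over $\pared E\cap A$ and combining with the first step (which shows contributions outside $K$ cancel) yields the claimed formula.
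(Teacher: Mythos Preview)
The paper does not actually prove this theorem; it is stated as a background result with a citation to \cite[Theorem 17.5]{maggibook}. Your sketch is correct and is essentially the standard argument given in that reference: reduce to a compact set via locality, apply the area formula on the rectifiable set $\pared E$ to write $P(f_t(E);A)$ as the integral of the tangential Jacobian, and Taylor-expand $\sqrt{\det(I_{n-1}+tS+o(t))}=1+t\,\dive_E T+o(t)$ with the remainder controlled uniformly thanks to the compact support and regularity of $T$.
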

The following theorem, instead, is what we need to compute the first variation of a functional defined as $\mathcal{G}(E)=\int_E g\d x$ for a continuous function $g$. In particular if $g=1$ the theorem provides the first variation of the Lebesgue $n$-dimensional measure (see  \cite[Theorem 17.8]{maggibook}).
\begin{theorem}\label{teo: chapter intro first variation of the potential energy}
Given an open set $A$, a set of finite perimeter $E$, $|E|<+\infty$ a continuous function $g\in C^0(\R^n)$ and a local variation $\{f_t \ | \ -\e<t<\e\}$ in $A$, then
\begin{equation}\label{eqn: chapter intro teo first variation of the potential energy}
\int_{f_t(E)}g(x)\d x=\int_E g(x)\d x+t\int_{\pared E} g(x) (T(x)\cdot \nu_E(x) )\d \H^{n-1}(x)+o(t),
\end{equation}
where $T$ is the initial velocity of the local variation.
\end{theorem}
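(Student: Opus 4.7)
The plan is to reduce the identity to the generalized Gauss--Green formula \eqref{generalized Gauss-Green formula} via a Taylor expansion in $t$. Since $\{f_t\}_{|t|<\e}$ is a smooth one-parameter family of diffeomorphisms equal to the identity outside a compact subset of $A$, the usual change of variables gives
\[
\int_{f_t(E)}g(x)\d x=\int_E g(f_t(y))\,|\det\nabla f_t(y)|\d y.
\]
First I would assume $g\in C^1(\R^n)$ and recover the $C^0$ case by mollification at the end. Using $f_t(y)=y+tT(y)+o(t)$ together with $\nabla f_t(y)=I+t\nabla T(y)+o(t)$, both uniform in $y\in \spt T$, one obtains the standard expansions
\[
g(f_t(y))=g(y)+t\,\nabla g(y)\cdot T(y)+o(t), \qquad |\det\nabla f_t(y)|=1+t\,\dive T(y)+o(t),
\]
where the second one follows from the formula for $\det(I+tA)=1+t\trace A+O(t^2)$ applied to $A=\nabla T$, and the absolute value may be dropped for $|t|$ small since $f_0=\Id$.

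Multiplying these two expansions, integrating over $E$ and collecting the coefficient of $t$ yields
\[
\int_{f_t(E)}g\d x=\int_E g\d y + t\int_E \bigl(g\,\dive T+\nabla g\cdot T\bigr)\d y + o(t) = \int_E g\d y + t\int_E \dive(gT)\d y + o(t).
\]
Since $gT\in C_c^1(A;\R^n)$, I would then apply the vector-valued version of \eqref{generalized Gauss-Green formula} (which follows from its scalar version by summing the identity for $\varphi=gT_i$ over $i=1,\ldots,n$) to get
\[
\int_E \dive(gT)\d y=\int_{\pared E} g(x)\bigl(T(x)\cdot \nu_E(x)\bigr)\d \H^{n-1}(x),
\]
which is precisely the $t$-coefficient appearing in \eqref{eqn: chapter intro teo first variation of the potential energy}.

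The only genuinely non-routine step is the passage from $g\in C^1$ back to the stated hypothesis $g\in C^0$, needed because $\nabla g$ is not literally available. I would handle it by mollification: set $g_\e=g\star\rho_\e\in C^\infty(\R^n)$ and apply the $C^1$ version just proved to each $g_\e$. The key observation is that, because $T$ vanishes outside a compact subset of $A$ and $f_t$ coincides with the identity there, the symmetric difference $f_t(E)\,\Delta\, E$ is contained in a single compact set $K\subset A$ for all $|t|<\e$; hence both the variation $\int_{f_t(E)}g\d x-\int_E g\d x$ and the boundary integral in \eqref{eqn: chapter intro teo first variation of the potential energy} depend on $g$ only through its values on $K$. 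Letting $\e\to 0^+$ delivers the claim for $g$, since $g_\e\to g$ uniformly on $K$ and $P(E;K)=\H^{n-1}(\pared E\cap K)<+\infty$ allow one to pass the uniform limit inside both integrals. Controlling the $o(t)$ remainder uniformly in the mollification parameter (so as not to lose the first-order expansion in the limit) is the main, and only, technical point.
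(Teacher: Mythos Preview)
The paper does not actually prove this theorem: it is stated with a reference to \cite[Theorem 17.8]{maggibook}, so there is no in-paper proof to compare against. Your change-of-variables plus Gauss--Green argument is precisely the standard route taken there, and the $C^1$ part of your write-up is correct as stated.

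One remark on the mollification step. You phrase the remaining issue as ``controlling the $o(t)$ remainder uniformly in the mollification parameter'', but this uniformity is generally \emph{false}: the Taylor remainder for $g_\e(f_t(y))$ involves the modulus of continuity of $\nabla g_\e$, which blows up as $\e\to 0$. The correct way to close the argument is different: show instead that $|\psi_g(t)-\psi_{g_\e}(t)|\le C\,\|g-g_\e\|_{C^0(K)}\,|t|$, where $\psi_h(t)$ denotes the difference between the two sides of \eqref{eqn: chapter intro teo first variation of the potential energy} for a given integrand $h$. The boundary term contributes $|t|\,\|g-g_\e\|_{C^0(K)}\,\|T\|_{C^0}\,P(E;K)$; for the volume term you need $|f_t(E)\Delta E|\le C\,|t|$, which follows from the BV estimate $\|\ca_E(\cdot+h)-\ca_E\|_{L^1}\le |h|\,P(E)$ applied along the flow (since $\ca_E\in BV$ and $\|f_t^{-1}-\Id\|_{C^0}=O(|t|)$). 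With this linear-in-$t$ comparison in hand, a standard $\e$-then-$t$ argument gives $\psi_g(t)=o(t)$ without any uniformity of the individual $o_\e(t)$ terms.
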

\begin{remark}
\rm{
If $E$ is a set of finite perimeter with $|E|<+\infty$  and we apply Theorem \ref{teo: chapter intro first variation of the potential energy} by choosing $g(x)=1$ we obtain the useful formula
\begin{equation}\label{eqn: chpter intro first variaition of the lebesgue measure}
|f_t(E)|=|E|+t\int_{\pared E} ( T(x)\cdot \nu_E(x) )\d \H^{n-1}(x)+o(t).
\end{equation}
}
\end{remark}
\subsection{Distributional mean curvature of a set of finite perimeter}
Let $E$ be a finite perimeter set, $A$ an open set and $H_E$ a function in $ L^{1}(A\cap\pared E;\H^{n-1})$. We say that $H_E$ is the \textbf{distributional mean curvature of $E$ in $A$} if it holds:
\begin{equation}\label{eqn: chapter intro distributional mean curvature}
\int_{\pared E\cap A} \dive_E T \d \H^{n-1}=\int_{\pared E\cap A} H_E(x) (T\cdot \nu_E) \d \H^{n-1} \ \ \ \ \ \forall \ T\in C_c^{\infty}(A;\R^n).
\end{equation}
Note that if $E$ is a finite perimeter set with distributional mean curvature $H_E$, then the distributional mean curvature of $E^c$ is $-H_{E}$. 

\begin{remark}
\rm{Let $E$ be a set of finite perimeter with constant distributional mean curvature equal to $C$ in an open set $A$. Given an initial velocity $T(x)\in C_c(A;\R^n)$ and having defined the local variation $f_t(x)=\{x+tT(x) \ | \ -\e<t<\e\}$, by putting together \eqref{eqn: chapter intro teo first variation derivata del perimetro} and \eqref{eqn: chapter intro distributional mean curvature} we obtain the useful formula:
\begin{equation}\label{eqn: chapter intro constant mean curvature variations}
\frac{d}{dt} P( f_t(E) )\Big{|}_{t=0}=\int_{\pared E \cap A} \dive_E T(x)\d \H^{n-1}(x)=C\int_{\pared E \cap A} (T\cdot \nu_E) \d \H^{n-1}.
\end{equation}
}
\end{remark}
\section{Regularity of perimeter almost minimizing sets}

\begin{definition}[$(\Lambda,r_0)$-perimeter-minimizing inside $\Om$, \cite{maggibook} pp. 278-279]\label{Lambdarminimi}
We say that a set of finite perimeter $E$ is a $(\Lambda,r_0)$-perimeter-minimizing in 
$\Om$ if for every $B_r\subset \Om$ with $r<r_0$ and every set $F$ such that $E\Delta F\subset\subset B_r$, it holds
$$P(E;B_r)\leq P(F;B_r)+\Lambda |E\Delta F|.$$
\end{definition}
The following theorem clarifies why Definition \ref{Lambdarminimi} is so important (see \cite[Chapter 21 and pp. 354, 363-365]{maggibook}).
\begin{theorem}\label{regularity} 
If $\Om$ is an open set in $\R^n$, $n\geq 2$ and $E$ is a $(\Lambda,r_0)$-perimeter-minimizing in $\Om$, with 
$\Lambda r_0\leq 1$, then for every $\a\in(0,1)$ the set $\Om\cap \partial^*E$ is a $C^{1,\a}$ hypersurface that is relatively open in $\Om\cap \partial E$, and it is $\H^{n-1}$ equivalent to $\Om \cap \partial E$. Moreover, setting 
$$\Sigma(E;\Om):=\Om\cap (\partial E\setminus \partial^*E),$$
then the following statements hold true:
\begin{itemize}
 \item[(i)] if $2\leq n\leq 7$, then $\Sigma(E;\Om)$ is empty;
 \item[(ii)] if $n=8$, then $\Sigma(E;\Om)$ is discrete;
 \item[(iii)] if $n\geq 9$, then $\H^s(\Sigma(E;\Om))=0$ for every $s>n-8$. 
\end{itemize}
\end{theorem}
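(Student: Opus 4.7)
The proof follows the classical De Giorgi--Almgren regularity program adapted to the almost-minimizing setting. The condition $\Lambda r_0\le 1$ ensures that, for every ball $B_r(x)\cc \Om$ with $r<r_0$, the volume error $\Lambda|E\Delta F|\le \Lambda\,\om_n r^n\le \om_n r^{n-1}$ is controlled by the natural perimeter scale $r^{n-1}$, so $E$ behaves like an exact minimizer up to a quantitatively small error. As a preliminary step I would establish uniform density estimates: there exist $c\in (0,1/2)$ and $r_1\le r_0$ such that for every $x\in \Om\cap \spt\mu_E$ and $r<r_1$ with $B_r(x)\cc\Om$,
\[
c\,r^{n-1}\le P(E;B_r(x))\le c^{-1}\,r^{n-1},\qquad c\le \frac{|E\cap B_r(x)|}{\om_n r^n}\le 1-c,
\]
obtained by testing almost-minimality against the competitors $E\cup B_r(x)$ and $E\setminus B_r(x)$ and invoking the relative isoperimetric inequality. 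These bounds yield that $\Om\cap \spt\mu_E=\Om\cap \pa E$ is $\H^{n-1}$-equivalent to $\Om\cap \pared E$.

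The heart of the proof is the \emph{excess decay}. Define the spherical excess
\[
\mathbf{e}(x,r)=\inf_{\nu\in S^{n-1}}\frac{1}{r^{n-1}}\int_{\pared E\cap B_r(x)} \frac{|\nu_E(y)-\nu|^2}{2}\,\d\H^{n-1}(y);
\]
the goal is to show that there exist $\theta\in(0,1)$ and $\e_0,C>0$ with
\[
\mathbf{e}(x,r)+\Lambda r<\e_0\;\Longrightarrow\;\mathbf{e}(x,\theta r)\le \tfrac12 \theta^{2\a}\,\mathbf{e}(x,r)+C\,\Lambda\, r.
\]
Two ingredients go into this. The De Giorgi--Almgren Lipschitz approximation theorem says that on a subset of $B_{r/2}(x)$ of large relative measure, $\pared E$ is the graph of a Lipschitz function $u$ over the hyperplane orthogonal to the minimizing $\nu$, with $\|\nabla u\|_\infty$ bounded by a power of $\mathbf{e}(x,r)$. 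Almost-minimality then forces $u$ to be close in averaged $L^2$ to a harmonic function $h$, and a standard Campanato-type $L^2$ decay for $\nabla h$ at smaller scales, pulled back through the graph representation and weighted against the $\Lambda r$ error, yields the stated decay.

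Once excess decay is available, iterating it at any point of the reduced boundary (where the excess vanishes at infinitesimal scales by De Giorgi's structure theorem) gives $\mathbf{e}(x,r)\le C r^{2\a}$ for any $\a<1$, and the Campanato characterization of Hölder spaces upgrades the Lipschitz graph to a $C^{1,\a}$ graph. Hence $\Om\cap \pared E$ is a $C^{1,\a}$ hypersurface relatively open in $\Om\cap \pa E$. For the singular set $\S(E;\Om)=\Om\cap(\pa E\setminus \pared E)$, I would use \emph{Federer's dimension reduction}: at $x_0\in\S(E;\Om)$ and $r_j\to 0^+$, the blow-ups $E_{x_0,r_j}$ have locally uniformly bounded perimeter by the density estimate, hence subconverge in $L^1_{\loc}$ to a locally perimeter-minimizing set $K$ in $\Rn$ (the $\Lambda$-error vanishes since $\Lambda r_j\to 0$), and a monotonicity argument shows that a further blow-up of $K$ can be taken to be a cone with $0\in \S(K;\Rn)$. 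Simons's theorem, classifying area-minimizing hypercones, asserts that such cones are necessarily hyperplanes when $n\le 7$: this forces $\S(E;\Om)=\emptyset$ in those dimensions. Iterating the blow-up procedure at singular points of already singular cones and tracking the dimension of the possible singular set then gives $\dim\S(E;\Om)\le n-8$; in particular $\S(E;\Om)$ is discrete when $n=8$ and $\H^s(\S(E;\Om))=0$ for every $s>n-8$ when $n\ge 9$.

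The main technical obstacle is the excess decay lemma: balancing the Lipschitz approximation against the harmonic comparison so that the error term $C\Lambda r$ does not overwhelm the geometric decay $\theta^{2\a}$ requires careful quantitative estimates, and this is the step where the almost-minimality assumption is genuinely used beyond the exact-minimality case. The other deep, external input is Simons's classification of area-minimizing hypercones, which is what produces the sharp dimension thresholds $n=7$ and $n=8$ appearing in (i)--(iii).
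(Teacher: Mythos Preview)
Your outline is correct and follows the classical De Giorgi--Almgren regularity program (density estimates, excess decay via Lipschitz/harmonic approximation, Campanato, then Federer dimension reduction combined with Simons's theorem). Note, however, that the paper does not give its own proof of this theorem: it is stated as a known result with a reference to \cite[Chapter~21 and pp.~354, 363--365]{maggibook}, and the approach you sketch is precisely the one developed there.
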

The set $\S(E;\Om)$ is called \textbf{singular set}. In every dimension greater than or equal to $8$ it is possible to exhibit an example of a $(\La, r_0)$-perimeter-minimizing set $E$ with $\H^{n-8}(\S(E))>0$ (see \cite{de2009short}, \cite{bombieri1969minimal}, \cite[Section 28.6]{maggibook}). Assertion $(iii)$ has been recently improved in \cite{naber2015singular} where the authors show that the singular sets of minimizing hypersurfaces in dimension greater than or equal to $8$ is exactly an $(n-8)$ rectifible sets with finite $(n-8)-$dimensional Hausdorff measure.

\section{Useful inequalities for sets of finite perimeter}
We here recall some useful inequalities holding on the family of sets of finite perimeter.
\subsection{Isoperimetric inequality}
For every  set of finite perimeter $E$ with $|E|<+\infty$ it holds
\begin{equation}\label{eqn: chapter intro isoperimetric}
P(E)\geq n\om_n^{1/n} |E|^{1-1/n}=P(B^E),
\end{equation} 
where $B^{E}$ is a ball such that $|B^{E}|=|E|$. Equality is attained if and only if $E$ is (equivalent to) a ball. Note that \eqref{eqn: chapter intro isoperimetric} states that among all sets of finite perimeter $E$ with the same amount of fixed volume $|E|=v$ the $n$-dimensional Euclidean ball with radius $r=\left(\frac{v}{\om_n}\right)^{\frac{1}{n}}$ is the one attaining the smallest perimeter (see \cite{DeGiorgiisoperi}, \cite[pp. 185-197]{Selectedpapers}, \cite[Chapter 14]{maggibook}). Quantitative versions of \eqref{eqn: chapter intro isoperimetric} are provided, through different methods, in \cite{CicaleseLeonardi}, \cite{FuMP08} and \cite{FMP10}.

\subsection{Isodiametric inequality}
For every Borel set $E$ with $|E|<+\infty$ it holds
\begin{equation}\label{eqn: chapter intro isodiametric}
|E|\leq \left(\frac{\diam(E^{(1)})}{2}\right)^n=|B_{\frac{\diam(E)}{2}}|
\end{equation}
where $B_{\frac{\diam(E)}{2}}$ is a ball of radius $\diam(E)/2$. Equality is attained if and only if $E$ is (equivalent to) a ball. Note that \eqref{eqn: chapter intro isodiametric} states that among all the Borel sets $E$ with the same diameter $\diam(E)=d$ the $n$-dimensional Euclidean ball with radius $r=\frac{d}{2}$ is the one attaining the biggest volume (see \cite[Theorem 3.11]{maggibook}). A quantitative version of \eqref{eqn: chapter intro isodiametric} is provided in \cite{maggi2014quantitative}.\\

\subsection{Cheeger inequality for Borel sets}
For a bounded Borel set $E\subseteq \R^n$, the \textbf{Cheeger constant of $E$} is defined as
	\begin{equation}\label{cheeger}
	h(E)=\inf\left\{\frac{P(F)}{|F|}\ \Big{|} \ F\subset E, \ \text{set of finite perimeter} \right\}
	\end{equation}
Then for every set of finite perimeter $E\subset \R^n$ and it holds 
\begin{equation}\label{eqn: chapter intro cheeger inequality}
h(E) \geq \frac{n \om_n^{1/n}}{|E|^{1/n}}=\om_n^{1/n} \frac{h(B)}{|E|^{1/n}}.
\end{equation}
where $B$ is a ball of unit-radius. Equality is attained if and only if $E$ is (equivalent to) a ball. Note that \eqref{eqn: chapter intro cheeger inequality} states that among all sets of finite perimeter $E$ with the same amount of fixed volume $|E|=v$ the $n$-dimensional Euclidean ball with radius $r=\left(\frac{v}{\om_n}\right)^{\frac{1}{n}}$ is the one attaining the smallest Cheeger constant. The proof of this fact can be obtained as a consequence of the isoperimetric property of the ball \eqref{eqn: chapter intro isoperimetric}. A quantitative version of \eqref{eqn: chapter intro cheeger inequality} is provided in \cite{FMP09}.\\

We refer to Chapter 4, Section \ref{cpt 4 sct 1} below where the main properties  of the Cheeger constant together with some brief historical notes are recalled.
 
\section{$N$-clusters and tilings}\label{cpt Ncluster of Rn}
\subsection{$N$-clusters of $\R^n$} \label{subsection $N$-clusters of $R^n$}
By $N$-cluster we mean a family of $N$ sets of finite perimeter $\E=\{\E(0),\E(1),\ldots,\E(N)\}\subset \R^n$, having positive Lebesgue measure and pairwise disjoint up to a set of measure zero. In other words a family of Borel sets $\{\E(i)\}_{i=1}^{N}$ is called an $N$-cluster if
\begin{itemize}
\item[1)] $P(\E(i))< +\infty$, for $i=1,\ldots,N$;
\item[2)] $0<|\E(i)|<+\infty$,  for $i=1,\ldots,N$;
\item[3)] $|\E(i)\cap\E(k)|=0$ for all $k\neq i$.
\end{itemize}
We allow in the previous definition also the case $N=+\infty$. In the sequel,  unless it is not specified, we are always assuming that $N<+\infty$. We define the \textbf{volume vector of $\E$}, $\vol(\E)\in \R^N$ as:
$$\vol(\E)=(|\E(1)|,\ldots,|\E(N)|).$$
 The \textbf{external chamber} $\E(0)$ of the $N$-cluster $\E$ is the set
\begin{equation}\label{esterna}
\E(0):=\Rn\setminus\left(\bigcup_{i=1^{N}}\E(i)\right).
\end{equation}
We define the $(h,k)$-interface of $\E$ as
\begin{equation}\label{(h,k)-interface}
\E(h,k):=\pa^{*}\E(h)\cap \pa^{*}\E(k).
\end{equation}
We moreover introduce the boundary of $\E$ and the reduced boundary of $\E$ as
\begin{equation}\label{boundary}
\pa\E:=\bigcup_{i=1}^{N}\pa \E(i), \ \ \ \pared \E:=\bigcup_{0\leq h<k\leq N}^{N}\E(h,k).
\end{equation}
With these notations we can easily define the {\it perimeter of an $N$-cluster $\E$} relative to a Borel set $F$ as
\begin{equation}\label{perimetro di un cluster}
P(\E;F):=\frac{1}{2}\sum_{i=0}^{N}P(\E(i);F)=\sum_{0\leq h<k\leq N}^{N} \H^{n-1}(\E(h,k)\cap F).
\end{equation}
Note that with the notation \eqref{boundary} introduced above, on every Borel set $F$ it holds:
$$P(\E;F)=\H^{n-1}(\pared \E\cap F).$$

We define the \textit{distance} between two given $N$-clusters $\E$ and $\F$ as
$$d(\E,\F)=\frac{1}{2}\sum_{i=0}^N |\E(i)\Delta \F(i)|.$$
\subsection{Planar tilings}
A planar tiling is a countable family of sets of finite perimeter in $\R^2$, $\E=\{\E(i)\}_{i=1}^{+\infty}\subset \R^2$, such that 
\begin{itemize}
\item[1)] $P(\E(i))< +\infty$, for all $i\geq 1$;
\item[2)] $0<|\E(i)|<+\infty$,  for all $i\geq 1$;
\item[3)] $|\E(i)\cap\E(k)|=0$ for all $k\neq i$;
\item[4)] $|\E(0)|=0$.
\end{itemize}
A planar tiling is substantially an $\infty$-cluster with empty external chamber. In the sequel the \textit{regular hexagonal tiling of $\R^2$} (or simply the \textit{hexagonal tiling}) is the planar tiling $\H=\left\{H+\sqrt[4]{12}\left(\frac{k}{\sqrt{3}},\frac{j}{2}\right)\right\}_{k,j\in \mathbb{Z}}$ where $H$ denotes a unit-area regular hexagon.

\subsection{The flat torus}\label{sbsct: the flat torus}
Let $v,w\in \R^2$ be two orthogonal vectors. We say that two points $x_1,x_2\in \R^2$ are equivalent, and we write $x_1\sim x_2$, if there exists two integers $k_v,k_w\in \Z$ such that
$$x_1-x_2=k_v v+k_w w.$$
We define the \textit{flat torus} to be the collection of all the equivalence classes of $\R^2$ with respect to $\sim$:
$$\T (v,w):=\R^{2}/\sim.$$
Note that
$$Q_{\T}:=\{sv \ | \ s\in (0,1]\}\times \{tw \ | \ t\in(0,1]\}\subset \R^2$$
is a fundamental domain, namely a set containing exactly one representative in each equivalence class. Moreover for any given Borel set $E\subseteq \T(v,w)$  we can always consider the \textit{periodic extension $\widehat{E}\subset \R^2$}. Thus it is well defined the relative perimeter of $E\subset \T(v,w)$ inside $F\subset \T(v,w)$ as
$$P_{\T}(E;F):=P(\widehat{E};\widehat{F}\cap Q_{\T}).$$
The total perimeter of a set $E$ is then defined as
$$P_{\T}(E):=P(\widehat{E}; Q_{\T}).$$
Note that if $E,F\subseteq Q_{\T}$, by denoting with
$$E'=E/\sim, \ \ \ \ \ F'=F/\sim$$
it must hold:
$$P_{\T}\left(E';F'\right)=P(E;F).$$
Indeed $\pared \widehat{E'}\cap \widehat{F'}\cap  Q_{\T}=\pared E \cap F$. For this reason in the sequel we avoid the subscript $\T$ and we simply write $P(E;F)$ also to denote the relative perimeter (and the perimeter) of a Borel set $E\subseteq \T(v,w)$ inside $F\subseteq \T(v,w)$. We usually write $\T$ instead of $\T(v,w)$ whenever the role of the vectors $v,w$ is clear from the context.\\

\subsection{$N$-clusters on the torus}
Given a flat torus $\T$ we define an $N$-cluster of the torus as a family of $N$ Borel sets $\{\E(1),\ldots,\E(N)\}\subset \T$ with 
\begin{itemize}
\item[1)] $P(\E(i))< +\infty$, for all $i\geq 1$;
\item[2)] $0<|\E(i)|$,  for all $i\geq 1$;
\item[3)] $|\E(i)\cap\E(k)|=0$ for all $k\neq i$;
\end{itemize}

Note that, since $|\T|<+\infty$ we do not need to add the request $|\E(i)|<+\infty$ as in the planar case. The external chamber is then defined as
$$\E(0)=\T\setminus \left(\bigcup_{i=1}^N \E(i)\right).$$

The volume of $\E$ is $\vol(\E)=(|\E(1)|,...,|\E(N)|)$, and the relative perimeter of $\E$ in $A\subset\T$ is given by
\[
P(\E;A)=\frac12\sum_{h=0}^N P(\E(h);A)\,,
\]
while the distance between two $N$-clusters $\E$ and $\F$ is defined as
\[
\d(\E,\F)=\frac12\sum_{h=0}^N|\E(h)\Delta\F(h)|\,.
\]
\subsection{$N$-tilings of the torus}

An $N$-tiling of a two-dimensional flat torus $\T$ is an $N$-cluster $\E\subseteq \T$ with the additional request 
$$|\T\setminus\bigcup_{h=1}^N\E(h)|=|\E(0)|=0.$$ The volume of $\E$ is $\vol(\E)=(|\E(1)|,...,|\E(N)|)$, and the relative perimeter of $\E$ in $A\subset\T$ is given by
\[
P(\E;A)=\frac12\sum_{h=1}^N P(\E(h);A)\,,
\]
while the distance between two tilings $\E$ and $\F$ is defined as
\[
\d(\E,\F)=\frac12\sum_{h=1}^N|\E(h)\Delta\F(h)|\,.
\]
We say that $\E$ is a {\it unit-area tiling} of $\T$ if $|\E(h)|=1$ for every $h=1,...,N$. (In particular, in that case, it must be $N=|\T|$).\\

Obviously, every $N$-cluster is an $(N+1)$-tiling and every $N$-tiling defines an $(N-1)$-cluster. Notice also that every $N$-tiling of a flat torus $\T$ can be viewed as a periodic planar tiling in $\R^2$. 

\section{Set operations on Clusters} \label{subsection Union of clusters and intersections with Borel sets.}

\subsection{Union of Clusters}
An $N$-cluster $\E$ and an $M$-cluster $\F$ are said to be \textit{disjoint} if 
$$|\E(i)\cap \F(j)|=0, \ \ \ \text{for every $i=1,\ldots, N$ and $j=1,\ldots, M$}.$$
In this case we define the $(M+N)$-Cluster $\E\cup \F$ as 
$$\E\cup \F:=\{\E(1),\ldots, \E(N),\F(1),\ldots,\F(M)\}.$$ 
Note that $(\E\cup \F)(0)=\E(0)\cap \F(0)$. \\

By exploiting formulas \eqref{eqn: perimetro intersezione}, we obtain:
\begin{equation}\label{eqn: formula per il perimetro di unioni di clusters}
P(\E\cup \F)=P(\E)+ P(\F)-\H^{n-1}(\pared \E(0)\cap \pared \F(0)).
\end{equation}

\subsection{Intersection of a Cluster with a Borel set}
Given a Borel set $F$ and an $N$-cluster $\E$ we define the cluster $\E\cap F$ as the family of sets:
$$\E\cap F:=\{\E(j)\cap F  \ \ \text{for all $j=1,\ldots,N$ such that $|\E(j)\cap F |>0$}\}.$$
Note that $(\E\cap F)(0)=\E(0)\cup F^c$. The number of chambers $k$ of $\E\cap F$ is given by
$$k=\#(\{j\in\{1,\ldots,N\} \ | \ |\E(j)\cap F |>0\}).$$

By exploiting formulas \eqref{eqn: perimetro intersezione} and \eqref{eqn: perimetro unione}, we obtain:
\begin{equation}\label{eqn: formula per il perimetro di intersezioni di clusters}
P(\E\cap F) =P(\E; F^{(1)})+P(F;\E(0)^{(0)})+\H^{n-1}(\{\nu_{\E(0)}=-\nu_F\} ). 
\end{equation}

\subsection{Difference between a set and a cluster}
Given a Borel set $F$ and an $N$-cluster $\E$ we define the Borel set
$$F\setminus\E:=\bigcap_{i=1}^N (F\setminus \E(i))=F\cap\E(0).$$
By exploiting formula \eqref{eqn: perimetro differenza}, we obtain:
\begin{equation}\label{eqn: formula per il perimetro di differenza insieme clusters}
P(F\setminus E) =P(F; \E(0)^{(1)})+P(\E(0);F^{(1)})+\H^{n-1}(\{\nu_{\E(0)}=\nu_F\} ). 
\end{equation}
We also define the cluster $\E\setminus F$ as
$$\E\setminus F:=\{\E(j)\setminus F \ | \  \text{for all $j=1,\ldots,N$ such that $|\E(j)\setminus F |>0$}\}=\E\cap F^c.$$
Note that $(\E\setminus F)(0)=\E(0)\cup F$. The number of chambers $k$ of $\E\setminus F$ is given by
$$k=\#(\{j\in\{1,\ldots,N\} \ | \ |\E(j)\setminus F |>0\}).$$

By exploiting formula \eqref{eqn: formula per il perimetro di intersezioni di clusters}, we obtain:
\begin{equation}\label{eqn: formula per il perimetro di differenza cluster insieme}
P(\E\setminus F) =P(\E; F^{(0)})+P(F;\E(0)^{(0)})+\H^{n-1}(\{\nu_{\E(0)}=\nu_F\} ). 
\end{equation}

\subsection{Symmetric difference between clusters}

Given two $N$-clusters $\E,\F$ we define the \textit{symmetric difference between $\E$ and $\F$} as the set
$$\E\Delta \F:= \bigcup_{i=1}^N \E(i)\Delta \F(i).$$

\section{$C^{k,\a}$ $N$-clusters in $\R^{2}$} \label{subsection C k, a N-clusters in R2.}
For a given a closed curve with boundary $\g:[a,b]\rightarrow \R^2$ we introduce the notations
$$ \g=\g([a,b]), \ \ \,\ \text{int}(\g)=\g((a,b)), \ \ \ \  \bd(\g)=\{\g(a),\g(b)\}.$$
We say that a family of closed connected curves with boundary $\{\g_i\}_{i\in I}$ is a \textit{network} if, having defined $\{p_j\}_{j\in J}=\{\bd(\g_i)\}_{i\in I}$, the following properties hold:
\begin{itemize}
\item[1)] $I$ and $J$ are at most countable;
\item[2)] $\{p_j\}_{j\in J}$ and $\{\g_i\}_{i\in I}$ are locally finite, in the sense that 
	\[
	\#(\{j\in J \ | \ p_j\in B_r\})+	\#(\{i\in I \ | \ \g_i \cap  B_r\neq \emptyset \})<+\infty \ \ \ \ \text{for all $r>0$};
	\]
\item[3)] $\text{int}(\g_i)\cap \text{int}(\g_h)=\emptyset$, for all $i,h\in I$, $i \neq h$;
\item[4)] Each $p_j$ is a common end-point to at least three different curves from $\{\g_i\}_{i\in I}$.
\end{itemize} 
If each $\g_i$ is also a $C^{k,\a}$-curve we say that the family  $\{\g_i\}_{i\in I}$ is a \textit{$C^{k,\a}-$network.}\\

We say that a cluster $\E\cc\R^{2}$ is of class $C^{k,\a}$ inside an open set $\Om$ if there exists a $C^{k,\a}$-network $\{\g_i\}_{i\in I}$ such that
\begin{align}
\pa \E \cap \Om&    = \bigcup_{i\in I} \g_i, \label{frontiera C k alfa}\\
\pared \E \cap \Om&= \bigcup_{i\in I} \text{int}(\g_i), \label{frontiera ridotta C k alfa}\\
\Sigma(\E;\Om)&=\Om\cap (\pa \E\setminus \pared \E) =\Om \cap \bigcup_{j\in J } \{p_j\}.  \label{insieme singolare}
\end{align}

If $\Om=\R^2$ we simply say that $\E$ is of class $C^{k,\a}$.

\section{perimeter-minimizing $N$-clusters}

\subsection{Definition of perimeter-minimizing $N$-clusters}
An $N$-cluster $\E$ is said to be a \textbf{perimeter-minimizing $N$-cluster} if for every other $N$-cluster $\F$ with 
$$\vol(\E)=\vol(\F)$$
(up to relabeling the chambers) it holds
$$P(\E)\leq P(\F).$$
The existence of perimeter-minimizing $N$-clusters was proved by Almgren in \cite{Almgren76} where also a partial regularity of these objects was discussed.  

\subsection{$(\La,r_0)$-perimeter-minimizing $N$-clusters inside $\Om$}\label{subsection perimeter-minimizing N-clusters}

We say that $\E$ is a $(\La,r_0)$-perimeter-minimizing $N$-cluster inside an open set $\Om$ if for every $B_r(x)\subset \Om$ with $r<r_0$ and every $N$-cluster $\F$ with
$$\E\Delta\F \cc B_r(x),$$
it holds
\begin{equation}\label{perimeter minimizers $N$-clusters}
P(\E;B_r(x))\leq P(\F;B_r(x))+\La d(\E,\F).
\end{equation}

It can be shown that each perimeter-minimizing $N$-cluster $\E$ is a $(\La,r_0)$-perimeter-minimizing cluster for a suitable choice of $\La$ and $r_0$ and that this fact leads to the regularity given by Theorem \ref{regularity} on $\pared \E$ and that the singular set 
	\begin{equation}\label{insieme singolare di un N-cluster}
	\S(\E;A):=(\pa \E\setminus \pared \E )\cap A,
	\end{equation}
is closed and $\H^{n-1}-$negligible. More precisely the following statement holds true (see \cite[Corollary 4.6]{CiLeMaIC1}, \cite[Chapter 30]{maggibook})
\begin{theorem}\label{rego planar cluster}
If $\E\subset \R^n$ is a $(\La,r_0)$-perimeter-minimizing cluster in an open set $\Om$, then $\pared \E\cap \Om$ is a $C^{1,\a}$-hypersurface for every $\a\in (0,1)$, it is relatively open inside $ \pa \E\cap \Om$, and $\H^{n-1}(\S(\E;\Om))= 0$. Moreover, if $n = 2$, then we can replace $C^{1,\a}$ with $C^{1,1}$.
\end{theorem}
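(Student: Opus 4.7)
The plan is to reduce cluster regularity to the single-set regularity supplied by Theorem \ref{regularity}. I would first argue that for each chamber $\E(i)$, $i=1,\dots,N$, the set $\E(i)$ alone is a $(\Lambda', r_0)$-perimeter-minimizing set in $\Omega$ in the sense of Definition \ref{Lambdarminimi}, for some explicit $\Lambda' = C\Lambda$. Given $B_r(x)\subset\Omega$ with $r<r_0$ and a competitor $F$ with $F\Delta\E(i)\cc B_r(x)$, I would build an $N$-cluster competitor $\F$ by setting $\F(i)=F$ and $\F(j)=\E(j)\setminus F$ for every $j\neq i$. The $\F(j)$ are pairwise disjoint, $\E\Delta\F\cc B_r(x)$, and a direct computation gives $d(\E,\F)\leq \tfrac{3}{2}|\E(i)\Delta F|$. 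Using formula \eqref{peri N-cluster} from Lemma \ref{tecnico} to expand both $P(\E;B_r(x))$ and $P(\F;B_r(x))$ in terms of interface contributions, and combining with \eqref{eqn: perimetro differenza}, one checks that
\[
P(\E(i);B_r(x))-P(F;B_r(x))\leq 2\bigl(P(\E;B_r(x))-P(\F;B_r(x))\bigr)\leq 3\Lambda |\E(i)\Delta F|.
\]
Hence each chamber is $(3\Lambda,r_0)$-perimeter-minimizing, and (possibly shrinking $r_0$ so that $3\Lambda r_0\leq 1$) Theorem \ref{regularity} applies to every $\E(i)$.

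Second, I would assemble the cluster picture from the single-chamber ones. By Theorem \ref{regularity} applied to $\E(i)$, the set $\Omega\cap\pared\E(i)$ is a $C^{1,\alpha}$ hypersurface for every $\alpha\in(0,1)$, is relatively open inside $\Omega\cap\pa\E(i)$, and $\Sigma(\E(i);\Omega)$ is $\H^{n-1}$-negligible. Since $\pared\E\cap\Omega = \bigcup_{h<k}\E(h,k)\cap\Omega$ and each interface $\E(h,k)$ is contained in $\pared\E(h)\cap\pared\E(k)$, the $C^{1,\alpha}$ regularity of the two bounding chambers forces the same regularity of the interface. The inclusion $\Sigma(\E;\Omega)\subset\bigcup_{i=1}^N \Sigma(\E(i);\Omega)$, together with the subadditivity of Hausdorff measure, yields $\H^{n-1}(\Sigma(\E;\Omega))=0$; and the relative openness of each $\pared\E(i)\cap\Omega$ inside $\pa\E(i)\cap\Omega$ implies the relative closedness of $\Sigma(\E;\Omega)$ inside $\pa\E\cap\Omega$.

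Finally, for $n=2$, the upgrade from $C^{1,\alpha}$ to $C^{1,1}$ comes from the Euler--Lagrange inequality associated with almost-minimality. Testing the first variation formula \eqref{eqn: chapter intro teo first variation derivata del perimetro} against admissible vector fields $T\in C_c^\infty(B_r(x);\R^n)$ applied to the single-set $(3\Lambda,r_0)$-minimality of each $\E(i)$, and comparing with the variation of the Lebesgue measure via \eqref{eqn: chpter intro first variaition of the lebesgue measure}, one obtains a uniform $L^\infty$ bound on the distributional mean curvature of $\pared\E(i)$ (in the sense of \eqref{eqn: chapter intro distributional mean curvature}) by $3\Lambda$. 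In the plane a $C^{1,\alpha}$ curve whose distributional curvature is in $L^\infty$ admits a $C^{1,1}$ arc-length parametrization, so each $\pared\E(i)$, and hence each interface $\E(h,k)$, is $C^{1,1}$ inside $\Omega$. The hardest step I expect is the bookkeeping in the reduction: interfaces between $\E(i)$ and $\E(j)$ with $j\neq i$ are genuinely shared and must be accounted for only once, so the algebraic manipulation converting the cluster inequality into the single-set inequality $P(\E(i);B_r(x))\leq P(F;B_r(x))+3\Lambda|\E(i)\Delta F|$ requires a careful use of Lemma \ref{tecnico} rather than a naive chamber-by-chamber bound.
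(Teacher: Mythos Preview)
There is a genuine gap in your reduction step: individual chambers of a $(\Lambda,r_0)$-perimeter-minimizing cluster are \emph{not}, in general, $(\Lambda',r_0)$-perimeter-minimizing sets in the sense of Definition \ref{Lambdarminimi}. The standard counterexample is the planar $Y$-cone: three half-lines meeting at $120^\circ$ at the origin, partitioning the plane into three $120^\circ$ wedges. This is a $(0,\infty)$-minimizing $3$-cluster, yet each chamber $\E(i)$ is a wedge with a corner. Rounding the corner by an arc of radius $\rho$ produces a competitor $F\subset\E(i)$ with $P(\E(i);B_r)-P(F;B_r)=\rho\bigl(\tfrac{2}{\sqrt3}-\tfrac{\pi}{3}\bigr)>0$ while $|\E(i)\Delta F|=O(\rho^2)$, so no inequality $P(\E(i);B_r)\le P(F;B_r)+\Lambda'|\E(i)\Delta F|$ can hold for small $\rho$. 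In particular, your displayed inequality
\[
P(\E(i);B_r)-P(F;B_r)\le 2\bigl(P(\E;B_r)-P(\F;B_r)\bigr)
\]
fails here: the left side is positive, but with your competitor $\F$ (where $\F(j)=\E(j)$ for $j\neq i$ and $\F(0)=\E(i)\setminus F$) one computes $P(\F;B_r)=3r+\tfrac{\rho\pi}{3}>3r=P(\E;B_r)$, so the right side is negative. The point is that when $F\subsetneq\E(i)$, the piece $\E(i)\setminus F$ is pushed into $\F(0)$, and its boundary is counted \emph{again} in $P(\F;B_r)$ rather than cancelled. Your ``bookkeeping via Lemma \ref{tecnico}'' cannot rescue this. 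A further symptom: if your argument were valid, Theorem \ref{regularity}(i) would force $\Sigma(\E(i);\Om)=\emptyset$ for $n\le 7$, hence $\Sigma(\E;\Om)=\emptyset$ by your claimed inclusion---contradicting the very existence of triple junctions.

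The paper does not give its own proof of this statement; it is quoted from \cite[Corollary 4.6]{CiLeMaIC1} and \cite[Chapter 30]{maggibook}. The correct route there is local rather than global: at a point $x\in\E(h,k)$ one first uses an \emph{infiltration lemma} to show that, in a small ball $B_r(x)$, only the two chambers $\E(h)$ and $\E(k)$ are present. Once that is established, the cluster perimeter in $B_r(x)$ coincides with $P(\E(h);B_r(x))$, and \emph{then} $\E(h)$ is genuinely $(\Lambda,r_0)$-minimizing in $B_r(x)$, so Theorem \ref{regularity} applies and yields the $C^{1,\alpha}$ regularity of the interface. The singular set is exactly where three or more chambers meet, and its $\H^{n-1}$-negligibility comes from density and monotonicity estimates at the cluster level, not from single-chamber regularity. (Your reduction \emph{does} succeed for the Cheeger $N$-clusters of Chapter 4---see Theorem \ref{regolare}---precisely because those clusters have no interior triple junctions; but that is a special feature, not a general fact.)
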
 

As pointed out in Theorem \ref{rego planar cluster} in the planar case it is possible to improve the regularity of perimeter-minimizing $N$-clusters (see for example \cite[Section 30.3]{maggibook}, \cite{Morgan}) thanks to a detailed study of the singular set $\S(\E)=\pa \E\setminus \pared \E$. In particular it can be shown that each perimeter-minimizing $N$-cluster in the plane is a cluster of class $C^{1,1}$ and 
$$\pared \E=\bigcup_{i\in I} \text{int}(\g_i)$$ 
where each curve $\g_i$ is either a segment or a circular arc. Furthermore any two arcs belonging to the same interface $\E(h,k)$ have the same curvature, the singular set $\S(\E)$ is locally finite and each singular point $p_j\in \Sigma(\E)$ is a common end-point to exactly three different curves from $\{\g_i\}_{i\in I}$, which form three 120 degree angles at $p_j$.\\

\begin{remark}\rm{
In the celebrated work of Taylor \cite{ta76} the singular set of a 3-dimensional perimeter-minimizing $N$-cluster is completely characterized. In particular is proved that the singular set $\S(\E)$ \textit{consists of H\"older continuously differentiable curves along which three sheets of the surface meet at equal angles, together with isolated points at which four such curves meet bringing together six sheets of the surface at equal angles.}\\
So far, except for the general regularity structure given by Theorem \ref{regularity}, the description of the singular set of a perimeter-minimizing $N$-cluster in dimension bigger than $3$ is still mostly unknown.}
\end{remark}
\section{Useful tools from the theory of $N$-clusters}

\subsection{Hales's Theorem and its consequences}

On every flat torus $\T$ the following Theorem due to Hales (\cite{hales}) holds true.
\begin{theorem}\label{teo: chapter intro Hales sul toro}
If $\E$ is an $N$-cluster of a torus $\T$, with $|\E(i)|\leq 1$ for every $i=1,\ldots,N$ then the following estimate holds
\begin{equation}\label{eqn: Hales toro}
P(\E)\geq \frac{P(H)}{2}\left(\min\{|\E(0)|,1\}+\sum_{i=1}^N |\E(i)|\right),
\end{equation}
where $H$ denotes a unit-area regular hexagons. Equality in \eqref{eqn: Hales toro} is attained if and only if $\E$ is an hexagonal tiling with unit-area chambers. 
\end{theorem}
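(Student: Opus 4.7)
The plan is to deduce the torus inequality from Hales's planar hexagonal honeycomb theorem \cite{hales}, applied cell-by-cell to the periodic extension $\widehat{\E}\subset\R^2$ of $\E$ restricted to the fundamental domain $Q_{\T}$. Writing $p_H:=P(H)$ for brevity, the heart of Hales's work is the \emph{hexagonal isoperimetric inequality} (HHII): for any planar region $C$ with $0<|C|\le 1$ and piecewise smooth boundary consisting of arcs satisfying the curvature and angle constraints naturally inherited from being a cluster cell, one has
\[
P(C)\ \ge\ p_H\,|C|\,+\,\Phi(C),
\]
where $\Phi(C)$ is a functional of the arcs of $\pa C$ that is \emph{antisymmetric across shared interfaces}, i.e., if $C_1$ and $C_2$ are two cluster cells meeting along an interface, then the contributions to $\Phi(C_1)+\Phi(C_2)$ from the shared arcs cancel. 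Equality in HHII holds if and only if $C$ is a regular unit hexagon with straight edges.

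First I would apply HHII to each interior chamber $\E(i)$ with $i\in\{1,\ldots,N\}$, which is legal since $|\E(i)|\le 1$ by hypothesis, obtaining
\[
P(\E(i))\ \ge\ p_H\,|\E(i)|\,+\,\Phi(\E(i)).
\]
For the external chamber $\E(0)$ I would split into two cases. If $|\E(0)|\le 1$, HHII applies directly and contributes $p_H\,|\E(0)|=p_H\,\min\{|\E(0)|,1\}$. If $|\E(0)|>1$, I would extract a measurable subregion $E_0\subset \E(0)$ with $|E_0|=1$ whose boundary inside $\E(0)$ consists of arcs of $\pa\E$ plus short auxiliary cuts, apply HHII to $E_0$ to harvest a contribution of $p_H$, and absorb the auxiliary cuts into the antisymmetric bookkeeping; the net contribution to the right-hand side is then $p_H=p_H\,\min\{|\E(0)|,1\}$, as wanted.

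Summing the chamber inequalities, invoking $2P(\E)=\sum_{h=0}^{N}P(\E(h))$ from \eqref{perimetro di un cluster}, and using the antisymmetric cancellation $\sum_h \Phi(\E(h))=0$ across the interfaces $\E(h,k)$, I would arrive at
\[
2P(\E)\ \ge\ p_H\Big(\min\{|\E(0)|,1\}\,+\,\sum_{i=1}^{N}|\E(i)|\Big),
\]
which after division by $2$ is exactly \eqref{eqn: Hales toro}. For the equality case, equality in HHII for every interior chamber forces each $\E(i)$ to be a regular unit hexagon bounded by straight segments; combined with the interface-matching condition across neighbours on $\T$, this forces $|\E(0)|=0$ and $\E$ to be a unit-area regular hexagonal tiling of $\T$.

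The main obstacle is HHII itself, which is by far the deepest ingredient and constitutes the technical core of Hales's $2001$ paper: it requires a careful case analysis on the number of edges of a cell and on their geodesic curvatures, together with a sharp quadratic correction in the area deficit $1-|C|$. Granted HHII, the remaining work is bookkeeping, the only subtle points being the antisymmetric cancellation of $\Phi$ across interfaces and the correct choice of the auxiliary cuts defining $E_0$ when $|\E(0)|>1$, which must be short enough (indeed asymptotically negligible or exactly balanced by $\Phi$) so as not to spoil the final estimate.
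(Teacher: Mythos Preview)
The paper does not give its own proof of this theorem: it is stated as Hales's result and simply cited to \cite{hales}. So there is no paper proof to compare against; your sketch is effectively a summary of Hales's original argument.

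Your overall architecture is the right one and is indeed how Hales proceeds: prove a sharp per-cell inequality (HHII) containing an interface term that is antisymmetric across shared arcs, sum over all cells, cancel the interface terms, and read off the global bound. The equality discussion is also fine in outline.

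There is, however, a genuine gap in your treatment of the external chamber when $|\E(0)|>1$. Your proposed fix---extract a subregion $E_0\subset\E(0)$ with $|E_0|=1$, apply HHII to $E_0$, and ``absorb the auxiliary cuts into the antisymmetric bookkeeping''---does not work. The auxiliary cuts lie entirely inside $\E(0)$ and are not shared with any other cell to which you are applying HHII, so their $\Phi$-contributions have nothing to cancel against; moreover the length of those cuts appears in $P(E_0)$ but not in $P(\E(0))$, so you cannot bound $P(\E(0))$ from below by the inequality for $E_0$ without losing exactly the cut length. No choice of ``short enough'' cuts repairs this: for a large torus $|\E(0)|$ can be arbitrarily large, and there is no a priori reason the cuts can be made asymptotically negligible relative to the deficit.

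The point you are missing is that in Hales's formulation the truncation $\min\{|C|,1\}$ is \emph{built into} the hexagonal isoperimetric inequality itself, via his area function $a(x)$: HHII reads (schematically) $P(C)\ge p_H\,\min\{|C|,1\}+\Phi(C)$ and is valid for cells of any area (after the standard reduction to piecewise-circular boundaries). Applied uniformly to every chamber $\E(0),\E(1),\ldots,\E(N)$, this yields $\sum_{h=0}^N\min\{|\E(h)|,1\}$ on the right-hand side; the hypothesis $|\E(i)|\le 1$ for $i\ge 1$ then converts $\min\{|\E(i)|,1\}$ into $|\E(i)|$, while the $\E(0)$ term stays as $\min\{|\E(0)|,1\}$. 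No surgery on $\E(0)$ is needed.
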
 
Theorem \ref{teo: chapter intro Hales piano} tells us that among all the $N$-clusters (tilings) of the torus with unit-area chambers, the hexagonal tiling is the one attaining the smallest perimeter.\\

If $\E$ is a bounded planar $N$-cluster we can always find two orthogonal vectors $v,w$ such that 
$$\E\cc \{s v \ | \ s\in(0,1]\} \times \{tw \ | \ t\in (0,1]\}.$$ 
We can consider the $\E':=\{\E(i)/\sim\}_{i=1^N}$ on the flat torus $\T=\T(v,w)$ and apply Hales's Theorem. As a consequence starting from Theorem \ref{teo: chapter intro Hales sul toro} it is possible to prove the following Theorem (also appearing in \cite{hales}) holding on planar $N$-clusters.
\begin{theorem}\label{teo: chapter intro Hales piano}
If $\E$ is a bounded planar $N$-cluster with $|\E(i)|\leq 1$ for every $i=1,\ldots,N$ then it holds
\begin{equation}\label{eqn: Hales piano}
P(\E)>\frac{P(H)}{2}\sum_{i=1}^N |\E(i)|.
\end{equation}
where $H$ denote a unit-area regular hexagons.
\end{theorem}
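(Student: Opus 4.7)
The plan is to deduce the planar statement from the torus version (Theorem~\ref{teo: chapter intro Hales sul toro}) by embedding $\E$ into a sufficiently large flat torus. The strict inequality in the planar case will come ``for free'' from the contribution of the external chamber on the torus.

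First, since $\E$ is bounded, I would pick two orthogonal vectors $v,w\in\R^2$ so that the cluster is compactly contained in the half-open parallelogram
\[
Q_{\T}=\{sv\mid s\in(0,1]\}\times\{tw\mid t\in(0,1]\}\,,
\]
and moreover $|Q_{\T}|\geq 1+\sum_{i=1}^N|\E(i)|$ (always possible, by enlarging $\|v\|$ and $\|w\|$). Let $\T=\T(v,w)$ and define the quotient family $\E':=\{\E(i)/\!\sim\}_{i=1}^N$ on $\T$. Because $\E\cc Q_{\T}$, the periodic extension $\widehat{\E'(i)}$ in $\R^2$ is a disjoint union of integer translates of $\E(i)$, with no two translates touching; in particular $|\E'(i)|=|\E(i)|\leq 1$ and
\[
P_{\T}(\E')=\sum_{0\leq h<k\leq N}\H^{1}\!\big(\E'(h,k)\cap Q_{\T}\big)=P(\E)\,,
\]
using the conventions recalled in Section~\ref{sbsct: the flat torus}. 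The external chamber of $\E'$ on the torus then has mass
\[
|\E'(0)|=|\T|-\sum_{i=1}^N|\E(i)|=|Q_{\T}|-\sum_{i=1}^N|\E(i)|\geq 1\,.
\]

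Now I would simply invoke Theorem~\ref{teo: chapter intro Hales sul toro} for $\E'$ on $\T$. Since $|\E'(i)|\leq 1$ for every $i=1,\ldots,N$ and $\min\{|\E'(0)|,1\}=1$ by the choice of $v,w$, one obtains
\[
P(\E)=P_{\T}(\E')\;\geq\;\frac{P(H)}{2}\Big(1+\sum_{i=1}^N|\E(i)|\Big)\;>\;\frac{P(H)}{2}\sum_{i=1}^N|\E(i)|\,,
\]
which is exactly \eqref{eqn: Hales piano}. Notice that the strict inequality is built into the argument because the torus estimate always charges at least one ``hexagon worth'' of perimeter to the exterior, whereas the planar statement compares only against the interior chambers.

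The only real point to verify carefully is the identity $P_{\T}(\E')=P(\E)$; this is where the assumption $\E\cc Q_{\T}$ enters. Writing $\pared(\E(i)/\!\sim)$ on the torus as the image of $\pared\E(i)$ and observing that $\pared\E(i)\subset Q_{\T}$ (so no point of the reduced boundary lies on the pasting seams of the fundamental domain) makes it clear that the reduced boundaries on $\T$ match, chamber by chamber, the reduced boundaries of $\E$ in $\R^2$. Everything else is bookkeeping, so I expect no substantive obstacle beyond this transfer step.
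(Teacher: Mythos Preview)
Your argument is correct and follows the same route as the paper: embed the bounded cluster into a large enough flat torus and apply Theorem~\ref{teo: chapter intro Hales sul toro}. The only minor difference is how strictness is obtained---the paper invokes the equality case (no bounded planar cluster can project to the hexagonal tiling $\H$), whereas you enlarge the torus so that $\min\{|\E'(0)|,1\}=1$ and read off the strict inequality directly from the extra $\tfrac{P(H)}{2}$ term; your version is in fact slightly more quantitative.
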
 
It is worth noticing that inequality \eqref{eqn: Hales piano} is strict. Indeed Theorem \ref{teo: chapter intro Hales piano} is a consequence of Theorem \ref{teo: chapter intro Hales sul toro}. Since equality in \eqref{eqn: Hales toro} is attained if only if $\E'=\H$ and since there is no bounded planar $N$-cluster $\E$ such that $\E'=(\E/\sim)=\H$ equality in \eqref{eqn: Hales piano} can never be attained.

\subsection{The "improved convergence" for planar clusters}\label{section The "improved convergence" for planar Cluster}
We here recall for the sake of completeness (and clarity), the basic concepts and the main theorem we are making use in Chapter \ref{chapter sharpquantitative}. All these results can be found in \cite{CiLeMaIC1}.  \\
\text{}\\
Let $\g:[0,\H^{1}(\g)]\rightarrow \R^2$ be a simple, closed and connected $C^{1,\a}$-curve, parametrized by the arc length and with non empty boundary $\bd(\g)\neq\emptyset$. A map $f:\g \rightarrow \R^2$ is said to be of class $C^{1,\a}(\g;\R^2)$ if 
\begin{eqnarray*}
f\circ \g&\in& C^{1,\a}([0,\H^{1}(\g)];\R^2),\\
\|f\|_{C^{1,\a}(\g)}&:=&\|(f\circ \g)\|_{C^1([0,\H^{1}(\g)])}+\|(f\circ \g)'\|_{C^{0,\a}([0,\H^{1}(\g)])}<+\infty.
\end{eqnarray*}
Let $\E$ be a $C^{1,\a}$ planar $N$-cluster and let $\{\g_i\}_{i\in I}$ be the $C^{1,\a}$-network associated to $\E$. We say that $f:\pa \E\rightarrow \R^{2}$ is of class $C^{1,\a}(\pa \E; \R^{2})$ if $f$ is continuos on $\pa \E$, $f\in C^{1,\a}(\g_i;\R^{2})$ for every $i\in I$  and 
$$\|f\|_{C^{1,\a}(\pa\E)}:= \sup_{i\in I} \|f\|_{C^{1,\a}(\g_i)}<+\infty.$$
We say that $f$ is a $C^{1,\a}$-diffeomorphism between two clusters $\E$ and $\E'$ if $f$ is an homeomorphism between $\pa \E$ and $\pa \E'$ with
$$f\in C^{1,\a}(\pa \E;\R^{2}), \ \ f^{-1}\in C^{1,\a}(\pa\E'; \R^{2}) \ \ \text{and} \ \ f(\S(\E))=\S(\E'). $$
We define the \textit{tangential component} of a vector field $f:\R^{2}\rightarrow \R^{2}$ with respect to an $N$-cluster $\E$ as
$$\tau_{\E}f(x) := f(x)-(f(x)\cdot \nu_{\E}(x) )\nu_{\E}(x),$$
where $\nu_{\E}:\pared\E\rightarrow \R^{2}$ is any Borel function such that either $\nu_{\E}(x)=\nu_{\E(h)}(x)$ or $\nu_{\E(k)}(x)$ for every $x\in \E(h,k)$.
\begin{theorem}\label{improv Theorem}[Improved convergence for planar almost-minimizing clusters] Given $\Lambda \geq 0, r_0>0$ and $\E$, a bounded $C^{2,1}-$cluster in $\R^{2}$, there exist positive constant $\mu_0$ and $C_0$ (depending on $\Lambda$ and $\E$) with the following property. If $\{\E_k\}_{k\in \N}$ is a sequence of perimeter $(\Lambda, r_0)$-minimizing $N-$clusters in $\R^{2}$ such that $d(\E_k,\E) \rightarrow 0$ \mbox{(as $k\rightarrow +\infty$)}, then for every $\mu<\mu_0$ there exists $k(\mu)\in \N$ and a sequence of maps $\{f_k\}_{k\in k(\mu)}$ such that each $f_k$ is a $C^{1,1}-$diffeomorphism between $\pa \E$ and $\pa \E_k$ with 
\begin{eqnarray}
\|f_k \|_{C^{1,1}(\pa \E)} &\leq &C_0,\\
\lim_{k\rightarrow +\infty} \|f_k -\Id \|_{C^{1}(\pa \E)} &=& 0,\\
\|\tau_{\E} (f_k-\Id ) \|_{C^{1}(\pa^{*} \E)} &\leq& \frac{C}{\mu} \|f_k - \Id \|_{C^{0}(\Sigma( \E))},\\
\tau_{\E} (f_k-\Id ) &=& 0 \ \ \ \text{on $\pa \E \setminus I_{\mu}(\Sigma(\E))$,}
\end{eqnarray}
where $Id(x)=x$ and 
$$I_{\mu}(\S(\E))=\{x\in \R^n \ | \ d(x,\S(\E) )<\mu\}. $$
\end{theorem}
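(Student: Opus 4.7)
The plan is to upgrade the $L^1$ convergence $\E_k\to\E$ to a $C^{1,\a}$-type convergence of the reduced boundaries away from the singular set $\S(\E)$, to understand the structure of $\S(\E_k)$ near each point of $\S(\E)$, and finally to patch together a normal graph parametrization of $\pared\E_k$ over $\pared\E$ with a local construction near each singular point using a cutoff function in a shell of width $\mu$.

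First I would note that each chamber $\E_k(h)$ inherits the $(\La,r_0)$-perimeter-minimizing property from $\E_k$, so by Theorem \ref{regularity} the reduced boundary $\pared\E_k(h)$ is $C^{1,\a}$ outside $\S(\E_k)$. Combining this with uniform density estimates for almost-minimizing clusters and the $L^1$ convergence $\E_k\to\E$, standard regularity results show that, on the complement of any neighborhood of $\S(\E)$, the reduced boundaries $\pared\E_k$ are $C^{1,\a}$-small normal graphs over $\pared\E$. In particular, for every $\mu>0$ and every sufficiently large $k$, a nearest-point projection $\pi_k:\pa\E\setminus I_{\mu/2}(\S(\E))\to\pa\E_k$ is well defined, smooth, normal to $\pared\E$, and satisfies $\|\pi_k-\Id\|_{C^{1,\a}}\to 0$.

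Second I would analyse the singular points. For each $p_j\in\S(\E)$ I would show that, for $k$ large, there is a unique $p_j^k\in\S(\E_k)\cap B_\mu(p_j)$ and that $p_j^k\to p_j$; moreover no singular point of $\E_k$ lies outside the union of these small balls. The lower bound on the number of nearby singularities is forced by the fact that three different reduced interfaces of $\E$ meet at $p_j$ at equal angles, so $\E_k$ must exhibit the same topological structure nearby; the upper bound follows from the $C^{1,\a}$ convergence of the interfaces, which rules out accumulation of singularities along a smooth piece of $\pared\E$.

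Finally, fix $\mu<\mu_0$ so that the balls $B_\mu(p_j)$ are pairwise disjoint. On $\pa\E\setminus I_\mu(\S(\E))$ set $f_k=\pi_k$, which is automatically normal, so $\tau_\E(f_k-\Id)\equiv 0$ there. Inside each ball $B_\mu(p_j)$ I would first define a local map sending $p_j$ to $p_j^k$ and each of the three arcs of $\pa\E$ emanating from $p_j$ to the corresponding arc of $\pa\E_k$ emanating from $p_j^k$ (using an arclength-type reference parametrization that exploits the $120^{\circ}$ angle condition at both triple points), and then interpolate with $\pi_k$ on the annulus $B_\mu(p_j)\setminus B_{\mu/2}(p_j)$ via a cutoff $\chi_\mu$ with $|\nabla\chi_\mu|\lesssim 1/\mu$. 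The gradient bound on the cutoff accounts for the $C/\mu$ factor in the tangential estimate, while the size of the tangential correction is controlled by the displacement of the singular endpoints, i.e. by $\|f_k-\Id\|_{C^0(\S(\E))}$. The main obstacle is verifying that the resulting interpolated $f_k$ is a genuine $C^{1,1}$-diffeomorphism between $\pa\E$ and $\pa\E_k$, that it maps $\S(\E)$ onto $\S(\E_k)$, and that the global bound $\|f_k\|_{C^{1,1}(\pa\E)}\le C_0$ can be chosen independent of both $k$ and $\mu<\mu_0$: the rigid $120^{\circ}$ angle condition at the two triple points has to be reconciled with the $C^1$ matching across $\pa B_\mu(p_j)$, which appears to be the most delicate technical piece.
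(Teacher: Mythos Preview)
The paper does not prove this theorem: it is recalled in Section~\ref{section The "improved convergence" for planar Cluster} with the sentence ``All these results can be found in \cite{CiLeMaIC1}'', and the torus variant (Theorem~\ref{thm improved convergence}) is likewise stated with ``The proof comes as a simple variant of Theorem~\ref{improv Theorem} (or \cite[Theorem 1.5]{CiLeMaIC1}) and therefore we omit the details.'' So there is no proof in the paper to compare your attempt against.

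That said, your outline is broadly faithful to the strategy in \cite{CiLeMaIC1}: Hausdorff convergence of boundaries via density estimates, $C^{1,\a}$-graphicality of $\pared\E_k$ over $\pared\E$ away from $\S(\E)$, pairing of singular points $p_j^k\to p_j$, and the interpolation near each triple point with a cutoff at scale $\mu$ to produce the tangential bound $C/\mu$. The part you flag as delicate---matching the local parametrization at the triple point with the normal-graph parametrization across the annulus while keeping a uniform $C^{1,1}$ bound---is exactly what the machinery of \cite[Theorem~3.1]{CiLeMaIC1} (invoked in this paper as Lemma~\ref{lemma di riparametrizzazione sui segmenti}) is designed to handle, and your sketch does not supply a substitute for it. So as a self-contained proof your proposal has a genuine gap at that point, but as an outline of the cited argument it is accurate.
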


\chapter{Uniform distribution of the energy for an isoperimetric partition problem with fixed boundary}

\section{Introduction}
A conjecture due to Morgan and Heppes, appeared in \cite{HepMor05}, states that the global shape of perimeter-minimizing planar $N$-clusters having equal-volume chambers, suitably normalized must converge, in the $L^{1}$-sense, to a ball. The global shape should be intended as $\E(0)^c$, where $\E(0)$ is the external chamber of the cluster $\E$. So far, no progress has been made in proving this conjecture and the reason could lie in the difficulties arising when we try to understand in which sense the shape of the internal chambers has an influence on the global shape of perimeter-minimizing $N$-clusters. In 2001 Thomas Hales \cite{hales} solved the so-called \textit{hexagonal honeycomb conjecture} providing Theorems \ref{teo: chapter intro Hales sul toro} and \ref{teo: chapter intro Hales piano} that somehow give us information about the internal structure of such perimeter-minimizing clusters. Theorem \ref{teo: chapter intro Hales piano} combined with a suitable comparison argument tells us that, for $N$ approaching $+\infty$, the perimeter of a perimeter-minimizing planar $N$-cluster $\E$ with equal volume chambers is asymptotic equivalent to the perimeter of a grid of $N\times N$ hexagons:
$$\frac{P(H)}{2}N\leq P(\E) \leq \frac{P(H)}{2}N+C\sqrt{N}.$$
Hales, in its paper \cite{hales}, proves more: when we consider the partition problem on the torus (which is a way to consider a periodic tiling of $\R^2$), the hexagonal tiling is the only minimizer. A new result on this topic is the one contained in Chapter Three (that can also be found in \cite{CM14}) where a stability results of the hexagonal tiling with respect to compactly-supported and mass-preserving perturbations has been proved. Everything suggests that the internal chambers of perimeter-minimizing $N$-clusters try to get closer and closer to regular hexagons and, so far, it is not clear whether this behavior affects the global shape of perimeter-minimizing $N$-clusters (and in which sense).\\

In order to investigate the influence of the boundary on the internal structure of perimeter-minimizing $N$-clusters it makes sense to consider an isoperimetric problem  with fixed boundary on planar $N$-clusters. Namely for a fixed set $\Om$ with finite perimeter we consider the quantity
\begin{eqnarray} 
\rho(N,\Om):=\inf_{\E\in \text{Cl}(N,\Om)} \{ P(\E) \}, \label{Fixed boundary}
\end{eqnarray}
where the infimum is taken among all the \textit{$N$-clusters of $\Om$}:
\begin{equation}\label{classe di competizione}
\text{Cl}(N,\Om)= \left\{ \text{$\E$ planar $N$-cluster with $|\E(j)|=\frac{|\Om|}{N}$ for $j\neq 0$ and } \E(0)=\Om^{c} \right\}.
\end{equation}

Thanks to the compactness for sets of finite perimeter and the semi-continuity property of the functional $P(\cdot)$ with respect to the $L^1$ convergence (see Theorems  \ref{compactness theorem}, \ref{semicontinuity theorem}) we get the existence of minimizers for $\rho(N,\Om)$ for every set $\Om$ with finite perimeter and for every $N\in \N$. We call such clusters \textit{perimeter-minimizing $N$-clusters for $\Om$} or simply \textit{minimizing $N$-clusters for $\Om$}. In the following we will not use any regularity property of such clusters, however with the same techniques developed for the perimeter-minimizing $N$-clusters with free boundary it is possible to show that, if $\Om$ is open, each $\E$, minimizing $N$-cluster for $\Om$, is a $(\La,r_0)-$perimeter-minimizing $N$-cluster inside $\Om$. In particular $\E$ is of class $C^{1,1}$ inside $\Om$. This also means that each singular point $p_j\in \Sigma(\E)\cap \Om$ is a common end-point to three different curves that meet in three 120 degree angles in $p_j$ and that the singular set $\Sigma(\E)\cap \Om$ is discrete.\\
 
Our main purpose here is to better understand the behavior of the localized energy $P(\E;Q_l)$ where $Q_l$ is a square of edge-length $l$ and $\E$ is a minimizing $N$-cluster for an open set $\Om$. To describe this behavior we provide two ``equidistribution theorems'' (see Theorems \ref{Equi dia}, \ref{Equi indeco}) in the spirit of the one obtained by Alberti, Choksi e Otto in \cite{AlChOt09}. For the sake of clarity, let us state a ``heuristic version" of the theorems we are going to prove.\\
\text{}\\
\textit{\noindent There exists a universal constant $C$ such that for every open bounded set $\Om$, every  $\E\in \Cl$ minimizing $N$-cluster for $\Om$ and every closed cube $Q\cc \Om$ ``far enough" from the boundary and ``large enough with respect to the size of the chambers" the following holds:
\begin{equation}\label{equi estimate -1}
\Big{|}P(\E;Q) -|Q| \frac{P(H)}{2}  \sqrt{\frac{N}{|\Om|}}\Big{|}\leq C P(Q).
\end{equation}
where $H$ denotes a unit-area regular hexagons.
}
\begin{remark}\label{remark: spiegazione qualitativa del teorema di equidistribuzione}
\rm{
From a qualitative point of view estimate \eqref{equi estimate -1} gives us information about the average energy of $\E$ inside the cube $Q$. If we divide both member of \eqref{equi estimate -1} by $\frac{|Q|}{|\Om|}N$, which represents the expected number of chambers of $\E$ lying inside $Q$, we obtain 
$$\Big{|}\frac{P(\E;Q)}{\frac{|Q|}{|\Om|}N} -\frac{P(H)}{2} \sqrt{\frac{|\Om|}{N}} \Big{|}\leq \frac{CP(Q)}{|Q|}\frac{|\Om|}{N}.$$
Note that $\frac{P(H)}{2} \sqrt{\frac{|\Om|}{N}}$ is the average energy of a uniform grid of hexagons $H$ having volume $\frac{|\Om|}{N}$ (and thus perimeter $P(H)\sqrt{\frac{|\Om|}{N}}$). Hence we can interpret estimate \eqref{equi estimate -1} as follows: the average energy $P(\E;Q)$ of a minimizing $N$-cluster for $\Om$ computed on a fixed cube $Q\cc \Om$ approaches the average energy of a grid of hexagons with area $\frac{|\Om|}{N}$. Estimate \eqref{equi estimate -1} suggests that, no matter where we are localizing for $N$ sufficiently large the boundary $\pa \Om$ does not affect the energetic behavior of minimizing $N$-clusters for $\Om$. This also indicates that some approximate periodicity in the behavior of internal chambers, at least from an energetic point of view, is attained.
}
\end{remark}
\begin{remark}\label{remark: errore CP(Q) ottimale}
\rm{
The term $CP(Q)$ appearing in the right-hand side of \eqref{equi estimate -1} is the optimal one. Indeed, assume for a moment that $\E$ is a perfect hexagonal grid made by hexagons of area $\frac{|\Om|}{N}$. In this situation, if we compute the localized energy $P(\E;Q)$, we discover that the principal part is just the perimeter of all the hexagons compactly contained inside $Q$, that is $|Q| \frac{P(H)}{2} \sqrt{\frac{N}{|\Om|}}$. The contribution of the hexagons intersecting $\pa Q$ will be of order $CP(Q)$ for a universal constant $C$.
}
\end{remark}
\begin{remark}\label{remark: perche nel teorema di equidistribizuione serve far enough and big enough}
\rm{
Let us focus on why we need to be on a cube $Q$ ``far enough from the boundary" and ``large compared to the size of the chambers". We cannot expect the estimate \eqref{equi estimate -1} to work on every cube compactly contained in $\Om$. For example it may happen that the geometry of $\pa \Om$ can affect the internal energy at least in its proximity and so for all cubes too close to $\pa \Om$ the localized energy could be very far from the one of the hexagonal tiling. Moreover, if a cube $Q$ is very small (say for example $|Q|<\frac{|\Om|}{N}$, smaller than the size of the chambers) the theorem will probably be meaningless since the localized energy will be zero or comparable. We are going to quantify in a precise way what ``far enough from the boundary" and ``large compared to the size of the chambers " mean.}
\end{remark}

An estimate of the type of \eqref{equi estimate -1} helps us to better understand the relation between the boundary and the internal chambers in the free boundary case. Indeed, it seems that no matter what ambient space $\Om$ we choose, for $N$ sufficiently large we expect to see hexagons inside. This could mean that the behavior of the boundary does not affect the shape of interior chambers. Thus, can the shape of internal chambers affect the behavior of the boundary in perimeter-minimizing $N$-clusters? We cannot say. We point out here, that it seems that the fixed boundary does not influences the asymptotic trend of internal chambers.\\

Throughout this chapter we denote with $H$ a unit-area regular hexagon so that $\sqrt{\de}H$ will be a regular hexagon of area $\de$. We are sometimes making use of the notation $\H_{\de}$ meaning the tiling of $\R^{2}$ made by regular hexagons of area $\de$ oriented and labeled as in Figure \ref{fig reference}.
\begin{figure}
\begin{center}
 \includegraphics[scale=0.7]{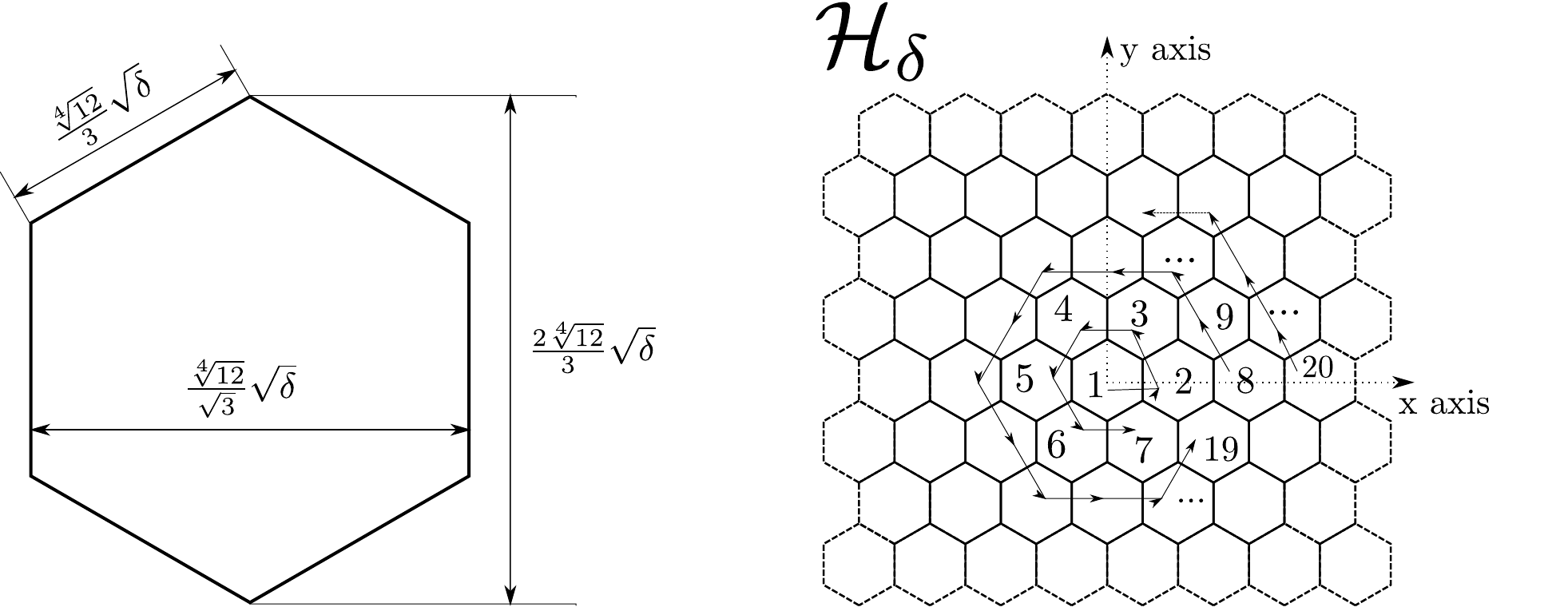}\caption{{The reference regular hexagon of area $\de$ and the correspondent reference hexagonal tiling of size $\de$.}}\label{fig reference}
\end{center}
\end{figure} 

\subsection{Brief sketch of the proof}\label{sbsct: Brief sketch of the proof}
To prove an estimate of the type \eqref{equi estimate -1} we need to provide two bounds on the localized energy $P(\E;Q)$. A lower bound is easily obtained in Lemma \ref{lower bound lemma} for a generic $N$-cluster $\E$ as a consequence of Hales's Theorem \ref{teo: chapter intro Hales piano}. On the other hand we obtain an upper bound by comparison, building a competitor through the following geometric construction. We fix a square $Q_l\cc \Om$ and we exploit the simple idea to substitute in a suitable way the existing cluster with a hexagonal grid (that we know be a heuristic minimizer) inside the square. In order to do this we first suitably enlarge $Q_l$ into $Q_{l+d}$ and then we remove all the chambers compactly contained into $Q_{l+d}$. After that, we completely cover $Q_l$ with an hexagonal grid (see Figure \ref{Skecth1}). We need to make sure that the grid that we have built does not overlap some remaining ``long chambers"  with some tentacles intersecting the boundary of the bigger square $Q_{l+d}$. To handle this phenomenon we restrict to two different classes of minimizing $N$-clusters giving us some control and allowing us to prove two different Theorems: \ref{Equi dia} and \ref{Equi indeco}. To complete the construction we need to ``stitch" with a suitable surgery the grid with the remaining parts of $\E$ (see Figure \ref{Skecth2}). The surgery will be the cluster with chambers of area exactly $\frac{|\Om|}{N}$ provided by Proposition \ref{chirurgia}. We thus obtain an $N$-cluster $\F$ differing from $\E$ only inside $Q_{l+d}$ and by comparison (and up to choose $d$ in a suitable way) we are able to reach the estimate
$$P(\E_N;Q_l)\leq P(\F;Q_l) \leq \frac{P(H)}{2}|Q_l|\sqrt{N}+\text{lower order terms}.$$
The behavior of the \textit{lower order terms} depends on the class of minimizers that we are considering. 
\begin{figure}[h]
\centering
\includegraphics[width=1\columnwidth]{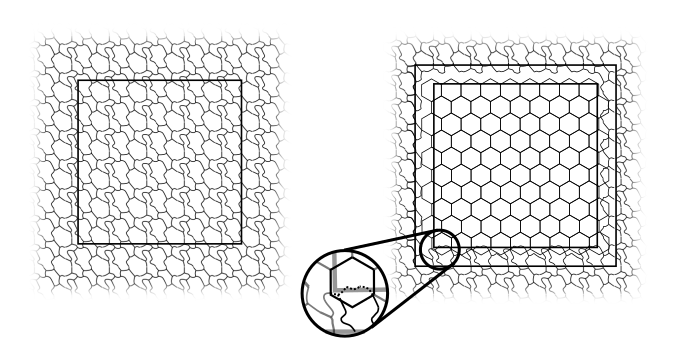}\caption{We consider a square $Q_l\cc \Om$ and we enlarge it in order to have space to complete the construction. We remove all the chambers of the cluster compactly contained into the bigger square and we cover $Q_l$ with an hexagonal tiling. We need to be sure that the tiling do not overlap some "long chamber". }\label{Skecth1} 
\end{figure} 

\begin{figure}[h]
\centering
\includegraphics[width=.5\columnwidth]{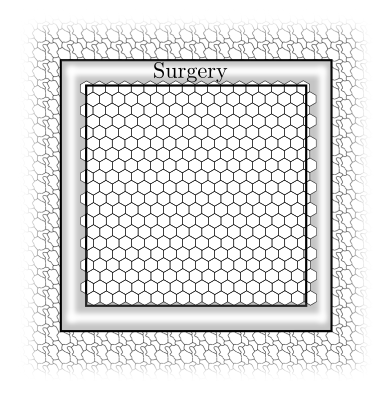}\caption{We build a suitable surgery partition of the remaining part in chambers having the right amount of area.}\label{Skecth2} 
\end{figure} 

The Chapter is organized as follows. In Section \ref{sct: technical lemmas} we prove two technical lemmas: \ref{lower bound lemma} and \ref{lemma chirurgia}. The first one is a consequence of Hales's Theorem \ref{teo: chapter intro Hales piano} and gives us a lower bound on the localized energy of planar $N$-cluster. The second one is a geometric construction for partition in equal-area chambers with a controlled amount of perimeter within a particular class of sets. This second lemma is the one that we need to perform the surgeries. In Sections \ref{sct equi bounded diameter} and \ref{sct equi indecomposable} we prove two different Equidistribution-type theorems holding on two different classes of minimizing $N$-clusters for a given open set $\Om$, namely the \textit{$\mu$-bounded minimizing $N$-clusters} (see Definition \ref{mu bounded}) and the \textit{indecomposable minimizing $N$-clusters} (see Definition \ref{indecomposable minimizers}). Both the sections are mostly devoted to the construction of a suitable competitor (through the idea explained in Subsection \ref{sbsct: Brief sketch of the proof}) in order to derive an upper bound on the localized energy.  

\section{Technical lemmas}\label{sct: technical lemmas}

\begin{lemma}\label{lower bound lemma}
Let $\Om$ be an open bounded set in $\R^2$ and let $\E\in \Cl$ be an $N$-cluster for $\Om$. Then for every open set $O \cc \Om$ it holds
\begin{equation}\label{lower bounded}
P(\E;O)\geq |O| \frac{P(H)}{2}\sqrt{\frac{N}{|\Om|}}-P(O)
\end{equation}
\end{lemma}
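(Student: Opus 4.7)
The plan is to reduce the problem to Hales's planar theorem (Theorem 1.9.2) applied, after rescaling, to the restriction of $\E$ to the open set $O$. Using the notation from Section 1.8, I would form the cluster $\F := \E\cap O$, whose interior chambers are $\F(i) = \E(i)\cap O$ (keeping only those with positive measure) and whose external chamber is, up to a negligible set, $O^c$. Crucially, since $\E(0) = \Om^c$ and $O\cc\Om$, the chambers $\F(i)$ with $i\geq 1$ satisfy $0 < |\F(i)| \leq |\Om|/N$ and together they cover $O$ up to a null set, so $\sum_{i\geq 1} |\F(i)| = |O|$.

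The first step is to establish the comparison
$$P(\F) \;\leq\; P(\E;O) + P(O).$$
This follows from the intersection formulas in Subsection 1.2.9 together with the observation that, since $O\cc\Om$, no point of $\pared\E(0)$ lies in $O^{(1)}$, so the interior interfaces of $\F$ are $\H^1$-equivalent to $\pared \E \cap O^{(1)}$, while the new $\F(0,k)$ interfaces are supported on $\pared O$ and appear at most once in $P(\F)$. Summing over the chambers (dividing by $2$ as in the definition of cluster perimeter) gives the stated bound.

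The second step is to rescale. Setting $\lambda := \sqrt{N/|\Om|}$ and considering the cluster $\lambda\F$, every chamber has area $\lambda^{2}|\F(i)|\leq 1$, and $\lambda\F$ is bounded because $\Om$ is bounded. Hales's Theorem 1.9.2 therefore yields
$$P(\lambda\F) \;>\; \frac{P(H)}{2}\sum_{i\geq 1}|\lambda\F(i)|.$$
Using $P(\lambda\F) = \lambda P(\F)$ and $\sum_{i\geq 1} |\lambda\F(i)| = \lambda^{2}|O|$, and dividing by $\lambda$, one obtains
$$P(\F) \;>\; \frac{P(H)}{2}\,|O|\sqrt{\frac{N}{|\Om|}}.$$
Combining with the first-step comparison,
$$P(\E;O) \;\geq\; P(\F) - P(O) \;>\; \frac{P(H)}{2}\,|O|\sqrt{\frac{N}{|\Om|}} - P(O),$$
which is \eqref{lower bounded}.

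The only genuine technical point is the perimeter comparison $P(\F)\leq P(\E;O)+P(O)$: everything else is just scaling and an invocation of Hales. The subtlety lies in bookkeeping the reduced boundary of $\F$ carefully so that interior interfaces of $\E$ contained in $O$ are counted exactly once (via $P(\E;O)$), and the newly created interfaces with $\F(0)=O^c$ are controlled by $P(O)$; the fact that $\E(0)\cap O$ is negligible is what prevents any double counting from the $\E(0)$-part of $\pared\E$.
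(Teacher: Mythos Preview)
Your proposal is correct and follows essentially the same route as the paper: form $\F=\E\cap O$, relate $P(\F)$ to $P(\E;O)+P(O)$, rescale by $\sqrt{N/|\Om|}$ so that all chambers have area at most $1$, and apply Hales's planar theorem. The paper does exactly this, citing the intersection formula \eqref{eqn: formula per il perimetro di intersezioni di clusters} to obtain $P(\E\cap O)=P(\E;O)+P(O)$ (you state the comparison as an inequality, which is all that is needed).
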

\begin{proof}
Since $|\E(i)\cap O|\leq \frac{|\Om|}{N}$, we can apply Theorem \ref{teo: chapter intro Hales piano} to 
$$\F=\sqrt{\frac{N}{|\Om|}}\left(\E\cap O\right)$$
and obtain 
\begin{eqnarray*}
P(\F) \geq  \frac{P(H)}{2}\sum_{i=1}^N  \frac{N}{|\Om|}|\E(i)\cap O|\geq  \frac{P(H)}{2}\frac{N}{|\Om|}|O|.
\end{eqnarray*}
But since, thanks to \eqref{eqn: formula per il perimetro di intersezioni di clusters}, 
\begin{eqnarray*}
P(\F)=\sqrt{\frac{N}{|\Om|}}P\left(\E\cap O\right) = \sqrt{\frac{N}{|\Om|}}[ P(\E; O)+P(O)]
\end{eqnarray*}
we obtain \eqref{lower bounded}.
\end{proof}

\begin{lemma}\label{lemma chirurgia}
Let $A\subset \R^2$, $0<|A|<+\infty$ be a set for which there exists two concentric cubes $Q_0\subset\subset Q_1$ such that
$$A\subseteq Q_1\setminus Q_0.$$
Then for every $M\in \N$ there exists an $M$-cluster $\E_M$ such that $|\E_M(i)|=\frac{|A|}{M}$ for all $i=1,\ldots,M$, $\E_M(0)=A^c$ and for which the following estimate holds:
\begin{equation}\label{chirurgia}
P(\E_M)\leq C|Q_1\setminus Q_0|\sqrt{\frac{M}{|A|}}+P(A),
\end{equation}
for a universal constant $C>0$.
\end{lemma}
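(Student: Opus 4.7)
The plan is to set $\delta := |A|/M$ (so that the target bound reads $C|Q_1\setminus Q_0|/\sqrt{\delta}+P(A)$) and construct $\E_M$ by a grid-and-adjustment procedure: first partition $A$ via a square mesh of scale $\sqrt{\delta}$, obtaining pieces of area at most $\delta$, then concatenate these pieces in a fixed linear order into exactly $M$ chambers of area $\delta$, making one short internal cut per chamber boundary to enforce the equal-area condition.

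For the grid step, I would overlay on $\R^2$ a square grid $G$ of mesh size $s=\sqrt{\delta}$ and denote by $A_1,\dots,A_{M'}$ the non-empty intersections $A\cap C_j$ with the grid cells $\{C_j\}$. The key estimate is the bound on internal grid-line length inside $A$: since $A\subseteq Q_1\setminus Q_0$, we have $\H^1(A\cap G)\le \H^1((Q_1\setminus Q_0)\cap G)$, and the right-hand side is controlled by a two-regime count. Writing $l_0, l_1$ for the side lengths of $Q_0,Q_1$ and $2t:=l_1-l_0$, a horizontal grid line $\{y=y_k\}$ meets $Q_1\setminus Q_0$ in total length $2t$ when $|y_k|<l_0/2$, and in length $l_1$ when $l_0/2<|y_k|<l_1/2$; the number of grid lines of each type is at most $l_0/s$ and $2t/s$ respectively, and likewise for vertical lines. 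Summing and using $|Q_1\setminus Q_0|=l_1^2-l_0^2=2t(l_0+l_1)$ should give $\H^1((Q_1\setminus Q_0)\cap G)\le C|Q_1\setminus Q_0|/\sqrt{\delta}$ with a universal constant, independently of the aspect ratio $t/\sqrt{\delta}$.

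For the adjustment step, I would order the pieces $A_1,\dots,A_{M'}$ arbitrarily, set $S_k=\sum_{j\le k}|A_j|$, and for $i=1,\dots,M$ let $k_i$ be the smallest index with $S_{k_i}\ge i\delta$. Then $\E_M(i)$ is defined as the leftover part of $A_{k_{i-1}}$ not used by chamber $i-1$, together with the full pieces $A_{k_{i-1}+1},\dots,A_{k_i-1}$, together with the portion of $A_{k_i}$ below a horizontal line $\{y\le\bar y\}$ chosen via the intermediate value theorem so that $|\E_M(i)|=\delta$; the remainder of $A_{k_i}$ is passed on to $\E_M(i+1)$. This yields an $M$-cluster with $\E_M(0)=A^c$ and each $|\E_M(i)|=\delta$. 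At most $M-1$ splitting cuts arise, each a horizontal chord inside a grid cell of side $s$, contributing total length at most $(M-1)\sqrt{\delta}\le|A|/\sqrt{\delta}\le|Q_1\setminus Q_0|/\sqrt{\delta}$. Internal interfaces $\E_M(h,k)$ with $h,k\ge 1$ lie in the union of the grid cuts inside $A$ and the splitting cuts, totalling at most $C|Q_1\setminus Q_0|/\sqrt{\delta}$, while the external interfaces $\E_M(i,0)$ partition $\partial A$ up to null sets and sum to $P(A)$. Combining gives the claim.

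The hard part will be verifying the grid-length estimate uniformly across annulus aspect ratios. The two-regime count above should handle both the thin ($2t<\sqrt{\delta}$) and thick cases simultaneously, avoiding a separate construction in the thin case (where perpendicular strips around the annular region would be the natural alternative). A minor secondary point is that the chambers produced by the concatenation need not be geometrically connected; this is harmless since the definition of an $N$-cluster recalled earlier in the chapter only requires Borel sets of positive measure with finite perimeter.
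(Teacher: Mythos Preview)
Your approach is genuinely different from the paper's and, modulo one fixable issue, sound. The paper partitions $A$ \emph{radially}: it first cuts the annulus into $k$ angular sectors via segments through the common center $O$ of the cubes, each sector designed to hold $s=\lceil d\sqrt{M/|A|}\rceil$ chambers (where $d$ is the half-thickness of the frame), and then subdivides each sector into equal-area chambers using circular arcs centered at $O$. The radial segments cost at most $2kd\le 2\sqrt{M|A|}$, and the arcs are controlled by radially projecting onto $\partial Q_1$, giving total arc length $\le s\,P(Q_1)\lesssim |Q_1\setminus Q_0|\sqrt{M/|A|}$. Your Cartesian grid-and-concatenate scheme is arguably simpler and more portable (it uses the annular geometry only through the grid-length bound), at the price of producing disconnected chambers---which, as you correctly note, is permitted by the definition of $N$-cluster.

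The gap is precisely in the two-regime line count you flag as ``the hard part''. The claim that the number of horizontal grid lines with $l_0/2<|y_k|<l_1/2$ is ``at most $2t/s$'' is false when $2t<s$: there can still be one such line, contributing length $l_1$, and $l_1$ is \emph{not} bounded by $C|Q_1\setminus Q_0|/\sqrt{\delta}$ in the thin regime. Concretely, with $l_1=100$, $t=0.01$, $\delta=1$ one has $|Q_1\setminus Q_0|/\sqrt{\delta}\approx 4$, while a single badly placed grid line already costs $100$. So the count as written does not handle the thin case. The clean fix is to \emph{choose} the grid offset rather than fix it: averaging over horizontal shifts $y_0\in[0,s)$ gives
\[
\frac{1}{s}\int_0^s \H^1\big((Q_1\setminus Q_0)\cap\{y\in y_0+s\Z\}\big)\,dy_0=\frac{|Q_1\setminus Q_0|}{s},
\]
so some shift makes the total horizontal grid length inside the annulus at most $|Q_1\setminus Q_0|/s$; the same works independently for the vertical offset. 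With this positioning the grid length inside $A$ is at most $2|Q_1\setminus Q_0|/\sqrt{\delta}$ uniformly in the aspect ratio, and together with your bound $(M-1)\sqrt{\delta}\le |Q_1\setminus Q_0|/\sqrt{\delta}$ on the splitting cuts the rest of the argument goes through unchanged.
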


\begin{proof}
Let $M\in \N$ be a fixed number. We want to partition $A$ in chambers of area $\frac{|A|}{M}$. To do that, we first partition $A$ in sectors $S_i$ enclosing the same (suitable) amount of area using lines starting from the baricenter  $O$ of the cubes (as in Figure \ref{costruzione chirurgia 1}). Then we divide each sector in chambers of area  $\frac{|A|}{M}$ with circular arcs centered at $O$ (as in Figure \ref{costruzione chirurgia 2}). We need to choose the amount of area that we want to allocate in each sector in a coherent way. \\

Set $d=\frac{1}{2} (\sqrt{|Q_1|}-\sqrt{|Q_0|})$ to be the thickness of the frame $Q_1\setminus Q_0$. Since we are planning to cover each sector with chambers of measure $\frac{|A|}{M}$ (and thus of diameter of order $\frac{\sqrt{|A|}}{\sqrt{M}}$), it is natural to say that the number of chambers that we expect to be the right one, allocable in each sector, would be $\frac{d}{\frac{\sqrt{|A|}}{\sqrt{M}}}=\frac{d\sqrt{M}}{\sqrt{|A|}}$. Hence we define the integer value
\begin{equation}\label{il numero s}
s=\left\lceil\frac{d\sqrt{M}}{\sqrt{|A|}} \right\rceil.
\end{equation}
We can write,
$$M=sk+r, \ \ \ \ \ r<s,$$
for some $k\in \N$. We thus divide $A$ in $k$ sectors $S_1,S_2,\ldots,S_k$ in a way that each sector $S_i$  has Lebesgue measure exactly equal to $s\frac{|A|}{M}$, plus an eventual remainder sector $R=A\setminus \left(\cup_i S_i\right)$ with measure $|R|<s \frac{|A|}{M}$ (see Figure \ref{costruzione chirurgia 1}).\\

An upper bound on the value of $k$ can be obtained by exploiting the relation:
	\[
	sk  \frac{|A|}{M}\leq  |A|,
	\]
which, thanks to the definition of $s$ in \eqref{il numero s}, implies
\begin{align*}
 |A|\geq sk  \frac{|A|}{M} \geq d k\sqrt{\frac{|A|}{M}},
\end{align*}
and hence:
\begin{equation}\label{quanti k-a}
k \leq \frac{\sqrt{M |A|}}{d}.
\end{equation}

We then divide each sector $S_i$ and the sector $R$ with circular arcs having a suitable radius and centered at $O$ in order to obtain chambers of area exactly $\frac{|A|}{M}$. In this way each sectors but the sector $R$ is containing exactly $s$ chambers (note that the sector $R$ will contain $r<s$ chambers). Of course, since each chambers has area $|A|/M$, we end up with exactly $M$ chambers. We thus define $\E_M$ to be the cluster given by this construction (see Figure \ref{costruzione chirurgia 3}).\\

\begin{figure}[h!]
\centering
\subfloat[][\emph{We radially partition $A$ in sectors $S_i$ enclosing the same (suitable) amount of area plus an eventual remaining sector $R$ enclosing a possibly smaller area.}]
{\includegraphics[width=.48\columnwidth]{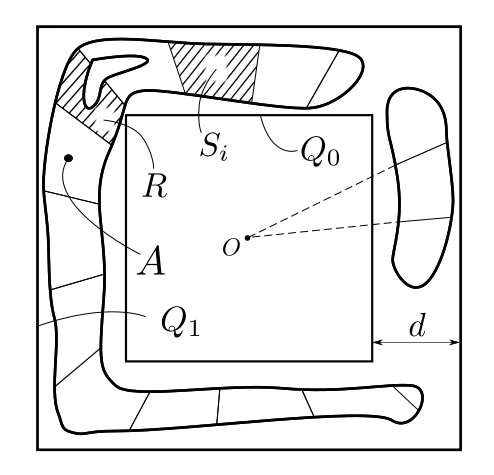}\label{costruzione chirurgia 1} }\quad
\subfloat[][\emph{We proceed to partition each sector in chambers of area $|A|/M$ with circular arcs centered at $O$. The length $\a$ of each circular arc is always less than the length $\rho$ of its radial projection onto $\pa Q_1$.}]
{\includegraphics[width=.47\columnwidth]{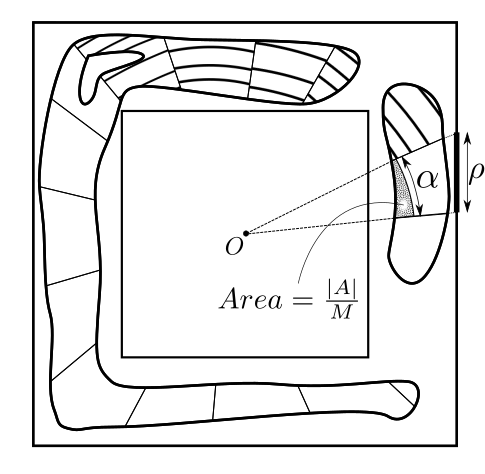}\label{costruzione chirurgia 2}}\quad
\subfloat[][\emph{The $M$-cluster $\E_M$}.]
{\includegraphics[width=.48\columnwidth]{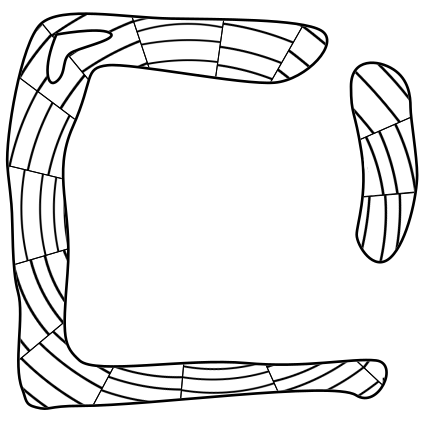}\label{costruzione chirurgia 3}}
\caption{The construction of the $M$-cluster $\E_M$ in the proof of Proposition \ref{chirurgia}.}
\end{figure}

To build the sectors $S_i$ we make use of $k$ segments of length less than $2d$. Note that two arcs from the same sector have the same radial projection on $\pa Q_1$ and each circular arc has length less than the length of its radial projection onto $\pa Q_1$ (see Figure \ref{costruzione chirurgia 2}).\\

These facts lead us to say that the global contribution of the circular arcs to the perimeter of $\E_M$ will be less than $P(Q_1)s$. Hence, thanks to \eqref{quanti k-a}, the global perimeter of $\E_M$ inside $A^{(1)}$ is easily estimated by
\begin{eqnarray*}
P(\E_M;A^{(1)})&\leq &2 k d+P(Q_1)s\\
&\leq & 2\sqrt{M |A|}+2P(Q_1)\frac{d\sqrt{M}}{\sqrt{|A|}}\\
&\leq & 2\left(|A|+P(Q_1)d\right)\frac{\sqrt{M}}{\sqrt{|A|}}.
\end{eqnarray*}
By noticing that $P(Q_1)d\leq 4|Q_1\setminus Q_0|$ and $|A|\leq |Q_1\setminus Q_0|$ we reach 
\begin{eqnarray}
P(\E_M; A^{(1)})&\leq &C |Q_1\setminus Q_0|\frac{\sqrt{M}}{\sqrt{|A|}},
\end{eqnarray}
for a universal constant $C>0$ and thus
\begin{eqnarray}
P(\E_M)&\leq &C |Q_1\setminus Q_0|\frac{\sqrt{M}}{\sqrt{|A|}}+P(A).
\end{eqnarray}
\end{proof}

\section{Uniform distribution for clusters with equi-bounded diameter.} \label{sct equi bounded diameter}
We are always considering $N$-clusters from the class $\text{Cl}(N,\Om)$ defined in \eqref{classe di competizione} where $\Om$ is an open bounded set with finite perimeter and $N\in \N$ is a natural number. 
\begin{definition}\label{mu bounded}
Let $\mu>0$ be a positive constant. An $N$-Cluster $\E\in \text{Cl}(N,\Om)$ is said to be a \textit{$\mu$-bounded $N$-Cluster for $\Om$} if
$$\diam(\E(i))\leq \mu\sqrt{\frac{|\Om|}{N}} \ \ \ \ \text{for all $i=1,\ldots,N$}.$$
If $\E$ is also a minimizing cluster for $\Om$ we call it a \textit{$\mu$-bounded minimizing $N$-cluster}.
\end{definition} 
On this class we are able to prove the following Theorem:
\begin{theorem}\label{Equi dia}
Let $\Om$ be an open and bounded set with finite perimeter. There exists a universal constant $C>0$ with the following property. For every  $\mu\geq \diam(H)$, every closed cube $Q_l\cc \Om$ such that
\begin{equation}\label{vincoli dia}
d(\pa Q_l, \pa \Om)  > 4\mu\sqrt{\frac{|\Om|}{N}}, \ \ \ \ \ \ \ \  l\geq 6\mu\sqrt{\frac{|\Om|}{N}}
\end{equation}
and every $\E\in \Cl$ $\mu$-bounded minimizing $N$-cluster the following holds:
\begin{equation}\label{tesi dia}
\Big{|}P(\E;Q_l)- |Q_l| \frac{P(H)}{2} \sqrt{\frac{N}{|\Om|}}\Big{|} \leq CP(Q_l)\mu.
\end{equation}
\end{theorem}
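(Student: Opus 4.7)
Applying Lemma \ref{lower bound lemma} with $O$ a small open neighborhood of $\INT(Q_l)$ contained in $\Om$ (and letting it shrink to $Q_l$) immediately yields
\[
P(\E;Q_l)\geq |Q_l|\,\frac{P(H)}{2}\sqrt{\tfrac{N}{|\Om|}} - P(Q_l),
\]
which proves the $\geq$ half of \eqref{tesi dia} since $\mu\geq \diam(H)>1$ lets the error $P(Q_l)$ be absorbed into $CP(Q_l)\mu$.

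\textbf{Upper bound: strategy by comparison.} Write $\de:=|\Om|/N$ and choose the enlargement $d:=2\mu\sqrt\de$; by \eqref{vincoli dia} the cube $Q_{l+2d}$ is compactly contained in $\Om$ and $l\geq 3d$. The hypothesis $\diam(\E(i))\leq\mu\sqrt\de = d/2$ forces: \emph{(i)} every chamber of $\E$ meeting $Q_l$ is contained in $Q_{l+d}$, and \emph{(ii)} every chamber meeting $\pa Q_{l+d}$ is contained in the annulus $Q_{l+2d}\setminus Q_l$. The plan is to exhibit a competitor $\F\in\Cl$ that agrees with $\E$ outside $Q_{l+2d}$ and satisfies
\[
P(\F;Q_{l+2d})\leq |Q_l|\,\frac{P(H)}{2}\sqrt{\tfrac{N}{|\Om|}}+CP(Q_l)\mu.
\]
Since $\F$ has the same volume vector as $\E$, minimality gives $P(\E;Q_{l+2d})\leq P(\F;Q_{l+2d})$, and the monotonicity $P(\E;Q_l)\leq P(\E;Q_{l+2d})$ then provides the $\leq$ half of \eqref{tesi dia}.

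\textbf{Construction of the competitor.} Following the scheme in Section \ref{sbsct: Brief sketch of the proof}, pick the reference hexagonal tiling $\H_\de$ of $\R^2$, slid so that the integer condition below is satisfied, and let $T$ be the union of the hexagons of $\H_\de$ contained in $Q_l$. Retain from $\E$ every chamber $\E(i)$ with $\E(i)\cap Q_{l+d}=\emptyset$; by \emph{(i)} these lie in $\Om\setminus Q_l$. The residual region
\[
A:=Q_{l+2d}\setminus\bigl(T\cup\bigcup\{\E(i):\E(i)\cap Q_{l+d}=\emptyset\}\bigr)
\]
consists of the strip $Q_l\setminus T$ plus the chambers of $\E$ meeting $Q_{l+d}$, all lying inside $Q_{l+2d}\setminus Q_{l+d/2}$; in particular $|A|\leq C\mu\sqrt\de\,P(Q_l)$. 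Arrange the slide (and, if needed, a single small resizing of one boundary hexagon, costing only $O(\sqrt\de)$ in perimeter) so that $|A|/\de=M\in\N$, then apply Lemma \ref{lemma chirurgia} to $A$ with this $M$. Define $\F$ as the union of $T$, the retained chambers, and this surgery $M$-cluster: $\F\in\Cl$ and $\F=\E$ outside $Q_{l+2d}$. The hexagons of $T$ contribute the principal perimeter $\frac{P(H)}{2}|Q_l|\sqrt{N/|\Om|}$ plus an $O(P(Q_l))$ error from the strip near $\pa Q_l$. Lemma \ref{lemma chirurgia} bounds the surgery perimeter by $C|A|/\sqrt\de+P(A)$, and both summands are $O(\mu P(Q_l))$: the first by the volume estimate on $A$; the second by decomposing $\pa A$ into $\pa Q_{l+2d}$, $\pa T$, and the interface between retained and non-retained chambers of $\E$, the last of which is controlled by the crude cluster bound $P(\E;Q_{l+2d}\setminus Q_{l+d})\leq C\mu P(Q_l)$ obtained by comparing $\E$ globally with a hexagonal-grid competitor.

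\textbf{Main obstacle.} The delicate step is the simultaneous enforcement of two rigidities: the volume-matching $|A|/\de\in\N$ (imposed by the requirement that every chamber of $\Cl$ has area exactly $\de$) and the perimeter accounting for $P(A)$, which must not exceed the $CP(Q_l)\mu$ budget. The first is handled by sliding $\H_\de$ and, if necessary, resizing a boundary hexagon; the second requires the layered decomposition above together with the $\mu$-boundedness hypothesis, which keeps the retained chambers inside a controlled annulus. Verifying that these estimates close up to the claimed error is the core technical work of the proof.
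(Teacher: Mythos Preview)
The lower bound is correct and matches the paper. The overall architecture of the upper bound --- remove chambers near $Q_l$, insert a hexagonal grid, use Lemma~\ref{lemma chirurgia} for the surgery --- is also the paper's scheme (Proposition~\ref{upper diametro}). However, your perimeter accounting has a genuine gap.

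The problem is the term $P(A)$ (equivalently, the contribution $P(\E;Q_{l+2d}\setminus Q_{l+d})$ from the interface between retained and removed chambers). You claim this is $O(\mu P(Q_l))$ ``by comparing $\E$ globally with a hexagonal-grid competitor''. But a global comparison only yields $P(\E)\le C\sqrt{N|\Om|}$, which is of a completely different order than $\mu\,l$; there is no way to localize that bound to a thin annulus without already knowing the upper bound you are trying to prove. Your localization to $Q_{l+2d}$ therefore leads to a circular estimate.

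The paper avoids estimating $P(A)$ altogether by making it \emph{cancel}. One compares \emph{globally}, $P(\E)\le P(\F)$, and writes
\[
P(\F)=P(\F_0)+P(\F_1)+P(\F_2)-P(A),
\]
where $\F_0$ is the sub-cluster of retained chambers, $\F_1$ the hexagonal grid, and $\F_2$ the surgery cluster from Lemma~\ref{lemma chirurgia}. The $+P(A)$ in the surgery estimate cancels exactly against the $-P(A)$ from the cluster-union formula. The key remaining observation is that, by $\mu$-boundedness, every retained chamber avoids $Q_l$, so all boundary of $\F_0$ lies in $Q_l^c$ and hence $P(\F_0)\le P(\E;Q_l^c)$. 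Subtracting from $P(\E)$ gives the bound on $P(\E;Q_l)$ directly, with no annulus term to control.

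A minor remark: your concern about the integer condition $|A|/\de\in\N$ is unnecessary. Since the removed chambers (those meeting $Q_{l+d}$) are all contained in $Q_{l+2d}$ and the retained chambers are disjoint from $Q_l\supset T$, your set $A$ is exactly $(\text{removed chambers})\setminus T$, whence $|A|=(\#\text{removed}-\#\text{hexagons})\,\de$ automatically. No sliding or resizing of hexagons is needed.
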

In this class we can exploit the advantage that the chambers cannot be ``too long". Note that what really matters in Theorem \ref{Equi dia} is how small is the size of the expected chambers $(N^{-\frac{1}{2}})$ compared to the size of the cube $Q_l$ that we are considering.
\begin{remark}\label{remark: come mai ci va il mu0 nel teorema sui mu-bounded}
\rm{
It may seems that the restriction $\mu>\diam(H)$ is a disadvantage in all the eventual situations where a very small diameter is attained. But the point is that the class of $\mu$-bounded $N$-cluster of $\Om$ is empty when $\mu$ is too small. Indeed, thanks to the planar isodiametric inequality \eqref{eqn: chapter intro isodiametric} we have that 
$$\diam(\E(i))\geq 2\sqrt{\frac{|\E(i)|}{\pi}},$$
so 
$$\mu\geq \frac{2}{\sqrt{\pi}}.$$
Thus it is not restrictive to require $\mu>\mu_0$ for some universal constant $\mu_0$ and the choice of $\mu_0=\diam(H)$ is just the most convenient one.
}
\end{remark}
\begin{remark}\label{remark: servono informazioni su mu}
\rm{
Note that each $N$-cluster is a $\mu$-bounded minimizing $N$-cluster with
$$\mu:=\sqrt{\frac{N}{|\Om|}}\max\left\{\diam(\E(i)) \ | \ i=1,\ldots,N\right\}.$$
The fact that $\mu$ appears in the right-hand side of \eqref{tesi dia} means that, without a good information about $\mu$, at an  asymptotic level the estimate is meaningless. For example if we only know that $\mu\leq N^{\frac{1}{2}}$, we get
$$\Big{|}P(\E;Q_l)- |Q_l| \frac{P(H)}{2} \sqrt{\frac{N}{|\Om|}}\Big{|} \leq CP(Q_l)N^{\frac{1}{2}}$$
which does not carry any information.  The optimal situation, when the Theorem becomes sharp, is attained when $\mu$ is of order of a constant, meaning that each chamber has diameter really of order $N^{-\frac{1}{2}}$. Let us also point out that, since $\mu\sqrt{\frac{|\Om|}{N}}$ is the size of each chamber both the restrictions on $l$ appearing in Theorem \ref{Equi dia} are sharp.  
}
\end{remark}

We premise the geometric construction of a competitor, working on every $\mu$-bounded $N$-cluster.

\subsection{Construction of a competitor}
\begin{proposition}\label{upper diametro}
Let $\Om$ be an open and bounded set with finite perimeter. There exists a universal constant $C_0$  with the following property. For every $\mu\geq \diam(H)$ , every closed cube $Q_l\cc \Om$ with 
$$d( \pa Q_l, \pa \Om)>4\mu\sqrt{\frac{|\Om|}{N}}, \ \ \ \ \ l\geq 6\mu\sqrt{\frac{|\Om|}{N}}$$
every $\mu$-bounded $N$-cluster $\E\in \Cl$ there exists an $N$-cluster $\F\ in \Cl$ for which the following estimate holds:
\begin{equation}\label{stima con diametro}
P(\F)\leq |Q_l| \frac{P(H)}{2}\sqrt{\frac{N}{|\Om|}}+P(\E;Q_{l}^c)+C_0 P(Q_l)\mu.
\end{equation}
\end{proposition}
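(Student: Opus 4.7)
First I would set $\eta:=\sqrt{|\Om|/N}$ and $d:=2\mu\eta$. The assumption $d(\pa Q_l,\pa\Om)>4\mu\eta$ guarantees $Q_{l+d+2\mu\eta}\cc\Om$, while $l\ge 6\mu\eta$ forces $l+d\le 4l/3$. Partition the chambers of $\E$ into $S_{\rm in}:=\{i:\E(i)\cc Q_{l+d}\}$, $S_{\rm out}:=\{i:\E(i)\cap\pa Q_{l+d}\ne\emptyset\}$, $S_{\rm far}:=\{i:\E(i)\cap Q_{l+d}=\emptyset\}$ and set $U_{\rm out}:=\bigcup_{i\in S_{\rm out}}\E(i)$. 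The decisive consequence of $\mu$-boundedness is that every $\E(i)$ with $i\in S_{\rm out}$ sits inside a ball of radius $\mu\eta$ centered at a point of $\pa Q_{l+d}$; hence
\[
U_{\rm out}\subset Q_{l+4\mu\eta}\setminus Q_l,\qquad U_{\rm out}\cap\INT Q_l=\emptyset,
\]
which is precisely the geometric fact that will let me separate the hexagonal grid from the long chambers.

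Next, I would construct $\F$. Place the tiling $\H_{\eta^2}$ of $\R^2$ by regular hexagons of area $\eta^2$ (oriented as in Figure~\ref{fig reference}) and let $T_l$ be the hexagons whose closure is contained in $Q_l$. Using $\diam(H)\le\mu$ one verifies that $G:=\bigcup T_l$ contains $Q_{l_0}$ with $l_0:=l-2\diam(H)\eta\ge l-2\mu\eta>0$. Define
\[
R:=Q_{l+d}\setminus U_{\rm out},\qquad A:=R\setminus G,\qquad M_s:=|S_{\rm in}|-|T_l|,
\]
so $|A|=M_s\eta^2\ge 0$ and $A\subset Q_{l+d}\setminus Q_{l_0}$, two concentric cubes of annular thickness at most $4\mu\eta$; in particular $|Q_{l+d}\setminus Q_{l_0}|\le C\mu\eta\,P(Q_l)$. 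When $M_s>0$, Lemma~\ref{lemma chirurgia} applied to $A$ with $M=M_s$ produces a cluster $\E_s$ supported in $A$, with $|\E_s(j)|=\eta^2$ for every $j$ and
\[
P(\E_s)\le C\,|Q_{l+d}\setminus Q_{l_0}|\,\sqrt{M_s/|A|}+P(A)\le C_1\mu\,P(Q_l)+P(A);
\]
if $M_s=0$ then $|A|=0$ and no surgery is needed. Finally define $\F$ to coincide with $\E$ on $Q_{l+d}^c\cup\pa Q_{l+d}$, to use the chambers $\E(i)$ for $i\in S_{\rm out}\cup S_{\rm far}$, the hexagons of $T_l$, and the chambers of $\E_s$. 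The chamber count is $|S_{\rm far}|+|S_{\rm out}|+|T_l|+M_s=|S_{\rm far}|+|S_{\rm out}|+|S_{\rm in}|=N$, every new chamber has volume $\eta^2=|\Om|/N$, and $\F(0)=\Om^c$, so $\F\in\Cl$.

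For the perimeter estimate, note that $\F=\E$ outside $\INT Q_{l+d}$, so
\[
P(\F)\le P(\F;\INT Q_{l+d})+P(Q_{l+d})+P(\E;Q_{l+d}^c),
\]
with $P(Q_{l+d})\le C\,P(Q_l)$ absorbing the possibly non-trivial contribution on $\pa Q_{l+d}$. Inside $\INT Q_{l+d}$ I would decompose $\pared\F$ according to whether a point lies in $\INT G$, in $\INT A$, on $\pa G\cap\pa A$, or in $\overline{U_{\rm out}}$; the a priori fifth class $\pa G\cap\pa U_{\rm out}$ is empty because $G\subset Q_l$ and $U_{\rm out}\cap\INT Q_l=\emptyset$. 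The hex-hex interfaces contribute at most $\frac12|T_l|P(H)\eta\le\frac{|Q_l|P(H)}{2}\sqrt{N/|\Om|}$, the expected main term. The internal surgery interfaces contribute $P(\E_s)-P(A)\le C_1\mu\,P(Q_l)$, where $P(A)$ cancels cleanly. The hex-surgery interfaces on $\pa G$ have total length $\le P(G)\le C\,P(Q_l)$ since $G$ is a hexagonal patch inscribed in $Q_l$. Finally, all remaining pieces (those touching $\overline{U_{\rm out}}$) coincide $\H^1$-almost everywhere, via~\eqref{peri N-cluster}, with a sub-collection of $\pared\E\cap\overline{U_{\rm out}}\cap\INT Q_{l+d}\subset\pared\E\cap(Q_{l+d}\setminus Q_l)$, so their total length is at most $P(\E;Q_{l+d}\setminus Q_l)$. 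Summing the four contributions with the boundary term and invoking the additivity $P(\E;Q_{l+d}\setminus Q_l)+P(\E;Q_{l+d}^c)=P(\E;Q_l^c)$ yields~\eqref{stima con diametro}.

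The main obstacle is the interface accounting just sketched. Using~\eqref{peri N-cluster} applied to the triple $(G,A,U_{\rm out})$, one must check that each point of $\pared\F\cap\INT Q_{l+d}$ is assigned once to exactly one of the four categories; that the identity $P(\E_s)=I_A+P(A)$ produces the cancellation of the unwanted $P(A)$ coming from Lemma~\ref{lemma chirurgia}; and, most delicately, that the part of $\pared\F$ meeting $\overline{U_{\rm out}}$ is geometrically contained, with multiplicity one, in $\pared\E$ restricted to the annulus $Q_{l+d}\setminus Q_l$. The threshold $d=2\mu\eta$ is exactly the minimal cushion that forces $\overline{U_{\rm out}}\cap\INT Q_l=\emptyset$, so that combining with $P(\E;Q_{l+d}^c)$ reproduces the target $P(\E;Q_l^c)$ rather than a strictly larger quantity.
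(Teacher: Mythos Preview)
Your proposal is correct and follows essentially the same strategy as the paper: enlarge $Q_l$ by a frame of width $d=2\mu\sqrt{|\Om|/N}$, keep the chambers that stick out of $Q_{l+d}$ unchanged, replace the interior chambers by a hexagonal grid of area $|\Om|/N$ plus a surgery furnished by Lemma~\ref{lemma chirurgia}, and estimate. The only cosmetic differences are that you \emph{inscribe} the hexagons in $Q_l$ (closures contained in $Q_l$) whereas the paper \emph{covers} $Q_l$ with hexagons that may protrude slightly, and that you do the perimeter bookkeeping by classifying interfaces in $\INT Q_{l+d}$ directly, while the paper packages everything as $P(\F)=P(\F_0)+P(\F_1)+P(\F_2)-P(A)$ together with the single clean bound $P(\F_0)\le P(\E;Q_l^c)$ (valid because every chamber in $\F_0$ misses $Q_l$ by $\mu$-boundedness). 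Both routes yield~\eqref{stima con diametro} with the same dependence on $\mu$.
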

\begin{proof}
Thanks to the assumption on $d( \pa Q_l,\pa \Om)$, setting $d:=2\mu \sqrt{\frac{|\Om|}{N}}$, we can consider the cube $Q_{l+d}$ concentric to $Q_l$  and still have $Q_{l+d}\cc \Om$ (see Figure \ref{Diam1}). With this choice, since $\E$ is a $\mu$-bounded $N$-cluster, every chamber intersecting $Q_{l+d}^c$ does not intersect $Q_{l}$. Set
\begin{equation}
\left\{
\begin{array}{ll}
I_{l,d}&=\ \left\{i\in \{1,\ldots ,N\} \ | \ \E(i) \cc Q_{l+d} \right\},\\
k(l,d)&= \#(I_{l,d}),\\
\end{array}
\right. \,
\end{equation}
and let us define 
$$\F_0=\{\E(i) \ | \ \E(i)\cap Q_{l+d}^c\neq \emptyset\}.$$
We now remove all the $k$ chambers $\E(i)$ compactly contained into $Q_{l+d}$ and we completely cover $Q_l$ with an hexagonal grid. We can be sure that this hexagonal grid does not overlap $\F_0$ since $\mu\geq \diam(H)$. 

\begin{figure}[h!]
\centering
\subfloat[][\emph{We place the cube $Q_l\cc \Om$ and we suitably enlarge it in a cube $Q_{l+d}$.}]
{\includegraphics[width=.7\columnwidth]{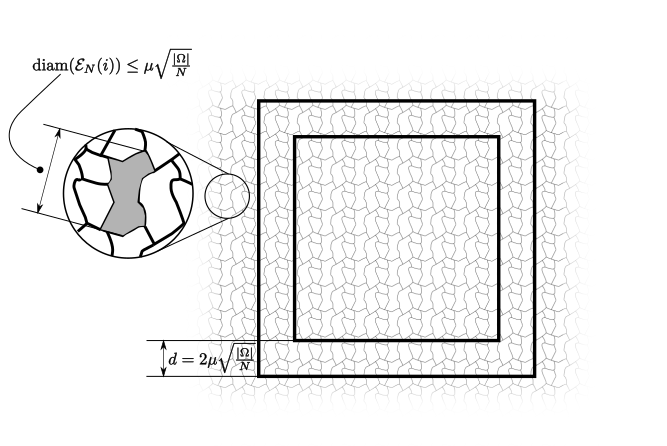}\label{Diam1} }\quad
\subfloat[][\emph{We remove all the chambers compactly contained into $Q_{l+d}$ and we cover $Q_l$ with an hexagonal grid. The request $\mu>\diam(H)$ and the choice of $d$ ensure us that the grid do not overlap the remaining chambers}]
{\includegraphics[width=.47\columnwidth]{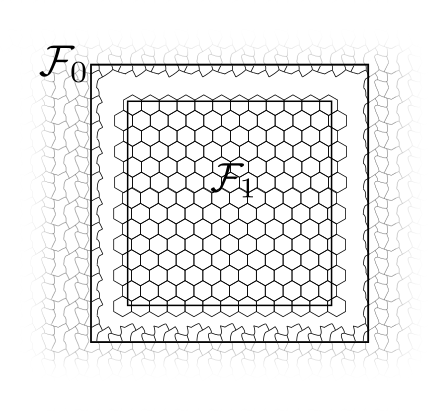}\label{Diam2}}\quad
\subfloat[][\emph{We apply Lemma \ref{lemma chirurgia} to build a partition of the remaining part of $Q_{l+d}\setminus Q_l$}.]
{\includegraphics[width=.47\columnwidth]{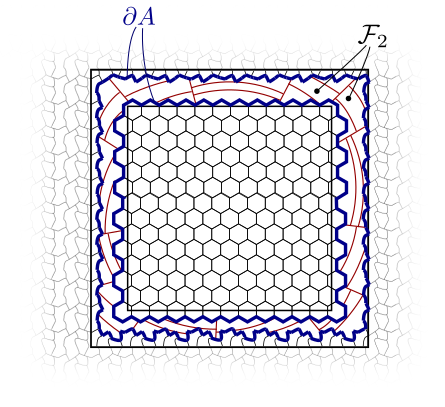}\label{Diam3}}
\caption{The construction of the $N$-cluster $\F$ in the proof of Proposition \ref{upper diametro}.}
\end{figure}
Denote with $h$ the total number of hexagons needed to completely cover $Q_l$. We build our covering in a way that each hexagon intersects $Q_l$ (see Figure \ref{Diam2}). In this way the hexagonal grid is completely contained into $Q_{l+\diam(H)\sqrt{\frac{|\Om|}{N}}}$ and so, since 
$$l\geq 6\mu\sqrt{\frac{|\Om|}{N}}\geq 6\diam(H)\sqrt{\frac{|\Om|}{N}}, $$
we obtain
\begin{eqnarray*}
h\frac{|\Om|}{N}&\leq & \left(l+\diam(H)\sqrt{\frac{|\Om|}{N}}\right)^2 \leq  l^2+Cl\sqrt{\frac{|\Om|}{N}} 
\end{eqnarray*}
where $C$ is a universal constant. Hence
\begin{equation}\label{quanti h1}
h \leq  l^2\frac{N}{|\Om|}+Cl\sqrt{\frac{N}{|\Om|}}.
\end{equation}
Denote this hexagonal grid with $\F_1$. The perimeter of $\F_1$ is estimated by
$$P(\F_1)\leq h \frac{P(H)}{2}\sqrt{\frac{|\Om|}{N}}+P(\F_1(0)).$$
It is straightforward that $P(\F_1(0))\leq Cl$ for a universal constant $C$. Thus, thanks to \eqref{quanti h1}, we reach:
\begin{equation}\label{qua}
P(\F_1)\leq l^2 \frac{P(H)}{2}\sqrt{\frac{N}{|\Om|}}+Cl,
\end{equation}
for a universal constant $C$.\\

After the construction of $\F_1$ and $\F_0$ we are left to partition the open set 
$$A=\left(\bigcup_{i\in I_{l,d}}\E(i)\right) \setminus \left(\bigcup_{i=1}^h \F_1(i)\right)$$
(evidenced in blue in Figure \ref{Diam3}). We here make use of Lemma \ref{lemma chirurgia} and we divide $A$ into $(k-h)$ chambers. Note that
$$|A|=k\frac{|\Om|}{N}-h\frac{|\Om|}{N},$$
and thus
\begin{equation}\label{key remark}
\frac{|A|}{k-h}=\frac{|\Om|}{N}.
\end{equation}
Since the set $A$ is contained into $Q_{l+d}\setminus Q_l$ we can apply Lemma \ref{lemma chirurgia} with $Q_0=Q_l$,$Q_1=Q_{l+d}$ and discover that there exists a $(k-h)$-cluster $\E_{k-h}\in \text{Cl}(k-h,A)$ such that 
$$P(\E_{h-k})\leq |Q_{l+d}\setminus Q_l| \sqrt{\frac{(k-h)}{|A|}}+P(A),$$
which, thanks to \eqref{key remark} and since $d<l$, means
\begin{equation} \label{three3-a}
P(\E_{h-k};A)\leq 3ld \sqrt{\frac{N}{|\Om|}}+P(A).
\end{equation}
Setting $\F_2=\E_{k-h}$ and
$$\F=\F_0 \cup \F_1 \cup \F_2,$$ 
clearly $\F\in \Cl$. Notice that (see Figures \ref{Diam2},\ref{Diam3})
\begin{align*}
P(\F)&=P(\F_0)+P(\F_1)+P(\F_2) -P(A).
\end{align*}
Furthermore by exploiting \eqref{qua}, \eqref{three3-a} we obtain
\begin{align*}
P(\F)&=P(\F_0)+P(\F_1)+P(\F_2) -P(A)\\
&\leq  P(\F_0)+l^2 \frac{P(H)}{2}\frac{N}{|\Om|}+3ld \sqrt{\frac{N}{|\Om|}}+Cl.
\end{align*}
Since, by construction, it holds 
$$P(\F_0) \leq P(\E;Q_l^c),$$
by recalling that $d=2\mu\sqrt{\frac{|\Om|}{N}}$ we obtain:
\begin{eqnarray*}
P(\F)&\leq &l^2\frac{P(H)}{2}\frac{N}{|\Om|}+P(\E;Q_l^c)+3ld \sqrt{\frac{N}{|\Om|}}+Cl\\
&\leq &l^2\frac{P(H)}{2}\frac{N}{|\Om|}+ P(\E;Q_l^c)+Cl(\mu+1)\\
&\leq & l^2\frac{P(H)}{2}\frac{N}{|\Om|}+P(\E;Q_l^c)+Cl\mu\left(1 +\frac{1}{\diam(H)}\right)
\end{eqnarray*}
 Setting $C_0=C\left(1+\frac{1}{\diam(H)}\right),$ we get the thesis \eqref{stima con diametro}.
\end{proof}

\subsection{Proof of Theorem \ref{Equi dia}}
\begin{proof}[Proof of Theorem \ref{Equi dia}]
Let $C_0$ be the constant given by Proposition \ref{upper diametro}. Let $\mu>\diam(H)$, $Q_l\cc \Om$ be a closed cube satisfying \eqref{vincoli dia} and $\E\in \Cl$ be a $\mu$-bounded minimizing $N$-cluster for $\Om$. Thanks to Proposition \ref{upper diametro} we can find an $N$-cluster $\F\in \Cl$ for which it holds:
$$P(\F)\leq |Q_l|\frac{P(H)}{2} \sqrt{\frac{N}{|\Om|}}+C_0P(Q_l)\mu+P(\E;Q_l^c).$$
By exploiting the minimality of $\E$ we obtain 
$$P(\E)\leq P(\F)$$
which leads to
\begin{equation}\label{fine dia su}
P(\E;Q_l)\leq |Q_l| \frac{P(H)}{2} \sqrt{\frac{N}{|\Om|}}+C_0P(Q_l)\mu.
\end{equation}
Proposition \ref{lower bounded} ensures that on $Q_l$ it holds 
$$P(\E;Q_l)\geq |Q_l|\frac{P(H)}{2}\sqrt{\frac{N}{|\Om|}}-P(Q_l),$$
and hence
\begin{equation}\label{fine dia giu}
P(\E;Q_l)\geq|Q_l|  \frac{P(H)}{2}\sqrt{\frac{N}{|\Om|}}-\frac{\mu^2}{\diam(H)^2}P(Q_l).
\end{equation}
Up to choosing $C=\max\left\{C_0, \frac{1}{\diam(H)^2}\right\}$, by combining\eqref{fine dia giu} and \eqref{fine dia su} we achieve the proof.
\end{proof}
\section{Uniform distribution for indecomposable minimizing clusters} \label{sct equi indecomposable}

Since getting information about the diameter of the chambers seems to be a very hard task we provide a second result, which applies on a possibly wider class of minimizing $N$-clusters.

\begin{definition} \label{indecomposable minimizers}
An $N$-Cluster $\E\in \Cl$ is said to be an \textit{indecomposable $N$-Cluster for $\Om$} if each chamber $\E(j)$ is an indecomposable set of finite perimeter. If $\E$ is also a minimizing $N$-cluster we call it an \textit{indecomposable minimizing $N$-cluster for $\Om$}.
\end{definition}
On this class the following result holds.
\begin{theorem}\label{Equi indeco}
Let $\Om$ be an open bounded set with Lipschitz boundary and $0\leq \b<\frac{1}{2}$ be a positive real number. Then there exist three positive constant $\eta,\l, C$ depending only on $\b$ and on the shape of $\Om$ with the following property. For every cube $Q_l\cc \Om$ with
\begin{equation}\label{vincoli indeco main thm}
d( \pa Q_l, \pa \Om)  > \eta\sqrt{|\Om|} N^{-\frac{1}{6}}, \ \ \ \ \ \ \ \  l\geq \l\sqrt{|\Om|} N^{-\b}
\end{equation}
and for every indecomposable minimizing $N$-cluster $\E\in \Cl$ the following holds
\begin{equation}\label{tesi indeco main thm}
\Big{|}P(\E;Q_l)- \frac{P(H)}{2}|Q_l| \sqrt{\frac{N}{|\Om|}}\Big{|} \leq C P(Q_l)^{\frac{3}{2}} \left(\frac{N}{|\Om|}\right)^{\frac{1}{4}}.
\end{equation}
\end{theorem}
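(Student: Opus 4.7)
The plan is to parallel the proof of Theorem \ref{Equi dia}: Lemma \ref{lower bound lemma} already supplies the lower bound in \eqref{tesi indeco main thm}, so the content is a matching upper bound, produced by exhibiting a carefully engineered competitor to the minimiser $\E$. The obstacle relative to the $\mu$-bounded setting of Proposition \ref{upper diametro} is that an indecomposable chamber may be arbitrarily elongated, ``threading'' through any enlarged cube $Q_{l+d}$ around $Q_l$. The key replacement for the diameter bound is Proposition \ref{indeco piano}: each indecomposable chamber satisfies $\diam \E(i)^{(1)} \leq P(\E(i))/2$, so long chambers are perimeter-expensive, and hence sparse.

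\textbf{A priori global bound.} First I would derive $P(\E)\leq C\sqrt{N|\Om|}$ for some $C=C(\Om)$, by comparing $\E$ with a crude hexagonal-type competitor built via Proposition \ref{upper diametro} (applied with large $\mu$ on a cube enclosing $\Om$) and trimmed back to $\Om$ through Lemma \ref{lemma chirurgia}; the Lipschitz regularity of $\pa\Om$ enters precisely in controlling the constant $C$ at this stage. Combined with Proposition \ref{indeco piano} this yields, for every $\de>0$,
\[
\#\{i\in\{1,\ldots,N\}:\diam(\E(i)^{(1)})\geq\de\}\leq C\sqrt{N|\Om|}/\de,
\]
so that the ``long'' chambers are few provided $\de$ is not too small.

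\textbf{Construction of the competitor.} Fix $Q_l\cc\Om$ and a parameter $d$ to be optimised at the end; the hypotheses on $\mathrm{dist}(\pa Q_l,\pa\Om)$ permit the enlargement $Q_{l+d}\cc\Om$. Classify the chambers of $\E$ as \emph{interior} ($\E(i)\cc Q_{l+d}$), \emph{exterior} ($\E(i)\cap Q_l=\emptyset$), and \emph{long} (meeting both $Q_l$ and $Q_{l+d}^c$); the a priori bound gives at most $C\sqrt{N|\Om|}/d$ long chambers. I build $\F$ by keeping exterior chambers intact, keeping each long chamber except possibly for a mild adjustment near $\pa Q_{l+d}$, discarding all interior chambers, tiling $Q_l\setminus\bigcup_{i\,\mathrm{long}}\E(i)$ with hexagons of area $|\Om|/N$ (a rescaled copy of $\H_{|\Om|/N}$), and invoking Lemma \ref{lemma chirurgia} on the residual annular region inside $Q_{l+d}\setminus Q_l$ to re-install chambers of mass $|\Om|/N$; a bounded number of surgery chambers is then slightly deformed to absorb small mass defects and make every chamber of $\F$ of volume exactly $|\Om|/N$.

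\textbf{Balancing and main obstacle.} The perimeter of $\F$ splits into the hexagonal grid contribution $|Q_l|(P(H)/2)\sqrt{N/|\Om|}+O(l)$, the interference boundary between grid and long chambers, the surgery cost $O(ld\sqrt{N/|\Om|})+O(l)$ from Lemma \ref{lemma chirurgia}, and the unchanged $P(\E;Q_{l+d}^c)$, which cancels against $P(\E)$ by the minimality $P(\E)\leq P(\F)$. The interference and volume-restoration terms are controlled through the sparsity $C\sqrt{N|\Om|}/d$ of long chambers together with a Cauchy--Schwarz-type trade between their number and their contribution to $P(\E;Q_l)$. Optimising $d$ to balance the resulting two error terms, and verifying that the optimal $d$ is admissible because $d\leq\mathrm{dist}(\pa Q_l,\pa\Om)\sim\sqrt{|\Om|}N^{-1/6}$ and $l\geq\la\sqrt{|\Om|}N^{-\b}$ with $\b<1/2$, produces an error of the claimed form $C P(Q_l)^{3/2}(N/|\Om|)^{1/4}$. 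I expect the main obstacle to be precisely this long-chamber bookkeeping: ensuring that the perimeter of their tentacles in $Q_l$, the mass defect from cutting them, and the mismatch with the rigid hexagonal grid are all absorbed without breaking the scaling, and thereby producing the exponents $3/2$ on $P(Q_l)$ and $1/4$ on $N/|\Om|$ rather than something worse.
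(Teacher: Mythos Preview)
Your overall strategy (lower bound from Lemma \ref{lower bound lemma}, upper bound via a competitor, and control of long chambers through Proposition \ref{indeco piano}) matches the paper, and your a priori global bound is exactly Proposition \ref{trivial estimate}. However, there is a genuine gap: a \emph{single} application of the competitor construction, fed only with the global estimate $P(\E)\le C\sqrt{N|\Om|}$, cannot produce the error $C\,P(Q_l)^{3/2}(N/|\Om|)^{1/4}$ at all scales allowed by the theorem. If you run the computation (normalising $|\Om|=1$), the number of long chambers is $m\lesssim \sqrt{N}/d$, and balancing the surgery cost $ld\sqrt N$ against the long-chamber cost forces $d\sim (P/(lN))^{1/2}$ with $P=P(\E;\Om')$; the resulting error is $\sqrt{Pl}$. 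With the \emph{global} $P\sim\sqrt N$ this gives an error $\sim l^{1/2}N^{1/4}$, not $l^{3/2}N^{1/4}$, and moreover the admissibility constraint $d\lesssim N^{-1/6}$ then limits you to $l\gtrsim N^{-1/6}$, i.e.\ only $\b\le 1/6$. So the single-step argument you sketch proves a strictly weaker statement.

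What the paper does (and what your outline is missing) is an \emph{iteration}: the construction is packaged as a bootstrap lemma (Proposition \ref{bootstrap}) that, given any localised bound $P=P(\E;\Om')$, returns a competitor with error $C_0\sqrt{Pl}$. Starting from the global bound one first obtains the rough estimate $P(\E;Q_l)\le P_0\,l^2\sqrt N$ on cubes of side $l\ge \tau_1 N^{-1/6}$; feeding this back in with $\Om'=Q_L$ pushes the rough estimate down to $l\ge \tau_2 N^{-\a_2}$, and so on along the sequence $\a_{k+1}=\tfrac23\a_k+\tfrac16\to\tfrac12$ (this is Proposition \ref{upper indeco}, and is precisely where the exponent $1/6$ in the distance hypothesis originates, cf.\ Remark \ref{remark su un sesto}). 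Only once the rough localised bound $P(\E;Q_{l+d})\le P_0(l+d)^2\sqrt N$ is available does one apply the bootstrap a last time with $\Om'=Q_{l+d}$ and $d\sim l^{1/2}N^{-1/4}$ to obtain the sharp error $\sqrt{Pl}\sim l^{3/2}N^{1/4}$ (Proposition \ref{upper indeco quello che serve davvero}). As a secondary remark, your construction keeps the long tentacles inside $Q_l$ and tiles around them, whereas the paper cuts them on $\partial Q_{l+d}$ and rebuilds them as small squares in a sub-stripe of the frame (Figure \ref{figura2}); the latter avoids the interference bookkeeping you flag as the main obstacle, but fixing that alone would still leave you with the missing iteration.
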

In this case we follow the simple idea that, the longer is the chamber the bigger will be its contribution to the global perimeter. An a priori estimate (Proposition \ref{trivial estimate}) on the global energy $P(\E)$ allows us to control the number of the bad chambers and leads us to the sought upper bound on $P(\E;Q_l)$. 

\begin{remark}
\rm{
We need $\Om$ to have Lipschitz boundary in order to achieve the proof of Proposition \ref{trivial estimate} (a key step in the proof of Theorem \ref{Equi indeco}).
}
\end{remark}

\begin{remark}\label{remark3}
\rm{We are going to explain in Remark \ref{remark su un sesto} below where the exponent $\frac{1}{6}$ in the hypothesis on the distance between $\pa Q_l$ and $\pa \Om$ comes from.
}
\end{remark}

\begin{remark}
\rm{
Let us remark that the existence of indecomposable minimizing $N$-cluster is actually an open problem though, intuitively, there are good reasons that underline that the class defined in \eqref{indecomposable minimizers} is not empty. In many situations it could happen that the chambers in the proximity of $\pa \Om$ decide to split in order to compensate the effect of a possibly irregular boundary. As an example consider the case when $\Om$ is an open square $Q$ (with $|Q|=N$) union two disjoint thin open rectangles $R_1,R_2$ of area $\frac{1}{2}$, height $t$, length $\frac{1}{2t}$ (see Figure \ref{controesempioconnessione}). If we consider $\E$ a minimizing $(N+1)$-cluster for $\Om$ it is not clear whether it is convenient for $\E$ to be indecomposable or to have a chamber with two big indecomposable components given by $R_1$ and $R_2$.\\

It is reasonable to expect that on every fixed ambient space $\Om$, for $N$ sufficiently large this behavior is avoided at least for every chambers far enough from the boundary. We could have enlarged our class a bit more by requiring the indecomposability only for those chambers lying at a distance $d(N)$ (decaying in $N$) from the boundary of the ambient space, but since our arguments will work in the same way we prefer, for the sake of clarity, not to add this more technical restriction.
\begin{figure}
\begin{center}
 \includegraphics[scale=0.7]{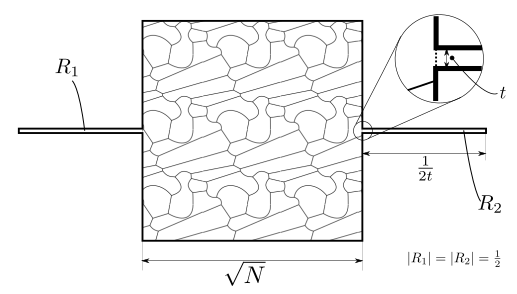}\caption{{For this ambient space $\Om$ and for $t$ small enough the $(N+1)$-minimizers could have at least an indecomposable chamber.}}\label{controesempioconnessione}
\end{center}
\end{figure}
}
\end{remark}

\subsection{Construction of a competitor.}\label{subsct Equi: indecomposable}
The construction of the competitor in the case of indecomposable $N$-cluster is a slight modification of the one developed in the proof of Proposition \ref{upper diametro}. We first state and prove Proposition \ref{bootstrap} which, for a fixed open set $\Om'\subseteq \Om$ and for a fixed (suitable) cube $Q_l\cc \Om'$, starting from a generic $\E\in \Cl$ gives us an $N$-cluster $\F\in \Cl$ having perimeter 
$$P(\F)\leq |Q_l|\frac{P(H)}{2}\sqrt{N} + P(\E; Q_l^c)+ C_0 \sqrt{P(\E;\Om')}\sqrt{l},$$
for some constant $C_0$. \\

The presence of the localized perimeter $P(\E;\Om')$ in the right-hand side of the previous estimate requires some kind of weak control on the perimeter of $\E$ and thus we need to exploit minimality to complete our construction. This is done in Proposition \ref{upper indeco} where a first rough estimate 
\begin{equation}\label{rough estimate}
P(\E;Q_l)\leq P_0|Q_l|\sqrt{N}
\end{equation}
for a universal constant $P_0$,  is obtained for every closed cube $Q_l\cc \Om$ far enough from $\pa \Om$.  The proof of \eqref{rough estimate} is achieved by combining Proposition \ref{bootstrap} with an estimate on the global energy proved in Proposition \ref{trivial estimate}. \\

We choose a cube $Q_l$ satisfying the hypothesis of Theorem \ref{Equi indeco} and we carefully enlarge it into $Q_{l+d}$. By setting $\Om'=\mathring{Q_{l+d}}$ and by applying again Proposition \ref{bootstrap}, thanks to the rough estimate \eqref{rough estimate} on $P(\E;Q_{l+d})$ (provided a suitable $d$), we build the competitor with the desired energy in Proposition \ref{upper indeco quello che serve davvero}.\\

In the following we are always considering an open set $\Om$ with $|\Om|=1$. In the end, with a scaling argument, we achieve the proof of Theorem \ref{Equi indeco} for a generic set $\Om$.

\begin{proposition}\label{bootstrap}
Let $\Om$ be an open, bounded set with Lipschitz boundary and $|\Om|=1$. There exist universal constants $\eta_0,\lambda_0, C_0$ with the following property. Let $\Om ' \subseteq  \Om$ and let $\E\in \Cl$ be a generic indecomposable $N$-cluster. Set:
\begin{equation}\label{passo di boot}
P:=P(\E;\Om ' )
\end{equation} 
Then for every closed cube $Q_l \cc \Om' $ with 
\begin{equation}\label{1 vincoli su l}
 d(\pa Q_l, \pa \Om ' )  > \eta_0 \sqrt{\frac{P}{l N}}, \ \ \ \ \ \ \ \ l\geq  \l_0\left[ \left(\frac{P}{N}\right)^{\frac{1}{3}}+\frac{1}{\sqrt{N}}\right]
 \end{equation} 
%
there exists $\F\in \Cl$ for which the following estimate holds:
\begin{eqnarray}
P(\F)\leq |Q_l|\frac{P(H)}{2}\sqrt{N} + P(\E; Q_l^c)+ C_0 \sqrt{Pl}. \label{upper only}
\end{eqnarray}
\end{proposition}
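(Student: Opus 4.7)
The plan is to adapt the competitor construction from the proof of Proposition \ref{upper diametro}. Indecomposable chambers may have arbitrarily large diameters, and the new ingredient we need is Proposition \ref{indeco piano}, $P(E)\geq 2\diam(E^{(1)})$ for indecomposable planar sets, which lets us convert the energy bound $P=P(\E;\Om')$ into a bound on the number of chambers that cross a thin annulus surrounding $\pa Q_l$.

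Fix an enlargement $d$ of order $\sqrt{P/(lN)}$; the first inequality in \eqref{1 vincoli su l} ensures $Q_{l+d}\cc\Om'$, and the lower bound on $l$ ensures $d\ll l$. Split the chambers of $\E$ into: \emph{inner} ones $I_{l,d}=\{i:\E(i)\cc Q_{l+d}\}$, \emph{crossing} ones $J=\{i:\E(i)\cap Q_l\neq\emptyset\text{ and }\E(i)\cap Q_{l+d}^c\neq\emptyset\}$, and the rest (disjoint from $Q_l$), which will be left untouched. Each $i\in J$ is indecomposable with two points of $\E(i)^{(1)}$ at distance at least $d/2$ across the annulus, so Proposition \ref{indeco piano} gives $P(\E(i))\geq d$; a slicing argument refines this to $P(\E(i);Q_{l+d}\setminus Q_l)\geq d$, and summing yields
\[
\#J\leq \frac{2P}{d}.
\]

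The cluster $\F\in\Cl$ is then built exactly as in Proposition \ref{upper diametro}, with one change to handle $J$: (a) remove all inner chambers; (b) for each $i\in J$ replace $\E(i)$ by a set $\F(i)$ of area $1/N$ disjoint from $Q_l$, obtained from $\E(i)\setminus Q_{l+d}$ by gluing a small patch in the annulus $Q_{l+d}\setminus Q_l$ so as to restore the area; (c) cover $Q_l$ with a regular hexagonal tiling $\F_1$ of $h\simeq l^2 N$ hexagons of area $1/N$; (d) apply Lemma \ref{lemma chirurgia} to the residual set $A\subset Q_{l+d}\setminus Q_l$ to partition it into $k-h$ chambers of area $1/N$, where $k=\#I_{l,d}$. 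Using $|\Om|=1$ and that $\E$ tiles $\Om$, the identity $|Q_{l+d}|=\sum_i|\E(i)\cap Q_{l+d}|$ forces $|A|=(k-h)/N$, so the volume constraint is met and $\F\in\Cl$. Summing the perimeters,
\[
P(\F)\leq |Q_l|\tfrac{P(H)}{2}\sqrt{N}+P(\E;Q_l^c)+O(l)+O\bigl(ld\sqrt{N}\bigr)+O\bigl(\#J/\sqrt{N}\bigr),
\]
where the three $O$-terms come respectively from the boundary of the hexagonal block, Lemma \ref{lemma chirurgia}, and the patches added in step (b). Choosing $d=c\sqrt{P/(lN)}$ makes the surgery term $ld\sqrt{N}$ and the crossing-patch term $\#J/\sqrt{N}\lesssim P/(d\sqrt{N})$ both of order $\sqrt{Pl}$; the $O(l)$ term is absorbed into $\sqrt{Pl}$ via the lower bound $l\gtrsim 1/\sqrt{N}$ combined with the Hales-type estimate $P\geq P(\E;Q_l)\gtrsim l^2\sqrt{N}$ supplied by Lemma \ref{lower bound lemma}.

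The main obstacle is step (b): one must show that the patches added to $\F(i)$ for $i\in J$ can be placed in the annulus without overlap and while contributing only $O(\#J/\sqrt{N})$ of extra perimeter. The geometry is tight: with the choice $d\sim\sqrt{P/(lN)}$, the bound $\#J\lesssim\sqrt{PlN}$ matches the annular area $ld\sim\sqrt{Pl/N}$ in the sense that each patch gets a ``slot'' of area $\sim 1/N$ along $\pa Q_{l+d}$, exactly the scale of a single chamber. The two lower bounds on $l$ in \eqref{1 vincoli su l}, namely $l\gtrsim(P/N)^{1/3}$ (forcing $d\ll l$) and $l\gtrsim 1/\sqrt{N}$ (controlling the boundary of the hexagonal block), are precisely the constraints needed for this bookkeeping to succeed.
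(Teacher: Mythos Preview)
Your strategy matches the paper's exactly: enlarge $Q_l$ by $d\sim\sqrt{P/(lN)}$, control the number of crossing chambers via Proposition~\ref{indeco piano}, replace their interior parts by small patches of the correct area, tile $Q_l$ with hexagons, and finish with Lemma~\ref{lemma chirurgia}. The obstacle you flag in step~(b) is real, and with only the two cubes $Q_l\cc Q_{l+d}$ it is not just bookkeeping: the ``rest'' chambers (those disjoint from $Q_l$ but meeting $Q_{l+d}^c$) can still occupy arbitrary portions of the annulus, so your patches have no region guaranteed to be free.

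The paper resolves this with a refinement you are missing. It introduces two intermediate cubes $Q_{l+d/4}\cc Q_{l+3d/4}$ and defines the crossing set as
\[
J_{l,d}=\bigl\{j:\E(j)^{(1)}\cap\pa Q_{l+d}\neq\emptyset\ \text{and}\ \E(j)^{(1)}\cap\pa Q_{l+3d/4}\neq\emptyset\bigr\}.
\]
By indecomposability, any chamber not in $I_{l,d}\cup J_{l,d}$ must stay (essentially) outside $Q_{l+3d/4}$, so the middle half-strip $Q_{l+3d/4}\setminus Q_{l+d/4}$ is clear of both the untouched chambers and the hexagonal block. The $m\le 4P/d$ replacement squares are then laid out in an explicit skeleton grid of $m$ boxes of side $1/\sqrt N$ inside a fixed rectangle $Q'\subset Q_{l+3d/4}\setminus Q_{l+d/4}$, and a short computation (using precisely your two lower bounds $l\ge\l_0(P/N)^{1/3}$ and $l\ge\l_0/\sqrt N$) checks that this grid fits. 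Your scaling heuristic is correct, but it does not by itself prevent collisions; the extra layer of cubes is the missing idea that makes the placement rigorous.
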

\begin{remark}
\rm{
Note that assumptions \eqref{1 vincoli su l} implies that Proposition \ref{bootstrap} is meaningless whenever the energy of the indecomposable $N$-cluster for $\Om$ is too much. In particular the restriction on the size of the cube implies that $P(\E;\Om')$ must be less than or equal to $\frac{l^3}{\l_0^3}N$. In particular it could happen that for some ``wrong"choice of $\E$ there are no cubes satisfying restrictions \eqref{1 vincoli su l}. However, we are going to apply Proposition \ref{bootstrap} on the indecomposable minimizing $N$-clusters for $\Om$ where an upper bound on the global energy is always attained (see Proposition \ref{trivial estimate}).
}
\end{remark}
\begin{remark}
\rm{
Note that the exponent $1/3$ in the hypothesis \eqref{1 vincoli su l} on $l$ cannot be improved. Indeed, assume that we are able to prove Proposition \ref{bootstrap} with
 	\begin{equation}\label{non so se e il caso ma comqunque}
  	l\geq  \l_0\left[ \left(\frac{P}{N}\right)^{\a}+\frac{1}{\sqrt{N}}\right],
 	\end{equation}

 for some $\a>1/3$. As we are going to show below in Proposition \ref{upper indeco}, if $\E$ is a perimeter-minimizing $N$-cluster for $\Om$ then $P=P(\E;Q_l)\approx l^2\sqrt{N}$ which means that \eqref{non so se e il caso ma comqunque} on $l$ can be wrote as
	\[
	l\geq l^{2\a } N^{-\a/2}, \ \ \ \ l \geq N^{\frac{-\a}{2-4\a}}.
	\]
Since $l\geq N^{-1/2}$ (it does not make sense to consider cubes smaller than the expected size of the chambers) we are lead to
	\[
	 N^{\frac{1}{2}- \frac{\a}{2-4\a}} \geq 1 \ \Rightarrow \  \a \leq 1/3.
	\]
}
\end{remark}
\begin{proof}[Proof of Proposition \ref{bootstrap}]
Fix $Q_l\cc \Om'$ satisfying \eqref{1 vincoli su l}. Note that, thanks to Proposition \eqref{lower bounded}, on $Q_l$ it holds
\begin{align*}
P(\E;Q_l)&\geq  |Q_l|\frac{P(H)}{2}\sqrt{N}-P(Q_l)=l^2\frac{P(H)}{2}\sqrt{N}-4l ,
\end{align*}
and since $Q_l\cc \Om'$:
\begin{eqnarray*}
P=P(\E;\Om')&\geq &P(\E;Q_l) \\
&\geq &  l^2\frac{P(H)}{2}\sqrt{N}-4l.
\end{eqnarray*}
Hence, by using \eqref{1 vincoli su l} and observing that $l\sqrt{N}\geq \l_0$,
\begin{eqnarray*}
\frac{P}{l}&\geq & \left(\frac{P(H)}{2}\l_0 -4\right).
\end{eqnarray*}
Thus, up to taking $\l_0$ bigger than a universal constant we can always assume
\begin{equation}\label{comoda}
\frac{P}{l}\geq 1.
\end{equation}
Let $d\in \R$ be defined as
\begin{equation}\label{1 vincolo su d}
d:=\eta \sqrt{\frac{P}{l N}},
\end{equation}
for some $\eta$ to be chosen (we postpone the choice of $\l_0, \eta_0, \eta$ to the end of the proof).  Let us set the restriction
$$\frac{l}{100}\geq d$$
in order to be sure that $d$ is much smaller than $l$. This leads to the restriction:
\begin{equation*}
 l\geq (100\eta)^{\frac{2}{3}} \left(\frac{P}{N}\right)^{\frac{1}{3}},
\end{equation*}
which becomes immediately a restriction on $\l_0$
\begin{equation}\label{su lambda 0}
\l_0\geq (100\eta)^{\frac{2}{3}}.
\end{equation}
In this way the concentric closed boxes
$$Q_l\cc Q_{l+\frac{1}{4}d}\cc Q_{l+\frac{3}{4}d} \cc Q_{l+d},$$ 
are all compactly contained into $\Om'$ providing $\eta_0>2\eta$ (see Figure \ref{figura1}).\\

\begin{figure}[h!]
\centering
\subfloat[][\emph{We consider a square $Q_{l}\cc Q_{l+d}\cc \Om' $. The square $Q_l$ is where we perform our construction. We remove all the chambers $\E(i)$ for $i\in I_{l,d}$ (defined in \eqref{number of}) completely lying inside $Q_{l+d}$. We save a small substripe $Q_{l+\frac{3}{4}d}\setminus Q_{l+\frac{1}{4}d}$ where we are going to allocate some small squares $Q_j$ needed to recover the area loss during the construction.}]
{\includegraphics[width=.47\columnwidth]{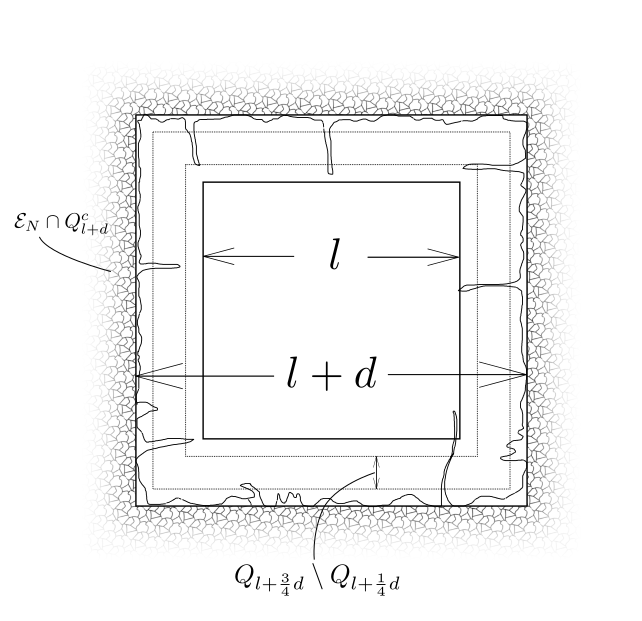}\label{figura1} }\quad
\subfloat[][\emph{We cut away all the tentacles $\E(j)\cap Q_{l+d}$ for $j\in J_{l,d}$ (defined in \eqref{number of}) and we substitute them with squares $Q_j$ inside the rectangle $Q'\subset Q_{l+\frac{3}{4}d}\setminus Q_{l+\frac{1}{4}d}$. We allocate each $Q_j$ inside a square of the pre-allocate grid $\mathcal{Q}_m$. The number of such long chambers $m(l,d)$ (and thus of the chambers of the grid $\mathcal{Q}_m)$ is controlled by the starting upper bound on the localized energy \eqref{passo di boot}.}]
{\includegraphics[width=.47\columnwidth]{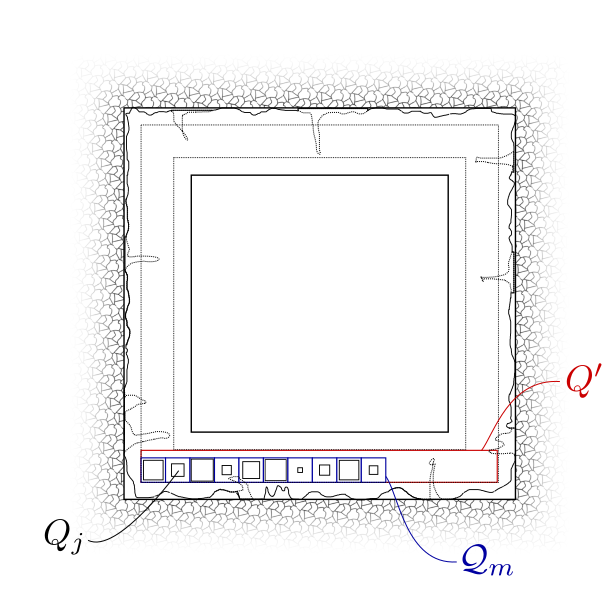}\label{figura2}}\quad
\subfloat[][\emph{We can now cover the square $Q_l$ with an hexagonal grid, without overlapping the other part of the construction.}]
{\includegraphics[width=.47\columnwidth]{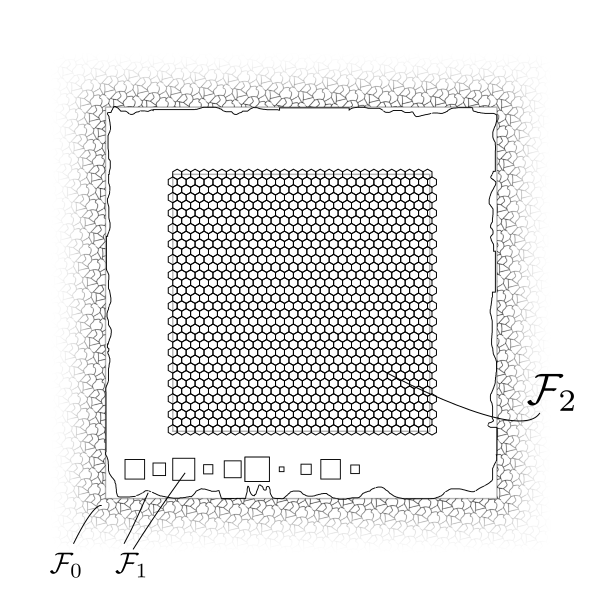}\label{figura3}}\quad
\subfloat[][\emph{We finally get the competitor $\F$, an $N$-cluster of equal area chambers agreeing with $\E$ outside of $Q_{l+d}$ and satisfying \eqref{upper only} by using Lemma \ref{lemma chirurgia} to cover the remaining part of $Q_{l+d}\setminus Q_l$ with chambers having the right amount of area.}]
{\includegraphics[width=.47\columnwidth]{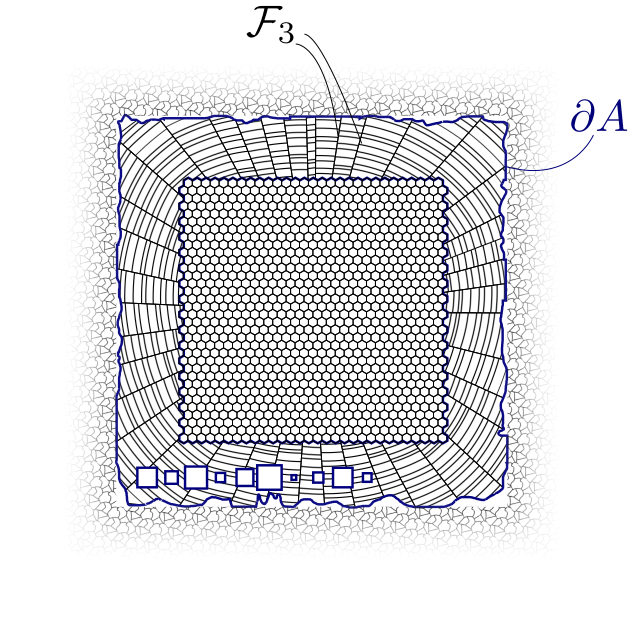}\label{figura6}}
\caption{The construction of the $N$-cluster $\F$ in the proof of Proposition \ref{bootstrap}.}
\end{figure}
Define the sets:
\begin{equation}\label{number of}
\left\{
\begin{array}{ll}
I_{l,d}&=\ \left\{i\in \{1,\ldots ,N\} \ | \ \E(i) \cc Q_{l+d} \right\},\\
k(l,d)&= \#(I_{l,d}),\\
J_{l,d}&:=\left\{j\in\{1,\ldots,N \} \ | \ \E(j)^{(1)} \cap \pa Q_{l+d}\neq  \emptyset \ \text{and} \ \E(j)^{(1)} \cap \pa Q_{l+\frac{3}{4}d}\neq   \emptyset \right\}, \\
m(l,d)&:=\#(J_{l,d}).
\end{array}
\right.
\end{equation}

We now divide the proof into four steps, for the sake of clarity. In the sequel we always adopt the same letter (namely $C$, except for $\eta, \eta_0, \l_0$) for the constants though the value of the constants can change from line to line. Let us set $\de=\frac{1}{N}$.\\
\text{}\\
\textit{Step one: Figure \ref{figura2}. The cluster $\F_0$ and $\F_1$: replacement of the long chambers.} In this step we provide a suitable adjustment of all the $m$ chambers $\E(j)$ for $j\in J_{l,d}$ that are too long. \\

We cut the part of the chambers $\{\E(j)\}_{j\in J_{l,d}}$ lying inside $Q_{l+d}$. After this operation we need to recover the loss of area. Our aim is to recover the area by placing $m$ small cubes with the right amount of area inside $Q'$ (evidenced in red in Figure \ref{figura2}) the lower rectangle of the stripe $Q_{l+\frac{3}{4}d}\setminus Q_{l+\frac{1}{4}d}$. To do that we first place a big grid $\mathcal{Q}_m$ (evidenced in blue in Figure \ref{figura2}) of $m$ boxes of area $\de$ (suitably arranged as in Figure \ref{griglia}) that we are using as skeleton. Inside each box we place a cube $Q_j$ having the right amount of area ($|Q_j|=|\E(j)\cap Q_{l+d}|$ for $j\in J_{l,d}$) and we complete the construction. Clearly, to perform this construction we need to show that there is enough space inside $Q'$. By making use of an estimate on the number $m(l,d)$ and provided $\l_0$ and $\eta$ are big enough, we show that this is the case.\\

Note that  since $\E$ is an indecomposable $N$-cluster, for every $j\in J_{l,d}$ it holds 
$$\diam(\E(j)^{(1)})\geq \frac{d}{4}$$ 
and thus thanks to Proposition \ref{indeco piano} we must have $P(\E(j);Q_{l+d})\geq\frac{d}{2}$. By the trivial upper bound 
$$\sum_{j\in J_{l,d}} P(\E(j);Q_{l+d})\leq 2P(\E; Q_{l+d})\leq 2P$$ 
we obtain
\begin{eqnarray*}
m \frac{d}{2}&\leq & \sum_{j\in J_{l,d}} P(\E(j);Q_{l+d})\leq 2P
\end{eqnarray*}
and thus
\begin{equation}\label{bound m(l,d)}
m\leq 4\frac{P}{d}.
\end{equation}
The total area of the union of the long chambers inside $Q_{l+d}$ is easily estimated from above by 
$$\sum_{j\in J_{l,d}} | \E(j) \cap Q_{l+d} |\leq m \de,$$
which, combined with \eqref{bound m(l,d)}, implies:
\begin{eqnarray}
\sum_{j\in J_{l,d}} | \E(j) \cap Q_l | &\leq & m\de\nonumber\\
 &\leq & 4\frac{P}{d}\de.\ \ \ \label{miley cirus}
\end{eqnarray}
Since we want to cut the long chambers and rebuild them into $Q'$ (evidenced in red in Figure \ref{figura2}) where $|Q'|=\left(l-\frac{d}{2}\right)\frac{d}{2}>\frac{ld}{4}$ (because of $d<\frac{l}{100}<\frac{l}{2}$), it is enough to ensure that
\begin{equation}\label{goal}
\sum_{j\in J_{l,d}} | \E(j) \cap Q_{l+d} |\leq \frac{ld}{4}
\end{equation}
which, thanks to \eqref{miley cirus}, can be obtained as a consequence of
$$4\frac{P}{d}\de \leq  \frac{ld}{4},$$
or equivalently
\begin{equation}\label{correction1}
 d\geq 4\sqrt{\frac{P}{l}\de}.
\end{equation}  
Thanks to the definition of $d$ \eqref{1 vincolo su d}, up to taking $\eta$ bigger than a universal constant (as well as  $\l_0$ according to \eqref{su lambda 0} ) we can always ensure the validity of \eqref{correction1} and thus of \eqref{goal}. \\

We now show how to place the grid $\mathcal{Q}_m$ (see Figure \ref{griglia}). 
\begin{figure}[t!]
\centering
\includegraphics[width=.7\columnwidth]{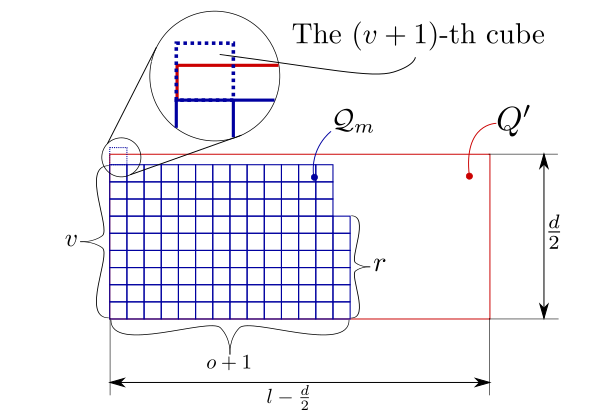}
\caption{We place the skeleton grid $\mathcal{Q}_m$ by choosing $v\in \N$ in a way that exactly $v$ vertical cubes of area $\de$ (and no one more) are contained into $Q'$. Then we choose $o,r$ to be the integer such that $m=vo+r$.}\label{griglia} 
\end{figure}
Choose $v\in \N$ such that
$$v\sqrt{\de}\leq \frac{d}{2}, \ \ \ \frac{d}{2}\leq (v+1)\sqrt{\de}.$$ 
The number $v$ represent the maximum number of cubes of area $\de$ that we can place "vertically" inside $Q'$ (for example, $v=1$ in Figure \ref{figura2}). In particular
\begin{equation}
\frac{d}{2\sqrt{\de}} - 1 \leq v\leq \frac{d}{2\sqrt{\de}}.
\end{equation} 
Let $o,r\in \N$ be such that:
$$m=vo+r, \ \ r\leq v.$$
We choose $\mathcal{Q}_m$ to be a grid of $o+1$ columns of cubes where the first $o$ columns are made by $v$ cubes and the $(o+1)-$th column contains exactly $r$ cubes of area $\de$ (see Figure \ref{griglia} where a generic situation is represented, or \ref{figura2} where $v=1$). 
Clearly 
$$ vo \leq m \leq( v+1)o,$$
and so 
\begin{equation}\label{pappapapa}
\frac{2m\sqrt{\de} }{d +2\sqrt{\de}} \leq o\leq \frac{2m\sqrt{\de} }{d-2\sqrt{\de} }.
\end{equation}
In order to be sure that we have enough space inside $Q'$ (to insert the grid $\mathcal{Q}_m$) we need to check that
 $$(o+1)\sqrt{\de}\leq l-\frac{d}{2}.$$
Since, thanks to \eqref{pappapapa} and to \eqref{bound m(l,d)},
 \begin{eqnarray*}
 (o+1)\sqrt{\de} &\leq & \left(\frac{2(m-1)\sqrt{\de} +d }{d-2\sqrt{\de} }\right) \sqrt{\de}\\
 &\leq &\left(\frac{2m\sqrt{\de} +d }{d-2\sqrt{\de} }\right) \sqrt{\de}\\
 &\leq &\left(\frac{8P\sqrt{\de} +d^2 }{d^2-2d\sqrt{\de} }\right) \sqrt{\de},
 \end{eqnarray*}
it is enough to check
$$ \left(\frac{8P\sqrt{\de} +d^2 }{d^2-2d\sqrt{\de} }\right) \sqrt{\de}\leq \frac{l}{2},$$
which means
\begin{eqnarray*}
d^2\left(\sqrt\de -\frac{l}{2}\right)+dl\sqrt\de + 8P\de\leq 0,
\end{eqnarray*}
that is satisfied when
$$d\geq \sqrt\de \left(\frac{1+\sqrt{1+\frac{32P}{l}\left(\frac{1}{2}-\frac{\sqrt{\de}}{l}\right)}}{1-2\frac{\sqrt\de}{l}}\right).$$
By exploiting $P/l\geq 1$ and up to taking $\l_0$ and $\eta$ bigger than a universal constant, by exploiting \eqref{1 vincoli su l}, we can always ensure that the previous condition is satisfied by $d$.\\

Thus we have space to place the grid $\mathcal{Q}_m\subset Q'$. For every $j\in J_{l,d}$ we consider a cube $Q_j$ with the property $|Q_j|=|\E(j)\cap Q_{l+d}|$ and we place it into an empty box of $\mathcal{Q}_m$. We define the following clusters $\F_0$ and $\F_1$ as:
\begin{eqnarray*}
\F_{0}(j)&=&\E(j) \ \ \ \text{$j\notin J_{l,d}\cup I_{l,d}$}\\
\F_{1}(j)&=&(\E(j)\cap \overline{Q_{l+d}^{c}}) \cup Q_j \ \ \ \text{$j\in J_{l,d}$}.
\end{eqnarray*}
By construction each chamber of $\F_0$ and $\F_1$ has area $\frac{1}{N}$. Moreover
\begin{eqnarray}
P(\F_0\cup \F_1) &\leq & P(\E ; Q_{l+d}^{c})+4l+\sum_{j\in J}P(Q_j)\nonumber\\
&\leq & P(\E ; Q_{l+d}^{c})+C\left(l+\frac{ P}{d}\sqrt{\de}\right) \label{one1}
\end{eqnarray}
for a universal constant $C$, because of \eqref{bound m(l,d)}.\\
\text{}\\
\textit{Step two: Figure \ref{figura3}. The $h$-cluster $\F_2$: the hexagonal tiling.}
We completely cover $Q_l$ with an hexagonal grid (see Figure \ref{figura3}). As in the proof of Proposition \ref{upper diametro} we do not consider the hexagons that do not intersect $Q_l$. Up to taking $\l_0$ and $\eta$ bigger than a universal constant the total number of hexagons $h$ (as in the proof of Proposition \ref{upper diametro}) is estimated by:
\begin{eqnarray*}
h\de &\leq & \left(l+\diam(H)\sqrt{\de}\right)^2 \leq l^2+Cl\sqrt{\de}
\end{eqnarray*}
where $C$ is a universal  constant. Hence
\begin{equation}\label{quanti h}
h \leq \frac{l^2}{\de}+C\frac{l}{\sqrt{\de}},
\end{equation}
If we denote with $\F_2$ such a cluster we obtain:
\begin{equation}\label{two2}
P(\F_2)\leq \frac{P(H)}{2}\frac{l^{2}}{\sqrt{\de}}+Cl,
\end{equation}
for a universal constant $C$. Note that up to choose a universal $\eta$ big enough we can ensure that the cluster $\F_1$ and $\F_2$ do not overlap.\\
\text{}\\
\textit{Step three: Figure \ref{figura6}. The $(k-h)$-Cluster $\F_3$: a link between $\F_0$ and $\F_2$.}
After the construction of $\F_2$ and $\F_1$ in the first two steps we are left to partition the set $A=(Q_{l+d}\setminus [\F_0\cup \F_1\cup \F_2])$ (evidenced in blue in Figure \ref{figura6}). 
We use Lemma \ref{lemma chirurgia} to build an $(k-h)$-cluster $\E_{k-h}$ with
$$P(\E_{k-h})\leq C|Q_{l+d}\setminus Q_l|\sqrt{\frac{k-h}{|A|}}+P(A),$$
for a universal constant $C$. By construction and thanks to the choice of $d$ the following hold
$$\frac{|A|}{k-h}=\frac{1}{N}=\de, \ \ \ \ \ \ |Q_{l+d}\setminus Q_l|\leq 3ld.$$
We thus set $\F_3=\E_{k-h}$:
\begin{equation}\label{quasi tre} 
P(\F_3)\leq C \frac{ld}{\sqrt{\de}}+P(A).
\end{equation}

\textit{Step four: Figure \ref{figura6}. The $N$-cluster $\F$ and estimate \eqref{upper only}}. We now consider the $N$-cluster 
$$\F=\F_0 \cup \F_1\cup\F_2\cup \F_3.$$ 
 Notice that (see Figures \ref{figura3},\ref{figura6} ) 
\begin{align*}
P(\F)&=P(\F_0\cup \F_1)+P(\F_2)+P(\F_3)-P(A).
\end{align*}
Furthermore, by exploiting \eqref{one1}, \eqref{two2}, \eqref{quasi tre}
\begin{align}
P(\F)&= P(\F_0\cup \F_1)+P(\F_2)+P(\F_3)-P(A)\nonumber\\ 
&\leq \frac{P(H)}{2}\frac{l^{2}}{\sqrt{\de}}+P(\E ; Q_{l+d}^{c})+C\left(l+ \frac{ P}{d}\sqrt{\de}+ \frac{ld}{\sqrt{\de}}\right)\label{daidai4} 
\end{align}
Notice that 
\begin{eqnarray*}
l+\frac{P}{d}\sqrt{\de} \leq \frac{ld}{\sqrt{\de}},
\end{eqnarray*}
when
\begin{equation}\label{d satisfies}
 d\geq \sqrt{\de} \left(1+\sqrt{1+4\frac{P}{l}}\right).
\end{equation} 
Up to taking $\eta$ and $\l_0$ bigger than a universal constant, thanks to \eqref{comoda}, we can always guarantee that $d$ satisfies \eqref{d satisfies}. Hence, \eqref{daidai4} leads us to
\begin{equation}\label{ho finito i nomi delle label}
P(\F) \leq\frac{P(H)}{2}\frac{l^2}{\sqrt{\de}}+P(\E;Q_{l}^{c})+C\frac{ld}{\sqrt{\de}}. 
\end{equation} 
We now fix a universal $\eta$ big enough. After we set $\eta_0=4\eta$ and we choose $\l_0$ big enough in dependence on $\eta$ (and thus universal) satisfying \eqref{su lambda 0}. Since $d=\eta\sqrt{\frac{P}{lN}}$ and $\de=\frac{1}{N}$ we obtain from \eqref{ho finito i nomi delle label}:
\begin{equation}\label{upper only 0}
P(\F) \leq \frac{P(H)}{2}l^2\sqrt{N}+P(\E;Q_{l}^{c})+C\eta \sqrt{Pl}.
\end{equation} 
By setting $C_0=C\eta$ we get \eqref{upper only} from \eqref{upper only 0}.
\end{proof}
Condition \eqref{1 vincoli su l} needs a starting energy estimate that is provided in the next Proposition. It is obtained by comparison, with a competitor (see Figure \ref{figura partizione aperto}) constructed by simply intersecting $\Om$ with the hexagonal tiling $\H_{\de}$ (in Figure \ref{fig reference}) for $\de=\frac{1}{N}$.
\begin{proposition}\label{trivial estimate}
For every $\Om\subset \R^{2}$ open bounded set with Lipschitz boundary having $|\Om|=1$ there exists a natural number $M_0>0$ depending only on the shape of $\Om$, and a universal constant $C_0$ such that
\begin{equation}\label{Energia totale}
\frac{P(H)}{2}\sqrt{ N}\leq \rho(N,\Om) \leq \frac{P(H)}{2}\sqrt{N}+ C_0 P(\Om) \ \ \ \text{for} \ N\geq M_0.
\end{equation}
\end{proposition}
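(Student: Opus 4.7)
The plan is to prove the two bounds separately. For the lower bound, I would simply rescale the cluster: given any $\E\in\Cl$, consider $\F:=\sqrt{N}\,\E$. Since $|\Om|=1$, each chamber of $\F$ has unit area, so Theorem~\ref{teo: chapter intro Hales piano} applies and gives $P(\F)\ge \frac{P(H)}{2}\sum_{i=1}^N|\F(i)|=\frac{P(H)}{2}N$. The homogeneity of the perimeter yields $P(\F)=\sqrt{N}\,P(\E)$, and hence $P(\E)\ge \frac{P(H)}{2}\sqrt{N}$. Taking the infimum over $\Cl$ produces the lower bound. Note that $M_0$ plays no role here.

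For the upper bound I would construct an explicit competitor by overlaying the reference hexagonal tiling $\H_\delta$ with $\delta=1/N$ of Figure~\ref{fig reference} and keeping the interior hexagons while performing a surgery on the boundary strip. More precisely, let $I_N$ be the set of hexagons $H\in\H_\delta$ with $H\cc\Om$ and let $k:=\#I_N$, so that the collection $\{H_i\}_{i\in I_N}$ already provides $k$ chambers of exact area $1/N$. The residual set $A:=\Om\setminus\bigcup_{i\in I_N}H_i$ lies in a tubular neighborhood of $\pa\Om$ of width $O(1/\sqrt{N})$, its area equals $(N-k)/N$, and a standard covering argument using the Lipschitz regularity of $\pa\Om$ shows that $N-k=O(P(\Om)\sqrt{N})$ for $N$ larger than some $M_0$ depending on the Lipschitz constants of $\pa\Om$. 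It then remains to partition $A$ into exactly $N-k$ chambers of area $1/N$. I would do this by the analogue, for a Lipschitz tube, of the construction of Lemma~\ref{lemma chirurgia}: cover $\pa\Om$ with finitely many bi-Lipschitz charts in which $\pa\Om$ becomes the graph of a Lipschitz function, in each chart arrange the local slice of $A$ into a single strip of thickness $O(1/\sqrt{N})$ and slice it by short walls transverse to $\pa\Om$ so that every resulting cell has area exactly $1/N$, and finally stitch adjacent charts to match the prescribed number $N-k$ of cells.

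For the perimeter accounting, the interior-hexagon contribution is at most $k\,P(H)/(2\sqrt{N})\le \frac{P(H)}{2}\sqrt{N}$ of internal edges, plus half of the length of $\pa(\bigcup_{i\in I_N}H_i)$, which stays within $O(1/\sqrt{N})$ of $\pa\Om$ and therefore has length $O(P(\Om))$ by the Lipschitz property. The boundary surgery produces $O(P(\Om)\sqrt{N})$ cells, each carrying transverse walls of length $O(1/\sqrt{N})$, so its contribution is also $O(P(\Om))$. Summing, the competitor $\F$ satisfies
\[
P(\F)\le \frac{P(H)}{2}\sqrt{N}+C_0\,P(\Om)
\]
for a universal constant $C_0$, and by definition $\rho(N,\Om)\le P(\F)$. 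The main obstacle is precisely the boundary surgery: carving a thin, irregular Lipschitz tube into exactly $N-k$ cells of area $1/N$ while keeping the total added perimeter linear in $P(\Om)$ with a universal constant is where the requirement $N\ge M_0$ enters, since we need the hexagon size $\sqrt{\delta}=1/\sqrt{N}$ to be small compared with the local geometric scales (reach, Lipschitz bi-covering radius) of $\pa\Om$ so that the bi-Lipschitz chart construction produces cells of prescribed area with controlled length of the walls.
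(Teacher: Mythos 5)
Your lower bound argument is the paper's: rescale by $\sqrt{N}$ so that every chamber has unit area, apply Hales's planar theorem (Theorem~\ref{teo: chapter intro Hales piano}), and use the $1$-homogeneity of perimeter in the plane; $M_0$ indeed plays no role there. For the upper bound you adopt the same two-part structure as the paper — keep the hexagons $\H_\de(i)\cc\Om$ with $\de=1/N$ and do a surgery on the boundary layer — and your order-of-magnitude accounting (interior edges $\le\tfrac{P(H)}{2}\sqrt{N}$, an inner tube boundary of length $O(P(\Om))$ once $N\ge M_0(\Om)$, and $O(P(\Om)\sqrt{N})$ residual cells each costing $O(1/\sqrt{N})$ of extra wall) matches the paper. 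Where you diverge is the surgery itself. You propose bi-Lipschitz charts in which $\pa\Om$ is a graph, transverse slicing of the tube, and a final stitching between charts. The paper never parametrizes the tube: it takes the family of pieces $\{\H_\de(j)\cap\Om\}$ for the $h$ hexagons meeting $\pa\Om$, sorts them by area, and greedily accumulates consecutive pieces into a chamber, closing each chamber by a single straight cut of length $\le 2\sqrt{\de}$ inside the hexagon currently being split so that the area hits exactly $\de$; the leftover piece seeds the next chamber. The extra perimeter is then at most $P(\Om)+h\,(P(H)+2)\sqrt{\de}$, and $h\le C\,P(\Om)/\sqrt{\de}$ (universal $C$) follows from the Minkowski-content estimate $|(\pa\Om)_r|\le C\,r\,P(\Om)$ valid for $r\le r_0(\Om)$, which is exactly where $M_0(\Om)$ enters. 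The point of this accumulation scheme, which your chart construction does not obviously reproduce, is that the Lipschitz data of $\pa\Om$ never appears in the final constants: every new cut is confined to one hexagon and so has length controlled by $\diam(\sqrt{\de}H)$ alone. In your version the transverse walls and the area bookkeeping at chart junctions and against the ragged inner boundary of hexagon edges would pick up the Lipschitz constants of the chart maps; you acknowledge this but suggest it is handled by taking $N$ large, which does not help, since Lipschitz constants are scale-invariant. As written your surgery would at best give $C_0$ depending on $\Om$, whereas the statement requires (and the paper's accumulation argument delivers) a universal $C_0$; this is the one genuine gap in the proposal.
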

\begin{proof}
By applying Proposition \ref{lower bounded} with $\Om=A$ we immediately get that for every $\E$ minimizing $N$-cluster for $\Om$ it holds
\begin{eqnarray}
\rho(N,\Om)=P(\E)\geq P(\E;\Om) \geq \frac{P(H)}{2}\sqrt{ N}. \label{eqn: lower bound nella stima globale di energia rho}
\end{eqnarray}
Set $\de=\frac{1}{N}$, consider the planar tiling $\H_{\de}$ as in Figure \ref{fig reference} and the sets of indexes
\begin{eqnarray*}
& I(\Om) &=\{i\in \N \ | \ \H_{\de}(i)\cc \Om  \}, \\
& I( \pa \Om) &=\{i\in \N \ | \ \H_{\de}(i)\cap \pa \Om \neq \emptyset \}
\end{eqnarray*}
with $k=\#(I(\Om) ), h=\#(I(\pa \Om) )$. Clearly 
\begin{equation}
\de k=\sum_{i\in I(\Om)}|\E(i)|\leq |\Om|=1,
\end{equation}
thus $k\leq\frac{1}{\de}=N$. Moreover, if we introduce 
$$(\pa \Om)_{2\sqrt{\de}}=\pa\Om+B_{2\sqrt{\de}},$$
we notice that $\H_{\de}(i)\cc (\pa \Om)_{2\sqrt{\de}}$ for $i\in I(\pa\Om)$ and so
\begin{eqnarray}\label{donadoni}
\de h&=&\sum_{i\in I(\pa \Om)}|\H_{\de}(i)|\leq |(\pa \Om)_{2\sqrt{\de}}|.
\end{eqnarray}
Since $\Om$ has Lipschitz boundary there exists a $\de_0>0$ depending only on the shape of $\Om$ and a universal constant $C$ such that
\begin{equation}\label{soglia}
|(\pa \Om)_{2\sqrt{\de}}|\leq C\sqrt{\de}P(\Om),  \ \ \ \ \forall \ \de<\de_0.
\end{equation}
By asking $\de$ small enough and by plugging \eqref{soglia} into \eqref{donadoni} we reach
$$ h\leq \frac{CP(\Om)}{\sqrt{\de}} . $$
Summarizing, for a suitable $\de_0$ depending only on the shape of $\Om$ only the following bounds hold:
\begin{equation}\label{bounds on h e k}
k\leq \frac{1}{\de} \ \ \ \text{and} \ \ \ h \leq \frac{CP(\Om)}{\sqrt{\de}}, \ \ \ \text{for \ $\de\leq \de_0$},
\end{equation}
for a universal constant $C$. Define the cluster 
$$\F_{\Om}=\{ \H_{\de}(i) \ | \ i\in I(\Om) \}.$$
We want to re-organize the chambers $\{\H_{\de}(i)\cap \Om\}_{i \in I(\pa \Om)}$ into a new cluster $\F_{\pa \Om}$ in a way that every chamber $\F_{\pa \Om}(j)$ encloses exactly an area $\de$.
\begin{figure}
\centering
\includegraphics[scale=1.1]{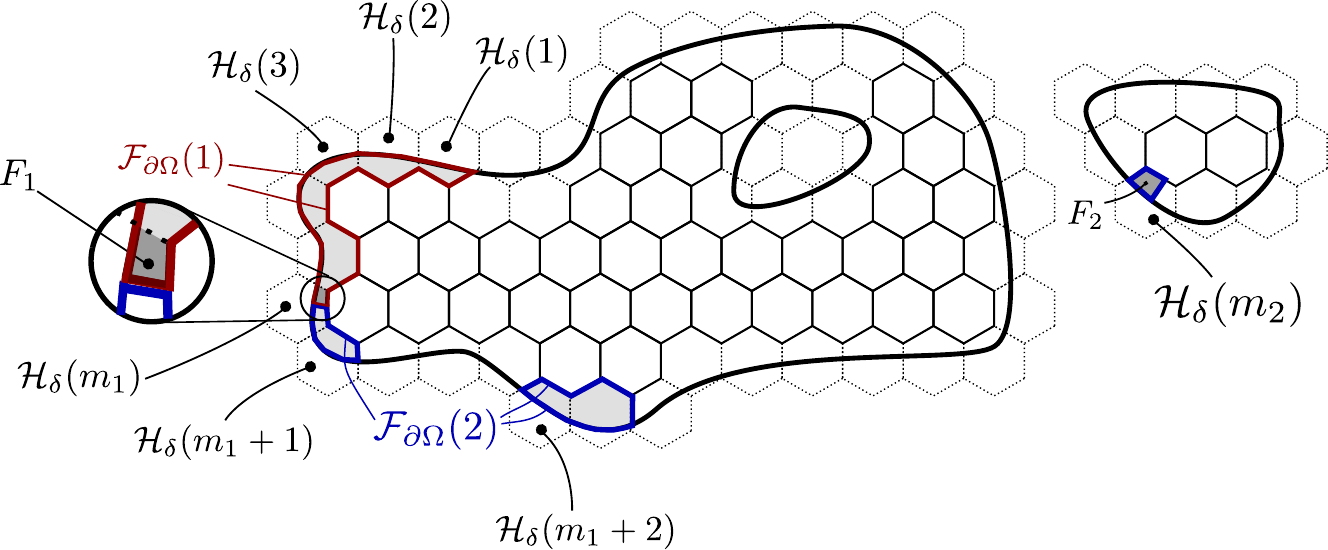}\caption{We consider all the hexagons from the tiling $\H_{\de}$ that are compactly contained into $\Om$. Then we re-organize the hexagons intersecting $\pa \Om$ in order to build chambers of area exactly $\frac{1}{N}$. To do that we suitably put together pieces of those chambers.}\label{figura partizione aperto}
\end{figure}
 To do that consider a relabeling of $\H_{\de}$ such that $I(\pa \Om)= \{1,2,\ldots, h\}$ and 
$$|\H_{\de}(1)\cap \Om|\leq |\H_{\de}(2)\cap \Om|\leq \ldots\leq |\H_{\de}(h)\cap \Om|.$$
Let $1\leq m_1\leq  h$ be the first index such that 
\begin{equation}\label{alba}
\sum_{j=1}^{m_1-1}|\H_{\de}(j)\cap \Om| \leq \de \leq \sum_{j=1}^{m_1}|\H_{\de}(j)\cap \Om|.
\end{equation}
If \eqref{alba} is in force we can select $F_1\subset \H_{\de}(m_1)\cap \Om$ in a way that  
\begin{equation}\label{kiara}
\sum_{j=1}^{m_1-1}|\H_{\de}(j)\cap \Om| +|F_1| =\de,
\end{equation}
for example by simply tracing a segment (see Figure \ref{figura partizione aperto}). We define the first chamber of the cluster $\F_{\pa \Om}$ as 
$$\F_{\pa \Om}(1)=F_1\cup \bigcup_{i=1}^{m_1-1 }\H_{\de}(j)\cap \Om.$$ 
We consider now $m_1\leq m_2\leq h$, the first index after $m_1$ such that
\begin{equation}\label{aria}
\sum_{j=m_1+1}^{m_2-1}|\H_{\de}(j)\cap \Om| \leq \de-\Big{|}(\H_{\de}(m_1)\cap \Om)\setminus F_1\Big{|} \leq\sum_{j=m_1+1}^{m_2}|\H_{\de}(j)\cap \Om|.
\end{equation}
Again, the validity of \eqref{aria} ensures that it is possible to cut away a subset $F_2\subset \H_{\de}(m_2)\cap \Om$ such that
$$\Big{|}(\H_{\de}(m_1)\cap \Om
)\setminus F_1\Big{|}+\sum_{j=m_1+1}^{m_2-1}|\H_{\de}(j)\cap \Om| +|F_2| =\de,$$
and define 
$$\F_{\pa \Om}(2)=F_2\cup [(\H_{\de}(m_1)\cap \Om
)\setminus F_1]\cup \bigcup_{i=m_1+1}^{m_2-1 }\H_{\de}(j)\cap \Om,$$
as clarified from Figure \ref{figura partizione aperto}. Since $\de=\frac{1}{N}$ 
\begin{eqnarray*}
h\de \geq  \sum_{j=1}^{h}|\H_{\de}(j)\cap \Om |=1-k\de = (N-k)\de.
\end{eqnarray*}
By iterating the previous argument we end up, in exactly $N-k\leq h$ steps, with an $(N-k)$-cluster $\F_{\pa \Om}=\{\F_{\pa \Om}(j)\}_{j=1}^{N-k}$ having chambers of area $\de$ and partitioning $\Om\setminus \F_{\Om}.$  
Moreover, by construction we have added almost $h$ hexagons and $h$ segment of length smaller than $2\sqrt{\de}$, so
\begin{equation}\label{sul perimetro di F}
P(\F_{\pa \Om}) \leq P(\Om) +  h P(H) \sqrt{\de}+h 2 \sqrt{\de}=P(\Om)+C h\sqrt{\de}
\end{equation}
for a universal constant $C$. If we now combine the cluster $\F_{\Om}=\{\H_{\de}(i)\}_{i\in I(\Om)}$ with $\F_{\pa \Om}$ we obtain a competitor for $\rho(N,\Om)$ and by construction and \eqref{bounds on h e k},\eqref{sul perimetro di F} we get
\begin{eqnarray}
\rho(N,\Om)&\leq & P(\F_{\pa \Om} \cup \F_{\Om})\nonumber\\
&\leq &k\frac{P(H)}{2}\sqrt{\de}+P(\F_{\pa \Om})\nonumber\\
&\leq &k\frac{P(H)}{2}\sqrt{\de}+Ch\sqrt{\de}+P(\Om)\nonumber\\
&\leq &\frac{P(H)}{2}\frac{1}{\sqrt{\de}}+CP(\Om) \ \ \ \text{for every $\de<\de_0$}.\label{lady D}
\end{eqnarray}
By recalling that $\de=\frac{1}{N}$ and combining \eqref{eqn: lower bound nella stima globale di energia rho} and \eqref{lady D} we get \eqref{Energia totale} with $C_0=C$ a universal constant and for every $N>\frac{1}{\de_0}=:M_0(\Om)$.
\end{proof}
\begin{remark}\label{remark}
\rm{ With Proposition \ref{trivial estimate} in mind we note that Theorems \ref{Equi indeco}, \ref{Equi dia} imply that at every scale $L>0$ and for $N$ big enough, the solution $\E$ attaining $\rho(N,\Om)$ has the main part of the localized energy close to the main part of $\rho(N,Q_L)$, namely dependent only on $L$ (as in the case of periodic pattern). Therefore:
$$\lim_{N\rightarrow +\infty}\frac{P(\E;Q_L)}{\sqrt{|Q_L| N}}\sqrt{\frac{|\Om|}{|Q_L|}}=\frac{P(H)}{2}=\lim_{N\rightarrow +\infty} \frac{\rho(N,Q_L)}{\sqrt{|Q_L|N}}.$$}
\end{remark}
By combining Proposition \ref{bootstrap} with Proposition \ref{trivial estimate} we obtain a first (rough) estimate on the energy in a cube $Q_l$ that we are going to refine in the sequel.
\begin{proposition}\label{upper indeco}
Let $\Om$ be an open bounded set with Lipschitz boundary and $|\Om|=1$. Let $0\leq \b<\frac{1}{2}$ be a real number. Then there exist three positive constants $\overline{\eta},\overline{\l},\overline{M}$ depending only on $\b$ and on the shape of $\Om$ and a universal constant $P_0\geq 1$ with the following property. For every $N>\overline{M}$, every closed cube $Q_l\cc \Om$ satisfying
\begin{equation}\label{vincoli upper indeco}
d(Q_l, \pa \Om)  > \overline{\eta} N^{-\frac{1}{6}}, \ \ \ \ \ \ \ \  l\geq \overline{\l} N^{-\b}
\end{equation}
and every indecomposable minimizing $N$-cluster $\E\in \Cl$ the following estimate holds:
\begin{equation}\label{tesi upper indeco}
P(\E;Q_l)\leq P_0 |Q_l| \sqrt{N}.
\end{equation}
\end{proposition}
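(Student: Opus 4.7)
The argument I would use is a finite bootstrap iteration of Proposition~\ref{bootstrap}, seeded by the global bound of Proposition~\ref{trivial estimate}. For $N\geq M_0=M_0(\Om)$ the latter gives $P(\E;\Om)\leq P(\E)\leq A\sqrt N$ with $A=A(\Om)$. Applied with $\Om'=\Om$ and $P=A\sqrt N$, Proposition~\ref{bootstrap} is available on every cube $Q_l\cc\Om$ with $l\geq \mu_0 N^{-1/6}$, where $\mu_0:=\l_0 A^{1/3}$, whose distance from $\pa\Om$ exceeds $\rho_0 N^{-1/6}$ for some $\rho_0=\rho_0(\Om)$. Combined with the minimality of $\E$, the competitor produced there yields
\[
P(\E;Q_l)\leq l^2\,\frac{P(H)}{2}\sqrt N + C_0\sqrt{A\sqrt N\cdot l}\leq P_1\, l^2\sqrt N,
\]
with $P_1:=\frac{P(H)}{2}+C_0/\l_0^{3/2}$ a \emph{universal} constant (for $l\geq \mu_0 N^{-1/6}$ the remainder collapses exactly to $C_0/\l_0^{3/2}$). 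This is the base step, corresponding to exponent $\a_0=1/6$.

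I would then iterate. Assume $P(\E;Q_l)\leq P_1 l^2\sqrt N$ at level $k$, i.e.\ whenever $l\geq \mu_k N^{-\a_k}$ and $Q_l$ is far enough from $\pa\Om$. Fix a target $Q_l$ with $l\geq \mu_{k+1}N^{-\a_{k+1}}$ (values to be specified) and enclose it in a concentric cube $Q_L$ with $L:=\max\{l+2d^{*},\,\mu_k N^{-\a_k}\}$, where $d^{*}$ is the distance margin required by Proposition~\ref{bootstrap}. The inductive hypothesis gives $P(\E;Q_L)\leq P_1 L^2\sqrt N$, so Proposition~\ref{bootstrap} applies with $\Om':=\mathring{Q_L}$ and $P:=P_1 L^2\sqrt N$; its condition \eqref{1 vincoli su l} reduces to $l\geq \l_0 P_1^{1/3} L^{2/3} N^{-1/6}$, which motivates the recursions
\[
\a_{k+1}=\frac{2\a_k+\tfrac12}{3},\qquad \mu_{k+1}=\l_0 P_1^{1/3}\mu_k^{2/3}.
\]
With $\mu_{k+1}$ taken at this exact threshold, the same cancellation as in the base step recurs, and the level-$(k+1)$ bound is again $P_1 l^2\sqrt N$ with the \emph{same} universal $P_1$.

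The map $\a\mapsto(2\a+1/2)/3$ is strictly increasing on $[1/6,1/2)$ with unique fixed point $1/2$; hence for any $\b<1/2$ there is a finite $K=K(\b)$ with $\a_K\geq\b$. The $\mu_k$'s converge to $\l_0^3 P_1$ and are uniformly bounded by some $\overline{\l}=\overline{\l}(\b,\Om)$. The margin lost at step $k$ in enclosing $Q_l$ in $Q_L$ is at most $\mu_k N^{-\a_k}\leq \overline{\l}\, N^{-1/6}$ (since $\a_k\geq 1/6$), so after $K$ iterations the accumulated margin is bounded by $\overline{\eta}\, N^{-1/6}$ with $\overline{\eta}=\overline{\eta}(\b,\Om)$. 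Taking $P_0:=P_1$ and $\overline{M}$ large enough (depending on $\b,\Om$) to absorb the threshold $M_0$ of Proposition~\ref{trivial estimate} and the smallness conditions used in each application of Proposition~\ref{bootstrap}, we obtain \eqref{tesi upper indeco}. The delicate point is to preserve the \emph{universal} nature of $P_0$ across the iteration: this rests on the precise cancellation $C_0\sqrt{Pl}/(l^2\sqrt N)=C_0/\l_0^{3/2}$ enforced by picking $\mu_{k+1}$ at the critical threshold dictated by \eqref{1 vincoli su l}, which prevents any $\b$- or $\Om$-dependence from leaking into $P_0$.
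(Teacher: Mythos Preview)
Your proposal is correct and follows essentially the same route as the paper. Both arguments are a finite bootstrap of Proposition~\ref{bootstrap}, seeded by the global bound of Proposition~\ref{trivial estimate}, driven by the same recursion $\a_{k+1}=\tfrac23\a_k+\tfrac16$ (your $(2\a_k+\tfrac12)/3$ is the same map) with fixed point $\tfrac12$, and both extract the universal $P_0=\tfrac{P(H)}{2}+C_0/\l_0^{3/2}$ from the inequality $P\le l^3N/\l_0^3$ implied by \eqref{1 vincoli su l}. The only organizational differences are that the paper indexes from $\a_0=0$ (so your base step is its $k=1$), fixes the enclosing cube as $L=2\tau_k N^{-\a_k}$ rather than your $L=\max\{l+2d^{*},\,\mu_k N^{-\a_k}\}$, and splits the argument into a layer-by-layer induction (Step one) followed by a gluing of the layers (Step two); your version absorbs the gluing into the inductive hypothesis by stating it for all $l\ge\mu_k N^{-\a_k}$ at once.
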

\begin{remark}\label{remark su un sesto}
\rm{
We are now in the position for explain where the exponent $\frac{1}{6}$, appearing in the hypothesis on the distance between boundaries  \eqref{vincoli indeco main thm} in Theorem \ref{Equi indeco}, comes from. To prove that estimate \eqref{tesi upper indeco} is in force on every scale $l>>N^{-\frac{1}{2}}$ we repeatedly apply Proposition \ref{bootstrap}. We  argue, essentially, as follows. We start by applying Proposition \ref{bootstrap} with $\Om'=\Om$ and by exploiting the global energy estimate given by Proposition \ref{trivial estimate}. This leads to the validity of \eqref{tesi upper indeco} for every cube from a certain family $Q_L\in \mathcal{Q}_1$ of edges bigger than a certain $N^{-\a_1}$ and with $d(\pa Q_L,\pa \Om)>N^{-\frac{1}{6}}$. Now we select a suitable cube $Q_L\in \mathcal{Q}_1$ with $L=N^{-\a_1}$ on which \eqref{tesi upper indeco} is in force thanks to the previous step and we apply again Proposition \ref{bootstrap} by setting $\Om'=\mathring{Q_L}$. By exploiting $P\leq P_0L^2\sqrt{N}$ we are lead to the validity of \eqref{tesi upper indeco} for every cube from a family $Q_{l}\in \mathcal{Q}_2\supset \mathcal{Q}_1$ of size bigger than a certain $N^{-\a_2}$ but still lying at a distance $d(\pa Q_l,\pa \Om)>N^{-\frac{1}{6}}$. The iteration of this argument leads to the proof of \eqref{tesi upper indeco} for every closed cube $Q_l$ with $l>N^{-\frac{1}{2}}$. At each application we gain the validity of \eqref{tesi upper indeco} at a smaller scale but the restriction on $d(\pa Q_l, \pa \Om)$ cannot be improved by this argument, since we need always to have at least the space for run the first iteration (which is $N^{-\frac{1}{6}}$) .
}
\end{remark}
\begin{proof}[Proof of Proposition \ref{upper indeco}]
Set 
\begin{equation*}
\left\{
\begin{array}{ll}
\a_{k+1}&=\frac{2}{3}\a_k+\frac{1}{6} \\
& \\
\a_0&=0,
\end{array}
\right.
\end{equation*} 
and let us divide the proof in two steps. Note that
\begin{equation*}
\left\{
\begin{array}{ll}
\displaystyle \a_k < \frac{1}{2}  & \ \ \text{for all $k\geq 0$}, \\
\displaystyle \a_k < \a_{k+1} & \ \ \text{for all $k\geq 0$}, \\
\displaystyle \lim_{k\rightarrow +\infty} \a_k=\frac{1}{2}. &
\end{array}
\right.
\end{equation*} 
\text{}\\
\indent \textit{Step one.} We prove that for every $k\in \N $ there exist positive constants $\tau_{k-1}$, $\tau_{k}$, $\eta_k,M_k$ depending only on the shape of $\Om$ and a universal positive constant $P_0\geq 1$ such that 
$$\tau_0=\diam(\Om), \  \ \ \ \ \ \eta_k\geq \eta_{k-1} ,\  \ M_{k}\geq M_{k-1} \ \ \forall \ k$$
and with the following property. For every $N\geq M_k$, for every  closed cube $Q_l\cc \Om$ with 
\begin{equation}\label{hyp passo k}
d( \pa Q_l, \pa \Om)>\eta_k N^{-\frac{1}{6}} \ \ \ \ \ \ \tau_{k-1} N^{-\a_{k-1}}\geq l \geq \tau_{k} N^{-\a_{k}}
\end{equation}
and for every $\E\in \Cl$ indecomposable minimizing $N$-cluster for $\Om$ the following estimate holds
\begin{equation}
P(\E;Q_l)\leq P_0 |Q_l|\sqrt{N}.
\end{equation}
We argue by induction on $k$. We start by proving the validity of our assertion for $k=1$ by setting  $\tau_0=\diam(\Om)$. Thanks to Proposition \ref{trivial estimate} there exists an $M_0$ depending on the shape of $\Om$ only such that if $N>M_0$ and $\E\in \Cl$ is a minimizing $N$-cluster for $\Om$ the following estimate holds:
\begin{equation}\label{stima a priori induzione 0}
P(\E) \leq \frac{P(H)}{2}\sqrt{ N}+C_0P(\Om),
\end{equation}
for a universal constant $C_0$.
We now want to apply Proposition \ref{bootstrap} with $\Om'=\Om$. Let us choose $\eta_1,\tau_1$ such that  
$$\tau_1\geq 2\l_0 P(H)^{\frac{1}{3}}, \ \ \  \ \ \ \ \eta_1\geq \frac{\eta_0 }{\sqrt{\tau_1}}  \sqrt{P(H)}, $$
where $\l_0,\eta_0$ are the constants given by Proposition \ref{bootstrap}. We choose $M_1\geq M_0$ big enough so that
\begin{equation}\label{stima a priori induzione}
P(\E) \leq \frac{P(H)}{2}\sqrt{ N}+C_0P(\Om)\leq P(H)\sqrt{N},
\end{equation}
and
$$\tau_0 > \tau_1 N^{-\frac{1}{6}} $$
for all  $N\geq M_1$. If $N\geq M_1$ and $Q_l\cc \Om$ satisfies
\begin{equation}\label{livorno}
 d(\pa Q_l,\pa \Om)\geq \eta_1N^{-\frac{1}{6}}, \ \ \ \ \ \tau_0 \geq  l\geq \tau_1 N^{-\frac{1}{6}},
 \end{equation}
 with the previous choice of $\eta_1,\tau_1,M_1$,  then it satisfies also
\begin{equation}\label{assaggio12}
 d(\pa Q_l,\pa \Om)\geq \eta_0 \sqrt{\frac{P(\E)}{N l }}, \ \ \ \ l\geq \l_0\left( \left(\frac{P(\E)}{N}\right)^{\frac{1}{3}}+N^{-\frac{1}{2}}\right).
\end{equation}
Indeed if $N\geq M_1$ and $Q_l\cc \Om$ is a closed cube for which \eqref{livorno} holds we see that
\begin{eqnarray*}
\frac{N^{-\frac{1}{4}}}{\sqrt{l}} &\leq &  \frac{1}{\sqrt{\tau_1}} N^{-\frac{1}{6}},
\end{eqnarray*}
and since $M_1\geq M_0$, thanks to \eqref{stima a priori induzione} and to the choice of $\eta_1,\tau_1$ we obtain \eqref{assaggio12}:
\begin{align}
d( Q_l, \pa \Om) &>\eta_1 N^{-\frac{1}{6}} \nonumber\\
&>\frac{\eta_0 \sqrt{P(H)}}{\sqrt{\tau_1}} N^{-\frac{1}{6}}\nonumber\\
&>\eta_0 \sqrt{P(H)}\frac{N^{-\frac{1}{4}}}{\sqrt{l}}\nonumber \\
&\geq \eta_0 \sqrt{\frac{P(\E)}{N l }},\\
\text{}\nonumber\\
 l \ \ &\geq \tau_1 N^{-\frac{1}{6}}\nonumber\\
 &\geq \frac{\tau_1}{2} N^{-\frac{1}{6}}+\frac{\tau_1}{2} N^{-\frac{1}{6}}\nonumber\\
 &\geq  \l_0 P(H)^{\frac{1}{3}}N^{-\frac{1}{6}}+\l_0 N^{-\frac{1}{2}}\nonumber\\
 &\geq  \l_0 \left(\left(\frac{P(\E)}{N}\right)^{\frac{1}{3}}+N^{-\frac{1}{2}}\right).
 \end{align}
In particular we can apply Proposition \ref{bootstrap} by setting $\Om'=\Om$ and find an $N$-cluster $\F\in \Cl$ such that 
\begin{equation}\label{viareggio}
P(\F)\leq l^2 \frac{P(H)}{2}\sqrt{N}+P(\E;Q_l^c)+C_0\sqrt{P(\E)l}.
\end{equation}
Estimate on $l$ in \eqref{assaggio12} implies also that 
$$P(\E)\leq \frac{l^3}{\l_0^3}N,$$
which inserted in \eqref{viareggio} leads to
\begin{equation}\label{livorno3}
P(\F)\leq \left(\frac{P(H)}{2}+\frac{C_0}{\l_0^{3/2} }\right) |Q_l| \sqrt{N}+P(\E;Q_l^c).
\end{equation}
By minimality we have
$$P(\E)\leq P(\F),$$
which combined with \eqref {livorno3} leads to
\begin{equation}
P(\E;Q_l)\leq P_0 |Q_l| \sqrt{N},
\end{equation}
where we have set $P_0=\left(\frac{P(H)}{2}+\frac{C_0}{\l_0^{3/2} }\right)\geq 1 $. This proves the claim for $k=1$ with the previous choice of universal $\tau_0,\tau_1,\eta_1, M_1$ depending only on the shape of $\Om$.\\

Assume now that our assertion holds on $\a_k$ for some constants $\tau_{k-1},\tau_k,\eta_k,M_k$  ($k\geq 2$) and with the same constant $P_0$ defined above. We can conclude the validity of our assertion on $\a_{k+1}$ as a consequence of the following \textit{claim}. \\

\textbf{Claim.} \textit{There exist constants $\tau_{k+1},\eta_{k+1},M_{k+1}$ such that for every $N\geq M_{k+1}$ and every cube $Q_l\cc \Om$ satisfying
\begin{equation}\label{boot k+1}
d(\pa Q_l, \pa \Om)  > \eta_{k+1} N^{-\frac{1}{6}}\ \ \ \ \ \ \ \  \tau_{k} N^{-\a_{k}}\geq l \geq \tau_{k+1} N^{-\a_{k+1}}
\end{equation}
there exists a cube $Q_L$ concentric to $Q_l$ satisfying $Q_l\cc Q_L \cc \Om$ and
\begin{equation}\label{boot k}
d(\pa Q_L, \pa \Om)  > \eta_k N^{-\frac{1}{6}}, \ \ \ \ \ \ \ \  \tau_{k-1} N^{-\a_{k-1}}\geq L \geq \tau_k N^{-\a_k}
\end{equation}
for which the following holds on every indecomposable minimizing $N$-cluster $\E\in \Cl$:
\begin{equation}\label{eccheppalleeee}
d(\pa Q_l,\pa Q_L)\geq \eta_0 \sqrt{ \frac{P(\E;\mathring{Q_L})}{Nl} }, \ \ \ \ l\geq \l_0\left(\left(\frac{P(\E;\mathring{Q_L})}{N}\right)^{\frac{1}{3}}+N^{-\frac{1}{2}}\right).
\end{equation}
}
Indeed, if we assume for a moment the validity of the claim, on every cube $Q_l$ satisfying \eqref{boot k+1} we can apply Proposition \ref{bootstrap} with $\Om'=\mathring{Q_L}$ and find an $N$-cluster $\F\in \Cl$ such that
\begin{equation}\label{pappa e ciccia}
P(\F)\leq \frac{P(H)}{2}|Q_l| \sqrt{N}+P(\E;Q_l^c)+ C_0\sqrt{P(\E;\mathring{Q_L})l}.
\end{equation}
Since, thanks to \eqref{eccheppalleeee}, it holds
$$P(\E;\mathring{Q_L})\leq\frac{l^3}{\l_0^3}  N$$
by comparison \eqref{pappa e ciccia} is leading to \eqref{tesi upper indeco} with the same constant $P_0=\frac{P(H)}{2}+\frac{C_0}{\l_0^{3/2}}$. Hence we can achieve the proof of step one by induction.\\

Let us focus on the proof of the claim. Set $L=2\tau_{k}N^{-\a_{k}}$ and let $Q_L$ be the cube concentric to $Q_l$. Choose 
\begin{equation}\label{eqn: vincolo su eta k+1 dipendente da eta k e xi, e vincolo su xi dipendente da tau k nel passo induttivo}
\eta_{k+1}>\eta_k + 2\tau_k, \ \ \ \tau_{k+1}\geq \left\{ 6\l_0 P_0^{\frac{1}{3}} \tau_k^{\frac{2}{3}}\, ,8(\eta_0^2)P_0\, ,2\l_0 \right\}
\end{equation}
 and $M_{k+1}\geq M_k$ to be such that 
\begin{equation}\label{bello caldo}
\tau_{k-1} N^{-\a_{k-1}}\geq 2\tau_k N^{-\a_{k}}
\end{equation}
for every $N\geq M_{k+1}$. The cube $Q_L$ and the cube $Q_l$ share the center, hence we easily have 
\begin{equation*}
d(\pa Q_l, \pa Q_L)=\frac{L-l}{\sqrt{2}}
\end{equation*}\label{distanze fra cubi}
that, combined with the triangular inequality, implies
\begin{eqnarray}
d(\pa Q_L,\pa \Om)&>& d(\pa Q_l,\pa \Om)-(L-l).
\end{eqnarray}
Since $\a_k\geq \frac{1}{6}$ $(k\geq 1)$and $d(\pa Q_l,\pa \Om)>\eta_{k+1} N^{-\frac{1}{6}}$, relation \eqref{distanze fra cubi} leads to
\begin{align*}
d(\pa Q_L,\pa \Om)&> d(\pa Q_l,\pa \Om)-(L-l)\\
& \geq (\eta_{k+1}-2\tau_k ) N^{-\frac{1}{6}},
\end{align*}
which implies \eqref{boot k} because of \eqref{eqn: vincolo su eta k+1 dipendente da eta k e xi, e vincolo su xi dipendente da tau k nel passo induttivo}. 
Moreover by exploiting hypothesis \eqref{boot k+1} on $Q_l$ and the fact that on $Q_L$, by induction (thanks to \eqref{boot k}), it holds 
\begin{equation*}
P(\E;Q_L)\leq P_0|Q_L|\sqrt{N}
\end{equation*}
for every $\E\in \Cl$ indecomposable minimizing $N$-cluster for $\Om$ (with $N\geq M_{k+1}$) we obtain \eqref{eccheppalleeee}:
\begin{align*}
d(\pa Q_l,\pa Q_L)&=\frac{L-l}{\sqrt{2}}\nonumber\\
&\geq \frac{ \tau_k N^{-\a_k}}{\sqrt{2}}\sqrt{ \frac{l N }{P_0|Q_L| \sqrt{N}}}\sqrt{ \frac{P_0 |Q_L| \sqrt{N}}{N l}}\nonumber\\
&\geq \frac{  \sqrt{\tau_{k+1}}}{2\sqrt{2P_0}}N^{\frac{1}{4}-\frac{\a_{k+1}}{2}} \sqrt{ \frac{P(\E;\mathring{Q_L})}{N l}}\nonumber\\
&\geq \eta_0 \sqrt{ \frac{P(\E;\mathring{Q_L})}{N l}},\nonumber\\
\text{}\\
l \ \ &\geq \frac{\tau_{k+1}}{2}N^{-\a_{k+1}}+\frac{\tau_{k+1}}{2}N^{-\a_{k+1}}\\
&\geq  \frac{\tau_{k+1}}{6\tau_k^{\frac{2}{3}}P_0^{\frac{1}{3}}}N^{\frac{1}{6}+\frac{2}{3}\a_k-\a_{k+1}}\left(\frac{P_0|Q_L|\sqrt{N}}{N}\right)^{\frac{1}{3}}+\l_0 N^{-\frac{1}{2}}\\
&\geq  \l_0 \left( \left(\frac{P(\E;\mathring{Q_L})}{N}\right)^{\frac{1}{3}}+ N^{-\frac{1}{2}}\right)
\end{align*}
and we achieve the proof of the claim and thus of step one.\\

\textit{Step two.} Let $\b\in \left[0,\frac{1}{2}\right)$ be a positive number and let $k$ be such that $\a_{k-1}\leq \b\leq\a_{k}$. Let $\tau_{k-1},\tau_k,\eta_k, M_k$ be the constants given by the step one. We now set $\overline{\l}=\tau_k$, $\overline{\eta}=\eta_k$ and $\overline{M}=M_k$. Then we argue as follows. Let $Q_l\cc \Om$ be a closed cube with
$$d(\pa Q_l,\pa \Om)> \overline{\eta} N^{-\frac{1}{6}}, \ \ \ \ \ \ \ \ l> \overline{\l} N^{-\b}.$$
By construction, $l\geq\tau_k N^{-\a_{k}}$. If it holds also $l\leq \tau_{k-1} N^{-\a_{k-1}}$ and thus 
$$d(\pa Q_l,\pa \Om)> \overline{\eta} N^{-\frac{1}{6}}=\eta_k N^{-\frac{1}{6}} \ \ \ \  \ \tau_{k-1} N^{-\a_{k-1}}\geq  l\geq\tau_k N^{-\a_{k}},$$
thanks to step one we immediately have that  \eqref{tesi upper indeco} is in force on $Q_l$. If it does not hold  $l\leq \tau_{k-1} N^{-\a_{k-1}}$, then it must hold $l\geq \tau_{k-1} N^{-\a_{k-1}}$. As before, if it is true that $l\leq \tau_{k-2} N^{-\a_{k-2}}$, since we have chosen $\overline{\eta}=\eta_k\geq \eta_{k-1}$, we get 
\begin{equation}
d(\pa Q_l,\pa \Om) \geq \overline{\eta} N^{-\frac{1}{6}}\geq \eta_{k-1} N^{-\frac{1}{6}} \ \ \ \ \ \tau_{k-2} N^{-\a_{k-2}} \geq l\geq \tau_{k-1} N^{-\a_{k-1}}.
\end{equation}
Thus, thanks to step one, \eqref{tesi upper indeco} must be in force for every $N\geq M_{k-1}$ hence for every $N\geq \overline{M}=M_k\geq M_{k-1}$. If this is not the case, and thus $l\geq \tau_{k-2} N^{-\a_{k-2}}$ we iterate the previous argument and we move to the interval $l\in [\tau_{k-2}N^{-\a_{k-2}}, \tau_{k-3}N^{-\a_{k-3}}]$ by exploiting $\eta_{k-1}\geq \eta_{k-2}$ and $M_{k-1}\geq M_{k-2}$. This argument will end in exactly $k$ steps since, for sure $l\leq \diam(\Om)=\tau_0 N^{\a_0}.$ As a consequence \eqref{tesi upper indeco} must be in force for every $N\geq \overline{M}$, $l\geq \overline{\l} N^{-\b}$ and for every cube $Q_l$ lying at a distance bigger than $\overline{\eta} N^{-\frac{1}{6}}$ from $\pa \Om$ where $\overline{\eta},\overline{\l}, \overline{M}$ depend only on the shape of $\Omega$ and on the first $\a_k$ bigger than $\b$ (and thus on $\b$).
\end{proof}
\begin{remark}\rm{
The proof of Proposition \eqref{upper indeco} points out that we cannot reach the value $\b=\frac{1}{2}$ because the constants $\overline{\eta}(\b),\overline{\tau}(\b),\overline{M}(\b)$ are approaching $+\infty$ when $\b$ gets closer to $\frac{1}{2}$.
}
\end{remark}
Finally we put together Propositions \ref{bootstrap} and \ref{upper indeco} to get our competitor at every scale $l>>N^{-\frac{1}{2}}$.
\begin{proposition}\label{upper indeco quello che serve davvero}
Let $\Om$ be an open bounded set with Lipschitz boundary and $|\Om|=1$. Let $0\leq \b<\frac{1}{2}$ be a fixed number. Then there exists three positive constants $\eta,\l,M$ depending only on $\b$ and on the shape of $\Om$, and a universal constant $C$ with the following property. For every $N>M$, every closed cube $Q_l\cc \Om$ satisfying
\begin{equation}\label{vincoli upper indeco finali}
d(\pa Q_l, \pa \Om)  > \eta N^{-\frac{1}{6}}, \ \ \ \ \ \ \ \  l\geq \l N^{-\b}
\end{equation}
and every indecomposable minimizing $N$-cluster $\E\in \Cl$ there exists an $N$-cluster $\F\in \Cl$ such that:
\begin{equation}\label{tesi upper indeco finale}
P(\F)\leq \frac{P(H)}{2}|Q_l| \sqrt{N}+P(\E;Q_l^c)+C P(Q_l)^{\frac{3}{2} }N^\frac{1}{4}.
\end{equation}
\end{proposition}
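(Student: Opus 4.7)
The plan is to combine Proposition \ref{upper indeco} (which gives an a priori localized energy bound for indecomposable minimizing $N$-clusters) with Proposition \ref{bootstrap} (which builds a competitor whose tolerance depends on $P(\E;\Om')$), applied on a slightly enlarged cube $Q_{l+d}$ with $d$ chosen of order $\sqrt{l}\,N^{-1/4}$.

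Let $\eta_0,\lambda_0,C_0$ denote the universal constants of Proposition \ref{bootstrap}, $P_0$ the universal constant of Proposition \ref{upper indeco}, and $\overline{\eta},\overline{\lambda},\overline{M}$ its constants for the given $\beta$. First I would set $d:=6\eta_0\sqrt{P_0\,l}\,N^{-1/4}$ and pick $\eta:=\overline{\eta}+1$, $\lambda:=\max\{\overline{\lambda},\lambda_0,(4P_0)^{1/3}\lambda_0^3\}$, and $M\ge\overline{M}$ large enough that, for every $N\ge M$ and every $Q_l$ satisfying \eqref{vincoli upper indeco finali}, one has $d\le l$ and $d/2<N^{-1/6}$ (possible since $l$ is bounded above by $\mathrm{diam}(\Om)^2\le 1$ and $N^{-1/4}\ll N^{-1/6}$). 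Then $Q_{l+d}\cc\Om$, $d(\pa Q_{l+d},\pa\Om)>\overline{\eta}\,N^{-1/6}$, and $l+d\ge\overline{\lambda}\,N^{-\beta}$, so Proposition \ref{upper indeco} applies on $Q_{l+d}$ and delivers the a priori bound
\[
P:=P(\E;\mathring{Q_{l+d}})\le P_0\,|Q_{l+d}|\sqrt{N}\le 4P_0\,l^2\sqrt{N}.
\]

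Second, I would verify the hypotheses of Proposition \ref{bootstrap} with $\Om':=\mathring{Q_{l+d}}$. The distance condition holds since $d(\pa Q_l,\pa Q_{l+d})=d/2=3\eta_0\sqrt{P_0\,l}\,N^{-1/4}$ while $\sqrt{P/(lN)}\le 2\sqrt{P_0\,l}\,N^{-1/4}$; the size condition $l\ge\lambda_0[(P/N)^{1/3}+N^{-1/2}]$ reduces to $l^{1/3}\ge\lambda_0(4P_0)^{1/3}N^{-1/6}$ and $l\ge\lambda_0\,N^{-1/2}$, both ensured by the choice of $\lambda$ and $M$ (using $\beta<1/2$, so that $N^{-\beta}\gg N^{-1/2}$). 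Proposition \ref{bootstrap} then produces $\F\in\Cl$ with
\[
P(\F)\le|Q_l|\,\frac{P(H)}{2}\sqrt{N}+P(\E;Q_l^c)+C_0\,\sqrt{P\,l}.
\]
Plugging in $P\le 4P_0\,l^2\sqrt{N}$ gives $\sqrt{P\,l}\le 2\sqrt{P_0}\,l^{3/2}N^{1/4}$, and since $P(Q_l)=4l$ we have $l^{3/2}=P(Q_l)^{3/2}/8$, so $\sqrt{P\,l}\le C\,P(Q_l)^{3/2}N^{1/4}$ for a universal $C$, producing \eqref{tesi upper indeco finale}.

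The main technical obstacle is the delicate balance of scales: $d$ has to be large enough to absorb the tolerance $\eta_0\sqrt{P/(lN)}$ in Proposition \ref{bootstrap}, which after inserting the bound on $P$ forces $d\gtrsim\sqrt{l}\,N^{-1/4}$; at the same time, $d$ must remain small enough that $Q_{l+d}$ still lies at distance $>\overline{\eta}\,N^{-1/6}$ from $\pa\Om$. The hypothesis $d(\pa Q_l,\pa\Om)>\eta\,N^{-1/6}$ combined with $N^{-1/4}\ll N^{-1/6}$ is precisely what makes such a choice of $d$ possible, which also explains (cf.\ Remark \ref{remark su un sesto}) why the exponent $1/6$ is the correct one in the distance condition.
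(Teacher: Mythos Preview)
Your proposal is correct and follows essentially the same route as the paper: enlarge $Q_l$ to a concentric $Q_{l+d}$ with $d\sim\sqrt{l}\,N^{-1/4}$, apply Proposition~\ref{upper indeco} on $Q_{l+d}$ to get $P(\E;Q_{l+d})\le 4P_0\,l^2\sqrt{N}$, then feed this bound into Proposition~\ref{bootstrap} with $\Om'=\mathring{Q_{l+d}}$. One small slip: the parenthetical ``$l$ is bounded above by $\mathrm{diam}(\Om)^2\le 1$'' is not right ($|\Om|=1$ does not force $\diam(\Om)\le 1$, and $l\le\diam(\Om)$ rather than $\diam(\Om)^2$), but all you actually use is that $l$ is bounded by a constant depending on the shape of $\Om$, so the argument stands.
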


\begin{proof}
The proof follows easily as a consequence of the following \textit{claim}. \\

\textbf{Claim.} \textit{Let $0\leq \b<\frac{1}{2}$ be a fixed number. We can find positive constants $\eta,\lambda,M$ depending only on $\b$ and on the shape of $\Om$ with the following property. For every $N\geq M$, if $Q_l\cc \Om$ is a closed cube satisfying \eqref{vincoli upper indeco finali} there exists a concentric cube $Q_{l+d}$ with $d<l$ such that $Q_l\cc Q_{l+d}\cc \Om$ and
\begin{equation}\label{e allora ancora}
P(\E;Q_{l+d})\leq P_0|Q_{l+d}|\sqrt{N}
\end{equation}
where $P_0$ is the universal constant appearing in \eqref{upper indeco}. Moreover for every $\E$ indecomposable minimizing $N$-cluster for $\Om$ it holds
\begin{equation}\label{eccheppalleeee2}
d(\pa Q_l,\pa Q_{l+d})\geq \eta_0 \sqrt{ \frac{P(\E;\mathring{Q_{l+d}})}{N l} }, \ \ \ \ l\geq \l_0\left(\left(\frac{P(\E;\mathring{Q_{l+d}})}{N}\right)^{\frac{1}{3}}+N^{-\frac{1}{2}}\right),
\end{equation}
where $\l_0,\eta_0$ are the constants appearing in Proposition \ref{bootstrap}.}\\ 

Indeed, assume for a moment the validity of the previous fact and choose a cube $Q_l\cc \Om$ satisfying \eqref{vincoli upper indeco finali} with the constants given by the claim. For any given indecomposable minimizing $N$-cluster $\E\in \Cl$, estimate \eqref{eccheppalleeee2} ensures us that we can apply Proposition \ref{bootstrap} with $\Om'=\mathring{Q_{l+d}}$ and thus find an $N$-cluster $\F\in \Cl$ such that
$$P(\F)\leq \frac{P(H)}{2} |Q_l|\sqrt{N}+P(\E;Q_l^c)+C_{0}\sqrt{l P(\E;Q_{l+d})},$$
for a universal constant $C_0$. By combining \eqref{e allora ancora} with the previous estimate and by recalling that $d<l$ we reach
$$P(\F)\leq \frac{P(H)}{2} |Q_l|\sqrt{N}+P(\E;Q_l^c)+2C_{0}\sqrt{P_0} l^{\frac{3}{2}}N^{\frac{1}{4}},$$
which is \eqref{tesi upper indeco finale} with $C=2C_{0}\sqrt{P_0}$.\\

Let us focus on the proof of the claim.  Let $\overline{\eta}, \overline{\l}, \overline{M}$ be the constants given by Proposition \ref{upper indeco}. We show that by choosing $\l,\eta$ such that
\begin{equation}\label{scelta di lambda, eta}
\eta \geq \max \{ 2\overline{\eta}\, , 4\eta_0\sqrt{2 P_0}\}, \ \ \ \ \l \geq \max\left\{\overline{\l}\, , 32 \l_0^3 P_0\, , \frac{\eta^2}{4}\right\}
\end{equation}
and $M$ such that
\begin{equation}\label{scelta di M}
N^{\frac{1}{12}}\geq 2\sqrt{\diam(\Om)}+\overline{M} , \ \ \ \text{for all $N\geq M$},
\end{equation}
the claim holds. Choose a cube $Q_l\cc \Om$ satisfying \eqref{vincoli upper indeco finali} with the previous choice of $\l,\eta,M$. Set 
$$d=\frac{\eta}{2} \sqrt{l}N^{-\frac{1}{4}},$$
let $Q_{l+d}$ be a cube concentric to $Q_l$ and note that $d<l$ since:
\begin{align*}
\frac{\eta}{2} \sqrt{l}N^{-\frac{1}{4}}\leq \frac{\eta}{2\sqrt{\l}} \sqrt{l}\sqrt{\l N^{-\b}}\leq \frac{\eta}{2\sqrt{\l}} l\leq l. 
\end{align*}
Thanks to the choice of $M$, for $N\geq M$  it holds:
\begin{align*}
d(\pa Q_{l+d},\pa \Om)&\geq d(\pa Q_l,\pa \Om)-2d\\
&\geq \eta N^{-\frac{1}{6}}-\eta \sqrt{l} N^{-\frac{1}{4}}\\
&\geq \eta N^{-\frac{1}{6}}(1- \sqrt{\diam(\Om)} N^{-\frac{1}{12}})\\
&\geq \frac{\eta}{2} N^{-\frac{1}{6}} \geq \overline{\eta} N^{-\frac{1}{6}}\\
\text{}\\
l+d &\geq l \geq \l N^{-\b}\geq \overline{\l} N^{-\b}.
\end{align*}
But then $Q_{l+d}$ satisfies hypothesis \eqref{vincoli upper indeco} and thus \eqref{e allora ancora} must be in force thanks to Proposition \ref{upper indeco}. Moreover for $N\geq M$ ($\geq \overline{M}$), thanks to Proposition \ref{upper indeco} and to the validity of \eqref{e allora ancora}, we have that on $Q_{l+d}$ it holds
$$P(\E;Q_{l+d}) \leq P_0 (l+d)^2 \sqrt{N}\leq 4P_0 l^2 \sqrt{N},$$
for every indecomposable minimizing $N$-cluster $\E\in \Cl$. Thus 
\begin{align}
d(\pa Q_l, \pa Q_{l+d})&=\frac{d}{\sqrt{2}} \nonumber\\
&=\frac{\eta}{2}\sqrt{l} N^{-\frac{1}{4}} \sqrt{\frac{Nl}{4 P_0 l^2 \sqrt{N}}} \sqrt{\frac{4 P_0 l^2 \sqrt{N}}{Nl}}\nonumber\\
&\geq \frac{\eta}{4\sqrt{2P_0}} \sqrt{\frac{P(\E;\mathring{Q_{l+d}})}{Nl}}\nonumber\\
&\geq \eta_0 \sqrt{\frac{P(\E;\mathring{Q_{l+d}})}{Nl}}. \label{assaggio13}\\
\text{}\nonumber
\end{align}
From $\b<\frac{1}{2}$ it follows
$$\l N^{-\frac{1}{2}}\leq l ,\ \ \ \ \Rightarrow \ \ \ \ \frac{1}{\sqrt{N}}\leq \frac{l}{\l}$$
that leads to
\begin{align*}
\frac{P(\E;Q_{l+d})}{N} \l_0^3 &\leq \frac{P_0}{\sqrt{N}} 4l^2  \l_0^3  \leq \frac{4 P_0  \l_0^3 }{\l } l^3  \leq \frac{l^3}{8}
\end{align*}
and so 
\begin{equation}\label{assaggio14}
l\geq 2\l_0  \left(\frac{ P(\E; \mathring{Q_{l+d}}) } { N }\right)^{\frac{1}{3}}.
\end{equation}
Clearly, 
\begin{equation}\label{assaggio15}
l\geq \l N^{-\b}\geq 2\l_0 N^{-\frac{1}{2}},
\end{equation} 
and thus by combining \eqref{assaggio13},\eqref{assaggio14} and \eqref{assaggio15} we obtain the validity of \eqref{eccheppalleeee2} and we achieve the proof.
\end{proof}

\subsection{Proof of Theorem \ref{Equi indeco}}

\begin{proof}[Proof of Theorem \ref{Equi indeco}]
Let $\Om$ be a generic open set with Lipschitz boundary and let $\b<\frac{1}{2}$ be fixed. We choose $\eta,\l,M,C$ to be the constants given by Proposition \ref{upper indeco quello che serve davvero}. Let $Q_l\cc \Om$ be a cube satisfying \eqref{vincoli indeco main thm}. Up to a roto-translation we can assume that the center of $Q_l$ is the origin. Then we perform the scaling $\Om_0=\frac{\Om}{\sqrt{|\Om|}}$, $Q=\frac{Q_l}{\sqrt{|\Om|}}$. This immediately implies that
$$d(\pa Q, \pa \Om)>\eta N^{-\frac{1}{6}}, \ \ \ \ \sqrt{|Q|}> \l N^{-\b},$$
since $Q_l$ was satisfying \eqref{vincoli indeco main thm}. Thus $Q$ is compactly contained into $\Om_0$ and thanks to the choice of the constants it satisfies \eqref{vincoli upper indeco finali}. Moreover, if $\E\in \Cl$ indecomposable minimizing $N$-cluster for $\Om$ then $\frac{\E}{\sqrt{|\Om|}}$ is an indecomposable minimizing $N$-cluster for $\Om_0$. Thus for every $N\geq M$ and for every $\E\in \Cl$ indecomposable minimizing $N$-cluster for $\Om$ we can apply Proposition \ref{upper indeco quello che serve davvero} and find an $N$-cluster $\F\in \text{Cl}\left(N,\Om_0\right)$ such that:
$$P(\F)\leq \frac{P(H)}{2}|Q| \sqrt{N}+C|Q|^{\frac{3}{4}}N^{\frac{1}{4}}+P\left(\frac{\E}{\sqrt{|\Om|}}; Q^c\right).$$
By comparison, this leads to
\begin{eqnarray}
P\left(\frac{\E}{\sqrt{|\Om|}};Q\right)&\leq & \frac{P(H)}{2}|Q| \sqrt{N}+C|Q|^{\frac{3}{4}}N^{\frac{1}{4}}\nonumber\\
\frac{1}{\sqrt{|\Om|}}P(\E;Q_l)&\leq & \frac{P(H)}{2}\frac{|Q_l|}{|\Om|} \sqrt{N}+C\frac{|Q_l|^{\frac{3}{4}}}{|\Om|^{\frac{3}{4}}}N^{\frac{1}{4}}\nonumber\\
P(\E;Q_l)&\leq& \frac{P(H)}{2}|Q_l| \sqrt{\frac{N}{|\Om|}}+CP(Q_l)^{\frac{3}{2}} \left(\frac{N}{|\Om|}\right)^{\frac{1}{4}}.\label{fine indeco su}
\end{eqnarray}
We have obtained \eqref{fine indeco su} for every $N>M$ where $M$ is depending only on the shape of $\Om$. Notice that if $N\leq M$ the restriction on $l$ implies that $l\geq \l \sqrt{|\Om|} M^{-\b}$. Moreover, we can easily build a competitor with $N$ chambers of the right measure (for example by dividing with $N$ parallel segments the set $\Om$ in chambers of the right amount of area) and obtain the estimate
$$P(\E;Q_l)\leq P(\E)\leq N\diam(\Om)+P(\Om)\leq N(\diam(\Om)+P(\Om)).$$
For all $N\leq M$ we have:
\begin{eqnarray*}
\frac{P(\E;Q_l)-\frac{P(H)}{2}|Q_l| \sqrt{\frac{N}{|\Om|}}}{P(Q_l)^{\frac{3}{2}}\left(\frac{N}{|\Om|}\right)^{\frac{1}{4}}}&\leq &
\frac{N(\diam(\Om)+P(\Om))|\Om|^{\frac{1}{4}}}{P(Q_l)^{\frac{3}{2}}}\\
&\leq & \frac{M(\diam(\Om)+P(\Om))|\Om|^{\frac{1}{4}}}{\l^{\frac{3}{2}} |\Om|^{\frac{3}{4}} M^{-\frac{3}{2}\b}}\\
&\leq & \frac{(\diam(\Om)+P(\Om))}{\sqrt{|\Om|}}\frac{ M^{1+\frac{3}{2}\b}}{\l^{\frac{3}{2}}}.
\end{eqnarray*}
Note that the quantity 
$$\frac{(\diam(\Om)+P(\Om))}{\sqrt{|\Om|}}$$
is invariant under scaling. So up to increasing the constant $C$ in dependence only on $\b$ and the shape of $\Om$ we can provide estimate \eqref{fine indeco su} for every $N\in \N$. \\

As in the proof of Theorem \ref{Equi indeco}, Lemma \ref{lower bound lemma} gives us
\begin{equation}\label{alloraaaa}
 P(\E;Q_l)\geq \frac{P(H)}{2}|Q_l| \sqrt{\frac{N}{|\Om|}}-P(Q_l).
 \end{equation}
Since $l> \frac{\sqrt{|\Om|}}{\sqrt{N}}$ we immediately have 
$$\left(\frac{N}{|\Om|}\right)^{\frac{1}{4}}|Q_l|^{\frac{3}{4}}\geq l^{-\frac{1}{2}} l^{\frac{3}{2}}=l $$
which, together with \eqref{alloraaaa} implies
\begin{equation}\label{fine indeco giu}
P(\E;Q_l)\geq \frac{P(H)}{2}|Q_l| \sqrt{\frac{N}{|\Om|}}-4\left(\frac{N}{|\Om|}\right)^{\frac{1}{4}}|Q_l|^{\frac{3}{4}}.
\end{equation}
By combining \eqref{fine indeco su} and \eqref{fine indeco giu} we achieve the proof.
\end{proof}

\chapter{A sharp quantitative version of Hales’ isoperimetric Honeycomb Theorem}\label{chapter sharpquantitative}

\section{Introduction} The isoperimetric nature of the planar ``honeycomb tiling''  has been apparent since antiquity. Referring to \cite[Section 15.1]{Morgan} for a brief historical account on this problem, we just recall here that Hales's isoperimetric Theorem, see inequality \eqref{hexagonal honeycomb thm torus} below or Theorem \ref{teo: chapter intro Hales sul toro}, gives a precise formulation of this intuitive idea. Our goal here is to strengthen Hales's theorem into a quantitative statement, similarly to what has been done with other isoperimetric theorems in recent years (see, for example, \cite{FuMP08,FMP10}).\\

Let $\hat H$ denote the reference unit-area hexagon in $\R^2$ depicted in Figure \ref{fig hexagon}, so that $\ell=(12)^{1/4}/3=P(H)/6$ is the side-length of $\hat H$. Given $\a,\b\in\N$, let us consider the torus $\T=\T(v_{\b},w_{\a})$ where
$$v_{\b}:=(\sqrt{3} \b\ell,0 ), \ \ \ w_{\a}:=\left( 0, \frac{3}{2} \a  \ell \right).$$
We recall that $v_{\b},w_{\a}$ defines an equivalence relation $\sim$ on $\R^2$ (see Subsection \ref{sbsct: the flat torus} where the flat torus is defined) thus we are allowed to define $H=\hat H/_{\sim} \subset\T$.

\begin{figure}
\begin{center}
 \includegraphics[scale=0.7]{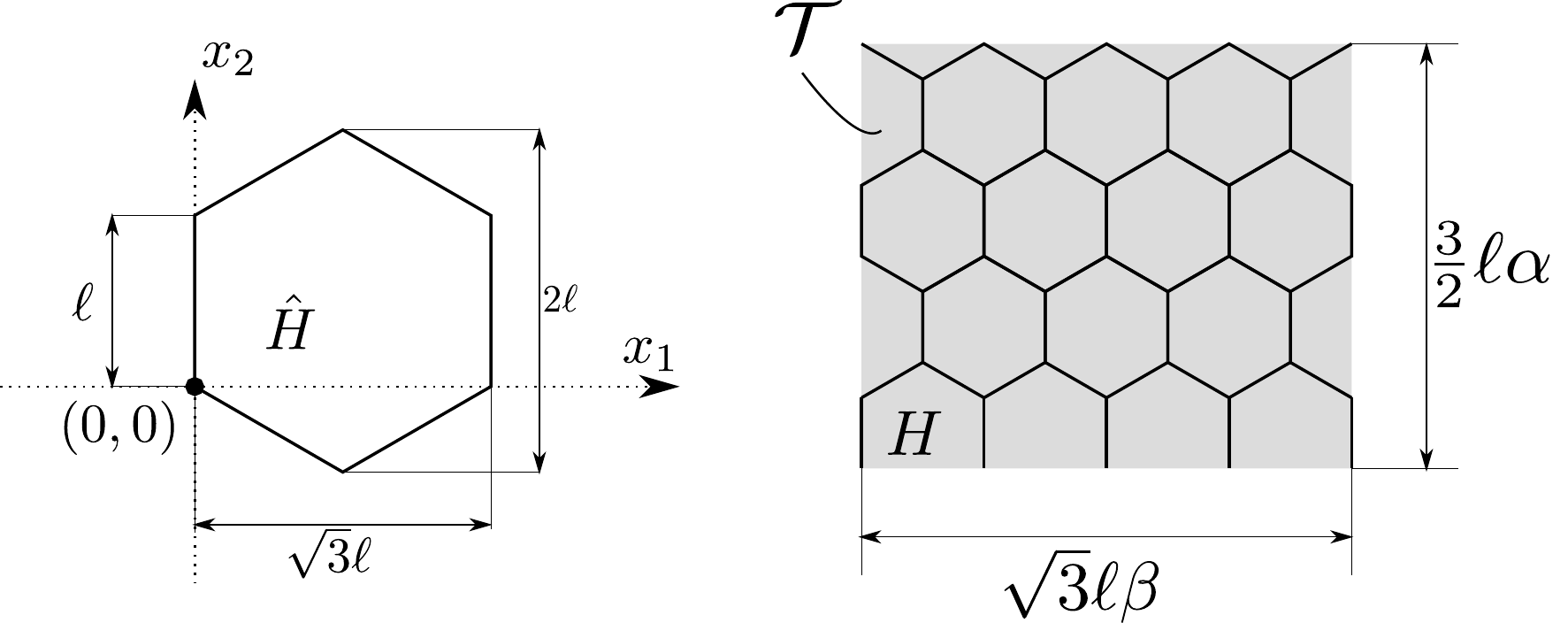}\caption{{\small Thoroughout the chapter $\hat H$ denotes the unit-area regular hexagon in $\R^2$ depicted on the left and we set $H=\hat H/_{\sim}$. Since $|H|=1$, one has $P(H)=2(12)^{1/4}$, and the side-length of $H$ is thus $\ell=(12)^{1/4}/3$. On the right, the torus $\T$ (depicted in gray) and the reference unit-area tiling $\H$ of $\T$ (with $\a=\b=4$). Notice that $N=|\T|=\a\,\b$. The chambers of $\H$ are enumerated so that $\H(1)=H$, $\{\H(h)\}_{h=1}^\b$ is the bottom row of hexagons in $\T$, and, more generally, if $0\le k\le\a-1$, then $\{\H(h)\}_{h=1+k\beta}^{(k+1)\beta}$ is the $(k+1)$th row of hexagons in $\T$.}}\label{fig hexagon}
\end{center}
\end{figure}

In order to avoid degenerate situations, {\it we shall always assume that}
\begin{equation}
  \label{occhio}
  \mbox{$\a$ is even and $\beta\ge 2$}\,.
\end{equation}
 In this way, $H$ is a regular unit-area hexagon (i.e., the vertices of $\hat{H}$ belong to six different equivalence classes) and one obtains a reference unit-area tiling $\H=\{\H(h)\}_{h=1}^N$ of $\T$ consisting of $\a$ rows and $\beta$ columns of regular hexagons by considering translations of $H$ by $(h\,\sqrt{3}\ell,3\ell\,k/2)$ ($h,k\in\Z$); see again Figure \ref{fig hexagon}. Under this assumption, {\it Hales's isoperimetric honeycomb theorem} asserts that
\begin{equation}
  \label{hexagonal honeycomb thm torus}
  P(\E)\ge P(\H)\,,
\end{equation}
whenever $\E$ is a unit-area tiling of $\T$, and that $P(\E)=P(\H)$ if and only if (up to a relabeling of the chambers of $\E$) one has $\E(h)=v+\H(h)$ for every $h=1,...,N$ and for some $v=(t\sqrt{3}\ell,s\ell)$ with $s,t\in[0,1]$. Our first main result strengthens this isoperimetric theorem in a sharp quantitative way.

\begin{theorem}
  \label{thm main periodic}
  There exists a positive constant $\k$ depending on $\T$ such that
  \begin{equation}
  \label{hexagonal honeycomb thm torus quantitative}
  P(\E)\ge P(\H)\,\Big\{1+\k\,\a(\E)^2\Big\}\,,
  \end{equation}
  whenever $\E$ is a unit-area tiling of $\T$ and
  \[
  \a(\E)=\inf \d(\hat{\E},v+\H)
  \]
  where the minimization takes place among all $v=(t\sqrt{3}\ell,s\ell)$, $s,t\in[0,1]$, and among all tilings $\hat\E$ obtained by setting $\hat\E(h)=\E(\s(h))$ for a permutation $\s$ of $\{1,...,N\}$. (Recall that the chambers of the reference honeycomb $\H$ are enumerated in a specific way, see Figure \ref{fig hexagon}.)
\end{theorem}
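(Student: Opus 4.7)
The overall strategy I would follow is the by-now standard \emph{selection principle + improved convergence + second variation} scheme of Cicalese--Leonardi--Maggi \cite{CiLeMaIC1}, carefully adapted to the network geometry of $\H$. Suppose by contradiction that \eqref{hexagonal honeycomb thm torus quantitative} fails: then there is a sequence $\{\E_k\}_{k\in\N}$ of unit-area tilings of $\T$ with $\a(\E_k)\to 0$ and
\[
P(\E_k)-P(\H)=o(\a(\E_k)^2)\,.
\]
The first step is a selection principle: one replaces $\E_k$ by minimizers $\widetilde{\E}_k$ of the penalized functional $P(\F)+\Lambda\,|\a(\F)-\a(\E_k)|$ among competitors close to $\H$. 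Standard arguments show that $\widetilde\E_k$ still violates the quantitative inequality, and that each $\widetilde\E_k$ is a $(\Lambda,r_0)$-perimeter-minimizing cluster in $\T$ for some uniform $\Lambda,r_0$. By Theorem \ref{rego planar cluster} the $\widetilde\E_k$'s are then of class $C^{1,1}$.

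Second, because $\H$ is a bounded $C^{2,1}$-cluster (in fact smooth away from its singular set), I would apply the improved convergence Theorem \ref{improv Theorem} (in its natural formulation on $\T$): up to a translation $v_k\in\T$ (and a relabeling of chambers) realizing $\d(\widetilde\E_k,v_k+\H)=\a(\widetilde\E_k)$, one obtains $C^{1,1}$-diffeomorphisms $f_k\colon\pa\H\to\pa\widetilde\E_k$ with $f_k\to \Id$ in $C^1(\pa\H)$ and $\|\tau_\H(f_k-\Id)\|_{C^1}$ controlled by $\|f_k-\Id\|_{C^0(\S(\H))}$. Writing $u_k:=(f_k-\Id)\cdot\nu_\H$ on each interface, the $C^1$-smallness of $f_k-\Id$ transfers $C^1$-smallness to $u_k$, and the mass-preservation constraint $|\widetilde\E_k(h)|=1$ yields a linear constraint of the form $\int_{\pa^*\H(h)}u_k\,d\H^1 = O(\|u_k\|_{C^0}^2)$ for each chamber $h$.

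Third, I would perform a Taylor expansion of the perimeter about $\H$. Since the edges of $\H$ are straight segments meeting at $120^\circ$ at the singular vertices, $\H$ is a critical point of $P$ under volume-preserving deformations, so the first-variation term vanishes. The second variation along normal vector fields $u$ on a network of line segments reduces (after integrating by parts and using the $120^\circ$ angle condition to cancel the singular boundary contributions) to a quadratic form of Dirichlet type,
\[
Q_\H(u)=\sum_{i\in I}\int_{\text{int}(\g_i)}|u'|^2\,d\H^1\,,
\]
so that $P(\widetilde\E_k)-P(\H)\ge \tfrac12 Q_\H(u_k)+o(\|u_k\|_{H^1}^2)$. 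The key analytic step is then a Poincar\'e-type inequality: there exists $c=c(\T)>0$ such that
\[
Q_\H(u)\ \ge\ c\,\|u\|_{L^2(\pa^*\H)}^2
\]
for every $u$ continuous on $\pa^*\H$, compatible with the triple junctions, having zero mean on each chamber, and orthogonal to the kernel of $Q_\H$. The kernel consists exactly of the infinitesimal rigid translations of the whole tiling, which are precisely the directions quotiented out by the infimum over $v$ in the definition of $\a(\E)$. Proving this spectral gap on the hexagonal network is the main obstacle: it requires a careful cell-by-cell analysis, coupling the continuity conditions at the $Y$-shaped singular vertices with the volume constraints, and exploiting the symmetries of the torus to identify the full kernel.

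Finally, one bounds $\a(\widetilde\E_k)^2=\d(\widetilde\E_k,v_k+\H)^2\le C\,\|u_k\|_{L^2(\pa^*\H)}^2$ by a direct area-via-signed-distance computation through the diffeomorphism $f_k$. Combining this with the spectral gap yields
\[
P(\widetilde\E_k)-P(\H)\ \ge\ \tfrac{c}{2}\|u_k\|_{L^2}^2+o(\|u_k\|_{H^1}^2)\ \ge\ \k\,\a(\widetilde\E_k)^2\,,
\]
contradicting the assumption and proving \eqref{hexagonal honeycomb thm torus quantitative}.
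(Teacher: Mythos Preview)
Your first two steps---a selection principle producing a uniformly $(\Lambda,r_0)$-minimizing contradicting sequence, followed by improved convergence to obtain $C^{1,1}$-diffeomorphisms $f_k\colon\pa\H\to\pa\widetilde\E_k$---match the paper's scheme closely. The paper's selection functional is $P(\F)+\d(\F,\F_k)^2$ rather than a penalization of $\a$, but this is a minor variant.

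The divergence is in the third step, and here your proposal takes a genuinely different route that contains a real gap. The paper does \emph{not} rely on a spectral gap for the second variation of $P$ on the hexagonal network. Instead, setting $\de_k=P(\widetilde\E_k)-P(\H)$, it first proves directly---via a quantitative Hales hexagonal isoperimetric inequality (Lemma~\ref{lemma hales}) combined with a quantitative isoperimetric inequality for convex hexagons (Proposition~\ref{lemma indrei}, Corollary~\ref{corollary indrei})---that the convex hull $\Pi_h$ of the singular points on $\pa\widetilde\E_k(h)$ is $C\sqrt{\de_k}$-close in Hausdorff distance to \emph{some} unit-area regular hexagon $K_h$. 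It then runs a geometric rigidity argument, reflecting these hexagons across shared edges around a full row of the torus, to show that the $K_h$ must fit together into a single translated honeycomb $v_k+\H$ up to error $C\sqrt{\de_k}$ (in particular the rotation angle $\theta$ of $K_1$ relative to $H$ satisfies $|\theta|\le C\sqrt{\de_k}$). This identifies the correct translation $v_k$ and yields $\sup_{\S(v_k+\H)}|f_0-\Id|\le C\sqrt{\de_k}$ \emph{before} any Taylor expansion is carried out. Only then does the paper expand the perimeter: because the vertex displacement is already $O(\sqrt{\de_k})$, the tangential component of $f_0-\Id$ is controlled by $C\sqrt{\de_k}$ as well, and the surviving Dirichlet term $\sum_i\int_{\s_i}|\nu_i\cdot\nabla^{\s_i}g[\tau_i]|^2$ controls $\|f_0-\Id\|_{C^0}$ via an elementary one-dimensional Poincar\'e inequality on each edge. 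No global spectral gap on the network is ever invoked.

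Your approach instead asks the second variation alone to select the translation, and this is precisely where the gap lies. You state the Poincar\'e-type inequality $Q_\H(u)\ge c\|u\|_{L^2}^2$ on the hexagonal network (modulo translations and the $N$ volume constraints) as ``the main obstacle'' but do not prove it, nor do you justify that the kernel of $Q_\H$ under those constraints consists \emph{only} of infinitesimal translations. This is the heart of the matter: without that spectral gap the contradiction does not close. The paper's geometric route (quantitative Hales plus polygonal isoperimetry plus the reflection-rigidity argument) is exactly a substitute for this missing spectral analysis, trading a delicate global eigenvalue computation on the network for explicit isoperimetric estimates that pin down the vertices first.
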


\begin{remark}
  {\rm We notice that \eqref{hexagonal honeycomb thm torus quantitative} is sharp in the decay rate of $\a(\E)$ in terms of $P(\E)-P(\H)$. Indeed, if $\om:(0,\infty)\to(0,\infty)$ is such that $P(\E)\ge P(\H)(1+\om(\a(\E)))$ for every unit-area tiling $\E$, then, for some $s_0>0$, one must have $\om(s)\le C\,s^2$ for $s\in(0,s_0)$. Indeed, one can explicitly construct a one-parameter family $\{\E_t\}_{0<t<\e}$ of unit-area tilings of $\T$ (by gently pushing three edges of the grid around a singular point by maintaining the area constraints) such that $P(\E_t)\le P(\H)(1+C\,\a(\E_t)^2)$ and $\{\a(\E_t):t\in(0,\e)\}=(0,s_0)$, so that $\om(s)\le C\,s^2$ for every $s\in(0,s_0)$.}
\end{remark}

In Theorem \ref{hexagonal honeycomb thm torus quantitative small} below, inequality \eqref{hexagonal honeycomb thm torus quantitative} is proven in much stronger form for $\pa\E$ in a special class of $C^1$-small $C^{1,1}$-diffeomorphic images of $\pa\H$, see \eqref{stabilita hd} and \eqref{stabilita f C1}.  The two main ingredients in the proof of Theorem \ref{hexagonal honeycomb thm torus quantitative small} are: a quantitative version of the hexagonal isoperimetric inequality, which we deduce from \cite{shilleto,indreinurbekyan}, see Lemma \ref{lemma indrei}; and a quantitative version of Hales's hexagonal isoperimetric inequality (the key tool behind Hales's proof of \eqref{hexagonal honeycomb thm torus}), proved in Lemma \ref{lemma hales}. These inequalities allow one to prove that each chamber of the unit-area tiling $\E$ is actually close, in terms of the size of $P(\E)-P(\H)$, to some regular unit-area hexagon in $\T$. These hexagons have no reason to fit nicely into an hexagonal honeycomb of $\T$ (that is, a translation of $\H$), therefore we need an additional argument to show that, up to translations and rotations of order $P(\E)-P(\H)$, one can achieve this. Having completed the proof of Theorem \ref{hexagonal honeycomb thm torus quantitative small}, we deduce Theorem \ref{thm main periodic} by a contradiction argument based on an improved convergence theorem for planar bubble clusters that was recently established in \cite{CiLeMaIC1}, and along the lines of the selection principle method proposed in \cite{CicaleseLeonardi}. Another consequence of Theorem \ref{hexagonal honeycomb thm torus quantitative small}, obtained in a similar vein, is the following result, which gives a precise description of isoperimetric tilings of $\T$ subject to an ``almost unit-area'' constraint.

\begin{theorem}\label{thm pertub volumes}
There exist positive constants $C_0,\de_0$ depending on $\T$ with the following property. If $\sum_{h=1}^Nm_h=N$ with $m_h>0$ and $|m_h-1|<\de_0$ for every $h=1,...,N$, and if $\E_m$ is an $N$-tiling of $\T$ which is a minimizer in
\begin{equation} \label{variational problem volumes}
\inf\big\{P(\E):|\E(h)|=m_h\quad\forall h=1,...,N\big\}
\end{equation}
then, up to a relabeling of the chambers of $\E_m$, there exists a $C^{1,1}$-diffeomorphism $f_m:\pa\H\to\pa\E_m$ such that
\begin{equation}\label{spirit}
\|f_m-(v+\Id)\|_{C^0(\pa\H)}^2+\|f_m-(v+\Id)\|_{C^1(\pa\H)}^4\le C_0\,\sum_{h=1}^N\,|m_h-1|\,,
\end{equation} 
for some $v=(t\sqrt{3}\ell,s\ell)$, $s,t\in[0,1]$.
\end{theorem}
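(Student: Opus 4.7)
The strategy is to parallel the passage from Theorem~\ref{hexagonal honeycomb thm torus quantitative small} to Theorem~\ref{thm main periodic}: first produce an $L^1$-type proximity between $\E_m$ and a translate of $\H$ by combining an upper bound on $P(\E_m)$ with the quantitative Hales inequality \eqref{hexagonal honeycomb thm torus quantitative}; then upgrade this proximity to a $C^{1,1}$-diffeomorphism $f_m$ via the improved convergence Theorem~\ref{improv Theorem}; finally invoke the smooth quantitative hexagonal honeycomb inequality (Theorem~\ref{hexagonal honeycomb thm torus quantitative small}) to conclude with the sharp bound \eqref{spirit}. Set $\de=\sum_{h=1}^N|m_h-1|$ throughout.

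\textit{Step 1 (Perimeter excess is $O(\de)$).} The first task is to construct a competitor $\F_m$ with $|\F_m(h)|=m_h$ obtained by perturbing $\H$: each chamber $\H(h)$ is deformed through small normal displacements of its interfaces of size $O(|m_h-1|)$, chosen consistently on both sides of each interface and so that $|\F_m(h)|=m_h$ (the compatibility $\sum(m_h-1)=0$ matches the fact that interior interfaces contribute opposite area changes to the two adjacent chambers). Since the honeycomb $\H$ has vanishing mean curvature on every interface and the $120^\circ$-balance at every singular point, \eqref{eqn: chapter intro teo first variation derivata del perimetro} implies that the first variation of $P$ at $\H$ vanishes along every such perturbation. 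Therefore the perimeter cost is controlled by the second variation, and one gets
\[
P(\F_m)-P(\H)\le C\sum_{h=1}^N(m_h-1)^2\le C\,\de_0\,\de\,.
\]
By minimality of $\E_m$, $P(\E_m)\le P(\H)+C\,\de$.

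\textit{Step 2 (Asymmetry bound).} Applying Theorem~\ref{thm main periodic} to $\E_m$ gives $\k\,\a(\E_m)^2\le (P(\E_m)-P(\H))/P(\H)\le C\,\de$, so $\a(\E_m)\le C\sqrt{\de}$. Choose a relabeling of $\E_m$ and a translation $v=(t\sqrt{3}\,\ell,s\,\ell)$, $s,t\in[0,1]$, realizing the infimum in the definition of $\a(\E_m)$, so that $\d(\E_m,v+\H)\le C\sqrt{\de}$.

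\textit{Step 3 (Smooth structure via improved convergence).} The cluster $\E_m$ is a $(\La,r_0)$-perimeter-minimizing cluster for constants $\La,r_0$ depending only on $\T$ and $\de_0$: indeed the Lagrange multipliers associated to the volume constraints $|\E(h)|=m_h$ depend continuously on $m$ and vanish at $m\equiv 1$ (by the vanishing mean curvature of every interface of $\H$), hence stay uniformly bounded for $|m_h-1|\le\de_0$ small. Since $\d(\E_m,v+\H)\to 0$ as $\de\to 0$, Theorem~\ref{improv Theorem}, applied with reference cluster $v+\H$, produces for $\de_0$ small enough a $C^{1,1}$-diffeomorphism $f_m:\pa\H\to\pa\E_m$ with
\[
\|f_m-(v+\Id)\|_{C^{1,1}(\pa\H)}\le C_0,\qquad \|f_m-(v+\Id)\|_{C^1(\pa\H)}\to 0\text{ as }\de\to 0.
\]

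\textit{Step 4 (Sharp inequality in the smooth regime).} For $\de_0$ sufficiently small, Step~3 places $\E_m$ in the class of $C^{1,1}$-diffeomorphic, $C^1$-small perturbations of $v+\H$ to which Theorem~\ref{hexagonal honeycomb thm torus quantitative small} applies. That smooth quantitative inequality controls precisely $\|f_m-(v+\Id)\|_{C^0}^2+\|f_m-(v+\Id)\|_{C^1}^4$ in terms of the perimeter excess $P(\E_m)-P(\H)$, modulo a linear-in-$\sum_h|m_h-1|$ correction accounting for the fact that the chambers of $\E_m$ do not have unit area. Combined with the estimate of Step~1 this yields \eqref{spirit}, with $C_0$ depending only on $\T$. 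The main obstacle is exactly Step~4: one has to check that the volume-mismatch correction in Theorem~\ref{hexagonal honeycomb thm torus quantitative small} is indeed linear in $\sum_h|m_h-1|$ and hence absorbable on the right-hand side of \eqref{spirit}, and that the translation $v$ selected in Step~2 is compatible with the normalization used there; both points are technical but fit naturally into the selection-principle scheme used to deduce Theorem~\ref{thm main periodic} from Theorem~\ref{hexagonal honeycomb thm torus quantitative small}.
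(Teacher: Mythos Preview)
Your overall strategy is the same as the paper's: bound $P(\E_m)-P(\H)$ via a competitor, get $L^1$-proximity to a translate of $\H$, use $(\Lambda,r_0)$-minimality plus improved convergence to reach the $(\e,\mu,L)$-perturbation regime, then finish with Theorem~\ref{hexagonal honeycomb thm torus quantitative small}. Your Step~4 observation that a linear-in-$\sum_h|m_h-1|$ correction is needed in that last application is correct and matches what the paper does implicitly.

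The real gap is Step~2: Theorem~\ref{thm main periodic} is stated and proved only for \emph{unit-area} tilings, so you cannot apply it to $\E_m$ as written. The paper does not use Theorem~\ref{thm main periodic} here at all; it argues by contradiction along a sequence $m^j\to(1,\dots,1)$, using the perimeter bound of Step~1 for compactness and then Hales's theorem on the (necessarily unit-area) subsequential limit to conclude $\d(\E_{m^j},v+\H)\to 0$. A direct repair of your Step~2 would be to first volume-fix $\E_m$ to a nearby unit-area tiling via Lemma~\ref{lemma volume fixing} and then apply Theorem~\ref{thm main periodic}. In fact the paper uses Lemma~\ref{lemma volume fixing} systematically in place of your ad hoc constructions: the competitor in Step~1 comes directly from it (with the linear bound $P\le P(\H)+C\max_h|m_h-1|$, so your second-variation argument, while plausible, is not needed), and the $(\Lambda,r_0)$-minimality in Step~3 follows from it exactly as in the proof of Lemma~\ref{thm selection}, with no appeal to Lagrange multipliers.
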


Next, let us consider the family $X$ of those $\Phi\in C^0(\T\times S^{1};(0,\infty))$ such that the positive one-homogeneous extension of $\Phi(x,\cdot)$ to $\R^2$ is convex, fix $\psi\in C^0(\T;(0,\infty))$, and consider the isoperimetric problem
\begin{equation}
  \label{finsler}
  \l(\Phi,\psi)=\inf\Big\{\PHI(\E)=\frac12\sum_{h=1}^N\PHI(\E(h)):\int_{\E(h)}\psi=\frac1{N}\int_\T\psi\quad \forall h=1,...,N\Big\}\,,
\end{equation}
where for a set of finite perimeter $E\subset\T$ we have set
\[
\PHI(E;A)=\int_{A\cap\pa^*E}\Phi(x,\nu_E(x))\,d\H^1(x)\,,\qquad \PHI(E)=\PHI(E;\T)\,,
\]
provided $\pa^*E$ and $\nu_E:\pa^*E\to S^1$ denote, respectively, the reduced boundary and the measure-theoretic outer unit normal of $E$. Notice that although we do not assume $\Phi$ to be even, we have nevertheless that $\l(\Phi,\psi)=\l(\hat\Phi,\psi)$ where $\hat\Phi(x,\nu)=(\Phi(x,\nu)+\Phi(x,-\nu))/2$. An interesting example is obtained when $g$ is a Riemannian metric on $\T$ and
\[
\Phi(x,\nu)=\sqrt{g(x)[\nu^\perp,\nu^\perp]}\,,\qquad \psi=\sqrt{\det(g(x))}\,,
\]
where $\nu^\perp=(\nu_2,-\nu_1)$ if $\nu=(\nu_1,\nu_2)$. In this case, \eqref{finsler} boils down to minimizing the total Riemannian perimeter of a partition of $\T$ into $N$-regions of equal Riemannian area.


\begin{theorem}\label{thm pertub metric}
  Given $L>0$ and $\g\in(0,1]$, there exist $C_0,\de_0>0$ (depending on $\T$, $L$ and $\g$) with the following property. If $\E$ is a minimizer in \eqref{finsler} for $\Phi\in X\cap\Lip(\T\times S^1)$ and $\psi\in C^{1,\g}(\T)$ such that
  \begin{gather}\label{hp finsler 1}
  \Lip\,\Phi+\|\psi\|_{C^{1,\g}(\T)}\le L\,,
  \\\nonumber
  \|\Phi-1\|_{C^0(\T\times S^1)}+\|\psi-1\|_{C^0(\T)}<\de_0\,,
  \end{gather}
  then
  \begin{equation}
    \label{venerdi}  \inf_{s,t\in[0,1]}\hd(\pa\E,v+\pa\H)^4\le C_0\,\Big(\|\Phi-\Id\|_{C^0(\T\times S^1)}+\|\psi-1\|_{C^0(\T)}\Big)\,,
  \end{equation}
  where $v=(t\sqrt{3}\ell,s\ell)$ and $\hd(S,T)$ denote the Hausdorff distance between the closed sets $S$ and $T$ in $\T$.
\end{theorem}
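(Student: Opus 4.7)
The plan is to argue by contradiction via a compactness/selection-principle scheme, mirroring the deductions used for Theorems \ref{thm main periodic} and \ref{thm pertub volumes}. I would assume the conclusion fails, extract sequences $\Phi_k \in X\cap\Lip(\T\times S^1)$ and $\psi_k\in C^{1,\g}(\T)$ satisfying \eqref{hp finsler 1} with $\e_k := \|\Phi_k-1\|_{C^0(\T\times S^1)} + \|\psi_k-1\|_{C^0(\T)} \to 0$, together with minimizers $\E_k$ of \eqref{finsler} for $(\Phi_k,\psi_k)$, such that $\inf_{s,t\in[0,1]} \hd(\pa \E_k, v + \pa \H)^4 > k\,\e_k$ with $v=(t\sqrt{3}\ell,s\ell)$, and derive a contradiction for $k$ large.

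First I would control the Euclidean perimeter. Because $|\psi_k-1|\le\e_k$ and $\int_{\E_k(h)}\psi_k = N^{-1}\int_\T\psi_k$, the Lebesgue volumes $m_{h,k}:=|\E_k(h)|$ satisfy $|m_{h,k}-1|\le C\e_k$ and $\sum_h m_{h,k}=N$. Then I would construct a competitor $\F_k$ for \eqref{finsler} by radially dilating each chamber of a copy of $\H$ around its centroid until $\int_{\F_k(h)}\psi_k = N^{-1}\int_\T\psi_k$; such a perturbation costs only $P(\F_k) \le P(\H) + C\e_k$. Using the elementary bounds $(1-\e_k)P \le \PHI_k \le (1+\e_k)P$, the minimality of $\E_k$ gives $P(\E_k) \le P(\H) + C\e_k$.

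Next I would apply compactness and improved convergence. Each $\E_k$ is a $(\Lambda,r_0)$-perimeter almost-minimizing cluster in $\T$ for $\Lambda, r_0$ depending only on $L$: the anisotropy $\Phi_k$ and the weight $\psi_k$ are Lipschitz and two-sided bounded once $\e_k$ is small, and the weighted volume constraint contributes an $L$-controlled Lagrange-multiplier correction. Combined with $P(\E_k)\to P(\H)$, standard compactness (Theorem \ref{compactness theorem}) and the uniqueness of unit-area perimeter-minimizing tilings of $\T$ (Theorem \ref{teo: chapter intro Hales sul toro}) yield, along a subsequence and after relabeling of chambers, $\E_k\to v_* + \H$ in $L^1$ for some $v_*\in\T$. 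The improved convergence theorem (Theorem \ref{improv Theorem}) then furnishes $C^{1,1}$-diffeomorphisms $f_k:\pa(v_*+\H)\to\pa\E_k$ with $\|f_k\|_{C^{1,1}(\pa\H)}\le C_0$ and $\|f_k-\Id\|_{C^1(\pa\H)}\to 0$.

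Finally I would invoke the $C^1$-close quantitative honeycomb inequality (Theorem \ref{hexagonal honeycomb thm torus quantitative small}) for $f_k$. After correcting for the deviation $|m_{h,k}-1|\le C\e_k$ from unit area, either via a small further deformation of $f_k$ or via a comparison with the Euclidean minimizer for the volumes $(m_{h,k})$ provided by Theorem \ref{thm pertub volumes}, the quantitative stability produces
\[
\|f_k-\Id\|_{C^1(\pa \H)}^4 \;\le\; C\,\Big(P(\E_k)-P(\H) + \sum_{h=1}^N |m_{h,k}-1|\Big) \;\le\; C'\,\e_k.
\]
Since $\hd(\pa\E_k, v_*+\pa\H) \le \|f_k-\Id\|_{C^0(\pa \H)} \le \|f_k-\Id\|_{C^1(\pa \H)}$, this gives $\hd(\pa\E_k,v_*+\pa\H)^4 \le C\e_k$, contradicting the standing assumption for $k$ sufficiently large. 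The hard part will be the last step: one has to match the $\psi_k$-weighted volume constraint satisfied by $\E_k$ with the unit-area hypothesis of Theorem \ref{hexagonal honeycomb thm torus quantitative small} while preserving the quartic-in-$\e_k$ estimate on the $C^1$ deformation norm. It is precisely this passage, through the $C^1$-norm rather than the sharper $C^0$-norm, that is responsible for the exponent $4$ in \eqref{venerdi}.
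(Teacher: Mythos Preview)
There is a genuine gap at the step where you assert that each $\E_k$ is a $(\Lambda,r_0)$-minimizer for the \emph{Euclidean} perimeter with $\Lambda,r_0$ depending only on $L$. From $\PHI_k$-minimality and the volume-fixing lemma one only gets
\[
P(\E_k;B_r)\le\frac{1+\e_k}{1-\e_k}\,P(\F;B_r)+C\,\d(\E_k,\F)\,,
\]
and the multiplicative factor $1+O(\e_k)$ cannot be absorbed into the additive term uniformly over all competitors $\F$ (take $\F$ with $P(\F;B_r)$ large and $\d(\E_k,\F)$ tiny). With $\Phi_k$ merely Lipschitz, the boundaries $\pa\E_k$ are not known to be $C^{1,1}$, and the singular structure of anisotropic isoperimetric clusters is not available; the paper remarks explicitly that this is exactly what would be needed to adapt the improved convergence theorem to the present setting. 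So Theorem \ref{thm improved convergence} and hence Theorem \ref{hexagonal honeycomb thm torus quantitative small} cannot be invoked on $\E_k$ as you propose.

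The paper's proof avoids this issue entirely and takes a different route. After using Lemma \ref{lemma volume fixing} to produce a nearby \emph{unit-area} tiling $\E_\de'$ with $P(\E_\de')\le P(\H)+C\de$ and $\d(\E_\de',\E_\de)\le C\de$, it applies Theorem \ref{thm main periodic} directly (that inequality holds for \emph{all} unit-area tilings, no almost-minimality required) to get $\d(\E_\de,v_\de+\H)^2\le C\de$. Separately, it proves lower density estimates $|\E_\de(h)\cap B_r(x)|\ge c_0 r^2$ directly from the $\PHI$-minimality (this does not need Euclidean $(\Lambda,r_0)$-minimality), and these convert the $L^1$-bound into $\hd(\pa\E_\de,v_\de+\pa\H)^2\le C\,\d(\E_\de,v_\de+\H)\le C\sqrt{\de}$. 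Thus the exponent $4$ in \eqref{venerdi} comes from two quadratic losses --- one in Theorem \ref{thm main periodic} and one in the density-to-Hausdorff passage --- and not from the $C^1$-branch of Theorem \ref{hexagonal honeycomb thm torus quantitative small}.
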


We deduce Theorem \ref{thm pertub metric} from Theorem \ref{thm main periodic} by some comparison arguments and density estimates. Since we are assuming that $\nabla\Phi$ is merely bounded, we do not expect $\pa\E$ to be a $C^1$-diffeomorphic image of $\pa\H$. From this point of view,  \eqref{venerdi} seems to express a qualitatively sharp control on $\pa\E$. At the same time, when more regular integrands $\Phi$ are considered (see, e.g., \cite{DuzaarSteffen} for the kind of assumption one may impose here) one would expect to be able to obtain a control in the spirit of \eqref{spirit}. However a description of singularities of isoperimetric clusters in this kind of setting, although arguably achievable at least in some special cases, is missing at present. In turn, understanding singularities would be essential in order to adapt the improved convergence theorem from \cite{CiLeMaIC1} to this context (see Theorem \ref{improv Theorem} above), and thus to be able to strengthen \eqref{venerdi} into an estimate analogous to \eqref{spirit}.

The chapter is organized as follows. In Section \ref{section indrei} we deduce from \cite{shilleto,indreinurbekyan} a quantitative isoperimetric inequality for polygons of possible independent interest. In Section \ref{section small def} we prove Theorem \ref{hexagonal honeycomb thm torus quantitative small} on small $C^1$-deformations of $\pa\H$ (actually with the Hausdorff distance between $\pa\E$ and $\pa\H$ in place of $\d(\E,\H)$ on the right-hand side of \eqref{hexagonal honeycomb thm torus quantitative}). In Section \ref{section fine} we exploit the improved convergence theorem from \cite{CiLeMaIC1} to deduce Theorem \ref{thm main periodic} and Theorem \ref{thm pertub volumes} from Theorem \ref{hexagonal honeycomb thm torus quantitative small}, and, finally, to deduce Theorem \ref{thm pertub metric} from Theorem \ref{thm main periodic}.

\section{A quantitative isoperimetric inequality for polygons}\label{section indrei} Thorough this section we fix $n\ge 3$. We denote by $\Pi$ a convex unit-area $n$-gon, and by $\Pi_0$ a reference unit-area regular $n$-gon. If $\ell$ and $r$ denote, respectively, the side-length and radius of $\Pi_0$, then one easily finds that
\[
P(\Pi_0)=n\,\ell=2\sqrt{n\,\tan\Big(\frac{\pi}n\Big)}\,,\qquad r^{-1}=\sqrt{n\,\sin\Big(\frac{\pi}n\Big)\,\cos\Big(\frac{\pi}n\Big)}\,.
\]
(Notice that in the other sections of the chapter we always assume $n=6$, so that $\ell=(12)^{1/4}/3$ according to the convention set in the introduction.) The isoperimetric theorem for $n$-gons asserts that
\begin{equation}
  \label{isoperimetric inequality ngon}
  P(\Pi)\ge n\,\ell\,,
\end{equation}
with equality if and only if $\Pi=\rho(\Pi_0)$ for a rigid motion $\rho$ of $\R^2$. A sharp quantitative version of \eqref{isoperimetric inequality ngon} is proved in \cite{indreinurbekyan} starting from the main result in \cite{shilleto}. Precisely, let us now denote by $\ell_i$ and $r_i$ the lengths of the $i$th edge and the $i$th radius of $\Pi$ (labeled so that $\ell_i=\ell_j$ and $r_i=r_j$ if $i=j$ modulo $n$), and set
\[
\bar{\ell}=\frac1n\sum_{i=1}^n\ell_i\,,\qquad\bar{r}=\frac1n\sum_{i=1}^n r_i\,.
\]
Then \cite[Corollary 1.3]{indreinurbekyan} asserts that
\begin{equation}
  \label{isoperimetric inequality ngon indrei}
  C(n)\,\left(P(\Pi)^2-(n\ell)^2\right)\ge \sum_{i=1}^n (r_i-\bar{r})^2+\sum_{i=1}^n (\ell_i-\bar{\ell})^2\,.
\end{equation}
The right-hand side of inequality \eqref{isoperimetric inequality ngon indrei} measures the distance of $\Pi$ from being a unit-area regular $n$-gon in the sense that if $r_i=\bar r$ and $\ell_i=\bar\ell$, then it must be $\bar r=r$ and $\bar\ell=\ell$ by the area constraint, and thus $\Pi$ is a regular unit-area $n$-gon. However, in addressing our problem we shall need (in the case $n=6$) to control the distance of $\Pi$ from a specific regular unit-area $n$-gon by means of $P(\Pi)^2-(n\ell)^2$. Passing from \eqref{isoperimetric inequality ngon indrei} to this kind of control is the subject of the following proposition.

\begin{proposition}\label{lemma indrei}
There exists a positive constant $C(n)$ with the following property: for every convex unit-area $n$-gon $\Pi$ there exists a rigid motion $\rho$ of $\R^2$ such that
   \begin{equation}
       \label{quantitative indrei inq}
      C(n)\,\left(P(\Pi)^2-(n\ell)^2\right)\ge \hd(\pa \Pi,\pa \rho\Pi_0 )^2\,.
  \end{equation}
\end{proposition}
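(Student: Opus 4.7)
The plan is to combine the quantitative bound \eqref{isoperimetric inequality ngon indrei} of Indrei and Nurbekyan with elementary Euclidean trigonometry so as to reconstruct the vertices of $\Pi$ up to a rigid motion. Set $\delta^{2}:=P(\Pi)^{2}-(n\ell)^{2}$. From \eqref{isoperimetric inequality ngon indrei} I immediately obtain $|\ell_{i}-\bar\ell|\le C(n)\,\delta$ and $|r_{i}-\bar r|\le C(n)\,\delta$ for every $i$, so all sides and all radii of $\Pi$ are $C(n)\delta$-close to their own averages. To pass from the averages $\bar\ell,\bar r$ to the ``correct'' values $\ell,r$ of $\Pi_{0}$, I use the constraint $|\Pi|=1$ together with $P(\Pi)\ge n\ell$ (which yields $\bar\ell\ge \ell$): writing the area as $\sum_{i}\tfrac12 r_{i}r_{i+1}\sin\theta_{i}=1$, with $\theta_{i}$ the angle between consecutive radii (so that $\sum_{i}\theta_{i}=2\pi$), a compactness/continuity argument on the family of unit-area convex $n$-gons upgrades the qualitative fact ``all $\ell_{i}\to\bar\ell$, all $r_{i}\to\bar r$ and $|\Pi|=1$ force $\bar\ell=\ell,\bar r=r$'' into a quantitative bound $|\bar\ell-\ell|+|\bar r-r|\le C(n)\,\delta$, valid whenever $\delta$ is below a threshold depending only on $n$. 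If $\delta$ exceeds this threshold the conclusion \eqref{quantitative indrei inq} is trivial up to enlarging $C(n)$, since the diameter of $\Pi$ is controlled by $P(\Pi)$.

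Once radii and edges are known to be within $C(n)\delta$ of $r$ and $\ell$ respectively, the angles $\theta_{i}$ are recovered via the law of cosines,
\[
\cos\theta_{i}=\frac{r_{i}^{2}+r_{i+1}^{2}-\ell_{i}^{2}}{2\,r_{i}\,r_{i+1}}\,.
\]
Since this rational expression equals $\cos(2\pi/n)\in(-1,1)$ in the regular configuration and $\cos$ is locally bi-Lipschitz away from $\{0,\pi\}$, I get $|\theta_{i}-2\pi/n|\le C(n)\,\delta$ for every $i$. Parameterizing the vertices of $\Pi$ in polar coordinates centered at the radial center $c$ of $\Pi$ as $v_{i}=c+r_{i}(\cos\varphi_{i},\sin\varphi_{i})$ with $\varphi_{i+1}=\varphi_{i}+\theta_{i}$, and choosing the rigid motion $\rho$ so that $\rho\Pi_{0}$ has center $c$ and first vertex at angular coordinate $\varphi_{1}$, a telescoping estimate gives $|\varphi_{i}-(\varphi_{1}+(i-1)\,2\pi/n)|\le C(n)\,\delta$, whence each vertex of $\Pi$ lies within $C(n)\,\delta$ of the corresponding vertex of $\rho\Pi_{0}$.

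Both $\partial\Pi$ and $\partial\rho\Pi_{0}$ are closed polygonal lines whose vertices are paired within $C(n)\,\delta$ and whose edges differ in length by at most $C(n)\,\delta$; a direct estimate then yields $\hd(\partial\Pi,\partial\rho\Pi_{0})\le C(n)\,\delta$, and squaring gives \eqref{quantitative indrei inq}. The main obstacle is the quantitative step $|\bar r-r|+|\bar\ell-\ell|\le C(n)\,\delta$: the nonlinear interplay between edge lengths, radii, angles and the area constraint makes a direct Taylor expansion delicate, so the cleanest route is the compactness-based Lipschitz estimate sketched above, localized in a neighborhood of the regular polygon (a setting in which the constraint surface $|\Pi|=1$ is a smooth manifold transverse to the ``averaging'' directions used in \eqref{isoperimetric inequality ngon indrei}).
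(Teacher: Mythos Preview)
Your proposal is correct and follows essentially the same route as the paper: reduce to small deficit by a diameter bound, invoke \eqref{isoperimetric inequality ngon indrei} to control $|r_i-\bar r|$ and $|\ell_i-\bar\ell|$, upgrade to $|r_i-r|$ and $|\ell_i-\ell|$ via the area constraint, recover the angles by the law of cosines, and finish by a vertex-matching/telescoping argument.

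The only substantive difference is the step you flag as ``the main obstacle'', namely $|\bar r-r|+|\bar\ell-\ell|\le C(n)\,\delta$. The paper does not use a compactness/transversality argument here but proceeds directly: since $\bar\ell=P(\Pi)/n$, the bound $|\bar\ell-\ell|$ is immediate; then writing $1=|\Pi|=\sum_i A(r_i,r_{i+1},\ell_i)$ with $A(a,b,c)$ the area of a triangle with sides $a,b,c$, Lipschitz continuity of $A$ near $(r,r,\ell)$ gives $|A(r,r,\ell)-A(\bar r,\bar r,\ell)|\le C\sqrt{\delta}$, and the explicit formula $A(a,a,\ell)=(\ell/4)\sqrt{4a^2-\ell^2}$ is bi-Lipschitz in $a$ near $a=r$, yielding $|\bar r-r|\le C\sqrt{\delta}$ directly. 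This is shorter and avoids the implicit-function/compactness machinery you sketch, so you may prefer to substitute it in. Also, be explicit that ``the radial center $c$'' is the barycenter (or whichever center is used in \eqref{isoperimetric inequality ngon indrei}), so that the $r_i$ you manipulate are the same as those appearing there.
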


\begin{proof}
  Up to a translation, we can assume that $\Pi$ has barycenter at $0$. Next, if $P(\Pi)\ge n\ell+\eta\,P(\Pi)$ for some $\eta>0$, then $P(\Pi)^2-(n\ell)^2\ge \eta\,P(\Pi)^2$. Since $\hd(\pa\Pi,\pa\rho\Pi_0)<\diam(\Pi)+\diam(\Pi_0)\le (P(\Pi)+P(\Pi_0))/2\le P(\Pi)$ whenever $\pa\rho\Pi_0$ intersects $\pa\Pi$, we conclude that \eqref{quantitative indrei inq} holds with $C(n)=\eta^{-1}$. In other words, in proving \eqref{quantitative indrei inq}, one can assume without loss of generality that
  \begin{equation}\label{piccolezza}
  P(\Pi)-n\,\ell< \eta\,P(\Pi)\,
  \end{equation}
  for an arbitrarily small constant $\eta=\eta(n)$. By a trivial compactness argument (on the class of convex $n$-gons with barycenter at $0$), one sees that given $\e>0$ there exists $\eta>0$ such that if \eqref{piccolezza} holds, then, up to rigid motions,
  \begin{equation}
    \label{piccolezza hd}
    \hd(\pa \Pi, \pa \Pi_0)< \e\,,
  \end{equation}
  where the reference regular unit-area $n$-gon $\Pi_0$ is assumed to have barycenter at $0$.

  Now let $v_i$ and $w_i$ denote the positions of the vertices of $\Pi$ and $\Pi_0$ respectively: by \eqref{piccolezza hd} and up to a rotation, one can entail that
  \[
  |v_i-w_i|<\e\,,\qquad\forall i=1,...,n\,,\qquad v_1=\l\,w_1\qquad\mbox{for some $\l>0$}\,.
  \]
  Let $\rho_i$ denote the rotation around the origin such that $\rho_i(v_i)=\l_i\,w_i$ for some $\l_i>0$ (so that $\rho_1=\Id$ by $v_1=\l\,w_1$), and let $\theta_i$ denote the angle identifying $\rho_i$ as a counterclockwise rotation; since $\|\rho_i-\Id\|\le|\theta_i|$ and $|\rho_i(v_i)-w_i|=|r_i-r|$, one has
  \begin{equation}
    \label{bene -1}
    \hd(\pa\Pi,\pa\Pi_0)\le C\,\sum_{i=1}^n\,|v_i-w_i|\le C\,\sum_{i=1}^n\,r_i|\theta_i|+|r_i-r|\,.
  \end{equation}
  Let us now set $\de=P(\Pi)-n\ell$: by \eqref{isoperimetric inequality ngon indrei} and \eqref{piccolezza} one finds
  \begin{equation}
    \label{bene 0}
  \max_{1\le i\le n}|r_i-\bar{r}|+|\ell_i-\bar\ell|\le C\,\sqrt{\de}\,.
  \end{equation}
  Since $\bar\ell=n^{-1}P(\Pi)$ gives $|\bar\ell-\ell|=n^{-1}\de$, we deduce from $|\ell_i-\bar\ell|\le C\sqrt\de$ that
  \begin{equation}
    \label{bene 1}
      \max_{1\le i\le n}|\ell_i-\ell|\le C\sqrt{\de}\,.
  \end{equation}
  Let now $A(a,b,c)$ denote the area of a triangle with sides of length $a$, $b$ and $c$. Since $A$ is a Lipschitz function in an $\e$-neighborhood of $(r,r,\ell)$ (where both $(\bar r,\bar r,\ell)$ and $(r_i,r_{i+1},\ell_i)$ lie by \eqref{piccolezza hd}), by \eqref{bene 0}, \eqref{bene 1} and by $|\Pi_0|=|\Pi|$ we find
  \[
  \Big|n\,A(r,r,\ell)-n\, A(\bar{r},\bar{r},\ell)\Big|
  =\Big|\sum_{i=1}^n A(r_i,r_{i+1},\ell_i)-n\, A(\bar{r},\bar{r},\ell)\Big|\le C\,\sqrt{\de}.
  \]
  Since $A(a,a,\ell)=(\ell/4)\,\sqrt{4a^2-\ell^2}$ we immediately see that $|A(r,r,\ell)-A(a,a,\ell)|\ge c\,|a-r|$ whenever $|a-r|<\e$ and where $c=c(\ell)=c(n)>0$. Thus, $|r-\bar{r}|\le C\,\sqrt{\de}$, and  \eqref{bene 0} and \eqref{bene 1} give
  \begin{equation}
    \label{bene 2}
    \max_{1\le i\le n}|r_i-r|+|\ell_i-\ell|\le C\,\sqrt{\de}\,.
  \end{equation}
  If $\a_i$ denotes the interior angle between $v_i$ and $v_{i+1}$ (so that $|\a_i-2\pi/n|=O(\e)$ by \eqref{piccolezza hd}), then
  \[
  \a_i=f(r_i,r_{i+1},\ell_i)\,,\qquad\mbox{where}\quad f(a,b,c)=\arccos\Big(\frac{a^2+b^2-c^2}{2ab}\Big)\,.
  \]
  Since $f$ is a Lipschitz function in an $\e$-neighborhood of $(r,r,\ell)$, we conclude from \eqref{bene 2} that
\begin{equation*}
\max_{1\le i\le n}\Big{|}\a_i-\frac{2\pi}n\Big{|}=\max_{1\le i\le n}\Big{|}f(r_i,r_{i+1},\ell_i)-f(r,r,\ell)\Big{|}\le C\,\sqrt\de.
\end{equation*}
In particular, since $\theta_1=0$ (as $\rho_1=\Id$), we deduce from this last estimate that $|\theta_i|\le C\,\sqrt{\de}$ for $i=1,...,n$. We plug this inequality and \eqref{bene 2} in \eqref{bene -1} to conclude the proof.
\end{proof}

Coming to the torus $\T$, we shall use the following corollary of Proposition \ref{lemma indrei}.

\begin{corollary}\label{corollary indrei}
  There exist positive constants $\eta$ and $c$, independent from $\T$, with the following property. If $\Pi$ is a convex hexagon in $\T$ such that $\hd(\pa\Pi,\pa H)\le \eta$, then there exists a regular hexagon $H_*$ in $\T$ with $|\Pi|=|H_*|$ and for which the following holds:
  \begin{equation}
    \label{precisi0}
      P(\Pi)-P(H)\sqrt{|\Pi|}\ge c\,\hd(\pa\Pi,\pa H_*)^2\,.
  \end{equation}
\end{corollary}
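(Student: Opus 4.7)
The plan is to apply Proposition \ref{lemma indrei} with $n=6$ after rescaling $\Pi$ to unit area, and then convert the difference $P(\Pi)^2-P(H)^2|\Pi|$ into the desired linear difference $P(\Pi)-P(H)\sqrt{|\Pi|}$ via the standard factorization. The smallness assumption $\hd(\pa\Pi,\pa H)\le\eta$ plays two roles: first, it guarantees that $\Pi$ lifts to a genuine convex hexagon $\hat\Pi\subset\R^2$ (close to the reference lift $\hat H$ of $H$), so we can legitimately work in the plane; second, it yields uniform upper bounds on $P(\Pi)$ and on $|\Pi|$, which will be needed to absorb the factor $P(\Pi)+P(H)\sqrt{|\Pi|}$.

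First I would lift the picture. Since $\eta$ will be chosen small (depending only on the geometry of $H$, which is fixed in $\T$), the closed curve $\pa\Pi$ lies in a small tubular neighborhood of $\pa H$ inside $\T$, which admits a diffeomorphic lift to $\R^2$. Thus $\Pi$ corresponds to a convex unit-area-scale hexagon $\hat\Pi\subset\R^2$ with $\hd(\pa\hat\Pi,\pa\hat H)=\hd(\pa\Pi,\pa H)\le\eta$. In particular $||\hat\Pi|-1|\le C\eta$ and $|P(\hat\Pi)-P(\hat H)|\le C\eta$ for a universal $C$.

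Next, apply Proposition \ref{lemma indrei} to the rescaled hexagon $\hat\Pi/\sqrt{|\hat\Pi|}$, which is a convex unit-area $6$-gon. With $\ell=(12)^{1/4}/3$ and $P(H)=6\ell$, this produces a rigid motion $\rho$ of $\R^2$ such that, after multiplying through by $|\hat\Pi|$ (using the $1$-homogeneity of perimeter and the $1$-homogeneity of Hausdorff distance),
\begin{equation*}
C(6)\bigl(P(\hat\Pi)^2-P(H)^2|\hat\Pi|\bigr)\;\ge\;\hd\bigl(\pa\hat\Pi,\,\pa\,\hat H_*\bigr)^2\,,
\end{equation*}
where $\hat H_*:=\rho\bigl(\sqrt{|\hat\Pi|}\,\Pi_0\bigr)$ is a regular hexagon with $|\hat H_*|=|\hat\Pi|$. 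Then I would define $H_*\subset\T$ as the projection of $\hat H_*$ to the torus: since $\hd(\pa\hat H_*,\pa\hat H)\le\hd(\pa\hat H_*,\pa\hat\Pi)+\hd(\pa\hat\Pi,\pa\hat H)$ and both terms are $O(\eta)$ (the first by the displayed inequality together with $P(\hat\Pi)^2-P(H)^2|\hat\Pi|=O(\eta)$), the set $\hat H_*$ stays inside a fundamental domain for $\T$ provided $\eta$ is small, hence the projection is a bona-fide regular hexagon in $\T$ with $|H_*|=|\Pi|$ and $\hd(\pa H_*,\pa\hat H_*)=\hd(\pa\Pi,\pa\hat H_*)$ becomes just $\hd(\pa\Pi,\pa H_*)$.

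Finally, to pass from the quadratic to the linear deficit, write
\begin{equation*}
P(\Pi)^2-P(H)^2|\Pi|=\bigl(P(\Pi)-P(H)\sqrt{|\Pi|}\bigr)\bigl(P(\Pi)+P(H)\sqrt{|\Pi|}\bigr).
\end{equation*}
By the closeness of $\Pi$ to $H$, the factor $P(\Pi)+P(H)\sqrt{|\Pi|}\le 2P(H)+C\eta$ is bounded above by a universal constant $M$, so
\begin{equation*}
P(\Pi)-P(H)\sqrt{|\Pi|}\;\ge\;\frac{1}{C(6)\,M}\,\hd(\pa\Pi,\pa H_*)^2,
\end{equation*}
which is \eqref{precisi0} with $c=1/(C(6)M)$. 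There is no real obstacle; the only delicate point is verifying that $\hat H_*$ projects injectively to $\T$, and this is handled by choosing $\eta$ small relative to the injectivity radius of the tubular neighborhood of $\pa H$ in $\T$.
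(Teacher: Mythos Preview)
Your proof is correct and follows essentially the same route as the paper's own argument: lift to $\R^2$, apply Proposition~\ref{lemma indrei} after rescaling to unit area, factor $P(\Pi)^2-P(H)^2|\Pi|$ to pass from the quadratic to the linear deficit, and use the smallness of $\eta$ to ensure $\hat H_*$ projects isometrically back to $\T$. One small inaccuracy: from the displayed inequality together with $P(\hat\Pi)^2-P(H)^2|\hat\Pi|=O(\eta)$ you only get $\hd(\pa\hat\Pi,\pa\hat H_*)=O(\sqrt{\eta})$, not $O(\eta)$ as you wrote; the paper records exactly this $\sqrt{\eta}$ bound, but of course it still suffices for the injective projection once $\eta$ is chosen small enough.
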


\begin{proof}
  We first notice that by Proposition \ref{lemma indrei} and by scaling, if $\hat\Pi$ is a convex hexagon in $\R^2$, then there exists a regular hexagon $\hat H_*$ with $|\hat H_*|=|\hat\Pi|$ and
  \begin{equation}
    \label{precisi1}
      P(\hat\Pi)^2-P(\hat H)^2 |\hat\Pi|\ge c\,\hd(\pa\hat\Pi,\pa \hat H_*)^2\,.
  \end{equation}
  Since $\Pi$ is a convex hexagon in $\T$ with $\hd(\pa\Pi,\pa H)\le\eta$, then there exists a convex hexagon $\hat\Pi$ in $\R^2$ isometric to $\Pi$ with $\hd(\pa\hat\Pi,\pa\hat H)\le\eta$. In particular, for some constant $C$ independent from $\T$, one has
  \[
  P(\hat\Pi)-P(\hat H)\sqrt{|\hat\Pi|}\le C\,\eta\,,\qquad P(\hat\Pi)+P(\hat H)\sqrt{|\hat\Pi|}\le C\,,
  \]
  and thus \eqref{precisi1} gives, up to further decrease the value of $c$,
  \begin{equation}
    \label{precisi2}
      C\eta\ge P(\hat\Pi)-P(\hat H)\sqrt{|\hat\Pi|}\ge c\,\hd(\pa\hat\Pi,\pa \hat H_*)^2\,.
  \end{equation}
  By \eqref{precisi2} and $\hd(\pa\hat\Pi,\pa\hat H)\le\eta$ we have $\hd(\pa\hat H,\pa\hat H_*)\le C\sqrt\eta$. Now, since $\beta\ge 2$ and $\a$ is even one can find $\eta_*>0$ (independent of $\a$ and $\beta$) such that $I_{\eta_*}(\hat H)=\{x\in\R^2:\dist(x,\hat{H})\le\eta_*\}$ is compactly contained into a rectangular box of height $3\ell\a/2$ and width $\sqrt{3}\ell\beta$. As a consequence, if $\hat{J}$ is a polygon contained in $I_{\eta_*}(\hat H)$, then $J=\hat{J}/\sim \subset\T$ is isometric to $\hat J$. Thus, if $C\sqrt\eta<\eta_*$, then $H_*=\hat H_*/\sim $ is a regular hexagon in $\T$ with $|H_*|=|\Pi|$ and $\hd(\pa\hat\Pi,\pa\hat H_*)=\hd(\pa\Pi,\pa H_*)$, and \eqref{precisi0} follows from \eqref{precisi2}.
\end{proof}

\section{Small deformations of the reference honeycomb}\label{section small def} The main result of this section is Theorem \ref{hexagonal honeycomb thm torus quantitative small}, which provides us, on a restricted class of unit-area tilings, with a stronger stability estimate than the one in Theorem \ref{thm main periodic}. Before stating this result we briefly recall the following terminology.

\medskip

\noindent {\bf Regular and singular sets:} Given a $N$-tiling $\E$ of $\T$ one sets
\[
\pa\E=\bigcup_{h=1}^N\pa\E(h)\,,\qquad\pa^*\E=\bigcup_{h=1}^N\pa^*\E(h)\,,
\]
\[
\S(\E)=\pa\E\setminus\pa^*\E\,,
\qquad[\pa\E]_\mu=\{x\in\pa\E:\dist(x,\S(\E))>\mu\}\,,\quad\mu>0\,,
\]
where $\pa^*E$ denotes the reduced boundary of a set of finite perimeter $E$ in $\T$, and where the normalization convention $\pa E=\ov{\pa^*E}=\spt{\mu_E}$ for sets of finite perimeter is always assumed to be in force, see Subsection \ref{subsct: topological boundary} above. We call $\pa^*\E$ and $\S(\E)$ the {\it regular set} and the {\it singular set} of $\pa\E$ respectively. In this way, $\pa^*\H$ and $\S(\H)$ are, respectively, the union of the open edges and the union of the vertices of the hexagons $\H(h)$ for $h=1,...,N$.

\medskip

\noindent {\bf Tilings and maps of class $C^{k,\a}$:} Given $k\in\N$ and $\a\in[0,1]$, one says that a tiling $\E$ of $\T$ is of {\it class $C^{k,\a}$} if there exist a finite family $\{\g_i\}_{i\in I}$ of compact $C^{k,\a}$-curves with boundary and a finite family $\{p_j\}_{j\in J}$ of points such that
\begin{equation}
  \label{class C1}
  \pa\E=\bigcup_{i\in I}\g_i\,,\qquad\pa^*\E=\bigcup_{i\in I}\INT(\g_i)\,,\qquad\Sigma(\E)=\bigcup_{i\in I}\bd(\g_i)=\bigcup_{j\in J}\{p_j\}\,,
\end{equation}
where $\INT(\g_i)$ and $\bd(\g_i)$ denote the interior and the boundary of $\g_i$ respectively. Moreover, given a function $f:\pa\E\to\T$, one says that $f\in C^{k,\a}(\pa\E;\T)$ if $f$ is continuous on $\pa\E$ and
\[
\|f\|_{C^{k,\a}(\pa\E)}:=\sup_{i\in I}\|f\|_{C^{k,\a}(\g_i)}<\infty\,.
\]
Finally, given two $C^{k,\a}$-tilings $\E$ and $\F$ of $\T$, one says that $f$ is a $C^{k,\a}$-diffeomorphism between $\pa\E$ and $\pa\F$ if $f$ is an homeomorphism between $\pa\E$ and $\pa\F$ with $f(\S(\E))=\S(\F)$, $f(\pa\E(h))=\pa\F(h)$ for every $h=1,...,N$, $f\in C^{k,\a}(\pa\E;\T)$ and $f^{-1}\in C^{k,\a}(\pa\E;\T)$.

\medskip

\noindent {\bf Tangential component of a map and $(\e,\mu,L)$-perturbations of $\H$:} Given a tiling $\E$ of $\T$ of class $C^1$, by taking \eqref{class C1} into account one can define $\nu_\E\in C^{0}(\pa^*\E;S^1)$ in such a way that $\nu_\E$ is a unit normal vector to $\g_i$ for every $i$. Correspondingly, given a map $f:\pa\E\to\T$, we define $\ttau_\E f:\pa^*\E\to\T$, the tangential component of $f$ with respect to $\pa\E$, as
\[
\ttau_\E f(x)=f(x)-(f(x)\cdot\nu_\E(x))\,\nu_\E(x)\,,\qquad x\in\pa^*\E\,.
\]
Finally, one says that $\E$ is an {\it $(\e,\mu,L)$-perturbation of $\H$} if $\E$ is of class $C^{1,1}$ and there exists an homeomorphism $f$ between $\pa\H$ and $\pa\E$ with
\begin{eqnarray}\label{eL perturbation 2}
  \|f\|_{C^{1,1}(\pa\H)}&\le&L\,,
  \\\nonumber
  \|f-\Id\|_{C^1(\pa\H)}&\le&\e\,,
  \\\nonumber
  \|\ttau_{\H}(f-\Id)\|_{C^1(\pa^*\H)}&\le&\frac{L}\mu\,\sup_{\S(\H)}|f-\Id|\,,
    \\\nonumber
  \ttau_{\H}(f-\Id)=0\,,&&\qquad\mbox{on $[\pa\H]_\mu$}\,.
\end{eqnarray}

\medskip

Note that the definition of $\pa \E$, $\pared \E$ as well as the definition of singular set and tilings of class $C^{k,\a}$ are the suitable translations in the framework of \textit{tilings on the torus} of the definition given in Section \ref{cpt Ncluster of Rn} above for clusters and planar tilings.

\begin{theorem}
  \label{hexagonal honeycomb thm torus quantitative small}
  For every $L>0$ there exist positive constants $\mu_0$, $\e_0$ and $c_0$ (depending on $L$ and $|\T|$), $C$ depending on $|\T|$ only, and $C'$ depending on $L$ only, with the following property. If $\E$ is a unit-area $(\e_0,\mu_0,L)$-perturbation of $\H$, then there exists $v\in\R^2$ such that
  \begin{equation}
    \label{stabilita hd}
      P(\E)-P(\H)\ge c_0\,\hd(\pa\E,v+\pa\H)^2\,,\qquad |v|\le C\,\e_0\,.
  \end{equation}
  Moreover, there exists a $C^{1,1}$-diffeomorphism $f_0$ between $v+\pa\H$ and $\pa\E$ such that
  \begin{equation}
    \label{stabilita f C1}
      P(\E)-P(\H)\ge c_0\,\Big(\|f_0-\Id\|_{C^0(v+\pa\H)}^2+\|f_0-\Id\|_{C^1(v+\pa\H)}^4\Big)\,,
  \end{equation}
  and $\|f_0\|_{C^{1,1}(v+\pa\H)}\le C'$.
\end{theorem}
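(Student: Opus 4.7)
The plan is to establish \eqref{stabilita hd} and \eqref{stabilita f C1} in three stages: (i) a chamber-by-chamber quantitative control obtained by combining Corollary \ref{corollary indrei} with a quantitative Hales-type hexagonal isoperimetric inequality (Lemma \ref{lemma hales}, referenced in the introduction as the paper's own quantitative reformulation of Hales's key inequality); (ii) a rigidity argument showing that the resulting regular-hexagonal approximants to the individual chambers form, up to a single common translation $v$, a copy of $\H$; (iii) an interpolation-based upgrade of the Hausdorff-distance bound into the $C^1$ control.

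In Stage (i), the hypothesis that $\E$ is an $(\e_0,\mu_0,L)$-perturbation of $\H$ guarantees that each chamber $\E(h)$ is a $C^{1,1}$-small deformation of $\H(h)$, with bounded curvature and six quasi-vertices. The quantitative Hales inequality yields a convex unit-area hexagon $\Pi_h$ (roughly, the polygon obtained by replacing each arc of $\pa\E(h)$ by its chord) such that $P(\E(h))-P(\Pi_h)\ge c\,\hd(\pa\E(h),\pa\Pi_h)^2$. Since $\Pi_h$ is $C^0$-close to the regular unit-area hexagon $\H(h)$, Corollary \ref{corollary indrei} then produces a regular unit-area hexagon $H_h^*\subset\T$ with $P(\Pi_h)-P(H)\ge c\,\hd(\pa\Pi_h,\pa H_h^*)^2$. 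Summing both estimates over $h$ and using $2P(\E)=\sum_h P(\E(h))$ together with $P(\H(h))=P(H)$ gives
\[
P(\E)-P(\H)\ge c\sum_{h=1}^N\hd(\pa\E(h),\pa H_h^*)^2.
\]

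In Stage (ii), one extracts a common translation from the family $\{H_h^*\}$. The interface $\E(h,k)$ between two adjacent chambers is close to the common edge of $\H(h)$ and $\H(k)$, so $H_h^*$ and $H_k^*$ must almost share an edge; rigidity of regular unit-area hexagons in $\T$ translates this into $|v_h-v_k|^2\le C\,\hd(\pa\E(h,k),\pa\H(h,k))^2$ for $v_h:=H_h^*-\H(h)\in\R^2$, whose sum over adjacent pairs is controlled by $P(\E)-P(\H)$. A Poincar\'e-type inequality on the adjacency graph of $\H$ (a bounded-degree, connected, uniformly-expanding graph on $\T$) then selects a common $v\in\R^2$ with $\sum_h|v_h-v|^2\le C(P(\E)-P(\H))$; moreover $|v_h|\le\hd(\pa H_h^*,\pa\H(h))\le C\e_0$, so $|v|\le C\e_0$. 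Combined with Stage (i) and the triangle inequality this yields \eqref{stabilita hd}. For Stage (iii), after translating so that $v=0$, one defines $f_0:\pa\H\to\pa\E$ by orthogonal projection along $\nu_\H$ on each regular arc and by mapping each singular vertex of $\pa\H$ to the unique close-by singular vertex of $\pa\E$; the uniform $C^{1,1}$-bound comes from the perturbation hypothesis, and the $C^0$ bound in \eqref{stabilita f C1} is \eqref{stabilita hd}. The $C^1$ bound is obtained by combining the first-variation/area-preservation identity
\[
P(\E)-P(\H)\ge c\int_{\pa^*\H}|\nu_\E\circ f_0-\nu_\H|^2\d\H^1-(\text{lower order})
\]
with the classical interpolation $\|g\|_{C^1}^2\le C\|g\|_{L^2}\|g\|_{C^{1,1}}$ applied to the tangential component of $f_0-\Id$, producing the fourth-power bound on $\|f_0-\Id\|_{C^1}$.

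The main obstacle is Stage (ii): the various local translation vectors $v_h$ must be reconciled into a single global $v$ without accumulation of errors across $\T$, and this requires carefully phrasing the rigidity constraints between adjacent $H_h^*$'s as a discrete elliptic problem on the honeycomb graph whose Poincar\'e constant is controlled independently of $N$. Once this is in place, Stages (i) and (iii) are largely mechanical consequences of the quantitative polygonal isoperimetric inequality and of the uniform $C^{1,1}$ regularity of $\pa\E$ granted by the perturbation hypothesis.
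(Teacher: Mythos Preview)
Your three-stage outline is close in spirit to the paper's, but there is a genuine gap in Stage~(ii) and a misidentification of the key mechanism in Stage~(iii).

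\textbf{Stage (ii): rotation is missing.} Corollary~\ref{corollary indrei} only produces a regular hexagon $H_h^*$ of the correct area; it gives no control on its \emph{orientation}. Writing $v_h:=H_h^*-\H(h)\in\R^2$ presupposes that $H_h^*$ is a translate of $\H(h)$, which it need not be. The paper confronts this directly: it writes $K_1=(t\sqrt3\ell,s\ell)+R_\theta H$ and proves $|\theta|\le C\sqrt\de$ by a global argument---iteratively reflecting $K_1$ across its right edge $\beta$ times around a row of the torus produces a hexagon $J_*$ which, on the one hand, must be $C\sqrt\de$-close to $K_1$ (by the chain of local estimates), and on the other hand satisfies $\hd(\pa J_*,\pa K_1)\ge|\theta|/C$ because a rotated chain does not close up on $\T$. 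Your graph-Poincar\'e scheme could at best show that the rotation angles $\theta_h$ are all close to a common $\theta$, but cannot force $\theta\approx 0$ without invoking the torus periodicity, which is exactly what the reflection argument does. (Also: the honeycomb adjacency graph on $\T$ is \emph{not} an expander; its Poincar\'e constant grows with $N$. This is harmless since the theorem allows $|\T|$-dependent constants, but your stated justification is incorrect.)

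\textbf{Stage (iii): the first-order cancellation is geometric, not variational.} The inequality you write as a ``first-variation/area-preservation identity'' is obtained in the paper by expanding $P(\E)-P(\H)=\sum_i\int_{\s_i}(|\tau_i+\nabla^{\s_i}g[\tau_i]|-1)\,d\H^1$ for $g=f_0-\Id$. The first-order term $\sum_i\int_{\s_i}\tau_i\cdot\nabla^{\s_i}g[\tau_i]$ integrates to $\sum_{p_j\in\S(\H_0)}g(p_j)\cdot\sum_i v_j^i$, and this vanishes \emph{because the three edge-directions at each vertex sum to zero} (the $120^\circ$ condition), not because of any area constraint. Area preservation was already consumed in Step~two (it makes $\sum_h(|\Pi_h|-1)=0$ so the linear-in-area terms from Lemma~\ref{lemma hales} cancel). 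Moreover, to absorb the remaining tangential second-order term $\int|\tau_i\cdot\nabla^{\s_i}g[\tau_i]|^2$ one needs $\|\ttau_{\H_0}(f_0-\Id)\|_{C^1}\le C\sup_{\S(\H_0)}|f_0-\Id|\le C\sqrt\de$. This is \emph{not} delivered by the original perturbation map $f$ (whose tangential part is only $O(\e_0)$), nor automatically by ``orthogonal projection plus vertex matching''; the paper builds $f_0$ edge-by-edge via a reparametrization lemma (Lemma~\ref{lemma di riparametrizzazione sui segmenti}) precisely to obtain this tangential control based at the \emph{new} grid $\H_0=v+\H$.

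Smaller issues: $\Pi_h$ is not unit-area (the term $\tfrac{P(H)}2(|\Pi_h|-1)$ in Lemma~\ref{lemma hales} is essential and cancels only after summation), and the chamberwise bound you state is on $|\E(h)\Delta\Pi_h|^2$, not on $\hd(\pa\E(h),\pa\Pi_h)^2$.
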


We premise two lemmas to the proof of Theorem \ref{hexagonal honeycomb thm torus quantitative small}. The first one, Lemma \ref{lemma hales} below, provide a quantitative version of (a particular case of) Hales's hexagonal isoperimetric inequality, the key step in the proof of \eqref{hexagonal honeycomb thm torus} in \cite{hales}. 

\begin{lemma}\label{lemma hales}
  There exist positive constants $\e_1$ and $c_1$ with the following property. If $\E$ is a unit-area tiling of $\T$ such that there exists an homeomorphism $f$ between $\pa\H$ and $\pa\E$ with $\|f-\Id\|_{C^0(\pa\H)}\le\e_1$, if $E=\E(h)$ for some $h\in\{1,...,N\}$ and $\Pi$ is the convex envelope of $\S(\E)\cap\pa E$ (so that $\Pi$ is convex hexagon with set of vertices $\S(\E)\cap\pa E$ provided $\e_1$ is small enough), then there exists a regular hexagon $H_*$ with $|H_*|=|\Pi|$ such that
  \begin{equation}
    \label{utile}
    P(E)\ge P(H)+\frac{P(H)}2\,(|\Pi|-|E|)+c_1\,\Big(|E\Delta\Pi|^2+\hd(\pa\Pi,\pa H_*)^2\Big)\,.
  \end{equation}
\end{lemma}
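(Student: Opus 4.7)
The plan is to decompose $P(E) = P(\Pi) + [P(E) - P(\Pi)]$ and control each piece separately. Corollary \ref{corollary indrei} applied to the convex hexagon $\Pi$ (whose six vertices lie within $\e_1$ of the vertices of $H$ by the hypothesis $\|f-\Id\|_{C^0(\pa\H)}\le\e_1$) yields a regular hexagon $H_*$ with $|H_*|=|\Pi|$ and
\[
P(\Pi) \ge P(H)\sqrt{|\Pi|} + c\,\hd(\pa\Pi,\pa H_*)^2,
\]
for $\e_1$ small enough that $\hd(\pa\Pi,\pa H)\le\eta$. For the second piece, enumerate the vertices of $\Pi$ as $v_1,\dots,v_6$ (with $v_7=v_1$) and let $\g_i$ be the arc of $\pa E$ joining $v_i$ to $v_{i+1}$. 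Writing $L_i=|v_{i+1}-v_i|$, $s_i=\H^1(\g_i)$, and $a_i$ for the signed area enclosed between $\g_i$ and the chord $[v_i,v_{i+1}]$ (positive when $\g_i$ lies outside $\Pi$), one has
\[
|E|-|\Pi|=\sum_{i=1}^{6}a_i,\qquad |E\Delta\Pi|=\sum_{i=1}^{6}|a_i|,\qquad P(E)-P(\Pi)=\sum_{i=1}^{6}(s_i-L_i).
\]

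The key per-arc estimate I would prove is the chord-vs-arc isoperimetric inequality $s_i - L_i \ge c_2\, a_i^2/L_i^3$, valid once $|a_i|/L_i^2$ is small enough that $\g_i$ is a graph over $[v_i,v_{i+1}]$. This is classical: among all curves joining two fixed points and enclosing a given signed area with the chord, the circular segment is length-minimizing, and expanding the segment formula for small angle gives $s-L = 6a^2/L^3 + O((a/L^2)^3)$. Since $L_i$ is close to $\ell$ for $\e_1$ small, an application of Cauchy--Schwarz produces
\[
P(E)-P(\Pi) \;\ge\; \frac{c_2}{L_{\max}^3}\sum_i a_i^2 \;\ge\; \frac{c_2}{6\,L_{\max}^3}\,|E\Delta\Pi|^2 \;=:\; c_3\,|E\Delta\Pi|^2,
\]
with $c_3 \to 1/\ell^3$ as $\e_1 \to 0$.

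To conclude, Taylor-expand $P(H)\sqrt{|\Pi|} = P(H) + (P(H)/2)(|\Pi|-1) - (P(H)/8)(|\Pi|-1)^2\bigl(1+O(\e_1)\bigr)$; using $|E|=1$ and combining with the two previous bounds yields
\[
P(E) - P(H) - \tfrac{P(H)}{2}\bigl(|\Pi|-|E|\bigr) \;\ge\; c_3\,|E\Delta\Pi|^2 + c\,\hd(\pa\Pi,\pa H_*)^2 - \tfrac{P(H)}{8}\bigl(|\Pi|-|E|\bigr)^2\bigl(1+O(\e_1)\bigr).
\]
Since $\bigl(|\Pi|-|E|\bigr)^2 = \bigl(\sum_i a_i\bigr)^2 \le \bigl(\sum_i|a_i|\bigr)^2 = |E\Delta\Pi|^2$, and the numerical comparison $1/\ell^3 > P(H)/8$ holds comfortably (using $P(H)=2(12)^{1/4}$ and $\ell=(12)^{1/4}/3$ the ratio is $9$), the negative term can be absorbed into a fraction of $c_3|E\Delta\Pi|^2$ once $\e_1$ is small enough. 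This delivers \eqref{utile}.

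The main obstacle is making the per-arc inequality $s_i - L_i \ge c_2\,a_i^2/L_i^3$ quantitative with a constant uniform in $i$: one must verify that the smallness of $\e_1$ forces each $\g_i$ to be a $C^0$-graph over its chord so the signed area is well defined, and that the constant in the circular-segment expansion is effective in the relevant regime. A secondary bookkeeping point is to check that for $\e_1$ small the set $\S(\E)\cap\pa E$ consists of exactly six points in convex position (inherited from the vertices of $\H(h)$ via the $C^0$-closeness of $f$ to $\Id$), so that $\Pi$ is indeed a convex hexagon and Corollary \ref{corollary indrei} applies.
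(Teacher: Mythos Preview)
Your strategy coincides with the paper's: bound $P(\Pi)$ below via Corollary~\ref{corollary indrei}, bound $P(E)-P(\Pi)$ below by a Dido-type estimate, Taylor-expand $P(H)\sqrt{|\Pi|}$, and absorb the quadratic remainder using $\big|\,|\Pi|-|E|\,\big|\le|E\Delta\Pi|$. The paper packages the middle step slightly differently---it applies Jensen to the convex arc-length function $\arc(a)=\arc_1(a)$ to obtain the ``chordal isoperimetric inequality'' $P(E)\ge P(\Pi)\,\arc\big(|E\Delta\Pi|/P(\Pi)\big)$ and only then uses $\arc(a)\ge1+\eta a^2$---but this is equivalent to your per-arc quadratic bound plus Cauchy--Schwarz. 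The paper also sidesteps your numerical constant comparison by writing $\sqrt{1-s}\ge 1-s/2-\tau s^2$ with $\tau$ arbitrary and chosen small at the end, so no explicit check like $1/\ell^3>P(H)/8$ is needed.

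There is, however, one real gap. You take $a_i$ to be the \emph{signed} area between $\g_i$ and its chord and assert $|E\Delta\Pi|=\sum_i|a_i|$. That identity holds only if each $\g_i$ stays on a single side of $[v_i,v_{i+1}]$, and the hypothesis $\|f-\Id\|_{C^0(\pa\H)}\le\e_1$ does not force this: a homeomorphism $C^0$-close to the identity can still send an edge to an arc oscillating across the chord, and then $|a_i|$ is strictly smaller than the contribution of side $i$ to $|E\Delta\Pi|$ (cancellation in the signed integral). Your Cauchy--Schwarz step then only yields $P(E)-P(\Pi)\ge c_3\big(\sum_i|a_i|\big)^2$, which does not control $|E\Delta\Pi|^2$. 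The paper's remedy is to let $a_i$ be the \emph{total (unsigned)} area enclosed between the $i$th side of $E$ and the $i$th side of $\Pi$, so that $\sum_ia_i=|E\Delta\Pi|$ tautologically, and to invoke Dido's inequality in the form $\H^1(\g_i)\ge\arc_{L_i}(a_i)$. With this change your argument goes through unchanged. Your ``main obstacle'' paragraph misidentifies the issue: the point is not to make $\g_i$ a graph over its chord (which $C^0$-smallness does not accomplish, and which in any case would still allow sign changes of the graph function), but to use the right notion of $a_i$ so that the identity with $|E\Delta\Pi|$ is automatic.
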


\begin{remark}
  {\rm The constants $\e_1$ and $c_1$ will just depend on the metric properties of the unit-area hexagon. In particular they do not depend on $\T$.}
\end{remark}

\begin{proof}
  [Proof of Lemma \ref{lemma hales}]
  Let $\arc_t(a)$ denote the length of a circular arc that bounds an area $a\ge0$ and whose chord length is $t>0$, and let us set $\arc(a)=\arc_1(a)$. In this way, $\arc:[0,\infty)\to[1,\infty)$ is an increasing function. Since the derivative of $\arc$ at $a$ is the curvature of any circular arc bounding an area $a$ above a unit length chord, and since this curvature is increasing as $a$ ranges from $0$ to $\pi/8$ (the value $a=\pi/8$ corresponds to the case of an half-disk with unit diameter), we conclude that $\arc$ is convex on $[0,\pi/8]$ (and, in fact, also concave on $[\pi/8,\infty)$). Moreover, a Taylor expansion gives that $\arc''(0^+)>0$: hence there exists $\eta>0$ such that
  \begin{equation}
    \label{arc coercivo}
    \arc(a)\ge 1+\eta\,a^2\,,\qquad\forall a\in[0,\eta)\,.
  \end{equation}
  Let $\ell_i$ denote the length of the $i$th side of $\Pi$, and let $a_i$ denote the total area enclosed between the $i$th side of $\Pi$ and the $i$th side of $E$;
  \begin{figure}
  \begin{center}
\includegraphics[scale=0.7]{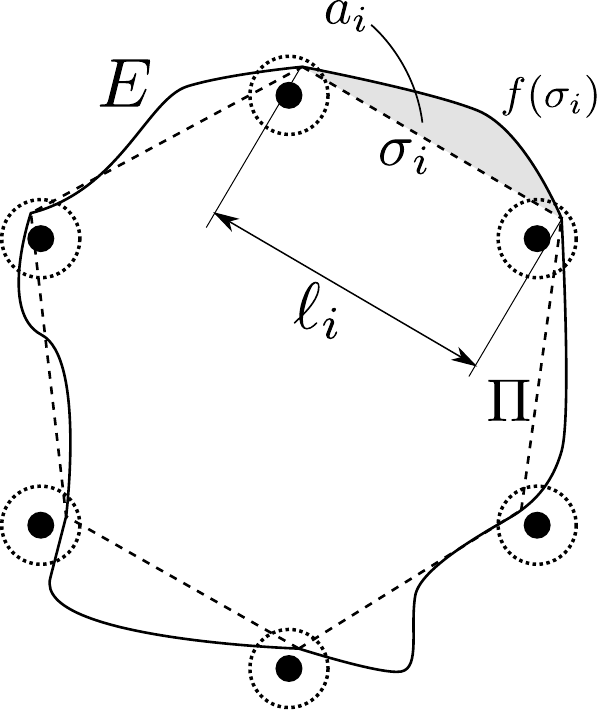}\caption{{\small The convex hexagon $\Pi$ spanned by $\S(\E)\cap\pa E$. The vertices of $\Pi$ are $\e_1$-close to the vertices of the unit-area regular hexagon $\H(h)$ (as $E=\E(h)$ and $f(\pa\H(h))=\pa\E(h)$) which are depicted as black dots. The boundaries of $\Pi$ and $E$ are depicted, respectively, by a dashed line and by a continuous line.}}\label{fig pi}
  \end{center}
  \end{figure}
  see Figure \ref{fig pi}. (If $\s_i$ is the $i$th side of $\Pi$, then the $i$th side of $E$ is a small $C^0$-deformation of $\s_i$ with fixed end-points). Noticing that $\arc_t(a)=t\,\arc(a/t^2)$, by Dido's inequality we find that
  \[
  P(E)\ge\sum_{i=1}^6\arc_{\ell_i}(a_i)=\sum_{i=1}^6\,\ell_i\,\arc\Big(\frac{a_i}{\ell_i^2}\Big)\,.
  \]
  By $\|f-\Id\|_{C^0(\pa\H)}\le \e_1$ and provided $\e_1\le 1$, one has
    \begin{equation}
    \label{omega}
    \hd(\pa\Pi,\pa\H(h))\le \e_1\,,\qquad \max_{1\le i\le 6}\Big\{a_i,\Big|\ell_i-\frac{P(H)}6\Big|\Big\}\le C\,\e_1\,,
  \end{equation}
  where a possible value for $C$ in \eqref{omega} is $2(\pi+\ell)$. By \eqref{omega}, by further decreasing $\e_1$, we can assume that $a_i/\ell_i^2\in[0,\pi/8]$ for every $i=1,...,6$. We thus apply Jensen inequality to find that
  \[
  P(E)\ge \sum_{i=1}^6\,\ell_i\,\arc\bigg(\frac1{\sum_{i=1}^6\ell_i}\sum_{i=1}^6\frac{a_i}{\ell_i^2}\bigg)\,.
  \]
  Since $P(H)/6=(12)^{1/4}/3<1$, by \eqref{omega} we may further assume that $\ell_i\le 1$ for every $i=1,...,6$, and thus conclude by $P(\Pi)=\sum_{i=1}^6\ell_i$, $|E\Delta\Pi|=\sum_{i=1}^6a_i$, and the monotonicity of $\arc$ that
  \begin{equation}
    \label{chordal isoperimetric inequality}
    P(E)\ge P(\Pi)\,\arc\Big(\frac{|E\Delta \Pi|}{P(\Pi)}\Big)\,.
  \end{equation}
  (Inequality \eqref{chordal isoperimetric inequality} is clearly related to the {\it chordal isoperimetric inequality} \cite[Proposition 6.1-A]{hales}, see also \cite[15.5]{Morgan}.) By \eqref{arc coercivo}, \eqref{omega} and \eqref{chordal isoperimetric inequality},
  \begin{equation}
    \label{chordal isoperimetric inequality x}
    P(E)\ge P(\Pi)+\eta\,\frac{|E\Delta\Pi|^2}{P(\Pi)^2}\ge P(\Pi)+c_1\,|E\Delta\Pi|^2\,,
  \end{equation}
  where $c_1>0$. Provided $\e_1$ is small enough, by \eqref{omega} we can apply Corollary \ref{corollary indrei} to find a regular hexagon $H_*$ with $|H_*|=|\Pi|$ and
  \[
    P(\Pi)- P(H)\sqrt{|\Pi|}\ge c\,\hd(\pa\Pi,\pa H_*)^2\,.
  \]
  Thus, up to further decrease the value of $c_1$,  \eqref{chordal isoperimetric inequality x} gives
  \begin{equation}
    \label{proof1}
    P(E)\ge P(H)\sqrt{|\Pi|}+c_1\,\Big(\hd(\pa\Pi,\pa H_*)^2+|E\Delta\Pi|^2\Big)\,.
  \end{equation}
  Finally, given $\tau>0$ let $\l>0$ be such that $\sqrt{1-s}\ge1-(s/2)-\tau\,s^2$ for $|s|<\l$: up to further decrease $\e_1$, by $\|f-\Id\|_{C^0(\pa\H)}\le\e_1$ we entail $|\s|<\l$ for $\s=|E|-|\Pi|$, and thus deduce with the aid of \eqref{proof1}  and $|E|=1$ that
  \begin{equation}
    \label{proof3}
    P(E)\ge P(H)-\frac{P(H)}2\,\s-P(H)\tau\,\s^2
    +c_1\,\Big(\hd(\pa\Pi,\pa H_*)^2+|E\Delta\Pi|^2\Big)\,.
  \end{equation}
  Since $|\s|=||E|-|\Pi||\le|E\Delta\Pi|$, for $\tau$ small enough depending from $c_1$, we prove \eqref{utile}.
\end{proof}

Given $\E$ an $(\e_0,\mu_0,L)-$perturbation of $\H$, thanks to Lemma \ref{lemma hales}, we are able to perform the construction of a suitable translation $\H_0=\H+v$ of $\H$ having the singular set $\S(\H_0)$ close to $\S(\E)$ in terms of the perimeter deficit. We thus try to show that the distance between $\pa \E$ and $\pa \H_0$ is estimated by the perimeter deficit but, in this situation, we cannot use just the information provided by the diffeomorphism $f:\pa \H\rightarrow \pa \E$. To achieve the proof of Theorem \ref{hexagonal honeycomb thm torus quantitative small} we need to have some more information about the relation between the new tilings $\H_0$ and the tiling $\E$. In particular we are going to build a new diffeomorphism $f_0:\pa \H_0\rightarrow \pa \E$ having the tangential component small in terms of the distance between $\S(\E)$ and $\S(\H_0)$. In order to do that we make use of the following lemma combined with the existence of the diffeomorphism $f$ between $\pa \E$ and $\pa \H$ given by our definition of $(\e_0,\mu_0,L)$-perturbation. In particular Lemma \ref{lemma di riparametrizzazione sui segmenti} establishes the existence of a diffeomorphism between a given segment $\s_0$ and a curve $\g$ close enough to $\s_0$. We set 
$$[\s_0]_t=\{x\in\s_0:\dist(x,\bd(\s_0))>t\},$$ 
for $t>0$ and we state the Lemma as follows.

\begin{lemma}\label{lemma di riparametrizzazione sui segmenti}
For every $M,\l>0,\a\in(0,1]$ there exist two constants $C_1$ and $\bar{\mu}$ depending on $M$,$\l$ and $\a$ only with the following property.  Let $\s_0$ be a segment of length $\l$ and $\g$ be a $C^{1,\a}$ curve in the plane having $\bd(\g)\neq\emptyset$ and 
\begin{equation}\label{controllo sulle normali}
\left\{
\begin{array}{ll}
|\nu(x)-\nu(y)|&\leq M |x-y|^{\a},\\
|\nu(x)\cdot(y-x)|&\leq M |y-x|^{1+\a},\\
\end{array}
\right.
\ \ \ \ \ \ \text{for all} \ \ \ x,y\in \g
\end{equation}
where $\nu(x)$ denotes the normal unit-vector to $\g$ at $x$. Assume also that for some $\rho<\bar{\mu}^2$, the curve $\g$ satisfies the following hypothesis
\begin{itemize}
\item[(a)]  $\hd(\s_0,\g)\leq \rho$;
\item[(b)] setting $\bd(\s_0)=\{p_0,q_0\}$ and $\bd(\g)=\{p,q\}$ it holds
$$|\tau(p)-\tau_0|+|\tau(q)-\tau_0|\leq \rho,$$
where $\tau_0=\frac{p_0-q_0}{|p_0-q_0|}$ is the tangent unit-vector to $\s_0$ and $\tau(x)=-\nu(x)^{\perp}$ denotes the tangent unit-vector to $\g$ at $x$;
 \item[(c)] there exists a map $\psi_0\in C^{1,1}([\s_0]_{\rho})$ such that
  \begin{equation*}
  [\g]_{3\rho}\subset(\Id+\psi_0\nu_0)\left([\s_0]_{\rho}\right)\subset\g\,,
  \end{equation*}
  \begin{equation*}
  \|\psi_0\|_{C^{1,1}([\s_0]_{\rho})}\le M\,,\qquad \|\psi_0\|_{C^1([\s_0]_{\rho})}\le \rho\,;
  \end{equation*}
\end{itemize}
Then, for every $\mu\in (\sqrt{\rho},\bar{\mu})$, there exists a $C^{1,\a}$-diffeomorphism $f_0$ between $\s_0$ and $\g$ such that $f_0(\bd(\s_0))=\bd(\g)$ and
\[
  \|f_0\|_{C^{1,\a}(\s_0)}\le C_1\,,\qquad\|f_0-\Id\|_{C^1(\s_0)}\le \frac{C_1}{\mu}\,\rho^{\a}\,,
\]
\[
\|(f_0-\Id)\cdot\tau_0\|_{C^1(\s_0)}\le \frac{C_1}{\mu}\,\sup_{\bd(\s_0)}|f_0-\Id|\,.
\]
\end{lemma}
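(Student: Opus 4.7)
The plan is to construct $f_0$ by using the normal graph provided by hypothesis (c) on the interior of $\s_0$ and correcting it with a tangential cut-off near the two endpoints so as to send $\bd(\s_0)$ onto $\bd(\g)$. I first parametrize $\s_0$ by arclength as $\s_0(s)=p_0+s\tau_0$, $s\in[0,\l]$, and $\g$ by arclength as $\g:[0,L]\to\R^2$ with $\g(0)=p$ and $\g(L)=q$. Combining (a), (b) and the $C^{1,\a}$ character of $\g$ expressed in \eqref{controllo sulle normali}, I check that $|L-\l|+|p-p_0|+|q-q_0|\le C\rho$; the key point here is that the tangent condition (b) rules out the possibility that the orthogonal projection of $p$ onto the line of $\s_0$ lands far from $p_0$, since starting from $p$ the curve $\g$ is nearly parallel to $\tau_0$ and must reach the normal-graph arc within arclength $\le3\rho$. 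On the middle interval, the map $\eta(s):=\s_0(s)+\psi_0(\s_0(s))\nu_0$ parametrizes the arc $\g\cap(\Id+\psi_0\nu_0)([\s_0]_\rho)\supseteq[\g]_{3\rho}$ with $|\eta'(s)|=\sqrt{1+\psi_0'(\s_0(s))^2}=1+O(\rho^2)$.

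Then I set $f_0(\s_0(s))=\g(\phi(s))$ for an increasing $C^{1,\a}$ reparametrization $\phi:[0,\l]\to[0,L]$ defined piecewise. On $[\mu,\l-\mu]$, $\phi(s)$ is chosen to equal the arclength on $\g$ of $\eta(s)$, so that $f_0=\eta$ there and its tangential component $(f_0(s)-\s_0(s))\cdot\tau_0$ vanishes identically. On the endpoint strip $[0,\mu]$ I blend the interior formula with the boundary value $\phi(0)=0$ via a smooth cut-off $\chi\in C^\infty([0,\mu];[0,1])$ satisfying $\chi=0$ near $0$, $\chi=1$ near $\mu$, and $\|\chi^{(k)}\|_\infty\le C_k\mu^{-k}$; symmetrically on $[\l-\mu,\l]$. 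The blend is designed so that $\phi(s)-s$ is, to leading order, a linear function of slope $O(\rho/\mu)$ across $[0,\mu]$, which both ensures $\phi$ is monotone (using $\rho<\bar\mu^2\le\mu^2$ and $\mu>\sqrt\rho$) and makes the subsequent estimates quantitative.

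The three required bounds are then verified separately. On $[\mu,\l-\mu]$ one has $f_0-\Id=\psi_0\nu_0$, so its $C^{1,1}$ norm is controlled by $\|\psi_0\|_{C^{1,1}}\le M$ while $\|f_0-\Id\|_{C^1}\le\rho\le\rho^\a$ and the tangential component is zero. On the boundary strips, the chain rule together with $\g\in C^{1,\a}$ and the arclength estimate $|p-p_0|\le C\rho$ give $|f_0(\s_0(s))-\s_0(s)|\le C\rho$, derivative of size $O(\rho/\mu)$, and Hölder modulus of order $\rho/\mu^{1+\a}\le\rho^{(1-\a)/2}\le C$ thanks to $\mu>\sqrt\rho$ and $\rho\le1$; altogether this yields $\|f_0\|_{C^{1,\a}(\s_0)}\le C_1$ and $\|f_0-\Id\|_{C^1(\s_0)}\le C_1\rho^\a/\mu$. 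For the tangential inequality, the tangential component vanishes outside the boundary strip, while on the strip it equals a cut-off multiple of $(p-p_0)\cdot\tau_0$, whose sup is $\le\sup_{\bd(\s_0)}|f_0-\Id|$ and whose derivative is bounded by $(C_1/\mu)\sup_{\bd(\s_0)}|f_0-\Id|$ because the cut-off lives on a scale $\mu$.

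The main obstacle is to arrange the blend so that all three estimates hold simultaneously with the stated $\mu$-, $\rho$- and $\a$-dependence. In particular, keeping the $C^{1,\a}$ bound with a constant $C_1$ independent of $\mu$ requires balancing the Hölder modulus of the cut-off $\chi$, which diverges like $\mu^{-1-\a}$, against the smallness $\rho\le\bar\mu^2$ of the displacement being cut off; this is precisely where the hypotheses $\rho<\bar\mu^2$ and $\mu>\sqrt\rho$ enter and determine the admissible range.
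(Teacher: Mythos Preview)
The paper does not actually prove this lemma; it simply invokes \cite[Theorem~3.1]{CiLeMaIC1} in the planar case $n=2$, $k=1$, with $S_0=\s_0$ and $S=\g$. So your proposal is not ``the same approach'' but rather a self-contained sketch of what that cited theorem does. The overall architecture you describe (normal graph on $[\s_0]_\mu$, cut-off interpolation on the two strips of width $\mu$, using $\rho<\mu^2$ to keep the cut-off Hölder seminorm under control) is the right one, and your treatment of the first two estimates is sound.

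The place where your argument is too quick is the tangential estimate. With the construction you actually wrote down, namely $f_0(\s_0(s))=\g(\phi(s))$ for an \emph{arclength} blend $\phi$, the tangential component on the strip is
\[
(p-p_0)\cdot\tau_0+(\phi(s)-s)+\int_0^{\phi(s)}\big(\tau(\g(t))\cdot\tau_0-1\big)\,dt,
\]
and the last integral contributes a term of order $\mu^{1+2\a}$ (coming from $|\tau-\tau_0|^2\lesssim \rho^2+t^{2\a}$) which is \emph{not} dominated by $|p-p_0|$ in general, since you have no lower bound on the boundary displacement. So the sentence ``on the strip it equals a cut-off multiple of $(p-p_0)\cdot\tau_0$'' is not literally true for this construction. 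The fix is to reverse the logic: prescribe the tangential part on the strip as $(1-\chi(s))\,(p-p_0)\cdot\tau_0$ (which gives the desired bound by design), and then recover the normal part by reading it off from the graph $\psi_0$ extended to a slightly larger interval, i.e.\ set $f_0(\s_0(s))=\s_0(s)+T(s)\tau_0+\psi_0\big(\s_0(s)+T(s)\tau_0\big)\nu_0$. This is how the proof in \cite{CiLeMaIC1} is organized, and it is what makes the third inequality come out with $\sup_{\bd(\s_0)}|f_0-\Id|$ on the right-hand side rather than merely $\rho$.
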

\begin{proof}
The proof follows by applying \cite[Theorem 3.1]{CiLeMaIC1} in the case $n=2$, $k=1$ and by setting $S_0=\s_0$, $S=\g$.
\end{proof}
\begin{proof}[Proof of Theorem \ref{hexagonal honeycomb thm torus quantitative small}] {\it Step one}: The reflection of $\R^2$ with respect to a generic line does not induce a map on $\T$. However, by \eqref{occhio}, one has that if $R_\theta\hat H$ denotes the counterclockwise rotation of $\hat H$ by an angle $\theta$ around the origin, then $R_\theta\hat H$ is compactly contained in a box of height $3\ell\a/2\ge3\ell$ and width $\sqrt{3}\ell\beta\ge2\sqrt{3}\ell$ for every $\theta$. As a consequence, given a unit-area regular hexagon $K$ in $\T$, all the rotations of $K$ are well-defined as unit-area regular hexagons in $\T$; in particular, it always makes sense to define the reflection $g_\s(K)$ of $K$ with respect to an edge $\s$ of $K$. Taking this into account, we notice that there exist positive constants $\eta$ and $C$ (independent of $\T$) such that, if $K$ and $K'$ are unit-area regular hexagons in $\T$, and if $\s$ and $\s'$ are edges of $K$ and $K'$ respectively, then
\[
\left\{
\begin{array}
  {l l}
  \hd(\s,\s')\le\eta\,,
  \\
  |K\Delta K'|\ge 2-\eta\,,
\end{array}
\right .
\qquad\Rightarrow\qquad
\hd(\pa g_\s(K),\pa K')\le C\,\hd(\s,\s')\,.
\]
This geometric remark is going to be repeatedly used in the following arguments, where we shall denote by $\e_1$ and $c_1$ the constants of  Lemma \ref{lemma hales} and set $\de=P(\E)-P(\H)$. We notice that, by the area formula and since $\|f-\Id\|_{C^1(\pa\H)}\le\e_0$, one has
\begin{equation}
  \label{delta eps0}
  \de\le C\,P(\H)\,\e_0^2\,,
\end{equation}
where $C$ is independent from $\T$ and where $P(\H)=|\T|\,P(H)/2$.

\medskip

\noindent  {\it Step two}: We claim that, if $\e_0$ is small enough depending only from $|\T|$, and if $\Pi_h$ denotes the convex envelope of $\pa\E(h)\cap\S(\E)$ (so that $\Pi_h$ is a convex hexagon, not necessarily with unit-area), then for every $h=1,...,N$ there exists a regular unit-area hexagon $K_h$ such that
\begin{eqnarray}
  \label{trombone}
  \hd(\pa\Pi_h,\pa K_h)\le C\,\sqrt\de\,,&&
  \\
  \label{trombone 2}
  |K_h\Delta K_{h+1}|\ge 2-C\,\sqrt\de\,,&&
\end{eqnarray}
where here and in the rest of this step, $C$ denotes a constant depending from $|\T|$ only. Indeed, since $\{\Pi_h\}_{h=1}^N$ is a partition of $\T$, one has $\sum_{h=1}^N|\Pi_h|=|\T|=\sum_{h=1}^N|\E(h)|$. By requiring $\e_0\le\e_1$ we can apply Lemma \ref{lemma hales} to each $\E(h)$ in order to find regular hexagons $H^*_h$ with $|H^*_h|=|\Pi_h|$ such that, by adding up \eqref{utile} on $h$, one finds
\begin{eqnarray}\label{pino}
  2\,\de=\sum_{h=1}^N (P(\E(h))-P(H))\ge c_1\,\sum_{h=1}^N\Big(|\E(h)\Delta\Pi_h|^2+\hd(\pa\Pi_h,\pa H^*_h)^2\Big)\,.
\end{eqnarray}
By \eqref{pino},
\begin{equation}
  \label{esagoniiii}
  ||\Pi_h|-1|\le|\E(h)\Delta \Pi_h|\le \sqrt{\frac{2\de}{c_1}}\,.
\end{equation}
By \eqref{occhio}, we may further decrease the value of $\eta$ introduced in step one so to have that if $J$ is a regular hexagon in $\T$ with $||J|-1|\le\eta$, then it makes sense to scale $J$ with respect to its barycenter in order to obtain a unit-area regular hexagon $J'$ with $\hd(\pa J,\pa J')\le C\,||J|-1|$. In particular, by \eqref{delta eps0} and \eqref{esagoniiii}, up to decrease the value of $\e_0$ we can define unit-area hexagons $K_h$ in $\T$ with the property that
\[
\hd(\pa K_h,\pa H_h^*)\le C\,||H_h^*|-1|=C\,||\Pi_h|-1|\le C\,\sqrt\de\,.
\]
By combining this estimate with \eqref{pino} we prove \eqref{trombone}. By \eqref{trombone}, $|K_j\Delta\Pi_j|\le C\sqrt\de$ for every $j$, and thus
\[
|K_h\Delta K_{h+1}|\ge|\E(h)\Delta\E(h+1)|-\sum_{j=h}^{h+1}|\E(j)\Delta K_j|\ge 2-C\,\sqrt\de-\sum_{j=h}^{h+1}|\E(j)\Delta \Pi_j|\,.
\]
In particular, \eqref{trombone 2} follows from \eqref{pino}.

\medskip

\noindent {\it Step three}: We claim the existence of a tiling $\H_0=v+\H$ of $\T$ such that
\begin{equation}
  \label{bella li}
  \hd(\S(\E),\S(\H_0))\le C\,\sqrt\de\,,\qquad |v|\le C\,\e_0\,,
\end{equation}
where here and in the rest of this step, $C$ denotes a constant depending from $|\T|$ only. Let us recall from Figure \ref{fig hexagon} that the chambers of $\H$ are ordered so that $\{\H(h)\}_{h=1}^\b$ is the ``bottom row'' in the grid defined by $\H$ and that $\H(1)=H$. Since $\E$ is an $(\e_0,\mu_0,L)$-perturbation of $\H$ one has
\begin{equation}
  \label{vicine}
  \max\Big\{\hd(\pa\E(h),\pa\H(h)),\hd(\pa\Pi_h,\pa\H(h))\Big\}\le \e_0\,,\qquad\forall h=1,...,N\,,
\end{equation}
so that \eqref{trombone} implies $\hd(\pa H,\pa K_1)\le C\,\e_0$. In particular, there exists $|\theta|,|s|,|t|\le C\e_0$ such that
\[
K_1=(t\sqrt{3}\ell,s\ell)+R_\theta H\,,
\]
where, with a slight abuse of notation, $R_\theta H$ denotes the counterclockwise rotation of $H$ by an angle $\theta$ around its left-bottom vertex (see step one). Of course, there is no reason to get a better estimate than $|s|,|t|\le C\,\e_0$ here (indeed, $\E$ itself could just be an $\e_0$-size translation of $\H$). Nevertheless, if $\theta\ne 0$, then we cannot fit $K_1$ into an hexagonal honeycomb of $\T$: therefore one expects
\begin{equation}
  \label{expectations}
  |\theta|\le C\sqrt\de\,.
\end{equation}
We prove \eqref{expectations}: set $J_1=K_1$, let $\tau_1$ be the common edge between $\Pi_1$ and $\Pi_2$, and let $\s_1$ and $\s_1'$ be the edges of $K_1$ and $K_2$ respectively such that, thanks to \eqref{trombone}, $\hd(\tau_1,\s_1)+\hd(\tau_1,\s'_1)\le C\,\sqrt\de$. In this way $\hd(\s_1,\s_1')\le C\,\sqrt\de$, and by \eqref{trombone 2} we can apply step one to deduce
\begin{equation}
  \label{hey}
  \hd(\pa J_2,\pa K_2)\le C\,\hd(\s_1,\s_1')\le C\sqrt\de\,,\qquad |J_2\Delta K_2|\le C\,\sqrt\de\,,
\end{equation}
where $J_2$ is the reflection of $J_1$ with respect to $\s_1$. Let now $\tau_2$ be common side between $\Pi_2$ and $\Pi_3$. By \eqref{trombone} and \eqref{hey} we have $\hd(\pa J_2,\pa\Pi_2)+\hd(\pa K_3,\pa\Pi_3)\le C\sqrt\de$, thus there exist edges $\s_2$ and $\s_2'$ of $J_2$ and $K_3$ respectively such that $\hd(\tau_2,\s)+\hd(\tau_2,\s')\le C\,\sqrt\de$. By \eqref{trombone 2} and \eqref{hey} one has $|J_2\Delta K_3|\ge 2-C\sqrt\de$, so that by step one $\hd(\pa J_3,\pa K_3)\le C\sqrt\de$ where $J_3$ is the reflection of $J_2$ with respect to $\s_2$. If we repeat this argument $\beta$-times, then we find regular unit-area hexagons $J_1,...,J_\beta$ such that $J_1=K_1$, $J_h$ is obtained by reflecting $J_{h-1}$ with respect to its ``vertical'' right edge, and $\hd(\pa J_h,\pa K_h)\le C\,\sqrt\de$ for $h=1,...,\beta$. By construction, $\Pi_\beta$ and $\Pi_1$ also share a common edge $\tau$, and correspondingly $J_\beta$ and $K_1$ have edges $\s$ and $\s'$ respectively with $\hd(\tau,\s)+\hd(\tau,\s')\le C\,\sqrt\de$. By reflecting $J_\beta$ with respect to $\s$ we thus find a regular unit area hexagon $J_*$ with
\[
\hd(\pa J_*,\pa K_1)\le C\,\sqrt\de\,.
\]
At the same time, since $J_*$ has been obtained by iteratively reflecting $J_1=K_1$ with respect to its ``vertical'' right edge, we find that
\[
\hd(\pa J_*,\pa J_1)\ge \frac{|\theta|}C\,.
\]
Thus \eqref{expectations} holds. As a consequence, up to apply to $K_1$ a rotation of size $C\,\sqrt\de$, one can assume that
\begin{equation}
  \label{richiuditi}
  K_1=(t\sqrt{3}\ell,s\ell)+H\,,\qquad\mbox{for some $|t|,|s|\le C\,\e_0$}\,.
\end{equation}
In particular, if we set $\H_0(h)=(t\sqrt{3}\ell,s\ell)+\H(h)$, then $\H_0$ defines a unit-area tiling of $\T$ by regular hexagons. By arguing as in the proof of \eqref{expectations}, one easily sees that
\begin{equation}
  \label{rigidita}
  \hd(\pa\Pi_h,\pa \H_0(h))\le C\,\sqrt{\de}\,,\qquad\forall h=1,...,N\,.
\end{equation}
In particular, the set of vertices of $\Pi_h$ and $\H_0(h)$ lie at distance $C\,\sqrt\de$. Since $\S(\E)$ is the set of all the vertices of the $\Pi_h$s, we complete the proof of \eqref{bella li}.

%
%
%

\medskip

\noindent {\it Step four}: We show that if $\mu_0$ is small enough with respect to $L$, and $\e_0$ is small enough with respect to $\mu_0$ and $|\T|$, then there exists a $C^{1,1}$-diffeomorphism $f_0$ between $\pa\H_0$ and $\pa\E$ such that
\begin{equation}
  \label{friday00}
  \|f_0\|_{C^{1,1}(\pa\H_0)}\le C\,,\qquad\|f_0-\Id\|_{C^1(\pa\H_0)}\le C\,\mu_0\,,
\end{equation}
\begin{equation}
  \label{friday01}
  \|(f_0-\Id)\cdot\tau_0\|_{C^1(\pa\H_0)}\le C\,\sup_{\S(\H_0)}|f_0-\Id|\,.
\end{equation}
where $C$ depends on $L$ only.  The map $f_0$ is built starting from Lemma \ref{lemma di riparametrizzazione sui segmenti} and is more useful than the map $f$ appearing in \eqref{eL perturbation 2} because the best estimate for $f-\Id$ on $\S(\H)$ is of order $\e_0$, while, thanks to \eqref{bella li}, we have a much more precise information about $f_0-\Id$ on $\S(\H_0)$, namely
\begin{equation}
  \label{deficit 0}
    \sup_{\S(\H_0)}|f_0-\Id|\le C\,\sqrt\de\,.
\end{equation}
(In \eqref{deficit 0}, $C$ depends on $|\T|$.) Let us also notice that we cannot just define $f_0$ by composing $f$ with the translation bringing $\pa\H_0$ onto $\pa\H$, because this translation is $O(\e_0)$, and thus the resulting map $f_0$ would still have tangential displacement $O(\e_0)$. We thus need a more precise construction, directly relating $\pa\H_0$ and $\pa\E$.

In order to apply Lemma \ref{lemma di riparametrizzazione sui segmenti}, we fix an edge $\s$ of $\H$, and set $\s_0=v+\s$, so that $\s_0$ is an edge of $\H_0$. We denote by $\tau_0$ and $\nu_0=\tau_0^\perp$ the constant tangent and normal unit-vector fields to $\s_0$ (and, obviously, to $\s$). We let $\gamma=f(\s)$ and set $\tau(x)=\nabla^\s f(f^{-1}(x))[\tau_0]$ and $\nu(x)=\tau(x)^\perp$, where $\nabla^\s f$ denotes the tangential gradient of $f$ with respect to $\s$. We start by noticing that $\bd(\g)\neq \emptyset$ and we argue as follows.  \\

 By applying Lemma \ref{lemma di riparametrizzazione sui segmenti} with $\l=\ell$ and $\a=1$ we discover that for any given $M>0$ for which $\g$ satisfies \eqref{controllo sulle normali} there exist positive constants $C_1$ and $\bar{\mu}$ (depending on $M$) such that if $\g$ satisfies also hypothesis (a),(b) and (c) for some $\rho<\bar{\mu}^2$ then, for every $\mu\in (\sqrt{\rho},\bar{\mu})$, there exists a $C^{1,1}$-diffeomorphism $f_0$ between $\s_0$ and $\g$ such that $f_0(\bd(\s_0))=\bd(\g)$ and
\[
  \|f_0\|_{C^{1,1}(\s_0)}\le C_1\,,\qquad\|f_0-\Id\|_{C^1(\s_0)}\le \frac{C_1}{\mu}\,\rho\,,
\]
\[
\|(f_0-\Id)\cdot\tau_0\|_{C^1(\s_0)}\le \frac{C_1}{\mu}\,\sup_{\bd(\s_0)}|f_0-\Id|\,.
\]
Thus it is enough to show for $\mu_0,\e_0$ small enough depending on $L$ then properties (a),(b), (c) and condition \eqref{controllo sulle normali} hold on $\g$ with a suitable choice of $M=M(L)$ and $\rho$. Clearly $\g$ satisfies \eqref{controllo sulle normali} for some $M=M(L)$, since $\|f\|_{C^{1,1}(\s)}\le L$ and $\|f-\Id\|_{C^1(\s)}\le\e_0$. We notice that property (a) holds provided $\rho\ge C\e_0$ for some $C$ depending on $|\T|$ only: indeed, by $\|f-\Id\|_{C^0(\s)}\le\e_0$ one finds $\hd(\s,\g)\le\e_0$, while $|v|\le C\,\e_0$ (recall \eqref{bella li}) gives $\hd(\s,\s_0)\le C\,\e_0$. Similarly, property (b) holds if $\rho\ge\e_0$, as $\tau(x)=\nabla^\s f(f^{-1}(x))[\tau_0]$ and $\|f-\Id\|_{C^1(\s)}\le \e_0$.  Finally, concerning property (c), we notice that by exploiting the fact that $\E$ is an $(\e_0,\mu_0,L)$-perturbation of $\H$ and setting $\psi=(f-\Id)\cdot\nu_0$, one has $\psi\in C^{1,1}([\s]_{\mu_0})$ with
\begin{equation}\label{friday1}
[\g]_{\mu_0+2\e_0}\subset(\Id+\psi\nu_0)\left([\s]_{\mu_0}\right)\subset\g\,,
\end{equation}
\begin{equation}\label{friday2}
\|\psi\|_{C^{1,1}([\s]_{\mu_0})}\le L\,,\qquad \|\psi\|_{C^1([\s]_{\mu_0})}\le \e_0\,,
\end{equation}
where the first inclusion in \eqref{friday1} follows from $\|f-\Id\|_{C^0(\s_0)}\le\e_0$ and $\g=f(\s)$.
By exploiting \eqref{friday1}, \eqref{friday2}, and the fact that $\s_0=v+\s$ with $|v|\le C\,\e_0$ by \eqref{bella li}, one can find two constants $C_2\le C_3$ (both depending just on $|\T|$) and $\psi_0\in C^{1,1}([\s]_{\mu_0+C_2\,\e_0})$ such that properties (a), (b) and (c) hold with $\rho=\mu_0+C_2\,\e_0$, and
  \begin{equation}\label{friday3}
  [\g]_{\mu_0+C_3\,\e_0}\subset(\Id+\psi_0\nu_0)\left([\s_0]_{\mu_0+C_2\,\e_0}\right)\subset\g\,,
  \end{equation}
  \begin{equation}\label{friday4}
  \|\psi_0\|_{C^{1,1}([\s_0]_{\mu_0+C_2\,\e_0})}\le L\,,\qquad \|\psi_0\|_{C^1([\s_0]_{\mu_0+C_2\,\e_0})}\le \e_0\,,
  \end{equation}
see
\begin{figure}
\begin{center}
  \includegraphics[scale=0.7]{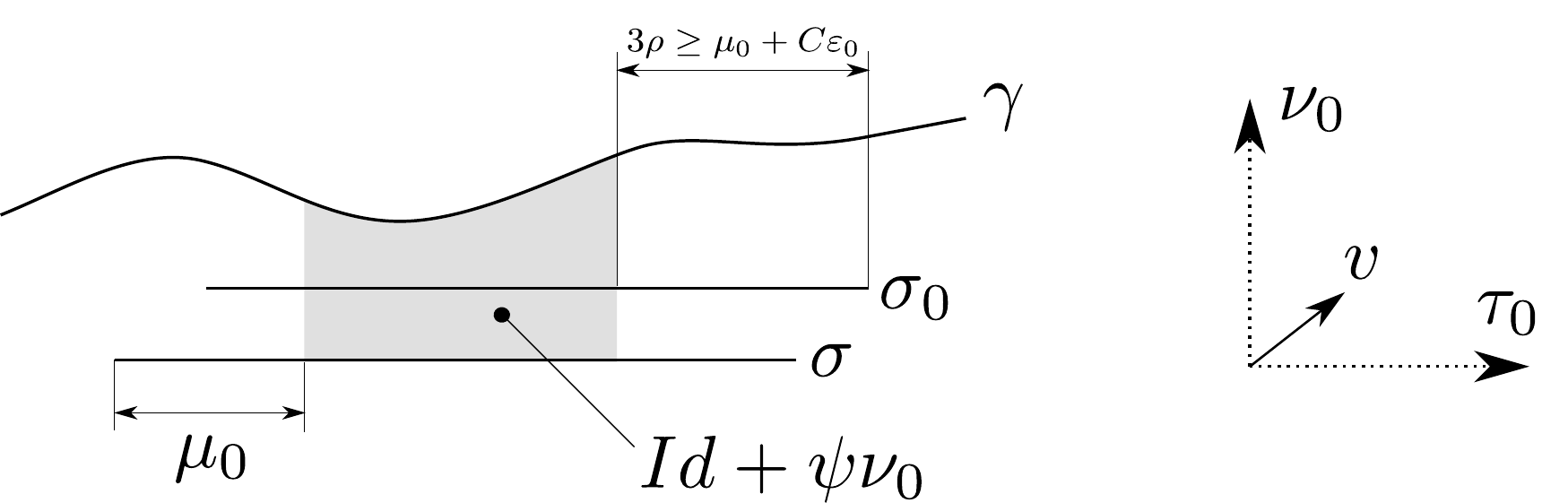}\caption{{\small The function $\psi_0$ is defined by computing the values of $\psi$ after a projection of $\s_0$ onto $\s$.}}\label{fig psi0}
  \end{center}
\end{figure}
Figure \ref{fig psi0}. Of course one can entail $3\rho> \mu_0+C_3\,\e_0$ by requiring $\e_0$ small enough with respect to $\mu_0$: in this way, property (c) follows from \eqref{friday3} and \eqref{friday4}. Summarizing, we have shown that if $\mu_0$ is small enough depending on $L$ (that is, depending on $M=M(L)$), and if $\e_0$ is small enough with respect to $\mu_0$ and $|\T|$, then properties (a),(b), (c) hold with $\rho=\mu_0+C_2\,\e_0$ and condition \eqref{controllo sulle normali} holds with a suitable $M=M(L)$. Up to further decrease the values of $\mu_0$ and $\e_0$ we may entail $\rho<\bar{\mu}^2$ and fix $\mu\in(\sqrt{\rho},\bar{\mu})$ depending on $L$ only. Thus, thanks to Lemma \ref{lemma di riparametrizzazione sui segmenti}, we find a $C^{1,1}$-diffeomorphism $f_0$ between $\s_0$ and $\g$ such that $f_0(\bd(\s_0))=\bd(\g)$ and
\[
\|f_0\|_{C^{1,1}(\s_0)}\le C\,,\qquad\|f_0-\Id\|_{C^1(\s_0)}\le C\,\mu_0\,,
\]
\[
\|(f_0-\Id)\cdot\tau_0\|_{C^{1,1}(\s_0)}\le C\,\sup_{\bd(\s_0)}|f_0-\Id|\,,
\]
where $C$ depends on $L$ only. By repeating this construction on every edge $\s_0$ of $\pa\H_0$ we complete the proof of \eqref{friday00} and \eqref{friday01}.

\medskip

\noindent {\it Step four}: With a little abuse of notation, let us denote by $\{\s_i\}_{i=1}^{3N}$ the family of segments such that $\pa\H_0=\bigcup_{i=1}^{3N}\s_i$. For every $i$ let $\tau_i$ denote a constant tangent unit vector to $\s_i$. If we set $g=f_0-\Id$, then we have
\begin{eqnarray*}
  P(\E)-P(\H)=\sum_{i=1}^{3N}\int_{\s_i}\big(|\nabla^{\s_i}g[\tau_i]+\tau_i|-1\big)\,d\H^1\,,
\end{eqnarray*}
where, by $\|g\|_{C^1(\pa\H_0)}\le\mu_0$, $\sqrt{1+t}\ge 1+t/2-t^2/8-C\,|t|^3$ ($t\ge -1$), and provided $\mu_0$ is small enough,
\begin{eqnarray*}
  |\nabla^{\s_i}g[\tau_i]+\tau_i|-1
  &=&\sqrt{1+2 \tau_i\cdot\nabla^{\s_i}g[\tau_i]+|\nabla^{\s_i}g[\tau_i]|^2}-1
  \\
  &\ge& \tau_i\cdot\nabla^{\s_i}g[\tau_i]+\frac{|\nabla^{\s_i}g[\tau_i]|^2}2-\frac{|2\,\tau_i\cdot\nabla^{\s_i}g[\tau_i]|^2}8
  -C\,\mu_0\,|\nabla^{\s_i}g[\tau_i]|^2\,.
\end{eqnarray*}
Let $\S(\H_0)=\{p_j\}_{j=1}^{2N}$, and for $p_j\in\bd(\s_i)$ denote by $v_j^i$ the tangent unit vector to $\s_i$ at $p_j$ pointing outside $\s_i$. In this way,
\[
\sum_{i=1}^{3N}\int_{\s_i}\,\tau_i\cdot\nabla^{\s_i}g[\tau_i]\,d\H^1=\sum_{j=1}^{2N}\sum_{\{i:p_j\in\bd(\s_i)\}}\,v_j^i\,g(p_j)=0\,,
\]
since $\{i:p_j\in\bd(\s_i)\}=\{i_1,i_2,i_3\}$ with $v_j^{i_2}$ and $v_j^{i_3}$ obtained from $v_j^{i_1}$ by counterclockwise rotations of $2\pi/3$ and $4\pi/3$ respectively. Hence, if we set $\nu_i=\tau_i^\perp$, then
  \begin{equation}\label{deficit 1}
    P(\E)-P(\H)\ge \sum_{i=1}^{3N}\int_{\s_i}\frac{|\nu_i\cdot\nabla^{\s_i}g[\tau_i]|^2}2\,d\H^1-C\,\mu_0\,\int_{\s_i}|\nabla^{\s_i}g[\tau_i]|^2\,d\H^1\,.
  \end{equation}
  By \eqref{friday01} and \eqref{deficit 0} we find that
  \[
  \sup_{1\le i\le 3N}\|\tau_i\cdot\nabla^{\s_i}g[\tau_i]\|_{C^0(\s_i)}\le C\,\sqrt\de\,,
  \]
  where $C$ depends on $L$ and $|\T|$. By combining this last inequality with \eqref{deficit 1}, and provided $\mu_0$ is small enough with respect to $L$ and $|\T|$, we find
  \begin{equation}
    \label{combina}
      C\,\sqrt{\de}\ge \sum_{i=1}^{3N}\int_{\s_i}|\nabla^{\s_i}g[\tau_i]|\ge\sum_{i=1}^{3N}\|g-g(p_{j(i)})\|_{C^0(\s_i)}\,,
  \end{equation}
  where for each $i=1,...,3N$ we have picked $p_{j(i)}\in \bd(\s_i)$. By \eqref{deficit 0} we have $|g(p_{j(i)})|\le C\sqrt\de$, so that \eqref{combina} implies
  \begin{equation}
    \label{combina C0}
      C\,\sqrt{\de}\ge\sum_{i=1}^{3N}\|g\|_{C^0(\s_i)}=\|f_0-\Id\|_{C^0(\pa\H_0)}\,.
  \end{equation}
  Since $f_0$ is a bijection between $\pa\H_0$ and $\pa\E$, we find that $\|f_0-\Id\|_{C^0(\pa\H_0)}\ge\hd(\pa\H_0,\pa\E)$ and thus prove \eqref{stabilita hd}. We now notice that if $u:(a,b)\to\R$ is a Lipschitz function with $\|u\|_{C^0(a,b)}\leq 1$, then
  \begin{equation}
    \label{interpol}
    \|u\|_{C^0(a,b)}^2\le 8\,\max\Big\{\Lip(u),\frac1{b-a}\Big\}\,\|u\|_{L^1(a,b)}\,.
  \end{equation}
  Indeed, let $x_0\in[a,b]$ be such that $u(x_0)=\|u\|_{C^0(a,b)}$ and set $L=\Lip(u)$, $r=|u(x_0)|/4L$. If $(x_0,x_0+r)\subset(a,b)$ or $(x_0-r,x_0)\subset(a,b)$, then by integrating $|u(y)|\ge |u(x_0)|-L|x_0-y|$ in $y$ over $(x_0,x_0+r)$ or over $(x_0-r,x_0)$ respectively, we find
  \[
  \int_{(a,b)}|u|\ge r\,|u(x_0)|-L\frac{r^2}2\ge \frac{|u(x_0)|^2}{8L}\,;
  \]
  otherwise one has $b-a\le 2r$ and thus $|u(y)|\ge |u(x_0)|/2$ for every $y\in(a,b)$. In order to complete the proof of \eqref{stabilita f C1} we just need to use \eqref{combina C0} and to combine the first inequality in \eqref{combina} with $\|f_0\|_{C^{1,1}(\pa\H)}\le C$ and with \eqref{interpol} (applied  to the components of $\nabla^{\pa^*\H_0}(f_0-\Id)$).
\end{proof}

\section{Proof of Theorem \ref{thm main periodic}, Theorem \ref{thm pertub volumes} and Theorem \ref{thm pertub metric}}\label{section fine} We start by introducing the following fundamental tool in the study of isoperimetric problems with multiple volume constraints. This kind of construction is originally found in \cite{Almgren76}, and it is fully detailed in our setting in \cite[Sections 29.5-29.6]{maggibook}, see also \cite[Theorem B.1]{CiLeMaIC1}. Since the version of this lemma needed here does not seem to appear elsewhere, we give some details of the proof.

\begin{lemma}[Volume-fixing variations]\label{lemma volume fixing}
  If $\E_0$ is a $N$-tiling of $\T$, $\g\in(0,1]$ and $L>0$, then there exist positive constants $r_0$, $\s_0$, $\e_0$, and $C_0$ (depending on $\E_0$, $L$ and $\g$ only) with the following property: if $\eta\in\R^N$ with $\sum_{h=1}^N\eta_h=0$, $\Phi\in\Lip(\T\times S^1;(0,\infty))$, $\psi\in C^{1,\g}(\T;(0,\infty))$, $x\in\T$, and $\E$ and $\F$ are $N$-tilings of $\T$ with
  \begin{gather}\label{volumefix hp 0}
    \|\Phi\|_{C^{0,1}(\T\times S^1)}+\|\psi\|_{C^{1,\g}(\T)}\le L\,,
    \\
    \label{volumefix hp 1}
    \d(\E,\E_0)\le \e_0\,,
    \\
    \label{volumefix hp 2}
    \F\Delta\E\cc B_{r_0}(x)\,,\qquad |\eta|<\s_0\,,
  \end{gather}
  then there exists a $N$-cluster $\F'$ such that
  \begin{eqnarray}\label{volumefix thesis 1}
    \F'\Delta\F&\cc& \T\setminus\ov{B}_{r_0}(x)\,,
    \\\label{volumefix thesis 2}
    \int_{\F'(h)}\psi&=&\eta_h+\int_{\E(h)}\psi\,,
    \\\label{volumefix thesis 3}
    |\PHI(\F')-\PHI(\F)|&\le& C_0\,P(\E)\,\Big(\sum_{h=1}^N\Big|\int_{\F(h)}\psi-\int_{\E(h)}\psi\Big|+|\eta|\Big)\,,
    \\\label{volumefix thesis 4}
    |\d(\F',\E)- \d(\F,\E)|&\le&C_0\,P(\E)\,\Big(\sum_{h=1}^N\Big|\int_{\F(h)}\psi-\int_{\E(h)}\psi\Big|+|\eta|\Big)\,.
  \end{eqnarray}
\end{lemma}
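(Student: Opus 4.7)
This is a parametrized volume-fixing construction of Almgren type (cf.\ \cite[Section 29.5]{maggibook}), adapted to the weighted setting on $\T$. The plan is to build, depending only on $\E_0$ and $r_0$, smooth vector fields $T_1,\dots,T_{N-1}$ with pairwise disjoint compact supports contained in $\T\setminus\ov{B}_{r_0}(x)$, and then to invoke a quantitative inverse function theorem to select $s^*\in\R^{N-1}$ so that the tiling $\F':=f_{s^*}(\F)$, obtained by pushing $\F$ under the time-one map of $\sum_h s_h T_h$, satisfies the required weighted-volume constraints.

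\textbf{Construction of the vector fields.} The dual graph of $\E_0$ (chambers as vertices, pairs $(h,k)$ with $\H^1(\E_0(h,k))>0$ as edges) is connected by Lemma \ref{tecnico}; I fix a spanning tree rooted at chamber $N$. For each $h\in\{1,\dots,N-1\}$ let $h=k_0^h,\dots,k_{m_h}^h=N$ be the corresponding path. For each edge $(k_i^h,k_{i+1}^h)$ choose a point $y_h^i\in\E_0(k_i^h,k_{i+1}^h)\setminus\ov{B}_{r_0}(x)$ (possible provided $r_0$ is small, depending only on $\E_0$) and a smooth vector field $T_h^i$ supported in a tiny ball around $y_h^i$ whose time-one flow transfers one unit of Lebesgue volume from $\E_0(k_{i+1}^h)$ to $\E_0(k_i^h)$. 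Arranging all the balls to be pairwise disjoint (across different $h$'s and $i$'s) and disjoint from $\ov{B}_{r_0}(x)$, the field $T_h:=\sum_{i=0}^{m_h-1}T_h^i$ has the property that, under its flow at $\E_0$, each intermediate chamber undergoes telescoping cancellation while the net effect is a unit transfer of Lebesgue volume from $\E_0(N)$ to $\E_0(h)$.

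\textbf{Derivative matrix, IFT, and estimates.} For $s\in\R^{N-1}$ let $f_s$ denote the time-one map of $\sum_h s_h T_h$ and set
\[
G_k(s)\,=\,\int_{f_s(\F)(k)}\psi\,dx-\int_{\E(k)}\psi\,dx-\eta_k\,,\qquad k=1,\dots,N-1.
\]
Theorem \ref{teo: chapter intro first variation of the potential energy} and the divergence theorem yield
\[
A_{kh}(\F,\psi)\,:=\,\pa_{s_h}G_k(0)\,=\,\int_{\pa^*\F(k)}\psi\,(T_h\cdot\nu_{\F(k)})\,d\H^1\,=\,\int_{\F(k)}\dive(\psi T_h)\,dx,
\]
and the volume-integral form makes $\F\mapsto A(\F,\psi)$ continuous under $L^1$-perturbations, with modulus controlled by $\|\dive(\psi T_h)\|_{L^\infty}\lesssim L$. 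By the construction of Step 1, $A(\E_0,\psi)$ is diagonally dominant with positive diagonal entries uniformly bounded below (in the regimes where the lemma is applied $\psi$ is bounded away from $0$ in terms of $L$), hence invertible with bounded inverse. By continuity there exist $\e_0,\s_0>0$ depending on $\E_0,L,\g$ such that, whenever $\d(\E,\E_0)\le\e_0$, $\d(\F,\E)\le\e_0$ and $|\eta|\le\s_0$, $\|A(\F,\psi)^{-1}\|\le C_1$; since $\|D^2_sG\|$ is uniformly bounded, a quantitative IFT produces $s^*$ solving $G(s^*)=0$ with
\[
|s^*|\,\le\,C_1'\Big(\sum_{k=1}^{N-1}\Big|\int_{\F(k)}\psi-\int_{\E(k)}\psi\Big|+|\eta|\Big).
\]
Set $\F'=f_{s^*}(\F)$. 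Conclusion \eqref{volumefix thesis 1} follows because $\cup_h\spt T_h\subset\T\setminus\ov{B}_{r_0}(x)$ and $f_{s^*}$ is the identity there; \eqref{volumefix thesis 2} holds by $G(s^*)=0$ for $k<N$ together with $\sum_{k}\int_{\F'(k)}\psi=\int_\T\psi$ and $\sum_k\eta_k=0$. For \eqref{volumefix thesis 3}--\eqref{volumefix thesis 4}, Theorem \ref{teo: chapter intro first variation of the perimeter} and the Lipschitz bound on $\Phi$ give $|\PHI(\F')-\PHI(\F)|+|\F'\Delta\F|\le C(L)|s^*|\,P(\F;\cup_h\spt T_h)$; using $\F\Delta\E\cc B_{r_0}(x)$ one has $P(\F;\cup_h\spt T_h)=P(\E;\cup_h\spt T_h)\le P(\E)$, and inserting the bound on $|s^*|$ yields both estimates with $C_0=C_0(\E_0,L,\g)$.

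\textbf{Main obstacle.} The delicate point is the uniform invertibility of $A(\F,\psi)$: the vector fields $T_h$ are chosen once and for all depending only on $\E_0$, yet they must yield an invertible derivative matrix simultaneously for every admissible $\F$ close to $\E_0$ and every admissible $\psi$. The divergence-theorem rewriting of $A_{kh}$ is what transforms the boundary integral (a priori only weakly continuous in $\F$) into a volume integral of a bounded function, giving the needed $L^1$-continuity. The connectivity of the dual graph of $\E_0$ provided by Lemma \ref{tecnico}, together with the spanning-tree/telescoping construction of the $T_h$, is what supplies a quantitative lower bound on the smallest eigenvalue of $A(\E_0,\psi)$ in terms of $\inf\psi$ on the supports of the $T_h$.
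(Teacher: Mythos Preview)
Your scheme is the same Almgren-type volume-fixing as the paper's, but two points need attention.

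\textbf{Dependence on $x$.} You assert that the $T_h$ are ``chosen once and for all depending only on $\E_0$'', yet you select the base points $y_h^i\in\E_0(k_i^h,k_{i+1}^h)\setminus\ov{B}_{r_0}(x)$, so the fields in fact depend on $x$. Since $r_0,\s_0,\e_0,C_0$ must be fixed \emph{before} $x$ is given, you would then have to argue that all the relevant bounds (on $\|T_h\|_{C^1}$, on $\|A^{-1}\|$, on the IFT radius) are uniform in $x$; this is plausible by compactness of the interfaces, but you do not carry it out, and the ``once and for all'' phrasing is inconsistent with the construction. The paper avoids this issue entirely by building \emph{two} disjoint families of balls $\{B_{z_i,\e}\}_{i=1}^M$ and $\{B_{y_i,\e}\}_{i=1}^M$, together with two corresponding systems $(f,\a)$ and $(g,\b)$, all depending only on $\E_0$ (and $\psi$), and then takes $r_0$ small enough that any single $\ov{B}_{r_0}(x)$ can meet at most one of the two families; whichever system is untouched is then used.

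\textbf{Invertibility of $A(\E_0,\psi)$.} Your telescoping yields $A(\E_0,1)=\Id$, but for general $\psi$ the off-diagonal entry corresponding to an intermediate node $k_j^h$ on the path $P_h$ is approximately $\psi(y_h^{j})-\psi(y_h^{j-1})$, which is of order $\mathrm{osc}(\psi)$ and in general \emph{not} dominated by the diagonal entry $\approx\psi(y_k^0)$; so diagonal dominance fails. What your spanning-tree construction actually gives is that $A(\E_0,\psi)$ is \emph{lower triangular} in any ordering placing descendants before ancestors (only $h$ and the ancestors of $h$ feel $T_h$), with positive diagonal $\approx\psi(y_h^0)$; this is enough for invertibility with a bound depending on $N$, $L$ and $\inf\psi$. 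The paper instead normalises its fields so that $\int_{\E_0(h(i))}\Div(\psi\,T_i)=1$, which makes the derivative structure $\psi$-independent, and then invokes the rank condition of \cite[Section~29]{maggibook} rather than an explicit matrix form.
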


\begin{remark}
  {\rm In practice we are going to apply this lemma either with $\eta=0$ and $\F\Delta\E\ne\emptyset$, or with $\eta\ne 0$ and $\F=\E$. In the first case, we are given a compactly supported variation $\F$ of $\E$, and we want to modify $\F$ outside of $B_{r_0}(x)$ into a new $N$-tiling $\F'$ so that $\int_{\F'(h)}\psi=\int_{\E(h)}\psi$ for every $h=1,...,N$. In the second case we want to modify $\E$ so that  $\int_{\E(h)}\psi$ is changed into $\eta_h+\int_{\E(h)}\psi$ for every $h=1,...,N$. In both cases, we want to control the change in $\PHI$-energy and the change in distance from $\E$ needed to pass from $\F$ to $\F'$. The name attached to the lemma is motivated by the fact that one usually takes $\psi\equiv1$.}
\end{remark}

\begin{proof}[Proof of Lemma \ref{lemma volume fixing}]
  The basic step consists in picking up a ball $B_{z,\e}$ and notice that if $T\in C^\infty_c(B_{z,\e};\R^2)$ and $f_t(x)=x+t\,T(x)$ for $x\in\T$, then for every Borel set $E\subset\T$ the function $\PSI_E(t)=\int_{f_t(E)}\psi=\int_E\psi(f_t)Jf_t$ is of class $C^{1,\g}(-t_0,t_0)$ with
  \begin{equation}
    \label{derivata psi}
    \|\PSI_E\|_{C^{1,\g}(-t_0,t_0)}\le C\,,\qquad  \Big|\int_{f_t(E)}\psi-\int_E\psi-t\,\int_E\,\Div(\psi\,T)\Big|\le C\,|t|^{1+\g}\,,
  \end{equation}
  where $t_0$ and $C$ denote positive constants depending only on $\g$, $L$, $|\T|$, and $\|T\|_{C^1(\T)}$. Next, one considers two families of balls $\{B_{z_i,\e}\}_{i=1}^M$  and $\{B_{y_i,\e}\}_{i=1}^M$ with $z_i\,,y_i\in\pa^*\E_0(h(i))\cap \pa^*\E_0(k(i))$ (for $1\le h(i)\ne k(i)\le N$ to be properly chosen -- see condition \eqref{rank} below) and with $|z_i-z_j|>2\e$ and $|y_i-y_j|>2\e$ for $1\le i<j\le M$ and $|y_i-z_j|>2\e$ for $1\le i\le j\le M$. For each $i$ we can find $T_i\in C^\infty_c(B_{z_i,\e};\R^2)$ such that
  \begin{gather}\label{fix 1}
  \int_{\E_0(h(i))}\Div(\psi\,T_i)=1=-\int_{\E_0(k(i))}\Div(\psi\,T_i)\,,
  \\\label{fix 2}
  \int_{\E_0(j)}\Div(\psi\,T_i)=0\,,\qquad j\ne h(i),k(i)\,.
  \end{gather}
  Let us consider the smooth map $f:(-t_0,t_0)^M\times\T\to\T$ defined by $f(\mathbf{t},x)=x+\sum_{i=1}^M\,t_i\,T_i(x)$, $\mathbf{t}=(t_1,...,t_M)$, so that for $t_0>0$ small enough $f(\mathbf{t},\cdot)$ is a smooth diffeomorphism of $\T$ with
  \begin{equation}
    \label{spt 1}
      \spt(f(\mathbf{t},\cdot)-\Id)\cc \bigcup_{i=1}^MB_{z_i,\e}\,.
  \end{equation}
  If we let $\a=(\a_1,...,\a_N)\in C^{1,\g}((-t_0,t_0)^M;\R^N)$ be defined by
  \[
  \a_h(\mathbf{t})=\int_{f(\mathbf{t},\E(h))}\psi-\int_{\E(h)}\psi\,,\qquad h=1,...,N\,,
  \]
  then $\a((-t_0,t_0)^M)\subset V=\{\eta\in\R^N:\sum_{h=1}^N\eta_h=0\}$, $\|\a\|_{C^{1,\g}((-t_0,t_0)^M)}\le C$, and, by
  \eqref{volumefix hp 0}, \eqref{volumefix hp 1}, \eqref{derivata psi}, \eqref{fix 1} and \eqref{fix 2}, one finds
  \begin{eqnarray}\label{vicino}
  \Big|\frac{\pa\a_{h(i)}}{\pa t_i}(\mathbf{t})-1\Big|+\Big|\frac{\pa\a_{k(i)}}{\pa t_i}(\mathbf{t})+1\Big|+\max_{j\ne h(i),k(i)}\Big|\frac{\pa\a_{j}}{\pa t_i}(\mathbf{t})\Big|\le C\,\e_0\,,
  \end{eqnarray}
  where, from now on, $C$ denotes a constant depending only on $L$, $\g$, $|\T|$, and $\E_0$ (through $\|T_i\|_{C^1(\T)}$). Provided $h(i)$ and $k(i)$ are suitable defined (see \cite[Step one, Proof of Theorem 29.14]{maggibook}) one can entail from \eqref{vicino} that
  \begin{equation}
    \label{rank}
    {\rm dim}\nabla\a(\00)=N-1\,.
  \end{equation}
  By the implicit function theorem there exists $\s_1>0$ and an open neighborhood $U$ of $\00\in\R^M$ such that $\a^{-1}\in C^{1,\g}(V_{\s_1};U)$ with $V_{\s_1}=\{\eta\in V:|\eta|<\s_1\}$, and
  \begin{equation}
    \label{sotto}
    |\a^{-1}(\eta)|\le C\,|\eta|\,,\qquad \forall \eta\in V_{\s_1}\,.
  \end{equation}
  Similarly, we may construct functions $g$ and $\beta$, analogous to $f$ and $\a$, starting from the family of balls $\{B_{y_i,\e}\}_{i=1}^M$. Now let $\F$ be as in \eqref{volumefix hp 2}, and assume that
  \begin{equation}
    \label{s0 r0}
    \s_0+\|\psi\|_{C^0(\T)}\pi\,r_0^2<\s_1\,.
  \end{equation}
  Up to further decrease the value of $r_0$ with respect to $\e$, we may also assume that $\ov{B}_{r_0}(x)\cap \ov{B}_{\e}(z_i)=\emptyset$ for every $i=1,...,M$, or that $\ov{B}_{r_0}(z)\cap \ov{B}_{\e}(y_i)=\emptyset$ for every $i=1,...,M$. Without loss of generality we may assume to be in the former case, and set
  \[
  \F'(h)=(\F(h)\cap B_{r_0}(x))\cup(f(\a^{-1}(w),\E(h))\setminus B_{r_0}(x))\,,\qquad 1\le h\le N\,,
  \]
  where $w_h$ is defined by the identity
  \[
  \int_{\F(h)\cap B_{r_0}(x)}\psi=\eta_h-w_h-\int_{\E(h)\cap B_{r_0}(x)}\psi\,,\qquad 1\le h\le N\,.
  \]
  By construction one has \eqref{volumefix thesis 1}. Moreover, by definition of $w_h$, by \eqref{spt 1} and since $\ov{B}_{r_0}(x)\cap \ov{B}_{\e}(z_i)=\emptyset$ for every $i=1,...,M$, one has
  \begin{eqnarray*}
    \int_{\F'(h)}\psi-\int_{\E(h)}\psi&=&\int_{\F(h)\cap B_{r_0}(x)}\psi+\int_{f(\a^{-1}(w),\E(h))\setminus B_{r_0}(x)}\psi-\int_{\E(h)}\psi
    \\
    &=&\eta_h-w_h+\int_{f(\a^{-1}(w),\E(h))\setminus B_{r_0}(x)}\psi-\int_{\E(h)\setminus B_{r_0}(x)}\psi
    \\
    &=&\eta_h-w_h+\int_{f(\a^{-1}(w),\E(h))}\psi-\int_{\E(h)}\psi=\eta_h-w_h+\a_h(\a^{-1}(w))\,.
  \end{eqnarray*}
  By  \eqref{volumefix hp 2} and \eqref{s0 r0} one has $|w|<\s_1$, so that \eqref{volumefix thesis 2} is proved. We now notice that by \cite[Equation (2.9)]{dephilippismaggiARMA}
  \[
  \PHI(f(\mathbf{t},E))=\int_{f(\mathbf{t},\pa^*E)}\Phi(y,\nu_{f_t(E)}(y))\,d\H^1(y)
  =
  \int_{\pa^*E}\Phi\Big(f_t(x),{\rm cof}\nabla f_t(x)[\nu_E(x)]\Big)\,d\H^1(x)\,,
  \]
  so that, by \eqref{volumefix hp 0}, $|\PHI(f(\mathbf{t},E))-\PHI(E)|\le C\,|t|\,P(E)$. By \eqref{sotto} we immediately deduce \eqref{volumefix thesis 3}. Finally \eqref{volumefix thesis 4} is obtained by exploiting \cite[Lemma B.2]{CiLeMaIC1}.
\end{proof}

We now translate the improved convergence Theorem for planar bubble clusters \ref{improv Theorem} in the case of tilings of $\T$. One says that a $N$-tiling $\E$ of $\T$ is {\it $(\Lambda,r_0)$-minimizer} if
\[
P(\E)\le P(\F)+\Lambda\,\d(\E,\F)\,,
\]
whenever $\F$ is a $N$-tiling of $\T$ and  $\E\Delta\F\cc B_{r_0}(x)$ for some $x\in\T$. If $\E$ is a $(\Lambda,r_0)$-minimizing tiling of $\T$, then (by a trivial adaptation of Theorem \ref{rego planar cluster} above) $\E$ is of class $C^{1,1}$. Moreover, the curves $\g_i$ and the points $p_j$ in \eqref{class C1} are such that each $\g_i$ has distributional curvature bounded by $\Lambda$, and for every $p_j$ there exists exactly three curves from $\{\g_i\}_{i\in I}$ which share $p_j$ as a common boundary point, and meet at $p_j$ by forming three 120 degrees angles.

We notice that, by \eqref{hexagonal honeycomb thm torus}, the reference honeycomb $\H$ is a $(0,\infty)$-minimizing unit-area tiling of $\T$. The following result is what we call an {\it improved convergence theorem}.  The proof comes as a simple variant of Theorem \ref{improv Theorem} (or \cite[Theorem 1.5]{CiLeMaIC1}) and therefore we omit the details.

\begin{theorem}\label{thm improved convergence}
  Given $\Lambda\ge0$, there exist positive constants $L$ and $\mu_*>0$ (depending on $\Lambda$ and $\H$) with the following property. If $N=|\T|$, $\mu<\mu_*$ and $\{\E_k\}_{k\in\N}$ is a sequence of  $(\Lambda,r_0)$-minimizing $N$-tilings of $\T$ (for some $r_0>0$) with $\d(\E_k,\H)\to 0$ as $k\to\infty$, then there exist $k(\mu)\in\N$ and, for every $k\ge k(\mu)$, a $C^{1,1}$-diffeomorphism $f_k$ with
  \begin{equation}
    \label{ic C11 e C1}
      \sup_{k\ge k(\mu)}\|f_k\|_{C^{1,1}(\pa\H)}\le L\,,\qquad \lim_{k\to\infty}\|f_k-\Id\|_{C^1(\pa\H)}=0\,,
  \end{equation}
  \begin{equation}
    \label{ic tangenziale}
      \ttau_{\H}(f_k-\Id)=0\quad\mbox{on $[\pa\H]_\mu$}\,,\qquad \|\ttau_{\H}(f_k-\Id)\|_{C^1(\pa^*\H)}\le\frac{L}\mu\,\sup_{\S(\H)}|f_k-\Id|\,.
  \end{equation}
  In particular, $\E_k$ is a $(\e_k,\mu,L)$-perturbation of $\H$ whenever $k\ge k(\mu)$.
\end{theorem}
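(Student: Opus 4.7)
The plan is to reduce this statement to Theorem \ref{improv Theorem} (equivalently \cite[Theorem 1.5]{CiLeMaIC1}) by the standard device of lifting tilings of $\T$ to periodic clusters in $\R^2$. First, I would observe that $\H$ is a legitimate reference structure in the sense of Theorem \ref{improv Theorem}: each interface is a straight segment, the singular set $\S(\H)$ consists of isolated triple points where three segments meet at $120$-degree angles, so $\H$ is of class $C^{\infty}$ at its regular points and thus a fortiori of class $C^{2,1}$. Moreover, by Hales's isoperimetric Theorem \ref{teo: chapter intro Hales sul toro}, $\H$ itself is a $(0,\infty)$-minimizing unit-area tiling, which makes it a ``model'' around which the improved convergence machinery can be deployed.

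Next, I would transfer the problem to $\R^2$ by working with the periodic extensions $\widehat\E_k$ and $\widehat\H$ on a fundamental domain $Q_\T$. Since $d(\E_k,\H)\to 0$ and each $\E_k$ is $(\Lambda,r_0)$-minimizing, the regularity Theorem \ref{rego planar cluster} (in its planar refinement) implies that each $\widehat\E_k$ is of class $C^{1,1}$ with a uniform curvature bound of order $\Lambda$. The standard density estimates for almost-minimizing clusters then give Hausdorff convergence $\pa\widehat\E_k\to\pa\widehat\H$ on compact subsets of $\R^2$ together with $C^1$-convergence on $[\pa\H]_\mu$, for every fixed $\mu>0$. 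These are precisely the hypotheses of the preliminary section of \cite[Section 4]{CiLeMaIC1}.

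The core construction of $f_k$ then proceeds exactly as in the proof of Theorem \ref{improv Theorem}: on the ``regular'' part $[\pa\H]_\mu$ each edge of $\pa\E_k$ is a normal graph $\Id+\psi_k\,\nu_\H$ over the corresponding segment of $\pa\H$, with $\|\psi_k\|_{C^1}\to 0$, and this gives $\ttau_\H(f_k-\Id)=0$ there. Near each singular point $p_j\in\S(\H)$ one must interpolate between these normal parametrizations and a reparametrization anchored at the triple point $p_j^k\in\S(\E_k)$ of $\pa\E_k$ converging to $p_j$. This interpolation is the content of the single-edge statement Lemma \ref{lemma di riparametrizzazione sui segmenti} above, applied to each of the finitely many edges of $\H$; summing the resulting $C^{1,\gamma}$-estimates produces the uniform $C^{1,1}$-bound $\|f_k\|_{C^{1,1}(\pa\H)}\le L$ and the factor $L/\mu$ in the tangential estimate \eqref{ic tangenziale}, exactly as in the Euclidean case.

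The main obstacle, and essentially the only real difference with Theorem \ref{improv Theorem}, is of a combinatorial/topological nature: one must verify that for $k$ large enough the globally-defined map $f_k\colon\pa\H\to\pa\E_k$ matches chamber boundaries in a consistent way, i.e.\ that there is a permutation $\s_k$ of $\{1,\dots,N\}$ with $f_k(\pa\H(h))=\pa\E_k(\s_k(h))$. This follows from $d(\E_k,\H)\to 0$: for $k$ large the $L^1$-proximity of $\E_k(h)$ to $\H(h)$ forces a unique chamber of $\E_k$ to occupy each hexagonal cell of $\H$, up to relabeling, and the finite cell complex of $\H$ on the compact torus $\T$ makes this matching automatic (unlike in $\R^2$, no issues with unboundedness arise). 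Once the $f_k$ are constructed and the estimates \eqref{ic C11 e C1}--\eqref{ic tangenziale} are verified, the final conclusion that $\E_k$ is an $(\e_k,\mu,L)$-perturbation of $\H$ with $\e_k:=\|f_k-\Id\|_{C^1(\pa\H)}\to 0$ is a direct readout of the definition \eqref{eL perturbation 2}.
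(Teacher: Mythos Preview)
Your proposal is correct and follows exactly the approach indicated by the paper: the paper simply states that the proof ``comes as a simple variant of Theorem \ref{improv Theorem} (or \cite[Theorem 1.5]{CiLeMaIC1}) and therefore we omit the details.'' Your write-up supplies precisely those omitted details---lifting to periodic extensions, checking that $\H$ is an admissible $C^{2,1}$ reference cluster, invoking the edge-by-edge reparametrization of Lemma~\ref{lemma di riparametrizzazione sui segmenti}, and handling the chamber-matching on the compact torus---all of which are the expected steps in adapting the planar result to the torus setting.
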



Let us now set
\begin{equation}
  \label{kappa R}
\k=\k(\T)=\inf\,\liminf_{k\to\infty}\frac{P(\F_k)-P(\H)}{\a(\F_k)^2}\,,
\end{equation}
where the infimum is taken among all sequences $\{\F_k\}_{k\in\N}$ of unit-area tilings of $\T$ such that $\a(\F_k)>0$ for every $k\in\N$ and $\a(\F_k)\to0$ as $k\to\infty$. By a compactness argument, Theorem \ref{thm main periodic} is equivalent in saying that $\k>0$.

\begin{lemma}\label{thm selection}
  If $\k=0$, then there exists a sequence of $(\Lambda,r_0)$-minimizing unit-area tilings $\{\E_k\}_{k\in\N}$ such that $\a(\E_k)>0$ for every $k\in\N$, $\a(\E_k)\to 0$ as $k\to\infty$, and
  \begin{equation}\label{behavior di Ek}
  P(\E_k)=P(\H)+o(\a(\E_k)^2)\,,\qquad\mbox{as $k\to\infty$}\,.
  \end{equation}
\end{lemma}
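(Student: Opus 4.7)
The strategy is a selection principle in the spirit of \cite{CicaleseLeonardi} and \cite{CiLeMaIC1}: starting from the nearly-extremal sequence guaranteed by the hypothesis $\k=0$, one constructs a sequence of bona fide almost-minimizing tilings with essentially the same deficit and asymmetry. From the definition of $\k$ in \eqref{kappa R} I can fix a sequence of unit-area tilings $\{\F_k\}_{k\in\N}$ of $\T$ with $\de_k := \a(\F_k) > 0$, $\de_k \to 0$, and $\eta_k^2 := P(\F_k)-P(\H) = o(\de_k^2)$. For a constant $M>0$ to be fixed below, consider the penalized problem
\begin{equation*}
\inf\bigl\{ \mathcal{G}_k(\E) := P(\E) + M\,|\a(\E)-\de_k| : \E \text{ unit-area tiling of } \T \bigr\},
\end{equation*}
and let $\E_k$ denote a minimizer. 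Existence of $\E_k$ follows from the direct method of the Calculus of Variations: $\a$ is continuous with respect to $L^1$-convergence (being an infimum, over the compact parameter space of admissible translations $v$ and the finite set of permutations of $\{1,\dots,N\}$, of the jointly continuous functionals $\E\mapsto\d(\hat\E,v+\H)$), $P$ is $L^1$-lower semicontinuous by Theorem \ref{semicontinuity theorem}, and minimizing sequences are relatively compact in $L^1$ by Theorem \ref{compactness theorem}.

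Comparing $\E_k$ with the admissible competitor $\F_k$ yields at once
\begin{equation*}
P(\E_k) + M\,|\a(\E_k) - \de_k| \le P(\F_k) = P(\H) + \eta_k^2,
\end{equation*}
so that $P(\E_k) - P(\H) \le \eta_k^2$ and $|\a(\E_k) - \de_k| \le \eta_k^2/M = o(\de_k^2)$. In particular, for $k$ large, $\a(\E_k) > 0$ and $\a(\E_k) = \de_k(1+o(1))$, which already gives the decay \eqref{behavior di Ek} in the form $P(\E_k) - P(\H) = o(\a(\E_k)^2)$.

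The technical core of the argument, and also the main obstacle, is to show that $\E_k$ is $(\Lambda,r_0)$-minimizing for some $\Lambda$ and $r_0 > 0$ uniform in $k$. Given a local variation $\F$ with $\F\Delta\E_k\cc B_{r_0}(x)$ and arbitrary chamber volumes, $\F$ is not admissible in the penalized problem because of the unit-area constraint. To restore the volumes I would apply Lemma \ref{lemma volume fixing} with reference tiling $\E_0 = \H$ (which is legitimate for $k$ large, since $\a(\E_k)\to 0$ allows one to translate $\E_k$ so that $\d(\E_k,\H) = \a(\E_k)(1+o(1)) \to 0$) and with weight $\psi\equiv 1$, $\eta=0$. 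This yields a unit-area tiling $\F'$ agreeing with $\F$ on $B_{r_0}(x)$, differing from $\F$ only in a fixed ball disjoint from $B_{r_0}(x)$, and with
\begin{equation*}
|P(\F')-P(\F)| + |\d(\F',\E_k) - \d(\F,\E_k)| \le C_0\, P(\E_k)\,\d(\F,\E_k).
\end{equation*}
Testing $\F'$ in the inequality $\mathcal{G}_k(\E_k)\le\mathcal{G}_k(\F')$ and using that $\a$ is $1$-Lipschitz with respect to $\d$ (by the triangle inequality applied to the infimum defining $\a$, one has $|\a(\E)-\a(\F)|\le\d(\E,\F)$), together with the uniform bound on $P(\E_k)$ coming from $P(\E_k)\le P(\H)+\eta_k^2$, gives
\begin{equation*}
P(\E_k;B_{r_0}(x)) \le P(\F;B_{r_0}(x)) + \Lambda\,\d(\E_k,\F),
\end{equation*}
for $\Lambda = (M+1)\bigl(1 + C_0 \sup_k P(\E_k)\bigr)$, which is the desired almost-minimality. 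With this in hand, the sequence $\{\E_k\}$ satisfies all the conclusions of the lemma.
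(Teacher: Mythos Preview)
Your argument is correct and follows the same selection-principle template as the paper, but the penalization you choose is different. The paper minimizes
\[
P(\E)+\d(\E,\F_k)^2
\]
over unit-area tilings (with the open constraint $\a(\E)>0$, which is in fact redundant for large $k$), whereas you minimize
\[
P(\E)+M\,|\a(\E)-\de_k|.
\]
Both choices force the minimizer $\E_k$ to inherit the deficit $o(\de_k^2)$ from $\F_k$ and to keep $\a(\E_k)\sim\de_k$: the paper gets this indirectly via $|\a(\E_k)-\a(\F_k)|\le\d(\E_k,\F_k)$ together with $\d(\E_k,\F_k)^2\le\eta_k^2$, while you get it directly from the penalty. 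The almost-minimality step is identical in spirit in both proofs, using Lemma~\ref{lemma volume fixing} to restore volumes and then the Lipschitz control of the penalty (for the paper, the quadratic distance is Lipschitz on bounded sets; for you, $\a$ is $1$-Lipschitz with respect to $\d$). Your penalty is arguably slightly cleaner since it avoids squaring and the associated estimate $\d(\F',\F_k)^2-\d(\E_k,\F_k)^2\le C\,\d(\F',\E_k)$; on the other hand, the paper's penalty does not require verifying continuity of $\a$ for the existence step. Two cosmetic points: you announce ``$M$ to be fixed below'' but never actually fix it (any $M>0$ works, so just say so), and both arguments only produce $\E_k$ with the required properties for $k$ large, so a reindexing is tacitly needed to match the ``for every $k\in\N$'' in the statement.
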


\begin{proof}
  By definition of $\k$, and since we are assuming $\k=0$, there exist unit-area tilings $\{\F_k\}_{k\in\N}$ of $\T$ such that $\a(\F_k)>0$ for every $k\in\N$, and
  \begin{equation}
    \label{Fk}
    \a(\F_k)\to 0\,,\qquad P(\F_k)=P(\H)+o(\a(\F_k)^2)\,,\qquad\mbox{as $k\to\infty$}\,.
  \end{equation}
  For every $k\in\N$, let $\E_k$ be a minimizer in the variational problem
  \[
  \inf\,\Big\{P(\E)+\d(\E,\F_k)^2 \ | \ \text{$\E$ unit-area tiling of $\T$ with $\a(\E)>0$}\Big\}\,.
  \]
  By comparing $\E_k$ with $\F_k$ and then subtracting $P(\H)$ one has
  \begin{equation}
    \label{comparison Fk}
      P(\E_k)-P(\H)+\d(\E_k,\F_k)^2\leq P(\F_k)-P(\H)=o(\a(\F_k)^2)\,.
  \end{equation}
  Since $|\a(\E_k)-\a(\F_k)|\le\d(\E_k,\F_k)$ and $P(\E_k)\ge P(\H)$, we conclude that
  \begin{equation}
    \label{take}
      \lim_{k\to\infty}\frac{\a(\E_k)}{\a(\F_k)}=1\,,
  \end{equation}
  so that, in particular, $\a(\E_k)\to 0$ as $k\to\infty$. Dividing by $\a(\E_k)^2$ in \eqref{comparison Fk} and using \eqref{take}, we complete the proof of \eqref{behavior di Ek}. We now show that each $\E_k$ is $(\Lambda,r_0)$-minimizer in $\T$. Indeed, let $r_0$, $\e_0$, $\s_0$ and $C_0$ be the constants associated by Lemma \ref{lemma volume fixing} to $\E_0=\H$, $\PHI=P$ and $\psi\equiv 1$. Since $\a(\E_k)\to0$, up to translations we have $\d(\E_k,\H)\le\e_0$ for $k$ large. We apply Lemma \ref{lemma volume fixing} with $\E=\E_k$, $\F$ a $N$-tiling with $\E_k\Delta\F\cc B_{r_0}(x)$ for some $x\in\T$, and $\eta=0$, to find a unit-area tiling $\F'$ such that
  \begin{eqnarray*}
    &&P(\E_k)\le P(\E_k)+\d(\E_k,\F_k)^2\le P(\F')+\d(\F',\E_k)^2
    \\
    &\le&P(\F)+C_0\,P(\E_k)\,|\vol(\F)-\vol(\E_k)|+\Big(\d(\F,\E_k)+C_0\,P(\E_k)\,|\vol(\F)-\vol(\E_k)|\Big)^2\,.
  \end{eqnarray*}
  Hence $P(\E_k)\le P(\F)+\Lambda\,\d(\E_k,\F)$ thanks to $|\vol(\F)-\vol(\E_k)|\le\d(\F,\E_k)$ and since, for $k$ large enough, $P(\E_k)\le 2\,P(\H)$.
  \end{proof}

\begin{proof}
  [Proof of Theorem \ref{thm main periodic}] We argue by contradiction. If the theorem is false, then $\k=0$ and thus by Lemma \ref{thm selection} there exists a sequence $\{\E_k\}_{k\in\N}$ of $(\Lambda,r_0)$-minimizing unit-area tilings of $\T$ such that $\a(\E_k)>0$, $\a(\E_k)\to 0$ as $k\to\infty$ and
  \[
  P(\E_k)=P(\H)+o(\a(\E_k)^2)\,,\qquad\mbox{as $k\to\infty$}\,.
  \]
  Up to translation we may assume that $\a(\E_k)=\d(\E_k,\H)\to 0$ as $k\to\infty$. Let $L$ and $\mu_*$ be the constants of Theorem \ref{thm improved convergence} (which depends on $\Lambda$ and $\H$) so that for every $\mu<\mu_*$ there exists $k(\mu)\in\N$ such that $\E_k$ is a $(\e_k,\mu,L)$-perturbation of $\H$ for every $k\ge k(\mu)$, with $\e_k\to 0$ as $k\to\infty$. Let $\e_0$ and $\mu_0$ be determined as in Theorem \ref{hexagonal honeycomb thm torus quantitative small} depending on $L$ and $|\T|$. If we set $\mu=\min\{\mu_*,\mu_0\}$ and increase $k(\mu)$ so that $\e_k\le\e_0$ for $k\ge k(\mu)$, then by Theorem \ref{hexagonal honeycomb thm torus quantitative small}, one finds $v_k\in\R^2$ with $|v_k|\le C\,\e_k$ such that
  \[
  P(\E_k)-P(\H)\ge c_0\,\hd(\pa\E_k,v_k+\pa\H)^2\ge c\,\d(\E_k,v_k+\H)^2\ge c\,\a(\E_k)^2\,,
  \]
  for some positive constant $c$. We have thus reached a contradiction, and proved the theorem.
\end{proof}

\begin{proof}[Proof of Theorem \ref{thm pertub volumes}]
   Let $\E_j=\E_{m^j}$ be minimizers in \eqref{variational problem volumes} for a sequence $\{m^j\}_{j\in\N}$ such that $\sum_{h=1}^Nm^j_h=N$, $m_h^j>0$ and $m_h^j\to 1$ as $j\to\infty$. By an explicit construction, for every $j$ large enough we can construct a small deformation $\H_j$ of $\H$ such that $|\H_j(h)|=m_h^j$ and $P(\H_j)\le P(\H)+C\,\max_{1\le h\le N}|m_h^j-1|$, with $C$ independent from $j$. (Alternatively, one can apply Lemma \ref{lemma volume fixing} with $\E_0=\E=\F=\H$, $\PHI=P$, $\psi\equiv1$ and $\eta_h=m_h^j-1$.) As a consequence, $\sup_{j\in\N} P(\E_j)<\infty$, and thus, up to extracting subsequences, $\d(\E_j,\E_0)\to0$ where $\E_0$ is a unit-area tiling of $\T$. In particular,
  \[
  P(\H)\le P(\E_0)\le\liminf_{j\to\infty}P(\E_j)\le\liminf_{j\to\infty}P(\H)+C\,\max_{1\le h\le N}|m_h^j-1|=P(\H)\,.
  \]
  By Hales's theorem, up to a relabeling of $\E_0$, $\E_0=v+\H$ for $v=(t\sqrt{3}\ell,s\ell)$ and $t,s\in[0,1]$. By performing the same relabeling on each $\E_j$, we have $\d(\E_j,v+\H)\to 0$. By exploiting Lemma \ref{lemma volume fixing} as in the proof of Lemma \ref{thm selection} one sees that each $\E_j$ is a $(\Lambda,r_0)$-minimizing tiling in $\T$, and then by arguing as in the proof of Theorem \ref{thm main periodic} we find a constant $L$ (depending on $\Lambda$ and $\H$) such that $\E_j-v$ is an $(\e_j,\mu_0,L)$-perturbation of $\H$ for $\mu_0$ as in Theorem \ref{hexagonal honeycomb thm torus quantitative small} and for $\e_j\to 0$ as $j\to\infty$. By Theorem \ref{hexagonal honeycomb thm torus quantitative small}, for $j$ large enough there exist $v_j\to 0$ and $C^{1,1}$-diffeomorphism $f_j$ between $v_j+\pa\H$ and $\pa\E_j-v$, with
  \begin{eqnarray*}
     C\,\max_{1\le h\le N}|m_h^j-1|\ge P(\E_j)-P(\H)
     \ge c\,\Big(\|f_j-\Id\|_{C^0(v_j+\pa\H)}^2+\|f_j-\Id\|_{C^1(v_j+\pa\H)}^4\Big)\,.
  \end{eqnarray*}
  Theorem \ref{thm pertub volumes} is then deduced by a contradiction argument.
\end{proof}

\begin{proof}[Proof of Theorem \ref{thm pertub metric}]
   In the following we denote by $\E_\de$ a minimizing in \eqref{finsler}, and set
   \[
   \de=\de(\Phi,\psi)=\|\Phi-1\|_{C^0(\T\times S^1)}+\|\psi-1\|_{C^0(\T)}\,,
   \]
   so that $\de<\de_0$. We notice that for every $E\subset\T$ of finite perimeter one has
   \begin{eqnarray}
    \label{bella}
      \Big|\int_E\psi -|E|\Big|&\le& C\,|E|\,\|\psi-1\|_{C^0(\T)}\,,
      \\
    \label{bella 2}
      |\PHI(E)-P(E)|&\le&C\,\min\{P(E)\,,\PHI(E)\}\,\|\Phi-1\|_{C^0(\T\times S^1)}\,,
   \end{eqnarray}
   where in \eqref{bella 2} we have also used the fact that $P(E)\le2\,\PHI(E)$ provided $\de_0\le 1$.

   \medskip

   \noindent {\it Step one}: We claim that, provided $\de_0$ is small enough, then
  \begin{eqnarray}\label{uno}
    \PHI(\E_\de)&\le&2\,P(\H)\,,
    \\\label{due}
    P(\E_\de)&\le&P(\H)+C\,\de\,.
  \end{eqnarray}
  Indeed, by considering an explicit small modification of $\H$ (or by applying Lemma \ref{lemma volume fixing} with $\E=\E_0=\F=\H$ and $\eta\ne 0$) we can construct a $N$-tiling $\H'$ of $\T$ such that $\int_{\H'(h)}\psi=N^{-1}\,\int_\T\psi$ for every $h=1,...,N$ and $\PHI(\H')\le \PHI(\H)+C\,\de$. By $\PHI(\E_\de)\le \PHI(\H')$ and by \eqref{bella 2}
  \begin{equation}
    \label{c}
      \PHI(\E_\de)\le \PHI(\H)+C\,\de\le P(\H)+C\,\de\,,
  \end{equation}
  which implies \eqref{uno}. Again by \eqref{bella 2}, $P(\E_\de)\le\PHI(\E_\de)+C\,\de$, and \eqref{c} gives \eqref{due}.

  \medskip

  \noindent {\it Step two}: We now show that if $\de_j=\de(\Phi_j,\psi_j)\to 0$ and $\E_j$ is a minimizer in \eqref{finsler} associated to $\Phi_j$ and $\psi_j$, then (and up to subsequences and to relabeling the chambers of $\E_j$) $\d(\E_j,v+\H)\to 0$ for some $v=(t\sqrt{3}\ell,s\ell)$, $s,t\in[0,1]$. By \eqref{uno} and since $\PHI_j(E)\ge P(E)/2$ for every $E\subset\T$ we find that $\sup_{j\in\N}P(\E_j)\le 4\,P(\H)$. By compactness, there exists a $N$-tiling $\E_*$ of $\T$ such that $\d(\E_j,\E_*)\to 0$ (up to subsequences). By \eqref{bella}, $\int_{\E_j(h)}\psi_j=N^{-1}\int_{\T}\psi_j$ implies $m_j(h)=|\E_j(h)|\to 1$ for every $h=1,...,N$. In particular, $\E_*$ is a unit-area tiling of $\T$, and thus by \eqref{hexagonal honeycomb thm torus}, by lower semicontinuity and by \eqref{due}
  \begin{equation}
    \label{ciao}
      P(\H)\le P(\E_*)\le\liminf_{j\to\infty}P(\E_j)\le P(\H)\,.
  \end{equation}
  By Hales's theorem, up a relabeling, $\E_*=v+\H$.

  \medskip

  \noindent {\it Step three}: Let $\e_0$, $r_0$, $\s_0$ and $C_0$ be the constants associated to $\E_0=\H$, $\Phi$ and $\psi$ by Lemma \ref{lemma volume fixing}. (Notice that the same constants will work on any translation of $\H$, and that these constants ultimately depend on $L$ and $\g$ only.) By step two we can assume that $\de_0$ is small enough to entail $\d(\E_\de,v_\de+\H)\le\e_0$ for some translation $v_\de$. We now claim that there exist positive constants $r_1\,,c_0>0$ such that
  \begin{equation}
    \label{lb}
    |\E_\de(h)\cap B_{r}(x)|\ge c_0\,r^2\,,\qquad\forall x\in\pa\E_\de(h)\,,r<r_1\,,h=1,...,N\,.
  \end{equation}
  This is a classical argument, see for example \cite[Lemma 30.2]{maggibook}, and we include some details just for the sake of completeness. Without loss of generality let us set $h=1$ and fix $x\in\pa\E_\de(1)$ and $r<r_1\le r_0$ such that $P(\E_\de;\pa B_{r}(x))=0$. There exists $j\in\{1,...,N\}$ such that
  \begin{equation}
    \label{questa}
      \H^1(\pa^*\E_\de(1)\cap\pa^*\E_\de(j)\cap B_{r}(x)\ge \H^1(\pa^*\E_\de(1)\cap\pa^*\E_\de(h)\cap B_{r}(x) )\,,\qquad\forall h\ne 1,j\,.
  \end{equation}
  If we set $\F(1)=\E_\de(1)\setminus B_r(x)$, $\F(j)=\E_\de(j)\cup(\E_\de(1)\cap B_r(x))$ and $\F(h)=\E_\de(h)$ for $h\ne 1,j$, then by applying Lemma \ref{lemma volume fixing} with $\E_0=v_\de+\H$, $\E=\E_\de$, and $\eta=0$ and setting $u(r)=|\E_\de(1)\cap B_r(x)|$, we find that, if $\e<r_0-r$, then
  \begin{eqnarray*}
    \PHI(\E_\de;B_{r+\e}(x))&\le&\PHI(\F;B_{r+\e}(x))+C_0\,P(\E_\de) \Big|\int_{\E_\de(1)\cap B_r(x)}\psi\Big|
    \\
    &\le&
    \PHI(\E_\de;B_{r+\e})+\hat{\PHI}(B_r(x);\E_\de(1))
    \\
    &&-\int_{\pa^*\E_\de(1)\cap\pa^*\E_\de(j)\cap B_r(x)}\hat{\Phi}(y,\nu_{\E_\de(1)}(y))\,d\H^1+C\,u(r)\,,
  \end{eqnarray*}
  where we have set $\hat\Phi(x,\nu)=(\Phi(x,\nu)+\Phi(x,-\nu))/2$. In particular, by \eqref{questa} and by $2\ge\Phi\ge 1/2$, for every $h\ne 1$ one finds
  \[
  \H^1(\pa^*\E_\de(1)\cap\pa^*\E_\de(h)\cap B_r(x))\le C(\H^1(\E_\de(1)\cap\pa B_r(x))+u(r))\,,
  \]
  i.e.
  \[
  P(\E_\de(1);B_r(x))\le C(u'(r)+u(r))\,,\qquad\mbox{for a.e. $r<r_1$}\,.
  \]
  By adding $u'(r)=\H^1(\E_\de(1)\cap\pa B_r(x))$ to both sides we find that
  \[
  C(u'(r)+u(r))\ge P(\E_\de(1)\cap B_r(x))\ge 2\sqrt{\pi\,u(r)}\,.
  \]
  In particular if $r_1$ is small enough to give $C\,u(r)\le C\sqrt{\pi r_1^2\,u(r)}\le \sqrt{\pi\,u(r)}$, then we find $\sqrt{u(r)}\le C\,u'(r)$ for a.e. $r<r_1$. This proves \eqref{lb}.

  \medskip

  \noindent {\it Step four}: We now conclude the proof. Again by step two and by Lemma \ref{lemma volume fixing}, one can find a unit-area tiling $\E_\de'$ of $\T$ such that $P(\E_\de')\le P(\E_\de)+C\,\de$ and $\d(\E_\de',\E_\de)\le C\,\de$. By Theorem \ref{thm main periodic}  and up to permutations of the chambers of $\E_\de$, we find a translation $v_\de$ such that
  \[
  c\,\d(\E_\de',v_\de+\H)^2\le P(\E_\de')-P(\H)\le P(\E_\de)-P(\H)+C\,\de\le C\,\de\,,
  \]
  where in the last inequality we have used \eqref{due}. Since $\d(\E_\de',v_\de+\H)\ge \d(\E_\de,v_\de+\H)-\d(\E_\de',\E_\de)$ we conclude
  \[
  \d(\E_\de,v_\de+\H)^2\le C\,\de\,.
  \]
  Setting for the sake of brevity $v_\de=0$, we now pick $x\in\pa\E_\de(1)$ such that $\dist(x,\pa\H(1))\ge\dist(y,\pa\H(1))$ for every $y\in\pa\E_\de(1)$. Let $r=\min\{r_1,\dist(x,\pa\H(1))\}$, so that either $B_r(x)\subset\T\setminus\H(1)$ or $B_r(x)\subset\H(1)$. In particular, provided $\de_0$ is small enough with respect to $c_0$, either
  \[
  \d(\E_\de,\H)\ge|\E_\de(1)\setminus\H(1)|\ge |\E_\de(1)\cap B_r(x)|\ge c_0\,r^2\ge c_0\,\dist(x,\pa\H(1))^2\,,
  \]
  or
  \begin{eqnarray*}
  \d(\E_\de,\H)&\ge&|\H(1)\setminus\E_\de(1)|\ge |B_r(x)\setminus\E_\de(1)|=\Big{|}\bigcup_{h=2}^NB_r(x)\cap\E_\de(h)\Big{|}
  \\
  &\ge& (N-1)c_0\,r^2\ge c_0\,\dist(x,\pa\H(1))^2\,;
  \end{eqnarray*}
  in both cases, $\pa\E_\de(1)\subset I_\e(\pa\H(1))$ for $\e=C\,\sqrt{\d(\E_\de,\H)}$. By the same argument (based on area density estimates for $\H$, which hold trivially) one finds that $\pa\H(1)\subset I_\e(\pa\E_\de(1))$.
\end{proof}

\chapter{Cheeger $N$-clusters}

\section{Introduction}\label{cpt 4 sct 1}
For a given open, bounded set $\Om$ and an integer $N\in \N$ we introduce the \textit{$N$-Cheeger constant of $\Om$} as:
\begin{equation}\label{N-cheeger constant 1} 
H_N(\Om)=\inf\left\{\sum_{i=1}^N\frac{P(\E(i))}{|\E(i)|}\ \Big{|}\ \E=\{\E(i)\}_{i=1}^{N} \subseteq \Om , \text{ is an $N$-cluster}\right\}.
\end{equation}
As shown below in Theorem \ref{existence}, the infimum in \eqref{N-cheeger constant 1} is always attained and we refer to the minimizers as the \textit{Cheeger $N$-clusters of $\Om$}.\\

\indent We focus on the quantity $H_N$ because it seems to represent the right object to study in order to provide some non trivial lower bound on the optimal partition functional
\begin{equation}\label{p-laplacian}
\Lambda_N^{(p)}(\Om)=\inf\left\{\sum_{i=1}^N\lambda_1^{(p)}(\E(i))  \right\},
\end{equation}
where $\lambda_1^{(p)}$ denotes the first Dirichlet eigenvalue of the p-Laplacian, defined as:
$$\lambda_1^{(p)}(E):=\inf\left\{\int_{E} |\nabla u|^{p} \d x \ \Big{|}\ u\in W^{1,p}_0(E), \ \|u\|_{L^{p}}=1 \right\}.$$
The infimum in \eqref{p-laplacian} is taken over all the $N$-clusters $\E$ whose chambers are \textit{quasi-open sets of $\Om$}. The family of quasi-open sets of an open bounded set $\Om$ is a suitable sub-class of the Borel's algebra of $\Om$  where the first Dirichlet eigenvalue of the $p$-Laplacian $\l_1^{(p)}$ can be defined. The definition of quasi-open set is related to the concept of $p$-capacitary measure in $\R^n$ that we do not need to recall in here (see \cite{EvGa91} for more details about it). For our purposes it is enough to recall that:
 \begin{center}
\textit{the quasi-open sets are the upper levels of $W^{1,p}$ functions as well as the open sets are the upper levels of continuous functions. Each open set of an open bounded set $\Om$ is also a quasi-open set of $\Om$.}
\end{center}

The importance of the partition problem \eqref{p-laplacian} relies in the fact that it provides a way to look at the asymptotic behavior in $N$ of the $N$-th Dirichlet eigenvalue of the classical Laplacian (the $2$-Laplacian), as Caffarelli and Lin show in \cite{CaLi07}. The  $N$-th Dirichlet eigenvalue of the Laplacian of an open set $\Om$ is recursively defined as 
\begin{align*}
\l_N^{(2)}(\Om)&=\inf_{u\in X_{N-1}}\left\{\frac{\int_{\Om}|\nabla u|^2\d x}{\int_{\Om}|u|^2\d x}\right\}\\
X_{N-1}&=\left\{u\in W^{1,2}_0(\Om) \ | \ \langle u,u_i\rangle_2=0, \ \ \text{for all $i=1,\ldots, N-1$}\right\}
\end{align*}
where $u_1,\ldots,u_{N-1}$ are the first $N-1$ eigenfunctions
	\[
	\l_i^{(2)}(\Om)=\frac{\int_{\Om}|\nabla u_i|^2\d x}{\int_{\Om}|u_i|^2\d x} \ \ \ \ \ \text{for all $i=1,\ldots, N-1$}
	\]
and $\langle \cdot ,\cdot\rangle_2$ denotes the standard scalar product of $L^2(\Om)$
	\[
	\langle u ,v\rangle_2=\int_{\Om} uv\d x \ \ \ \ \ \text{for all $u,v\in L^2(\Om)$}
	\]
(see \cite[Section 6.5]{EvGa91} for a detailed discussion about eigenvalues and eigenfunctions). In \cite{CaLi07}, Caffarelli and Lin prove that there exist two constants $C_1$ and $C_2$ depending only on the dimension such that
\begin{equation}\label{CaLi estimate}
C_1\frac{\Lambda_N^{(2)}(\Om)}{N}\leq \lambda_N^{(2)}(\Om) \leq C_2 \frac{\Lambda_N^{(2)}(\Om)}{N},
\end{equation}
where $\lambda_N^{(2)}$ is the $N$-th Dirichlet eigenvalue. The detailed study of $\lambda_N^{(2)}(\Om)$ for $N\geq 2$ seems to be an hard task (so far only the case $N=1,2$ are well known in details, see for instance \cite{H06}) and that is why the asymptotic approach suggested by Caffarelli and Lin could be a good way to look at the spectral problem. We also refer the reader to \cite{bucur2012minimization} where the existence of minimizers for $\l_N^{(2)}$ is proved. \\

\indent Caffarelli and Lin's conjecture (appearing in \cite{CaLi07}) about the asymptotic behavior of $\Lambda_N^{(2)}(\Om)$ in the planar case states that 
$$\Lambda_N^{(2)}(\Om)=\frac{N^{2}}{|\Om|}\lambda_1^{(2)}(H)+o(N^{2}),$$
where $H$ denotes a unit-area regular hexagon. So far, no progress has been made in proving the conjecture, anyway numerical simulations (see \cite{BouBucO09}) point out that the conjecture could be true. If the conjecture turns out to be true, relation \eqref{CaLi estimate} could be improved, in the planar case, as:
\begin{equation}\label{CaLi estimate 2.0}
C_1\frac{N\lambda_1^{(2)}(H)}{|\Om|}+o(N)\leq \lambda_N^{(2)}(\Om) \leq C_2 \frac{N\lambda_1^{(2)}(H)}{|\Om|}+o(N).
\end{equation}

In order to explain the connection between $H_N$ and $\Lambda_N^{(p)}$ we recall some well-known fact about the classical Cheeger constant of a Borel set $\Om$:
\begin{equation}\label{cheeger constant}
h(\Om):=\inf\left\{\frac{P(E)}{|E|} \ \Big{|} \ E \subseteq \Om \right\},
\end{equation}
(note that $h(\Om)=H_1(\Om)$). Given an open set $\Om$, each set $E\subseteq \Om$ such that $h(\Om)=\frac{P(E)}{|E|}$ is called \textit{Cheeger set for $\Om$}. It is possible to prove that each Cheeger set $E$ for $\Om$ is a $(\La,r_0)$-perimeter-minimizing inside $\Om$ and that $\pared E\cap \Om$ is a constant mean curvature analytic hypersurface relatively open inside $\pa E$. Furthermore, the mean curvature $C$ of the set $E$ in the open set $\Om$ is equal to $C=\frac{1}{n-1}h(E)$. We refer the reader to \cite{Pa11} and \cite{Leo15}: two exhaustive surveys on Cheeger sets and Cheeger constant.\\

The Cheeger constant was introduced by Jeff Cheeger in \cite{Ch70} and provides a lower bound on the first Dirichlet eigenvalue of the $p$-Laplacian of a domain $\Om$. By exploiting the coarea formula and Holdër' s inequality  it is possible to show that for every domain $\Om$ and for every $p>1$ it holds,
\begin{equation}\label{lb p-eig}
\lambda_1^{(p)}(\Om)\geq \left(\frac{h(\Om)}{p}\right)^p.
\end{equation}
The Cheeger constant is also called the {\it first Dirichlet eigenvalue of the 1-laplacian} since, thanks to \eqref{lb p-eig} and to a comparison argument
\begin{equation}\label{eqn limite per p che tende a uno}
\lim_{p\rightarrow 1} \lambda_1^{(p)}(\Om)=h(\Om).
\end{equation}
See, for example, \cite{KN08} for more details about the relation between the Cheeger constant and the first Dirichlet eigenvalue of the p-Laplacian  or \cite{BucBu05} and \cite{Bu10} for more details about the spectral problems and shape optimization problems. \\

We note here that the constant $H_N$ is the analogous of the Cheeger constant in the optimal partition problem for $p$-laplacian eigenvalues. We refer the reader to \cite{Pa09}, where a generalized type of Cheeger constant for the $2$-nd Dirichlet eigenvalue of the Laplacian is also studied. As we show in Proposition \ref{limit} below, we can always give a lower bound on $\Lambda_N^{(p)}$ by making use of \eqref{lb p-eig} and Jensen's inequality: 
\begin{equation}\label{Lp lowerbound}
\Lambda_N^{(p)}(\Om)\geq \frac{1}{N^{p-1}}\left(\frac{H_N(\Om)}{p}\right)^{p}.
\end{equation}
By combining \eqref{Lp lowerbound} with a comparison argument (see Theorem \ref{limite} below) we are also able to compute the limit as $p$ goes to $1$ and obtain
 	\begin{equation}\label{viva l italia}
 	\lim_{p\rightarrow 1}  \Lambda_N^{(p)}(\Om)=H_N(\Om).
 	\end{equation}
Thus, the constant $H_N$ seems to provide the suitable generalization of the Cheeger constant for the study of $\La_N^{(p)}$.\\

In this chapter we mainly focus on the general structure and regularity of Cheeger $N$-clusters in order to lay the basis for future investigations on $H_N$. In the final section, once we have proved \eqref{viva l italia}, we study the asymptotic behavior of $H_N$ in the planar case. The statements involving regularity are quite technical and we reserve to them the whole Section \ref{cpt 4 sct 1 sbsct 1} (Theorems \ref{mainthm1}, \ref{mainthm2} and \ref{mainthm3}),  we just point out here that if $\E$ is a Cheeger $N$-cluster of $\Om$ the following statement holds.\\

\textit{For every $i=1,\ldots,N$ the reduced boundary of each chambers $\pared \E(i)\cap \Om$ is a $C^{1,\a}-$h\-y\-per\-sur\-fa\-ces (for every $\a\in (0,1)$ ) that is relatively open inside $\pa \E(i)\cap \Om$. Furthermore it is possible to characterize the singular set of a Cheeger $N$-cluster $\E$ as a suitable collection of points with density zero for the external chamber 
	\[
	\E(0)=\Om\setminus \bigcup_{i=1}^N \E(i).
	\]
Moreover if the dimension is $n=2$ then the singular set is discrete and the chambers $\E(i)\cc \Om$ are indecomposable.}\\

Note that, in this context, the external chambers should be intended as $\Om\setminus (\cup_i \E(i) )$ instead of $\R^n\setminus \bigcup_i\E(i)$ as usual (that is because the ambient space is $\Om$ in place of $\R^n$). As we are pointing out below, also the definition of "singular set of a Cheeger $N$-cluster" must be given in a slightly different way (see \eqref{insieme singolare 1}) from the standard one $\pa \E \setminus \pared \E$, since this last set turns out to be too small. Let us postpone this discussion below to Section \ref{cpt 4 sct 1 sbsct 1} where precise statements are given, and let us, instead, briefly focus on the asymptotic properties of $H_N$ (to which Subsection \ref{asymptoyc of HN} is devoted).\\

We note that for $H_N$ it is reasonable to expect a behavior of the type 
\begin{equation}\label{come va}
H_N(\Om)= C(\Om) N^{\frac{3}{2}}+o(N^{\frac{3}{2}}), 
\end{equation}
for some constant $C(\Om)$. In Theorem \ref{asymptotic behavior} (Property 3) ) we provide some asymptotic estimate for $H_N$ showing that the exponent $\frac{3}{2}$ in \eqref{come va} is the correct one and proving that for any given bounded open set $\Om\subset \R^2$ it holds
\begin{equation}\label{asintotico in N per HN}
\frac{h(B)\sqrt{\pi}}{\sqrt{|\Om|}}\leq\liminf_{N\rightarrow +\infty} \frac{H_N(\Om)}{N^{\frac{3}{2}}}\leq  \limsup_{N\rightarrow +\infty} \frac{H_N(\Om)}{N^{\frac{3}{2}}}\leq  \frac{h(H)}{\sqrt{|\Om|}},
\end{equation}
We here conjecture that 
$$C(\Om)=\frac{h(H)}{\sqrt{|\Om|}},$$ 
which is nothing more than Caffarelli and Lin's conjecture for the case $p=1$. Note that, thanks to \eqref{Lp lowerbound} this would imply
\begin{equation}
\Lambda_N^{(2)}(\Om)\geq \frac{N^{2}}{|\Om|} \left(\frac{h(H)}{2}\right)^{2}+o(N^2),
\end{equation}
a "weak" version of Caffarelli and Lin's conjecture.  It seems coherent and natural to expect this kind of behavior for $H_N(\Om)$.\\

The chapter is organized as follows. In Section \ref{cpt 4 sct 1 sbsct 1} we present and comment the three main statements describing the regularity property and the structure of Cheeger $N$-clusters. Sections \ref{cpt 4 sct Existence and regularity}, \ref{cpt 4 sct The singular set of the Cheeger N-clusters in low dimension} and \ref{cpt 4 the planar case} are devoted to the proof of the Theorems introduced in Section \ref{cpt 4 sct 1 sbsct 1}. In the final section \ref{limite} we show the connection between $H_N$ and $\La^{(p)}_N$ and we establish the asymptotic trend of $H_N$ for $N$ large in the planar case.

\section{Basic definitions and regularity theorems for Cheeger $N$-clusters }\label{cpt 4 sct 1 sbsct 1}

We present three statements that we are going to prove in Section \ref{cpt 4 sct Existence and regularity} and in Subsections \ref{cpt 4 sbsct Proof of mainthm2} and \ref{cpt 4 sbsct proof of mainthm3}. \\

In Section \ref{cpt 4 sct Existence and regularity} after we have shown existence of Cheeger $N$-clusters for any given \textit{bounded} ambient space $\Om$ with finite perimeter (Theorem \ref{existence}) we provide the partial regularity Theorem \ref{mainthm1} in the spirit of Theorems \ref{rego planar cluster} and \ref{regularity}. Set, for a generic Borel set $F$ and for $i=1,\ldots,N$
\begin{align}
\S(\E(i);F)&:=[\pa \E(i)\setminus \pared\E(i)]\cap F\label{insieme singolare i},\\
\S(\E(i))&:=\S(\E(i);\R^n).\label{insieme singolare ii}
\end{align}
\begin{theorem}\label{mainthm1}
Let $n\geq 1, N\geq 2$. Let $\Om\subset \R^n$ be an open bounded set with finite perimeter and $\E$ be a Cheeger $N$-cluster of $\Om$.
Then for every $i=1,\ldots,N$ the following statements hold true: 
\begin{itemize}
\item[(i)] For every $\a\in \left(0,1\right)$ the set $\Om \cap \pared \E(i)$ is a $C^{1,\a}$-hypersurface that is relatively open in $\Om\cap \pa\E(i)$ and it is $\H^{n-1}$ equivalent to $\Om \cap \pared \E(i)$; 
\item[(ii)] For every $i=1,\ldots,N$ the set $\pa \E(i) \cap \Om$ can meet $\pared \Om$ only in a tangential way, that is: $\pa^*\Om \cap \pa \E(i) \subseteq \partial^*\E(i)$. Moreover for every $x\in \pa^*\Om \cap \pa\E(i)$ it holds:
$$\nu_{\E(i)}(x)=\nu_{\Om}(x).$$
Here $\nu_{\E(i)}$, $\nu_{\Om}$ denote, respectively, the measure theoretic outer unit normal to $\E(j)$ and to $\Om$;
\item[(iii)]  $\S(\E(i);\Om)$ is empty if $n\leq 7$;
\item[(iv)] $\S(\E(i);\Om)$ is discrete if $n=8$;
\item[(v)] if $n\geq 9$, then $\H^{s}(\S(\E(i);\Om))=0$ for every $s>n-8$.
\end{itemize}  
\end{theorem}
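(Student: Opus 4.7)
My plan is to reduce statements (i), (iii)--(v) to the regularity theorem for $(\Lambda, r_0)$-perimeter-minimizing sets, Theorem \ref{regularity}, by showing that each chamber $\E(i)$ of a Cheeger $N$-cluster is itself a $(\Lambda_i, r_0)$-perimeter-minimizer in $\Om$, with Cheeger ratio $\Lambda_i := P(\E(i))/|\E(i)|$ and $r_0 := 1/\Lambda_i$. Statement (ii) will be handled separately by a blow-up and first-variation analysis at the contact points.

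\textbf{From cluster minimality to single-chamber almost-minimality.} Fix $i$ and let $F$ be a set of finite perimeter with $F \Delta \E(i) \cc B_r(x) \subset \Om$, $r<r_0$. Define
\[
F' := F \setminus \bigcup_{j \neq i} \E(j)
\]
and let $\F$ be the $N$-cluster with $\F(i) = F'$ and $\F(j) = \E(j)$ for $j \neq i$. Since the chambers are essentially pairwise disjoint one checks that $F' \Delta \E(i) \subseteq F \Delta \E(i) \cc B_r(x)$, so $\F \in \Cl$. Cheeger-cluster minimality, after cancellation of the unchanged $j \neq i$ ratios, reduces to
\[
\frac{P(\E(i))}{|\E(i)|} \leq \frac{P(F')}{|F'|},
\]
and the standard algebraic manipulation that turns Cheeger minimality of a single set into $(\Lambda,r_0)$-minimality (cross-multiply by $|\E(i)|\,|F'|$, rearrange, use $|\E(i)|-|F'|\le|F'\Delta\E(i)|$, and subtract equal perimeter contributions outside $B_r$) gives
\[
P(\E(i); B_r) \leq P(F'; B_r) + \Lambda_i \, |F' \Delta \E(i)|.
\]

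\textbf{Passing from $F'$ to $F$: the main obstacle.} To upgrade this to full $(\Lambda_i, r_0)$-minimality against an arbitrary $F$, one must control the discrepancy between $F$ and $F'$, supported in $B_r(x) \cap \bigcup_{j \neq i} \E(j)$. Using the identities in Subsection \ref{sbst Union, intersection, differences} one has $|F' \Delta \E(i)| \leq |F \Delta \E(i)|$, but the natural perimeter comparison
\[
P(F'; B_r) \leq P(F; B_r) + 2\,P\!\Big(\bigcup_{j \neq i} \E(j); B_r(x)\Big)
\]
carries an additive remainder that does \emph{not} vanish with $|F \Delta \E(i)|$ when $x$ lies on the topological boundary of some $\E(j)$, $j \neq i$. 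This is the step I expect to be the most delicate. At ``simple'' points $x \notin \bigcup_{j \neq i} \pa\E(j)$ the remainder vanishes for $r$ small, Theorem \ref{regularity} applies directly, and one obtains partial regularity on an open dense subset of $\pared \E(i) \cap \Om$. The remaining multi-chamber points are then ruled out by exploiting the fact that the Cheeger-type functional $\sum_k P(\E(k))/|\E(k)|$ never rewards shared boundary: any positive $\H^{n-1}$ amount of contact $\pa\E(i) \cap \pa\E(j)$ could be destroyed by a small inward perturbation of both chambers along the contact region, yielding a cluster with strictly smaller functional. This forces $\overline{\E(i)} \cap \overline{\E(j)} = \emptyset$ for $i \neq j$ (consistently with the heuristic in the introduction that chambers ``try to separate as much as possible''), so the regularity on the simple stratum extends to all of $\pared \E(i) \cap \Om$.

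\textbf{Conclusion via Theorem \ref{regularity} and proof of (ii).} Once $(\Lambda_i, r_0)$-minimality is established in $\Om$ with $\Lambda_i r_0 \leq 1$, Theorem \ref{regularity} yields (i), (iii), (iv) and (v) chamber by chamber. For (ii), fix $x \in \pared \Om \cap \pa \E(i)$. Since $\E(i) \subseteq \Om$, every blow-up $\E(i)_{x,r}$ is contained in the corresponding blow-up of $\Om$, which converges to the half-space $H_{\nu_\Om(x)}$; in particular $\vt_n(x, \E(i)) \leq 1/2$. If the strict inequality held, a first-variation computation based on an initial velocity $T \in C_c^1$ that pushes $\pa \E(i)$ inward away from $\pared\Om$, combined with the formulas \eqref{eqn: chapter intro teo first variation derivata del perimetro} and \eqref{eqn: chpter intro first variaition of the lebesgue measure} for the first variations of perimeter and volume, would produce a strict decrease of the Cheeger ratio $P(\E(i))/|\E(i)|$, contradicting the minimality of $\E$. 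Hence $\vt_n(x, \E(i)) = 1/2$, the blow-up of $\E(i)$ at $x$ is precisely $H_{\nu_\Om(x)}$, and so $x \in \pared \E(i)$ with $\nu_{\E(i)}(x) = \nu_\Om(x)$, proving (ii).
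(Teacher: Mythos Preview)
Your reduction to $(\Lambda,r_0)$-minimality and the intermediate inequality
\[
P(\E(i);B_r)\le P(F';B_r)+\Lambda_i\,|F'\Delta\E(i)|,\qquad F'=F\setminus M_i,\quad M_i=\bigcup_{j\ne i}\E(j),
\]
match the paper exactly, and you have correctly located the real difficulty: passing from $F'$ to $F$. But your proposed resolution of that difficulty is wrong.

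\textbf{The separation claim is false.} You assert that any positive $\H^{n-1}$ amount of shared boundary $\pa\E(i)\cap\pa\E(j)$ can be removed by pushing both chambers inward, strictly decreasing the functional, and conclude $\overline{\E(i)}\cap\overline{\E(j)}=\emptyset$. This is not true: the paper's Theorem~\ref{mainthm3}(ii), Proposition~\ref{curvature}, and Figure~\ref{ese} show that Cheeger $N$-clusters generically \emph{do} have interfaces $E_{j,k}$ of positive $\H^{n-1}$-measure, with prescribed curvature $C_{j,k}$. The heuristic fails because removing a thin $\e$-strip from $\E(i)$ along a shared interface does \emph{not} decrease $P(\E(i))$ to first order: you lose the interface but gain a parallel surface of the same area, so $\Delta P(\E(i))=O(\e^2)$ while $\Delta|\E(i)|=-\e\,\H^{n-1}(S)+O(\e^2)$, and the ratio $P/|\E(i)|$ \emph{increases}. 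Moving the interface itself is a first variation whose vanishing is precisely the curvature condition $C_{j,k}$; it gives stationarity, not strict decrease.

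\textbf{What the paper does instead.} The missing idea is the Barozzi--Massari obstacle trick (\cite{BaMa82}). First (Step one of Theorem~\ref{regolare}) one proves that $M_i$ has distributional mean curvature bounded above by $H_N(\Om)$: for every $L\subseteq M_i$ with $M_i\setminus L\cc B_r$,
\[
P(M_i;B_r)\le P(L;B_r)+H_N(\Om)\,|M_i\setminus L|.
\]
This follows from the Cheeger minimality of each $\E(j)$, $j\ne i$, against its own inward perturbation $\E(j)\cap L$, combined with Lemma~\ref{tecnico}. Then one uses the set-theoretic inequality
\[
P(F\setminus M_i;B_r)+P(M_i\setminus F;B_r)\le P(F;B_r)+P(M_i;B_r)
\]
and applies the curvature bound with $L=M_i\setminus F$ to obtain
\[
P(F';B_r)\le P(F;B_r)+H_N(\Om)\,|F\setminus\E(i)|.
\]
Plugging this into your intermediate inequality yields $(\Lambda,r_0)$-minimality with $\Lambda=3H_N(\Om)$, $r_0=1/(4H_N(\Om))$, and then Theorem~\ref{regularity} gives (i), (iii)--(v). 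For (ii) the paper does not run a first-variation argument but quotes \cite[Proposition~2.5(vii)]{LP14} (Proposition~\ref{leo}), applied to $\E(i)$ as a Cheeger set of $A=\Om\setminus\bigcup_{j\ne i}\E(j)$.
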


For proving $(i),(iii),(iv)$ and $(v)$ we simply show (in Theorem \ref{regolare}) that each chamber $\E(i)$ is a $(\Lambda,r_0)$-perimeter-minimizing in $\Om$ (see Definition \ref{Lambdarminimi}) and then we make use of the De Giorgi's regularity Theorem \ref{regularity}. We re-adapt an idea from \cite{BaMa82} based on the fact that a solution of an obstacle problem having bounded distributional mean curvature is regular. Assertion $(ii)$ follows as a consequence of \cite[Proposition 2.5, Assertion  (vii)]{LP14} retrieved below (Proposition \ref{leo}).

\begin{remark}
\rm{ We need to ask that $\Om$ is bounded otherwise no Cheeger $N$-clusters are attained. Indeed if $\Om$ is unbounded, by intersecting $\Om$ with $N$ suitable disjoint balls of radius approaching $+\infty$ we easily obtain $H_N(\Om)=0$.}
\end{remark}

\subsection{The role of the singular set $\S(\E)$}
Note that Theorem \ref{mainthm1} yields the inner regularity of \textit{all} the chambers, differently from Theorem \ref{rego planar cluster} which involves the reduced boundary of the cluster $\pared \E$ (which in general is smaller than the union of the reduced boundary of the chambers). This stronger regularity of the chambers somehow affect the behavior of the singular set.  For example consider the case $n\leq 7$. In this case, according to Theorem \ref{mainthm1}, for a Cheeger $N$-cluster it must hold that 
$$(\pa \E \setminus \pared \E)\cap \Om=\emptyset,$$
and this would lead us to say that the singular set of a Cheeger $N$-cluster is empty which is clearly not the case. Indeed let us highlights that there is somehow an "hidden chamber" that plays a key role and influences the behavior of the global structure of these objects, namely the \textit{external chamber}:
$$\E(0)=\Om\setminus \left(\bigcup_{i=1}^N \E(i)\right).$$
Note that this definition of external chamber is slightly different from the ones adopted in the previous chapters. Since our ambient space in this context is the open bounded set $\Om$ instead of $\R^n$ we found convenient and coherent to keep this notation throughout this chapter.\\

Even if Theorem \ref{mainthm1} provides a satisfactory description of $\Sigma(\E(i),\Omega)$, this does not exhaust the analysis of the singular set of $\E$. Indeed the chamber $\E(0)$ is not regular after all and there are points in $\pa \E(0)$ of cuspidal type. For a complete description of the singularity, the correct definition of \textit{singular set of a Cheeger $N$-cluster $\E$ in the Borel set $F$} must be given as
\begin{align}
\S(\E;F)&:=\S(\E(0);F) \cup \bigcup_{i=1}^N \S(\E(i);F),\label{insieme singolare 1}
\end{align}
where for $i\neq 0$ the set $\S(\E(i))$ are the ones defined in \eqref{insieme singolare i} and \eqref{insieme singolare ii}, while for $i=0$ we clearly set
	\[
	\S(\E(0);F)=[\pa \E(0)\setminus \pared\E(0) ]\cap F.
	\]
With this definitions, $(\pa \E\setminus \pared \E)\cap \Om \subseteq \S(\E;\Om)$. With a slight abuse of notation we denote by $\S(\E)$ the singular set of a Cheeger $N$-cluster, even if it is different from the one defined in \eqref{insieme singolare di un N-cluster}. Since Theorem \ref{mainthm1} do not provides information about $\S(\E(0))$, we focus our attention on it in Subsection \ref{cpt 4 sbsct Proof of mainthm2} where the following theorem is proved. 
\begin{theorem}\label{mainthm2}
Let $1\leq n\leq 7, N\geq 2$, Let $\Om\subset \R^n$ be an open, connected, bounded set with $C^1$ boundary and finite perimeter and $\E$ be a Cheeger $N$-cluster of $\Om$. Then the following statements hold true.
\begin{itemize}
\item[(i)] $\E(0)$ is not empty and $\H^{n-1}(\pa\E(0)\cap \pa\E(j))>0$ for all $i=1,\ldots,N$;
\item[(ii)]  $\S(\E(0);\Om)=\pa\E(0)\cap \E(0)^{\zero}$, $\S(\E(0);\Om)$ is closed and 
\begin{eqnarray}
\S(\E(0);\Om)  &=&\Om \cap \bigcup_{\substack{j,k=1, \\ k\neq j}}^N  (\pa \E(j)\cap \pa \E(k) \cap \pa \E(0) ) 
\end{eqnarray}
\end{itemize}
\end{theorem}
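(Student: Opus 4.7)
The plan is to establish (ii) first via the regularity of Theorem \ref{mainthm1}, and then to address (i) by a variational/rearrangement argument. The proof of (ii) should go through cleanly from regularity; assertion (i) is the more delicate piece.

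For (ii), the key input will be Theorem \ref{mainthm1}(iii): since $1 \leq n \leq 7$, $\S(\E(i); \Om) = \emptyset$ for every $i = 1, \ldots, N$, so every point of $\pa \E(i) \cap \Om$ is a reduced boundary point of $\E(i)$. Given $x \in \pa \E(0) \cap \Om$, I will set $I(x) = \{i \geq 1 : x \in \pa \E(i)\}$, which must be nonempty (otherwise $x$ would lie in the interior of some chamber, contradicting $x \in \pa \E(0)$). If $I(x) = \{i_0\}$, then locally $B_r(x) \cap \Om$ is split between $\E(i_0)$ and $\E(0)$, and since $x \in \pared \E(i_0)$ the density $\vt_n(x, \E(i_0)) = 1/2$ forces $\vt_n(x, \E(0)) = 1/2$ and $x \in \pared \E(0)$. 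If $|I(x)| \geq 2$, picking $i_1 \neq i_2 \in I(x)$ gives $x \in \pared \E(i_1) \cap \pared \E(i_2)$, so that both densities equal $1/2$ and hence $\vt_n(x, \E(0)) = 0$, whence $x \in \E(0)^{(0)} \setminus \pared \E(0)$. Combining the two cases yields both equalities in (ii), and closedness follows since the characterizing set is a finite union of intersections of closed sets.

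For (i), I will argue by contradiction using the variational principle that minimality of $\E$ forces each chamber $\E(j)$ to be a Cheeger set of its available region $\E(0) \cup \E(j)$, so that $P(\E(j))/|\E(j)| = h(\E(0) \cup \E(j))$; otherwise, replacing $\E(j)$ with a Cheeger set of the available region would strictly decrease the functional. If $\H^{n-1}(\pa \E(0) \cap \pa \E(j)) = 0$ for some $j$, then $\E(0) \cup \E(j)$ decomposes (up to an $\H^{n-1}$-null interface) into two parts, its Cheeger set lies entirely in one of them, and substituting $\E(j)$ by a Cheeger set of $\E(0)$ (iterating if necessary) produces the desired strict improvement. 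For the case $|\E(0)| = 0$, connectedness of $\Om$ provides two chambers $\E(j_1), \E(j_2)$ sharing a positive-$\H^{n-1}$-measure interface; setting $G = \E(j_1) \cup \E(j_2)$, Lemma \ref{tecnico} gives
\[
\frac{P(G)}{|G|} = \frac{P(\E(j_1)) + P(\E(j_2)) - 2\,\H^{n-1}(\pared \E(j_1) \cap \pared \E(j_2))}{|\E(j_1)| + |\E(j_2)|} < \max_{i=1,2} \frac{P(\E(j_i))}{|\E(j_i)|},
\]
so $h(G)$ is strictly smaller than the maximum of the two original ratios, and a strictly better $N$-cluster will be obtained by repartitioning $G$ into a Cheeger set of $G$ plus a second suitably chosen sub-chamber.

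The main obstacle will be the $|\E(0)| = 0$ case of (i): after extracting a Cheeger set $C_1$ of $G$, controlling the ratio of the second sub-chamber $C_2$ carved from $G \setminus C_1$ so that $P(C_1)/|C_1| + P(C_2)/|C_2| < P(\E(j_1))/|\E(j_1)| + P(\E(j_2))/|\E(j_2)|$ will require a careful interplay of the Cheeger-isoperimetric inequality \eqref{eqn: chapter intro cheeger inequality} with the regularity of Theorem \ref{mainthm1}(i). Assertion (ii), by contrast, will reduce to a density computation once that regularity is in hand.
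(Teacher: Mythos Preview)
Your treatment of (ii) is essentially the paper's argument (Proposition~\ref{prima caratterizazione}): once $\S(\E(i);\Om)=\emptyset$ for $i\ge1$ by Theorem~\ref{mainthm1}(iii), the density dichotomy on $I(x)$ gives both characterizations of $\S(\E(0);\Om)$. One small gap: closedness does \emph{not} follow from ``a finite union of intersections of closed sets'', because the description intersects with the \emph{open} set $\Om$. You must rule out that a sequence in $\S(\E(0);\Om)$ accumulates on $\pa\Om$; the paper handles this separately (Corollary~\ref{egli e chiuso}) by observing that such a limit would lie in $\pa\E(j)\cap\pa\E(k)\cap\pa\Om$ for some $j\neq k\ge1$, hence be a point of density $\tfrac12$ for three disjoint sets $\E(j),\E(k),\Om^c$---impossible.

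Your approach to (i), however, diverges from the paper's and carries a genuine gap. The paper (Proposition~\ref{ognicameraconfinaconilvuoto}) does not argue variationally at all: it proves the topological claim that $\pa\E(1)\setminus\bigl[\pa\Om\cup\bigcup_{k\ge2}\pa\E(k)\bigr]\neq\emptyset$. If this set were empty, regularity plus connectedness force a connected component $M$ of $\pa\E(1)$ to be a closed $C^{1,\a}$ hypersurface lying entirely on some $\pa\E(2)\cap\Om$; translating $M$ (and what it bounds) until it touches another piece of boundary creates a zero-density point without changing the energy, contradicting Theorem~\ref{mainthm1}. This single argument yields both $|\E(0)|>0$ and $\H^{n-1}(\pa\E(0)\cap\pa\E(j))>0$ simultaneously. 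Your variational scheme, by contrast, stalls exactly where you say it does. The inequality $P(G)/|G|<\max_i P(\E(j_i))/|\E(j_i)|$ holds regardless of whether $|\E(0)|=0$, so it does not isolate what is special about that hypothesis. More seriously, after extracting a Cheeger set $C_1\subset G$ there is no mechanism to bound $P(G\setminus C_1)/|G\setminus C_1|$ from above; neither the Cheeger inequality~\eqref{eqn: chapter intro cheeger inequality} (which bounds $h$ from \emph{below}) nor the interior regularity gives such control. And minimality of $\E$ already asserts that no repartition of $G$ into two chambers strictly lowers the sum of ratios, so your strategy is looking for exactly the improvement that the hypothesis forbids---you would need to identify a structural consequence of $|\E(0)|=0$ that breaks this, and the sketch does not provide one.
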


\begin{remark}\label{controesempioconne}
\rm{
Note that Assertion $(ii)$ of Theorem \ref{mainthm2}, stated as above, would be meaningless if we do not ensure that $|\E(0)|>0$ (that is Assertion $(i)$, proved in Proposition \ref{ognicameraconfinaconilvuoto}). The assumption on $\Om$ to be connected and with $C^1$-boundary are the necessary ones to ensure the validity of this fact. Probably, the theorem remains true also by replacing \textit{$C^1$ boundary} with \textit{Lipschitz boundary}. Anyway we prefer to state and prove it by taking advantage of this stronger regularity on $\pa \Om$ in order to avoid some technicality. Let us also point out that there are situations where $\Om$ is not connected or $\pa \Om$ is not Lipschitz and where $\E(0)$ turns out to be empty. For example, given a set $\Om$ and one of its Cheeger $N$-cluster $\E$, we provide a counterexample by defining the new open set 
$$\Om_0=\left(\bigcup_{j=1}^N \mathring{\E(j)}\right).$$
The $N$-cluster $\E$ will be a Cheeger $N$-clusters of $\Om_0$ also and, by construction, $|\E(0)|=0$ (see Figure \ref{controesempio}).  The reason is that $\Om_0$ has no regular boundary.  As a further example one may also consider the case when $\Om$ is the union of $N$ disjoint balls. Anyway, it is reasonable to expect that, no matter what kind of ambient space $\Om$ we choose, for $N$ sufficiently large the chamber $\E(0)$ will be not empty.}
\end{remark}
\begin{figure}
\begin{center}
 \includegraphics[scale=1]{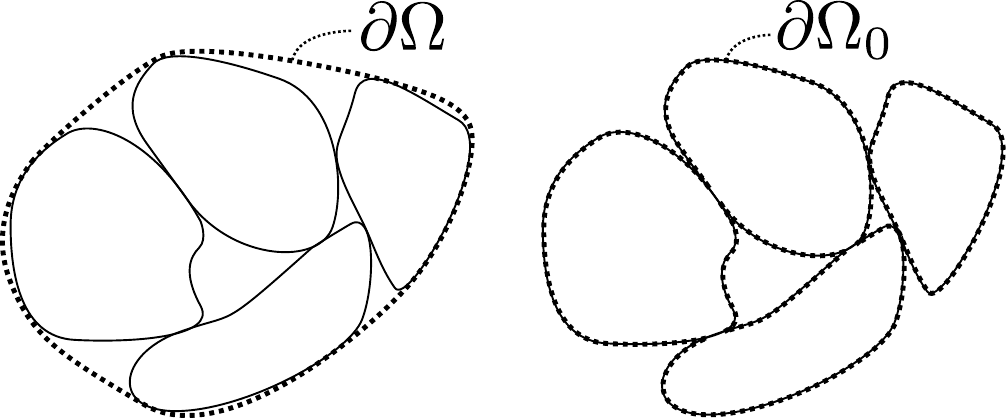}\caption{{\small The set $\Om_0$ built as the union of the interior of 
 the Cheeger $N$-cluster of an open set $\Om$. The external chamber of this Cheeger $N$-cluster of $\Om_0$ is empty because of the 
 cusps at the boundary of the open set.}}\label{controesempio}
 \end{center}
\end{figure}
\begin{remark}
\rm{Note that we ask for the dimension $n$ to be less than $7$. That is because, to prove Theorem \ref{mainthm2}, we exploit the regularity given by \ref{mainthm1} and we prefer to deal with the favorable case $n\leq 7$ where the singular set $\S(\E(i);\Om)=\emptyset$ for $i\neq 0$. Let us also point out that Assertion $(ii)$ remains true also in dimension bigger than $7$ up to replace $\Om$ with $\Om_0=\Om\setminus \cup_{i\neq 0} \S(\E(i);\Om)$. The interesting and not-trivial fact is that we actually do not know if assertion (i) remains true in dimension bigger than $7$ since, in the proof of Proposition \ref{ognicameraconfinaconilvuoto} (the crucial one in order to prove assertion $(i)$), we make a strong use of the fact $\S(\E(i);\Om)=\emptyset$. Roughly speaking in dimension bigger than $7$  it could happen that the chambers, by taking advantage of the possible presence of singular points $x\in \S(\E(i);\Om)$, can be combined in a way that kill $\E(0)$ even under a strong regularity assumption on $\Om$.}
\end{remark}

\begin{remark}\label{the singular set}
\rm{Somehow assertion $(ii)$ of Theorem \ref{mainthm2} is saying that the only singular points of $\E$ are the one where a cusp is attained.  Now we can give a complete description of the singular set $\S(\E;\Om)$ of a Cheeger $N$-cluster of an open, bounded, connected set $\Om$ with finite perimeter and $C^1$ boundary in dimension less than or equal to $7$. By combining Assertion $(iii)$ in Theorem \ref{mainthm1} and assertion $(i)$ in \ref{mainthm2} we can write
$$\S(\E;\Om)=\S(\E(0);\Om)=(\pa \E(0)\cap \E(0)^{(0)})\cap \Om.$$
}
\end{remark}

\subsection{The planar case}
Theorem \ref{mainthm2} gives us a precise structure of $\S(\E;\Om)$. We do not focus here on the singular set $\S(\E;\pa\Om)$ anyway, by exploiting the $C^1$-regularity assumption on $\pa \Om$, it is possible to prove a result in the spirit of Theorem \ref{mainthm2} also for the singular set $\S(\E(0);\pa \Om)$ (and thus characterize $\S(\E;\pa\Om)$). Let us point out that, at the present, the crucial information $\H^{n-1}(\S(\E(0);\Om))=0$ is missing. We are able to fill this gap when the ambient space dimension is $n=2$, together with some remarkable facts stated in the following theorem (proved in Subsection \ref{cpt 4 sbsct proof of mainthm3}). 
\begin{theorem}\label{mainthm3}
Let $n=2, N\geq 2$. Let $\Om\subset \R^2$ be an open, connected, bounded set with $C^1$ boundary and finite perimeter and $\E$ be a Cheeger $N$-cluster of $\Om$. Then the following statements hold true.
\begin{itemize}
\item[(i)] The singular set $\S(\E(0);\Om)$ is a finite union of points $\{x_j\}_{j=1}^k \subset \Om$. 
\item[(ii)] For every $j,k=0,\ldots,N$, $k \neq j$ the set
$$E_{j,k}:=[\pa \E(j)\cap \pa \E(k) \cap\Om]\setminus \S(\E(0);\Om)$$ 
is relatively open in $\pa \E(j)$ ($\pa \E(k)$) and is the finite union of segments and circular arcs. Moreover the set $\E(j)$ has constant curvature $C_{j,k}$ inside each open set $A$ such that $A\cap \pa \E(j) \subseteq E_{j,k}$. The constant $C_{j,k}$ is equal to:
\begin{equation}
C_{j,k}= \left\{
\begin{array}{cc}
\frac{|\E(k)|h(\E(j))-|\E(j)|h(\E(k))}{|\E(j)|+|\E(k)|}, & \text{if $k\ne 0$},\\
 & \\
h(\E(j)), & \text{if $k=0$}.
\end{array}
\right.
\end{equation}
As a consequence the set $\E(k)$ has constant curvature $C_{j,k}=-C_{k,j}$ inside each open set $A$ such that $A\cap \pa\E(k) \subseteq E_{k,j}$ ($=E_{j,k}$);
\item[(iii)] Each chamber $\E(j)\cc \Om$ is indecomposable.
\end{itemize} 
\end{theorem}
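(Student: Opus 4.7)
The plan is to prove (ii) first, since its constant-curvature conclusion feeds directly into both (i) and (iii). For (ii), fix $x\in E_{j,k}$ away from $\S(\E(0);\Om)$. Theorem \ref{mainthm1} ensures that both $\pared\E(j)$ and $\pared\E(k)$ are $C^{1,\a}$ in a neighborhood $U$ of $x$ and coincide there as a single $C^{1,\a}$ curve. I then pick $T=\varphi\,\nu_{\E(j)}\in C^{\infty}_{c}(U;\R^{2})$, form the local variation $f_{t}=\Id+tT$, and compute
\[
\frac{d}{dt}\Big|_{t=0}\sum_{i=1}^{N}\frac{P(f_{t}(\E(i)))}{|f_{t}(\E(i))|}
\]
via Theorems \ref{teo: chapter intro first variation of the perimeter} and \ref{teo: chapter intro first variation of the potential energy}, keeping in mind that both $\E(j)$ and $\E(k)$ are perturbed with opposite outer normals on their common interface. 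Minimality forces this derivative to vanish for every $\varphi\in C^{\infty}_{c}(U)$, yielding the identity $H_{\E(j)}\equiv C_{j,k}$ on $E_{j,k}\cap U$ with $C_{j,k}$ given by the formula in the statement; the case $k=0$ is easier since only the $j$-th summand is affected, producing $H_{\E(j)}=h(\E(j))$. In $\R^{2}$ a curve of constant geodesic curvature is either a segment or a circular arc, so (ii) follows up to the finiteness of the number of pieces, which is postponed to the argument for (i).

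For (i) I argue by analytic rigidity. By Theorem \ref{mainthm2}(ii) every $x\in\S(\E(0);\Om)$ lies on $\pa\E(j)\cap\pa\E(k)\cap\pa\E(0)$ for some distinct $j,k\in\{1,\ldots,N\}$. Both chambers have $C^{1,\a}$ boundaries at $x$ by Theorem \ref{mainthm1}(iii) (which applies as $n=2\le 7$), their interiors are disjoint, and $\E(0)$ has density zero at $x$; hence $\pa\E(j)$ and $\pa\E(k)$ must be tangent at $x$ from the same side, with $\E(0)$ squeezed into the cusp between them. By (ii), near $x$ each of the two boundary curves lies on a circle (or line) whose radius depends only on $(j,k)$. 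Two distinct circles, or a circle and a line, are mutually tangent in at most one point, so such cusp-points cannot accumulate along any fixed pair of arcs. Combined with the finiteness of the index set and a density/length estimate coming from the finite perimeter of each chamber (which bounds the number of constant-curvature arcs composing each $\pa\E(j)$), this yields that $\S(\E(0);\Om)$ is finite and each $E_{j,k}$ is a finite union of segments and arcs, completing (i) and the missing part of (ii).

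For (iii) I argue by contradiction. Suppose $\E(j)\cc\Om$ decomposes as $F_{1}\sqcup F_{2}$ with both of positive measure and mutually disjoint closures. Replacing $\E(j)$ by $F_{1}$ alone keeps the cluster admissible, and the mediant inequality
\[
\frac{P(F_{1})}{|F_{1}|}\ \le\ \frac{P(F_{1})+P(F_{2})}{|F_{1}|+|F_{2}|}\ =\ \frac{P(\E(j))}{|\E(j)|}
\]
together with minimality forces equality, i.e.\ $P(F_{1})/|F_{1}|=P(F_{2})/|F_{2}|$. Then, exploiting the scale-invariance identity $P(\l F_{1})/|\l F_{1}|=\l^{-1}P(F_{1})/|F_{1}|$ for $\l>0$, any dilation $\l F_{1}$ with $\l>1$ that sits inside $\Om\setminus\bigcup_{i\ne j}\E(i)$ strictly lowers the functional and contradicts minimality. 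The main obstacle lies in producing such a feasible dilation: the hypothesis $\E(j)\cc\Om$ gives positive room between $F_{1}$ and $\pa\Om$, while the freed region $F_{2}$ has positive measure in the new external chamber, so a volume-fixing--type modification in the spirit of Lemma \ref{lemma volume fixing} should allow one first to translate $F_{1}$ into a subregion of the freed area and then dilate by a factor $1+\eta$ for some small $\eta>0$. The central step is clearly (ii), which supplies the geometric rigidity that propagates into the remaining assertions; the main technical obstacle is precisely the feasibility construction in (iii).
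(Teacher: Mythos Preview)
Your treatment of (ii) is essentially the paper's: a first-variation computation at a point of $E_{j,k}$ away from $\Sigma(\E(0);\Om)$, giving the stated $C_{j,k}$. Good.

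Your approach to (i) is different from the paper's and can be made to work, but as written it contains an error. You claim that near a singular point $x\in\pa\E(j)\cap\pa\E(k)\cap\pa\E(0)$ ``each of the two boundary curves lies on a circle whose radius depends only on $(j,k)$''. That is false: $\pa\E(j)$ is $C^{1}$ at $x$ but its curvature \emph{jumps} there, from $C_{j,k}$ on the side where it coincides with $\pa\E(k)$ to $h(\E(j))$ on the side where it borders $\E(0)$. The correct argument is: in a small box containing only the chambers $\E(j),\E(k),\E(0)$, between two consecutive cusp points the free arcs of $\pa\E(j)$ and $\pa\E(k)$ have curvatures $h(\E(j))$ and $h(\E(k))$ and are tangent at \emph{both} endpoints; two arcs of circles (equal or distinct radii) that are tangent at two points must coincide, contradicting the presence of an $\E(0)$-pocket. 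Once you have ``no accumulation'', finiteness follows from closedness and boundedness; your ``density/length estimate bounding the number of arcs'' is neither correct (finite perimeter does not bound the number of constant-curvature pieces) nor needed. The paper instead proves (i) directly, without (ii): it shows that any compactly contained indecomposable component of $\E(0)$ must border at least three chambers, so pockets of $\E(0)$ trapped between only $\E(j)$ and $\E(k)$ are forbidden, which rules out accumulation.

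Your argument for (iii) has a genuine gap that you yourself flag. Producing a feasible dilation or translation of $F_{1}$ is not a technicality: $F_{1}$ may be pinned on all sides by other chambers, the freed region $F_{2}$ may be far away and of the wrong shape, and Lemma~\ref{lemma volume fixing} is an infinitesimal device that does not manufacture large displacements. The paper's route is entirely different and uses (ii) as a rigidity: from $P(F_{1})/|F_{1}|=P(F_{2})/|F_{2}|=h(\E(j))$ one sees that removing $F_{2}$ yields a new cluster $\F$ with the \emph{same} total energy, hence $\F$ is also a Cheeger $N$-cluster. A short lemma (every planar Cheeger set meets the boundary of its ambient in positive $\H^{1}$-measure) guarantees that $F_{2}$ shared positive-length boundary with some $\E(k)$, $k\neq 0,j$. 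That arc, viewed from the $\E(k)$ side, had curvature $C_{k,j}$ in $\E$ but becomes free boundary in $\F$, forcing curvature $h(\E(k))$; equating the two gives $(h(\E(j))+h(\E(k)))\,|\E(k)|=0$, a contradiction. This is the step you are missing.
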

\begin{figure}
\centering
 \includegraphics[scale=3.0]{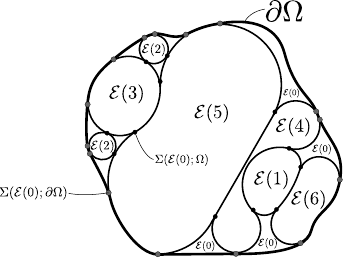}\caption{{\small An example of a possible Cheeger $6$-cluster in dimension $n=2$ suggested by Theorems \ref{mainthm1}, \ref{mainthm2} and \ref{mainthm3}}} \label{ese}
\end{figure}
\begin{remark}
\rm{
Theorems \ref{mainthm1}, \ref{mainthm2} and \ref{mainthm3} allow us to provide examples of planar Cheeger $N$-cluster. The one depicted in Figure \ref{ese} is a possible Cheeger $6$-clusters. Let us highlight that we do not want to suggest that the object in the figure is exactly the Cheeger $6$-cluster of the set $\Om$. We just want to point out the possible structure of such objects.
}
\end{remark}
\begin{remark}\label{remark controfinitezza}
\rm{Let us notice that Assertion $(i)$ of Theorem \ref{mainthm3} could fail when we replace  $\S(\E(0);\Om)$ with $\S(\E(0);\pa \Om)$. Indeed we can always modify $\Om$ at the boundary in order to produce a set $\Om_0$ having the same Cheeger $N$-clusters of $\Om$ and kissing the boundary of some $\pa \E(i)$ in a countable number of points (see Figure \ref{controfinitezza}). 
\begin{figure}
\begin{center}
 \includegraphics[scale=0.8]{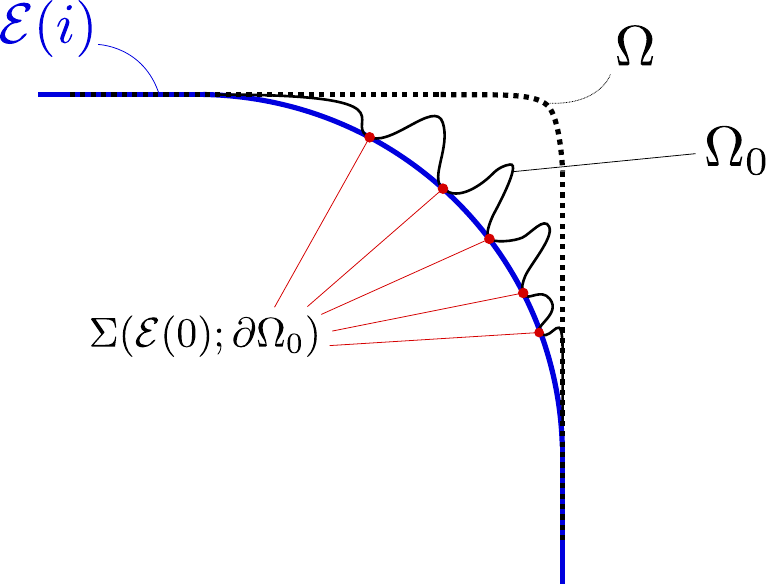}\caption{{\small By gently pushing $\pa \Om$ we can build as many contact points as we want. This proves that Assertion $(i)$ in Theorem \ref{struttura singolaritapiano} does not hold in general for $\S(\E(0);\pa \Om)$.} }\label{controfinitezza}
\end{center}
\end{figure}
}
\end{remark}

\begin{remark}
\rm{We speak of ``curvature of chambers" $\E(j),\E(k)$, instead of curvature of interfaces $\pa\E(j)\cap \pa\E(k)$ in order to point out that the sign of the constant $C_{j,k}$ depends on whether we are looking at $\pa\E(j)\cap \pa\E(k)$ as a piece of the boundary of $\E(j)$ or as a piece of the boundary of $\E(k)$ (namely it depends on the direction of the unit-normal vector to $\pa \E(j)\cap \pa\E(k)$ that we choose).
}
\end{remark}

\begin{remark}
\rm{Note that the set $E_{j,k}$ could be empty. For example, if $\pa\E(j) \cap \pa\E(k)\cap \Om=\{x\}$ consists of a single point, thanks to our characterization (assertion $(ii)$ Theorem \ref{mainthm2}) $x\in \S(\E(0);\Om)$. However, for some $k=0,\ldots, N$, $k\neq j$ it must clearly holds $\H^1 (E_{j,k})>0$. The natural question is whether there exists a chamber $\E(j)$ such that $E_{j,k}=\emptyset$ for all $k\neq 0$. We provide a lemma (Lemma \ref{nel piano!}) that excludes this possibility whenever $\E(j) \cc \Om$ and this will be our starting point for proving assertion $(iii)$ in Theorem \ref{mainthm3}. 
}
\end{remark}

\begin{remark}
\rm{ Since $E_{j,k}$ is relatively open in $\pa \E(j)\cap \pa\E(k)$ we can find an open set $A$ such that $A\cap \pa\E(k)\cap \pa \E(j)=E_{j,k}$ and conclude that $\pa \E(j)$ must have constant mean curvature in $A$ (that is, on $E_{j,k}$). In the sequel we sometimes refer to the distributional mean curvature of $\E(i)$ on $E\subset\pa \E(i)$, a relatively open subset of $\pa\E(i)$,  as the distributional mean curvature of $\E(i)$ inside the open set $A$ such that $A\cap \pa \E(i)=E$. 
}
\end{remark}

\begin{remark}
\rm{Assertions $(i)$ and $(ii)$ in Theorem \ref{mainthm3} tell us that a chamber $\E(j)$ has distributional curvature inside $\Om$ equal to
$$H_{\E(j)}(x)=\sum_{\substack{k=0\\ k,\neq j}}^{N} C_{j,k} \ca_{E_{j,k}}(x), \ \ \ \ \ \text{for $\H^1$-almost every $x\in \pa\E(j)\cap\Om$ }.$$ 
Indeed, since the set $\S(\E(0);\Om)$ is finite, $\pa\E(j)\cap \Om$ is $\H^1$-equivalent to $\bigcup_{k\neq j} E_{j,k}$. In particular, if $T\in C_c^{\infty}(\Om;\R^2)$ then
\begin{align*}
\int_{\pa \E(j)\cap \Om }\dive_{\E(j)}(T)\d \H^{1}(x)&= \sum_{\substack{k=0\\ k,\neq j} }^N \int_{\pa \E(j) \cap E_{j,k}\cap \Om }\dive_{\E(j)}(T)\d \H^{1}(x)\\
&=\sum_{\substack{k=0\\ k,\neq j}}^N \int_{\pa \E(j) \cap E_{j,k}\cap \Om }C_{j,k} (T\cdot \nu_{\E(j)})(x)\d \H^{1}(x)\\
&= \int_{\pa \E(j) \cap \Om } (T \cdot \nu_{\E(j)} )(x) \sum_{\substack{k=0\\ k,\neq j} }^N C_{j,k}\ca_{{E}_{j,k}}(x) \d \H^{1}(x).
\end{align*}
}
\end{remark}

We finally point out that, even if the indecomposability of the chambers is usually an hard task in the tessellation problems, in this case, thanks to a general fact for Cheeger sets (Proposition \ref{nel piano!}), we can easily achieve the proof of Assertion $(iii)$ in Theorem \ref{mainthm3}. This will be particularly useful when focusing our attention on the asymptotic behavior of $H_N$.

\section{Existence and regularity: proof of Theorem \ref{mainthm1}}\label{cpt 4 sct Existence and regularity}

We start by proving the existence and then, separately, we prove the regularity for Cheeger $N$-clusters. We present all the results needed to prove Theorem \ref{mainthm1}.

\begin{theorem}[Existence of Cheeger $N$-clusters.]\label{existence}
Let $\Omega$ be a bounded set with finite perimeter and $0<|\Om|$. For every $N\in \N$ there exists 
a Cheeger $N$-cluster of $\Om$, i.e. an $N$-cluster $\E\subseteq \Om$ such that:
$$H_N(\Om)=\sum_{i=1}^N\frac{P(\E(i))}{|\E(i)|}.$$
Moreover each Cheeger $N$-cluster of $\Om$ has the following properties:
\begin{equation}\label{nontrivialchambers}
|\E(i)|\geq \frac{n^n\om_n}{2^n H_N(\Omega)^n}  \ \ \ \text{for all } \ i=1,\ldots,N,
\end{equation}
\begin{equation}\label{selfcheeger}
 h(\E(i))=\frac{P(\E(i))}{|\E(i)|} \ \ \ \text{for all } \ i=1,\ldots,N.
\end{equation}
\end{theorem}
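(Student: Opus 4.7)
I would prove the three claims in the order: existence, then \eqref{selfcheeger}, and finally \eqref{nontrivialchambers}. The overall strategy is the direct method together with the self-Cheeger property, which is what forces minimizers to be ``rigid enough'' that the isoperimetric inequality can be used to bound the volume of each chamber from below.

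\textbf{Step 1 (existence by the direct method).} First I would note that $H_N(\Om)<+\infty$ because, since $|\Om|>0$, we may find $N$ pairwise disjoint small balls $B_{r_i}$ intersecting $\Om$ on a set of positive measure, and the competitors $\E(i)=\Om\cap B_{r_i}$ (using the coarea-type formula $P(\Om\cap B_r)\le P(\Om;B_r)+\H^{n-1}(\Om^{(1)}\cap\pa B_r)$ for a.e.\ $r$) give a finite upper bound. Let $\{\E_k\}_{k\in\N}$ be a minimizing sequence with
\[
\sum_{i=1}^N \frac{P(\E_k(i))}{|\E_k(i)|} \le H_N(\Om)+1.
\]
Since every term in the sum is positive, for each $i$ we have $P(\E_k(i))/|\E_k(i)|\le H_N(\Om)+1$. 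By the isoperimetric inequality \eqref{eqn: chapter intro isoperimetric}, $P(\E_k(i))\ge n\om_n^{1/n}|\E_k(i)|^{1-1/n}$, so
\[
|\E_k(i)|\ge \alpha_0 := \left(\frac{n\om_n^{1/n}}{H_N(\Om)+1}\right)^n>0,
\]
uniformly in $k$. This same bound, combined with $|\E_k(i)|\le|\Om|$, gives $\sup_k P(\E_k(i))<+\infty$. Since $\E_k(i)\subset\Om\subset B_R$ for some $R$, the compactness Theorem \ref{compactness theorem} yields a subsequence (not relabeled) and Borel sets $\E(i)\subset B_R$ such that $\E_k(i)\freccia^{L^1}\E(i)$ for every $i=1,\ldots,N$.

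\textbf{Step 2 (the limit is admissible).} The $L^1$ convergence preserves disjointness ($|\E(i)\cap\E(j)|=0$ for $i\ne j$) and inclusion ($|\E(i)\setminus\Om|=0$), and also preserves the uniform lower volume bound $|\E(i)|\ge\alpha_0>0$. By lower semicontinuity (Theorem \ref{semicontinuity theorem}), $P(\E(i))\le\liminf_k P(\E_k(i))$, while $|\E_k(i)|\to|\E(i)|$ by $L^1$ convergence. Therefore
\[
\sum_{i=1}^N \frac{P(\E(i))}{|\E(i)|}\le \liminf_{k\to\infty}\sum_{i=1}^N\frac{P(\E_k(i))}{|\E_k(i)|}= H_N(\Om),
\]
and equality must hold, so $\E$ is a Cheeger $N$-cluster of $\Om$.

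\textbf{Step 3 (self-Cheeger property \eqref{selfcheeger}).} This I would prove by contradiction. Suppose for some $i$ that $h(\E(i))< P(\E(i))/|\E(i)|$. By definition of Cheeger constant there exists $F\subseteq\E(i)$ with $|F|>0$ and $P(F)/|F|<P(\E(i))/|\E(i)|$. Define the $N$-cluster $\F$ by $\F(j)=\E(j)$ for $j\ne i$ and $\F(i)=F$. Since $F\subseteq\E(i)\subseteq\Om$ and $F$ is disjoint from the other $\E(j)$, $\F$ is still an $N$-cluster of $\Om$, and
\[
\sum_{j=1}^N\frac{P(\F(j))}{|\F(j)|}=\frac{P(F)}{|F|}+\sum_{j\ne i}\frac{P(\E(j))}{|\E(j)|}<H_N(\Om),
\]
contradicting the definition of $H_N(\Om)$. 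Since the reverse inequality $h(\E(i))\le P(\E(i))/|\E(i)|$ is trivial (take $F=\E(i)$), \eqref{selfcheeger} follows.

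\textbf{Step 4 (volume lower bound \eqref{nontrivialchambers}).} Using \eqref{selfcheeger} and the fact that every term in the sum defining $H_N(\Om)$ is positive, each $i$ satisfies $h(\E(i))=P(\E(i))/|\E(i)|\le H_N(\Om)$. The Cheeger inequality \eqref{eqn: chapter intro cheeger inequality} gives $h(\E(i))\ge n\om_n^{1/n}|\E(i)|^{-1/n}$, hence
\[
|\E(i)|\ge \left(\frac{n\om_n^{1/n}}{H_N(\Om)}\right)^n=\frac{n^n\om_n}{H_N(\Om)^n},
\]
which is even stronger than the stated bound; the factor $2^{-n}$ in \eqref{nontrivialchambers} is harmless slack that one can absorb at the end.

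\textbf{Main obstacle.} The only delicate step is Step 1, namely guaranteeing that volumes of chambers along the minimizing sequence do not collapse to zero (which would make the functional blow up but could be compatible with a degenerate limit having fewer chambers). The isoperimetric inequality gives this for free because the ratio $P(E)/|E|$ is coercive in $|E|$ as $|E|\to 0^+$. Once this is secured, the direct method together with the clean self-Cheeger argument of Step 3 produces all three conclusions.
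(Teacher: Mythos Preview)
Your proof is correct and follows essentially the same approach as the paper: direct method with the isoperimetric inequality to prevent volume collapse along the minimizing sequence, compactness plus lower semicontinuity, and the self-Cheeger property from minimality. Your Step~4 is in fact slightly sharper than the paper's argument (which derives the volume bound along the minimizing sequence with the factor $2^n$, whereas you apply the isoperimetric/Cheeger inequality directly to any minimizer and obtain the bound without the $2^n$); this is a genuine but minor improvement and also makes it transparent that \eqref{nontrivialchambers} holds for \emph{every} Cheeger $N$-cluster, not just the one produced by the direct method.
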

\begin{proof}
Clearly $H_N(\Omega)<+\infty$ since we can always choose, for example, $B_1,\ldots B_N$ disjoint balls
such that $|B_i\cap\Omega|>0$ and obtain
\begin{equation}\label{notinfinity}
H_N(\Omega)\leq \sum_{i=1}^N \frac{P( B_i \cap \Omega)}{|B_i\cap \Omega|}<+\infty \, .
\end{equation}
Moreover, thanks to the fact that $\Om$ is bounded we deduce $H_N(\Om)>0$. Indeed for every $N$-cluster $\E\subseteq \Om$, the isoperimetric inequality for sets of finite perimeter \eqref{eqn: chapter intro isoperimetric} implies
$$\sum_{i=1}^N\frac{P(\E(i))}{|\E(i)|}\geq nN\left(\frac{\om_n}{|\Om|}\right)^{1/n}$$
hence 
$$H_N(\Om)\geq nN\left(\frac{\om_n}{|\Om|}\right)^{1/n}>0.$$
Consider a minimizing sequence $\E^k=\{\E^k(i)\}_{i=1}^N$ of $N$-clusters such that
$$\lim_{k\rightarrow +\infty} \sum_{i=1}^N \frac{P(\E^k(i))}{|\E^k(i)|}=H_N(\Om).$$
Note that
\begin{align*}
 P(\E^k(i))&\leq|\Omega|\sum_{j=1}^N\frac{P(\E^k(i))}{|\E^k(i)}\leq 2|\Omega|H_N(\Omega).
 \end{align*}
Moreover, by exploiting again the isoperimetric inequality for sets of finite perimeter \eqref{eqn: chapter intro isoperimetric}, we provide the bound
\begin{align*}
n\left(\frac{\om_n}{|\E^k(i)|}\right)^{\frac{1}{n}} &\leq \frac{P(\E^k(i))}{|\E^k(i)|}\leq 2H_N(\Omega)
\end{align*}
and thus
\begin{eqnarray}
\sup_{k}\left\{\max_i \left\{P(\E^k(i))\right\}\right\}&\leq &2|\Omega|H_N(\Omega), \label{a} \\
\inf_{k}\left\{\min_i\left\{|\E^k(i)|\right\}\right\}&\geq & \frac{n^n\om_n}{2^n H_N(\Omega)^n}. \label{b} 
\end{eqnarray}
Thanks to the boundedness of $\Omega$ and to \eqref{a}, we can apply the compactness theorem 
for sets of finite perimeter (Theorem \ref{compactness theorem} in Subsection \ref{subsection Compactness and semicontinuity with respect to the $L^1$ topology}) and deduce that, up to a subsequence, each sequence of chambers $\E^k(i)$ is converging in $L^1$ to some 
$\E(i)\subseteq \Omega$ as $k\rightarrow +\infty$. Equation \eqref{b} implies the lower bound \eqref{nontrivialchambers} while 
the lower semicontinuity of distributional perimeter (Theorem \ref{semicontinuity theorem} in Subsection \ref{subsection Compactness and semicontinuity with respect to the $L^1$ topology}) yields:
\begin{align*}
H_N(\Omega)&\leq \sum_{i=1}^N\frac{P(\E(i))}{|\E(i)|}\leq \sum_{i=1}^N \liminf_{k\rightarrow \infty} \frac{P(\E^k(i))}{|\E^k(i)|}\\
&\leq \liminf_{k\rightarrow+\infty}\sum_{i=1}^N \frac{P(\E(i)^k)}{|\E^k(i)|}=H_N(\Omega).
\end{align*}
Property \eqref{selfcheeger} immediately follows from minimality.
\end{proof}
\begin{remark}\rm{
Thanks to  property \eqref{selfcheeger} $H_N$ can be equivalently defined as 
\begin{equation}\label{N-cheeger constant 2}
H_N(\Om)=\left\{\sum_{i=1}^{N}h(\E(i)) \ \Big{|} \ \E\subseteq \Om \text{ N-Cluster}\right\}.
\end{equation}
}
\end{remark}
We now show that every Cheeger $N$-cluster of a given open set is a $(\La,r_0)$-perimeter-minimizing inside $\Om$ that will implies immediately assertion $(i),(iii),(iv),(v)$ in Theorem \ref{mainthm1} by applying the regularity Theorem \ref{regularity}. \\

Note that, for proving regularity in the case of Cheeger $N$-clusters we have to deal with the possible non trivial components $\pa \E(i)\cap \pa \E(j)$.  Roughly speaking, property \eqref{selfcheeger}, implies that both $\E(i)$ and $\E(j)$ must have mean curvature bounded from above. This leads us to say that the mean curvature of $\E(i)$ ($\E(j)$) on $\pa\E(i)\cap\E(j)$ must be bounded from below as well and so neither outer nor inner cusps can be attained. This approach is based on an idea from \cite{BaMa82}, where the authors prove a regularity result for the solutions of some obstacle problems. 
\begin{theorem}\label{regolare}
Let $\Om$ be an open bounded set and $\E$ be a Cheeger $N$-cluster of $\Om$. Then there exists $\Lambda,r_0>0$ depending on $\E$ 
with $\Lambda r_0\leq 1$, such that each $\E(i)$ is a $(\Lambda,r_0)$-perimeter-minimizing in $\Om$. As a 
consequence, for every $i=1,\ldots,N$ the set $\Om\cap\pa \E(i)$ has the regularity of Theorem \ref{regularity}.
\end{theorem}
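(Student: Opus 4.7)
The plan is to establish $(\Lambda,r_0)$-perimeter minimality of each chamber $\E(i)$ in the sense of Definition~\ref{Lambdarminimi}; once this is in hand, assertions (i), (iii), (iv), (v) of Theorem~\ref{mainthm1} follow at once from De~Giorgi's interior regularity theorem, Theorem~\ref{regularity}, while the tangentiality assertion (ii) is of a different nature and can be deduced from Proposition~\ref{leo} stated later in the chapter.

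Fix $i\in\{1,\dots,N\}$, a ball $B_r(x_0)\subset\Om$ with $r<r_0$, and a competitor $F$ with $\E(i)\Delta F\subset\subset B_r(x_0)$. To turn $F$ into an admissible $i$-th chamber of an $N$-cluster one cuts out the intrusion into the other chambers, defining $F':=F\setminus\bigcup_{j\ne i}\E(j)$. Then the $N$-cluster $\F$ obtained from $\E$ by replacing $\E(i)$ with $F'$ is admissible in $\Om$ as soon as $|F'|>0$, a condition that can be forced by choosing $r_0$ small enough in terms of the lower bound \eqref{nontrivialchambers}. The Cheeger minimality of $\E$ then gives $P(\E(i))/|\E(i)|\le P(F')/|F'|$, which by an elementary rearrangement and the inclusion $F\setminus F'\subseteq F\Delta\E(i)$ yields
\[
P(\E(i);B_r)-P(F';B_r)\le h(\E(i))\,|\E(i)\Delta F'|\le 2\,h(\E(i))\,|\E(i)\Delta F|.
\]

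The remaining step, which is the main obstacle, is to pass from $P(F';B_r)$ back to $P(F;B_r)$. Setting $G:=F\cap\bigcup_{j\ne i}\E(j)=F\setminus F'$, the disjoint decomposition $F=F'\cup G$ combined with the perimeter formulas of Subsection~\ref{sbst Union, intersection, differences} produces the identity
\[
P(F;B_r)=P(F';B_r)+P(G;B_r)-2\,\H^{n-1}(\pa^*F'\cap\pa^*G\cap B_r).
\]
Up to $\H^{n-1}$-null sets, $\pa^*F'\cap\pa^*G$ is contained in $\bigcup_{j\ne i}\pa^*\E(j)$, and the very same piece of $\pa^*\E(j)\cap F^{(1)}$ also contributes to $P(G;B_r)$; matching the two via a careful double-counting argument collapses the surface surplus into a volume term, using that $G\subseteq F\Delta\E(i)$. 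This is precisely where I would invoke the Barozzi--Massari idea of \cite{BaMa82}: property \eqref{selfcheeger} makes each $\E(j)$ into a self-Cheeger set, and hence into a set of bounded distributional mean curvature $h(\E(j))$, which is exactly what is needed to convert a surface-type surplus into a volume-type one. Combining the two estimates yields $P(\E(i);B_r)\le P(F;B_r)+\Lambda\,|\E(i)\Delta F|$ with $\Lambda$ depending only on $\E$; after possibly shrinking $r_0$ so that $\Lambda r_0\le 1$, $\E(i)$ is $(\Lambda,r_0)$-perimeter minimizing and Theorem~\ref{regularity} closes the proof.
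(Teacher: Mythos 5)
Your overall strategy is the one the paper uses: replace the competitor $F$ by the admissible chamber $F' = F\setminus\bigcup_{j\ne i}\E(j)$, extract $P(\E(i);B_r)-P(F';B_r)\le C\,|\E(i)\Delta F|$ from the Cheeger inequality $P(\E(i))/|\E(i)|\le P(F')/|F'|$, and then convert the remaining surface surplus $P(F';B_r)-P(F;B_r)$ into a volume term via the Barozzi--Massari mechanism. Your accounting of the surplus is slightly different from the paper's (you write $F = F'\sqcup G$ with $G = F\cap\bigcup_{j\ne i}\E(j)$ and invoke the perimeter formula for disjoint unions, whereas the paper invokes the set-operation inequality $P(F\setminus M_i;B_r)+P(M_i\setminus F;B_r)\le P(F;B_r)+P(M_i;B_r)$), but after unwinding the formulas in Subsection~\ref{sbst Union, intersection, differences} the quantity you must control is, up to a benign sign on $\{\nu_F=\nu_{M_i}\}$, exactly $P(M_i;B_r)-P(M_i\setminus F;B_r)$ with $M_i=\bigcup_{j\ne i}\E(j)$.

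This is where the gap is. You write that property~\eqref{selfcheeger} makes each $\E(j)$ a self-Cheeger set with bounded distributional mean curvature $h(\E(j))$, ``which is exactly what is needed.'' But what is needed is a distributional-mean-curvature bound for the \emph{union} $M_i$, not for the individual chambers, and this does not follow automatically: if $L\subseteq M_i$ with $M_i\setminus L\cc B_r$, you need $P(M_i;B_r)\le P(L;B_r)+H_N(\Om)\,|M_i\setminus L|$, and the passage from the per-chamber inequalities $P(\E(j);B_r)\le P(\E(j)\cap L;B_r)+h(\E(j))\,|\E(j)\setminus L|$ to this union statement requires a nontrivial reckoning of the interface terms $\H^{n-1}(\pa^*\E(j)\cap\pa^*\E(k)\cap B_r)$. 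Summing the per-chamber bounds double-counts each interface once on the left-hand side, and one must check via Lemma~\ref{tecnico} that this double-counted mass is exactly absorbed when rewriting $\sum_j P(\E(j)\cap L;B_r)$ as $P(L;B_r)$ plus interface contributions. You gesture at this with the phrase ``matching the two via a careful double-counting argument,'' but that sentence contains the entire technical content of the paper's Step one (which occupies about a page and hinges on formulas~\eqref{spaghetto}--\eqref{peri N-cluster}). In its present form the proposal asserts the conclusion of that step rather than proving it, so the argument is not complete until you spell out the interface bookkeeping or cite an argument equivalent to the paper's Step one.
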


Before entering in the details of the proof of Theorem \ref{regolare} let us remark that, by exploiting formula \eqref{eqn: perimetro differenza} contained in Subsection \ref{sbst Union, intersection, differences}, it is possible to derive the inequality
\begin{equation}\label{diffe}
P(F\setminus E; A )+P(E\setminus F;A)\leq P(F;A)+P(E;A)
\end{equation}
holding for every couple of sets $E,F$ of locally finite perimeter and for open set $A$. \\

In order to prove Theorem \ref{regolare} we also recall the following definition. We say that a set of finite perimeter $M$ has \textit{distributional mean curvature less than $g\in L^1_{loc}(\Om)$ in $\Om$} if, there exists $r_0$ such that for every $B_r\cc \Om$ with $r<r_0$ and for every $L\subseteq M$ with $M\setminus L \cc B_r$, it holds
\begin{equation}
P(M;B_r)\leq P(L;B_r)+\int_{M\setminus L} g(x)\d x.
\end{equation}
\begin{proof}[Proof of theorem \ref{regolare}]
We start by fixing $i\in \{1,\ldots,N\}$ and by defining
$$M_i=\bigcup_{\substack{j=1, \\j\neq i }}^N\E(j).$$ 
We divide the proof in two steps.\\

\textit{Step one.} We prove that each $M_i$ has distributional mean curvature less than $H_N(\Om)$ in $\Om$. Let $B_r\subset\subset \Om$ be a ball and $L\subseteq M_i$ be a subset of finite perimeter of $M_i$ with $M_i\setminus L \subset\subset B_r$. What we need to prove is
\begin{equation}\label{limitiamolacurvatura}
P(M_i;B_r)\leq P(L;B_r)+H_N(\Om)|M_i\setminus L|. 
\end{equation}
Note that, up to choosing $r<r_0=\frac{n}{4H_N(\Om)}$ we can always assume $|\E(j)\cap L|>0$ for every $j\neq i$. Indeed $M_i\setminus L\cc B_r$ and, if by contradiction we assume $|\E(j)\cap L|=0$ for some $j\neq i$, this would mean $\E(j)\subset B_r$ up to a set of measure $0$ which implies (because of property \eqref{nontrivialchambers} and thanks to the choice of $r_0$) :
$$\frac{n^n\om_n}{2^nH_N(\Om)^n}<|\E(j)|<\om_n r^n<\frac{n^n\om_n}{4^nH_N(\Om)^n}$$
that is impossible.\\

By minimality it must hold:
\[
\frac{P(\E(j))}{|\E(j)|}\leq \frac{P(\E(j)\cap L)}{|\E(j)\cap L|} \ \ \ \ \text{ for every $j\neq i$,}
\]
that leads to:
\begin{align}
\frac{P(\E(j);B_r)+P(\E(j);B_r^c)}{|\E(j)|}&\leq \frac{P(\E(j)\cap L;B_r)+P(\E(j);B_r^c)}{|\E(j)|-|\E(j)\setminus L|}\nonumber\\
 P(\E(j);B_r)&\leq P(\E(j)\cap L;B_r)+|\E(j)\setminus L|h(\E(j)).\label{battle for falluja}
\end{align}
By exploiting \eqref{peri N-cluster} in Lemma \ref{tecnico} and \eqref{battle for falluja} above we obtain
\begin{align}
P(M_i;B_r)&=P\left(\cup_{j\neq i}\E(j);B_r\right)\nonumber\\
_{\text{\eqref{peri N-cluster} in Lemma \ref{tecnico}}}&=\sum_{j\neq i}P(\E(j);B_r)-\sum_{\substack{k,j\neq i,\ k\neq j}}\H^{n-1}(\partial^*\E(j)\cap \partial^* \E(k)\cap B_r)\nonumber\\
_{\eqref{battle for falluja}}&\leq \sum_{j\neq i}P(\E(j)\cap L;B_r) + |\E(j)\setminus L|h(\E(j))\nonumber\\
&-\sum_{\substack{k,j\neq i,\ k\neq j}}\H^{n-1}(\partial^*\E(j)\cap \partial^* \E(k)\cap B_r)\nonumber\\ 
&\leq \sum_{j\neq i}P(\E(j)\cap L;B_r) -\sum_{\substack{k,j\neq i,\ k\neq j}}\H^{n-1}(\partial^*\E(j)\cap \partial^* \E(k)\cap B_r)\nonumber\\
&+ H_N(\Om) |M_i\setminus L|  \label{il cacciatore},
\end{align} 
where in the last inequality we have used  the formulation of $H_N$ as in \eqref{N-cheeger constant 2}. By exploiting again Lemma \ref{tecnico} for $\{\E(j)\cap L\}_{j\neq i}$ we obtain
\begin{align}
P(M_i\cap L ; B_r)&=\sum_{j\neq i}P(\E(j)\cap L;B_r)\nonumber\\
&-\sum_{k,j\neq i \ \ k\neq j}\H^{n-1}(\pared (\E(j)\cap L) \cap \pared(\E(k)\cap L) \cap B_r).\label{full metal jacket}
\end{align}
After some quick computations, by exploiting formula \eqref{eqn: frontiera ridotta intersezione} for the reduced boundary of the intersections and the fact that the chambers $\E(j)$ are disjoint (up to a set of zero Lebesgue measure), we discover that
\[
\pared (\E(j)\cap L) \cap \pared(\E(k)\cap L) \cap B_r \approx L^{(1)}\cap \pared\E(k)\cap\pared \E(j) \cap B_r
\]
which plugged into \eqref{full metal jacket} leads to
\begin{equation}\label{platoon}
\sum_{j\neq i}P(\E(j)\cap L;B_r)-\sum_{k,j\neq i \ \ k\neq j}\H^{n-1}(\pared\E(k)\cap\pared \E(j) \cap B_r)\leq P(L; B_r),
\end{equation}
where we have exploited also $[(M_i\cap L)\Delta L] \cap B_r =\emptyset$. By combining \eqref{platoon} with \eqref{il cacciatore} we reach
\begin{align*}
P(M_i;B_r)&\leq P(L; B_r)+H_N(\Om) |M_i\setminus L|,
\end{align*} 
and we achieve the proof of Step one.\\

\textit{Step two.} We now prove that $\E(i)$ is a $(\Lambda,r_0)$-perimeter-minimizing for a suitable choice of $\Lambda$ 
and $r_0<\frac{n}{4H_N(\Om)}$ (according to Step one). Let $B_r\subset\subset \Om$ and $F$ be such that $F\Delta\E(i)\subset\subset B_r$. Define $E:=F\setminus M_i$ and observe, 
by minimality of $\E$ and by the relation $\E(i)\cap B_r^c=(F\setminus M_i)\cap B_r^c$, that:
\begin{align*}
\frac{P(\E(i))}{|\E(i)|} &\leq \frac{P(E)}{|E|}.\\
\end{align*}
Hence
\begin{align*}
\frac{P(\E(i);B_r)+P(\E(i);B_r^c)}{|\E(i)|} &\leq\frac{P(F\setminus M_i;B_r)+P(F\setminus M_i;B_r^c)}{|F|-|F \cap M_i|},\\
 &\leq\frac{P(F\setminus M_i;B_r)+P(\E(i);B_r^c)}{|\E(i)|+(|F\cap B_r|-|\E(i)\cap B_r|)-|F\cap M_i|}.
\end{align*}
If we expand the last inequality we get:
\begin{align*}
P(\E(i);B_r)|\E(i)|&\leq P(F\setminus M_i;B_r)|\E(i)|+P(\E(i))(|F\cap M_i|+|\E(i)\cap B_r|-|F\cap B_r|),
\end{align*}
which means (by observing that $F\cap M_i\subseteq F\setminus \E(i)$),
\begin{eqnarray}\label{quasifinita}
P(\E(i);B_r)&\leq& P(F\setminus M_i;B_r)+2h(\E(i))|\E(i)\Delta F|\, .
\end{eqnarray}
By making use of \eqref{diffe} we obtain
\begin{equation}\label{quasifinitachiave}
P(F\setminus M_i;B_r) \leq P(F;B_r)+P(M_i;B_r)-P(M_i\setminus F;B_r)
\end{equation}
Since $M_i\setminus F\subset M_i$ and $(M_i\setminus F)\Delta M_i\subset\subset B_r$ we can use step one (relation \eqref{limitiamolacurvatura}) with $L=M_i\setminus F$ for conclude that
\begin{eqnarray*}
P(M_i;B_r)&\leq & P(M_i\setminus F;B_r)+H_N(\Om)|M_i\setminus (M_i\setminus F)|\\
 &\leq & P(M_i\setminus F;B_r)+H_N(\Om)|M_i\cap F|.
 \end{eqnarray*}
 Hence
 \begin{equation}\label{e3}
 P(M_i;B_r)-P(M_i\setminus F;B_r)\leq  H_N(\Om)|F\setminus \E(i)|.
 \end{equation}
By plugging \eqref{e3} in \eqref{quasifinitachiave} we obtain
\begin{equation}\label{quasifinitachiave2}
P(F\setminus M_i;B_r)\leq P(F;B_r)+H_N(\Om)|\E(i)\Delta F|
\end{equation}
and by using \eqref{quasifinitachiave2} in \eqref{quasifinita} we find
$$P(\E(i);B_r)\leq P(F;B_r)+3H_N(\Om)|\E(i)\Delta F|.$$
By choosing $\Lambda=3H_N(\Om)$ and $r_0=\frac{1}{4 H_N(\Om)}$ we conclude that each $\E(i)$ is a 
$(\Lambda,r_0)$-perimeter-minimizing with $\Lambda r_0<1$ and we achieve the proof.
\end{proof}

Proof of assertion $(ii)$ can be viewed as a consequence of  \cite[Proposition 2.5, Assertion (vii)]{LP14} recalled below for the sake of clarity.
\begin{proposition}\label{leo}
Let $A$ be an open and bounded set and let $E$ be a Cheeger set $A$. Then
$$\pa^*A \cap \pa E \subseteq \partial^*E.$$
Moreover for every $x\in \pa^*A \cap \pa E$ 
it holds that 
$$\nu_{E}(x)=\nu_{A}(x),$$
where $\nu_{E}$, $\nu_{A}$ denotes the measure theoretic outer unit normal to $E$ and $A$ respectively.
\end{proposition}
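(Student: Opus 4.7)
I will prove Proposition \ref{leo} by combining a specific quasi-minimality property of the Cheeger set with a blow-up analysis at a point $x\in \pared A\cap \pa E$. Setting $h=h(A)=P(E)/|E|$, the Cheeger minimality of $E$ can be rephrased as the fact that $E$ minimizes $F\mapsto P(F)-h|F|$ among $F\subseteq A$, since $P(F)\ge h|F|$ for every $F\subseteq A$ with equality for $F=E$. Testing with $F$ such that $F\Delta E\cc B_r(x)$ yields
$$P(E;B_r(x))\le P(F;B_r(x))+h|E\Delta F|,$$
which is a genuine $(h,\infty)$-quasi-minimality that, crucially, does not need $B_r(x)\cc A$ and is therefore still active at points of $\pa A$.

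The heart of the argument is a single comparison. For a.e.\ $r>0$ I would test the above inequality with the natural competitor $F=E\cup (A\cap B_r(x))\subseteq A$: using that $F\cap B_r(x)=A\cap B_r(x)$ (since $E\subseteq A$) and the union formula \eqref{eqn: perimetro unione} in Subsection \ref{sbst Union, intersection, differences}, one obtains
$$P(E;B_r(x))+h\,|A\cap B_r(x)\setminus E|\le P(A;B_r(x))+\H^{n-1}\!\Big(\pa B_r(x)\cap A^{\uno}\cap E^{\zero}\Big).$$
This already yields $P(E;B_r(x))\le C\,r^{n-1}$, because $P(A;B_r(x))\le C\,r^{n-1}$ by $x\in\pared A$ and the boundary term is controlled by $\H^{n-1}(\pa B_r(x))$. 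The dual comparison with $F=E\setminus B_r(x)$, coupled with the relative isoperimetric inequality, provides the lower density estimate $|E\cap B_r(x)|\ge c\,r^n$ at every $x\in\pa E$ (cf.\ the proof of \eqref{nontrivialchambers}).

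Now fix $x\in\pared A\cap \pa E$ and perform the blow-up $E_r=(E-x)/r$, $A_r=(A-x)/r$. Part (3) of De Giorgi's structure Theorem applied to $A$ gives $A_r\freccia^{L^1_{loc}} H:=\{y\cdot \nu_A(x)\le 0\}$ and $\mu_{A_r}\weakstar \nu_A(x)\,\H^{n-1}\llcorner \pa H$. The uniform bound on $P(E_r;B_1)$ together with the inclusion $E_r\subseteq A_r$ yield, via the BV compactness Theorem \ref{compactness theorem}, a subsequence $r_k\to 0$ along which $E_{r_k}\to E_*\subseteq H$ in $L^1_{loc}$. Rescaling the quasi-minimality by the factor $r_k^{n-1}$ makes the volume term disappear in the limit, so that $E_*$ is a local perimeter minimizer among subsets of $H$.

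The main obstacle, which is the analytic content of \cite{LP14}, is the identification $E_*=H$ (as opposed to some other minimal cone in $H$, such as $H$ intersected with a half-space orthogonal to $\pa H$). Passing to the limit $r_k\to 0$ in the key comparison and exploiting the weak-$*$ convergence of $\mu_{A_{r_k}}$ gives the sharp bound
$$P(E_*;B_1)+\H^{n-1}\!\Big(\pa B_1\cap H^{\uno}\cap E_*^{\zero}\Big)\le \omega_{n-1},$$
whose only competitor compatible with the lower density estimate $|E_*\cap B_R|\ge c R^n$ (that is, with $0\in\pa E_*$) and with $E_*\subseteq H$ is $E_*=H$ itself. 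This identification forces $E$ to have density $\mez$ at $x$ with blow-up equal to $H_{\nu_A(x)}$, so $x\in\pared E$ with $\nu_E(x)=\nu_A(x)$. The global conclusion $\pared A\cap \pa E\subseteq \pared E$ then follows by repeating the argument at every such $x$.
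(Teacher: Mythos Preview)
Your strategy is exactly what the paper's one-line sketch (deferring to \cite[Proposition~2.5(vii)]{LP14}) indicates: quasi-minimality of $E$ among competitors $F\subseteq A$, valid all the way up to $\pa A$; uniform perimeter and lower volume-density bounds at points of $\pa E$; blow-up $E_{r_k}\to E_*\subseteq H$ with $A_{r_k}\to H$; and identification $E_*=H$.

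There is a slip in the last step. In your pre-limit comparison the term $\H^{n-1}\big(\pa B_r(x)\cap A^{\uno}\cap E^{\zero}\big)$ sits on the \emph{right}-hand side, and it stays there after rescaling and passing to the limit; your displayed inequality with that term on the left is not what the comparison produces. With the sign corrected, the resulting bound $P(E_*;B_R)\le \om_{n-1}R^{n-1}+\H^{n-1}(\pa B_R\cap H^{\uno}\cap E_*^{\zero})$ --- or even the stronger $P(E_*;B_R)\le \om_{n-1}R^{n-1}$ one gets directly --- does not by itself exclude half-space-in-half-space configurations: a quarter-space $H\cap\{y_1>0\}$ also has $P(\cdot;B_R)=\om_{n-1}R^{n-1}$. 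The identification $E_*=H$ genuinely requires the obstacle-minimality of $E_*$ among subsets of $H$ that you already recorded: the classification of perimeter-minimizing cones contained in a half-space with vertex on its boundary (this is the analytic content you correctly attribute to \cite{LP14}) leaves only $H$ and $\emptyset$, and your lower density estimate rules out the latter. So all the pieces are on the table; the final assembly should invoke the cone classification via obstacle-minimality rather than the perimeter inequality alone.
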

The proof follows by combining the fact that each Cheeger set $E$ is a $(\Lambda,r_0)-$perimeter-minimizing in $A$ with the fact that the blow-ups of $\pa A$ at a point $x\in \pa^* A$ converge to an half plane.
\begin{proof}[Proof of Theorem \ref{mainthm1}]
Assertion $(i)$, $(iii),(iv), (v)$ follow by combining Theorems \ref{regolare} and \ref{regularity}. Assertion $(ii)$ is obtained by noticing that each chambers $\E(i)$ is a Cheeger set for 
	\[
	A=\Om \setminus \bigcup_{\substack{j=1,\\ j\neq i }}^N \E(j)
	\]
and then by applying Proposition \ref{leo}.
\end{proof}

\section{The singular set $\S(\E)$ of Cheeger $N$-clusters in low dimension}\label{cpt 4 sct The singular set of the Cheeger N-clusters in low dimension}
The following results are all stated and proved for open bounded and connected sets $\Om\subset \R^{n}$ having $C^{1}$ boundary and with the ambient space dimension less than $8$. We ask $\Om$ to be connected and with $C^1$ boundary because this is enough to avoid degenerate situations where $|\E(0)|=0$ (see Remark \ref{controesempioconne} where a Cheeger $N$-cluster with $|\E(0)|=0$ is provided).\\ 

We obtain the proof of Theorem \ref{mainthm2} by combining different results, sated and proved separately in Subsection \ref{cpt 4 sbsct Proof of mainthm2}. We premise some technical lemmas.

\subsection{Technical lemmas}

\begin{lemma}\label{complete regularity of cheeger N-clusters}
If  $n\leq 7$, $\Om$ is an open, bounded, connected sets with $C^1$ boundary and finite perimeter and $\E$ is a Cheeger $N$-cluster of $\Om$ it holds 
\[
\pared \E(i)=\pa \E(i) \ \ \ \ \ \text{for all $i\neq 0$}.
\]
 \end{lemma}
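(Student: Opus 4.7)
The plan is to read this lemma as essentially an immediate corollary of Theorem \ref{mainthm1}, so the work is purely bookkeeping: split $\pa\E(i)$ into its interior-to-$\Om$ part and its boundary-trace part, and apply a different regularity assertion to each.

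First I would recall that $\E(i)\subseteq\Om$ forces $\pa\E(i)\subseteq \overline{\Om}$, so that the decomposition
\[
\pa\E(i) \;=\; \bigl(\pa\E(i)\cap\Om\bigr)\;\cup\;\bigl(\pa\E(i)\cap\pa\Om\bigr)
\]
is exhaustive. For the interior piece, I invoke Theorem \ref{mainthm1}(iii): in the regime $n\le 7$ the set $\Sigma(\E(i);\Om)=(\pa\E(i)\setminus\pared\E(i))\cap\Om$ is empty, which immediately gives $\pa\E(i)\cap\Om=\pared\E(i)\cap\Om\subseteq\pared\E(i)$.

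For the boundary-trace piece I would use Theorem \ref{mainthm1}(ii), which states $\pa^*\Om\cap\pa\E(i)\subseteq\pared\E(i)$. Since $\Om$ has $C^1$ boundary, $\pa\Om$ coincides with $\pa^*\Om$ (a $C^1$ hypersurface is entirely reduced boundary), and therefore $\pa\E(i)\cap\pa\Om\subseteq\pared\E(i)$. Combining the two inclusions yields $\pa\E(i)\subseteq\pared\E(i)$; the reverse inclusion $\pared\E(i)\subseteq\pa\E(i)$ is built into the standard convention $\spt(\mu_{\E(i)})=\pa\E(i)$ fixed in Subsection \ref{subsct: topological boundary}, so equality follows.

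There is no genuine obstacle here: both the dimensional regularity ($n\le 7$ kills all interior singular points) and the tangential-contact property at $\pa\Om$ are already packaged into Theorem \ref{mainthm1}, and the $C^1$ hypothesis on $\pa\Om$ is exactly what converts the assertion about $\pa^*\Om$ into one about the full topological boundary $\pa\Om$. The only point one must be mildly careful about is making sure that the decomposition of $\pa\E(i)$ is exhaustive, which is why one records at the outset that $\pa\E(i)\subseteq\overline{\Om}$.
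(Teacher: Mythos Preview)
Your argument is correct and matches the paper's own proof essentially line for line: the same decomposition $\pa\E(i)=(\pa\E(i)\cap\Om)\cup(\pa\E(i)\cap\pa\Om)$, the same use of Theorem~\ref{mainthm1}(iii) for the interior piece and (ii) together with $\pa\Om=\pared\Om$ (from the $C^1$ hypothesis) for the boundary piece. Your additional remarks on why the decomposition is exhaustive and why the reverse inclusion holds are welcome clarifications.
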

 \begin{proof}
 We decompose $\pa \E(i)$ as
 \[
 \pa\E(i)=(\pa\E(i)\cap \Om)\cup (\pa\E(i)\cap \pa\Om).
 \]
 and we note that
 \[
 \pa\E(i)\cap \Om =\pared \E(i)\cap \Om,
 \]
 because of Assertion (iii) of \ref{mainthm1}. Moreover, since $\Om$ has $C^1$ boundary $\pared \Om=\pa \Om$ and thus, thanks to Assertion (ii) we have also
 \[
 \pa \E(i)\cap \pa \Om = \pa \E(i)\cap \pared \Om \subseteq \pared \E(i).
 \]
 Hence
 \[
\pared\E(i)\subseteq \pa \E(i)=(\pa\E(i)\cap \Om)\cup (\pa \E(i)\cap \pa \Om)\subseteq \pared \E(i),
 \]
 and we achieve the proof.
 \end{proof}
 \begin{remark}\label{closed}
If $n\leq 7$, $\Om$ is an open, bounded, connected set with finite perimeter and $C^1$-boundary and $\E$ is a Cheeger $N$-cluster of $\Om$, by considering $\F(k)=\E(k)\cup \pa \E(k)$ for $k\neq 0$, thanks to Lemma \ref{complete regularity of cheeger N-clusters} we must have $|\F(k)\Delta \E(k)|\leq |\pa \E(k)|=0$ and thus
	\[
	P(\F(k))=P(\E(k)).
	\]
For this reason in the sequel we are always assuming that each chamber $\E(k)$ for $k\neq 0$ \textbf{is a closed set with $\pared \E(k)=\pa \E(k)$}.
 \end{remark}
\begin{lemma}\label{densita dei punti di omega}
Let $n\leq 7$, $N\geq 2$ and $\Om$ be an open, bounded and connected set with $C^{1}$ boundary and finite perimeter in $\R^{n}$. If $\E$ is a Cheeger $N$-cluster of $\Om$, then for every $x\in \R^{n}$ and every $k=1,\ldots,N$ there exists the $n$-dimensional density $\vt_n(x,\E(k))$ and it takes values:
\[
\vartheta_n(x,\E(k))=
\begin{sistema}
0 \ \ \text{if $x\notin \E(k)$};\\ 
\frac{1}{2}\ \  \text{if $x\in \pa\E(k)$};\\
1 \ \ \text{if $x\in \mathring{\E(k)}$}.
\end{sistema}
\]
\end{lemma}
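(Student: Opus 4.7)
The plan is to reduce the statement to a trichotomy of topological alternatives and then invoke, in the only non-trivial case, point 3) of De Giorgi's structure theorem. Throughout, we work with the representative of $\E(k)$ fixed in Remark \ref{closed}: for every $k=1,\ldots,N$ the chamber $\E(k)$ is a \emph{closed} set with $\pa\E(k)=\pared\E(k)$ (this is where dimension $n\leq 7$, the connectedness of $\Om$, and the $C^1$-regularity of $\pa\Om$ enter, via Lemma \ref{complete regularity of cheeger N-clusters}). With this choice, every $x\in\R^n$ lies in exactly one of the three pairwise disjoint sets $\R^n\setminus \E(k)$, $\mathring{\E(k)}$, $\pa\E(k)$, so it is enough to compute $\vt_n(x,\E(k))$ separately in each case.

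First I would dispose of the two trivial alternatives. If $x\notin \E(k)$, then since $\E(k)$ is closed its complement is open, so there is $r_0>0$ with $B_{r_0}(x)\cap\E(k)=\emptyset$; hence $|\E(k)\cap B_r(x)|=0$ for every $r<r_0$, and $\vt_n(x,\E(k))=0$. Symmetrically, if $x\in\mathring{\E(k)}$ there is $r_0>0$ with $B_{r_0}(x)\subseteq\E(k)$, giving $|\E(k)\cap B_r(x)|=|B_r(x)|$ for $r<r_0$ and $\vt_n(x,\E(k))=1$.

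The remaining case is $x\in\pa\E(k)$. Here I would use the key identification $\pa\E(k)=\pared\E(k)$ (Lemma \ref{complete regularity of cheeger N-clusters} combined with Remark \ref{closed}): every topological boundary point of $\E(k)$ is in fact a reduced boundary point. Since $\E(k)$ is of finite perimeter, we may apply point 3) of De Giorgi's structure Theorem to conclude that the rescaled sets $\E(k)_{x,r}=(\E(k)-x)/r$ converge locally in $L^1(\R^n)$, as $r\to 0^+$, to the half-space $H_{\nu_{\E(k)}(x)}$. Testing this convergence on the unit ball gives
\[
\vt_n(x,\E(k))=\lim_{r\to 0^+}\frac{|\E(k)\cap B_r(x)|}{|B_r(x)|}=\frac{|H_{\nu_{\E(k)}(x)}\cap B_1|}{|B_1|}=\frac{1}{2}\,,
\]
which is the desired value.

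The only point demanding a little care is that $x$ could lie on $\pa\Om$, so one has to make sure the blow-up argument still yields the half-space and not a smaller set. This is however guaranteed by the same package: $\pared\E(k)$ contains $\pa^*\Om\cap\pa\E(k)$ by assertion (ii) of Theorem \ref{mainthm1}, and De Giorgi's theorem applies to every reduced boundary point without any interior assumption, so no additional argument is needed. Thus there is, in fact, no serious obstacle: the proof is essentially an assembly of Lemma \ref{complete regularity of cheeger N-clusters}, Remark \ref{closed}, and the structure theorem for sets of finite perimeter.
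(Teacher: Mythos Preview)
Your proof is correct and follows essentially the same route as the paper: both use the closedness of $\E(k)$ from Remark \ref{closed} to dispatch the interior and exterior cases, and the identification $\pa\E(k)=\pared\E(k)$ from Lemma \ref{complete regularity of cheeger N-clusters} for the boundary case. The only cosmetic difference is that the paper concludes $\pa\E(k)\subseteq\E(k)^{(1/2)}$ by directly citing the inclusion $\pared E\subset E^{(1/2)}$ from Theorem \ref{equivalence of reduced and essential boundary}, whereas you derive the same density via the blow-up in point 3) of De Giorgi's structure theorem; these are equivalent.
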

\begin{proof} 
Each chamber $\E(k)$ for $k\neq 0$ is a closed set (see Remark \ref{closed}) and thus 
	\begin{align*}
		\E(k)^c&=\E(k)^{\zero},\\
		\mathring{\E(k)}&=\E(k)^{\uno}.
	\end{align*}
Lemma \ref{complete regularity of cheeger N-clusters} implies 
	\[
		\pa \E(k)=\pared \E(k)\subseteq \E(k)^{\mez}\subseteq \pa \E(k).
	\] 
\end{proof}

\subsection{Proof of theorem \ref{mainthm2}}\label{cpt 4 sbsct Proof of mainthm2}

We are now ready to prove two propositions that immediately imply Theorem \ref{mainthm2}. The following Proposition is needed in order to prove Assertion $(i)$ in Theorem \ref{mainthm2}.
\begin{proposition}\label{ognicameraconfinaconilvuoto}
Let $n\leq 7$, $N\geq 2$ and $\Om$ be an open, bounded and connected set with $C^{1}$ boundary and finite perimeter in $\R^{n}$. Let $\E$ be a Cheeger $N$-cluster of $\Om$. 
Then for every $i\in \{1,\ldots,N\}$ there exists $x\in \pa\E(i)$ such that $|B_s(x)\cap \E(0)|>0$ for all $s>0$.
\end{proposition}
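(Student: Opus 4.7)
The argument is by contradiction. Suppose that for some $i_0\in\{1,\ldots,N\}$ the conclusion fails: to each $x\in\pa\E(i_0)$ there corresponds a radius $s(x)>0$ with $|B_{s(x)}(x)\cap\E(0)|=0$. Since $\E(i_0)$ is closed (by the convention recalled in Remark~\ref{closed}) and contained in $\overline\Om$, its boundary $\pa\E(i_0)$ is compact, and from the open cover $\{B_{s(x)/2}(x)\}_{x\in\pa\E(i_0)}$ a Lebesgue number argument extracts a uniform $s_0>0$ such that $|B_{s_0}(x)\cap\E(0)|=0$ for every $x\in\pa\E(i_0)$. Equivalently, the open tubular neighborhood $U=\{y\in\R^n:\dist(y,\pa\E(i_0))<s_0\}$ meets $\E(0)$ only on a Lebesgue negligible set, so that $\E(i_0)$ and $\E(0)$ are separated by a positive distance.

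The strategy will be to build a competitor $N$-cluster $\F$ strictly decreasing the functional $\sum_k P(\F(k))/|\F(k)|$, contradicting the minimality of $\E$. A preliminary sub-case is $|\E(0)|=0$: then the $N$ chambers tile $\Om$ up to Lebesgue negligible sets, and starting from a regular point of a smooth interface $\pa\E(j_0)\cap\pa\E(k_0)\cap\Om$ (which is $C^{1,\a}$ by Theorem~\ref{mainthm1}) a small inward normal variation opens a positive-volume external region, so one may reduce to $|\E(0)|>0$. Assuming $|\E(0)|>0$, let $G^*\subset\E(0)$ be a Cheeger set of $\E(0)$, so that $h(G^*)=h(\E(0))>0$ (such a $G^*$ exists by Theorem~\ref{existence} applied with $N=1$). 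Because $G^*\subset\E(0)$ and $U\cap\E(0)$ is Lebesgue negligible, the closure of $G^*$ is disjoint from $\pa\E(i_0)$, so that $P(\E(i_0)\cup G^*)=P(\E(i_0))+P(G^*)$. Setting $\F(k)=\E(k)$ for $k\ne i_0$ and $\F(i_0)=\E(i_0)\cup G^*$ gives an admissible $N$-cluster, and the minimality of $\E$ together with the self-Cheeger identity \eqref{selfcheeger} for $\E(i_0)$ yields, after a short computation, the inequality $h(\E(i_0))\le h(\E(0))$.

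Turning this nonstrict inequality into a genuine contradiction is the hardest part of the argument. My plan is to exploit the positive gap $s_0$ to amplify the competitor: one constructs a one-parameter family $\{\F_\epsilon\}$ that augments $\F(i_0)$ by a scaled copy of $G^*$ translated into the collar $U$, compensating the volume loss by a correction inside a chamber $\E(j_0)$ adjacent to $\E(i_0)$ (which exists because $\pa\E(i_0)\subset (\bigcup_{j\ne 0,i_0}\pa\E(j))\cup\pa\Om$ by the separation step). Using the first-variation formulas of Theorems~\ref{teo: chapter intro first variation of the perimeter} and~\ref{teo: chapter intro first variation of the potential energy} on the smooth portions of $\pa\E(i_0)\cap\pa\E(j_0)$ provided by Theorem~\ref{mainthm1}, one aims to show that the derivative at $\epsilon=0$ of the functional along this family is strictly negative, giving the desired contradiction. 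The technical heart of the proof will be arranging the bookkeeping so that each $\F_\epsilon(k)$ has positive volume, the cardinality $N$ is preserved, and the positivity of $s_0$ is genuinely used---since first-order competitors that do not exploit $s_0$ only yield the nonstrict inequality already derived.
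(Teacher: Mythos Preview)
Your argument up through the inequality $h(\E(i_0))\le h(\E(0))$ is correct and clean: the compactness of $\pa\E(i_0)$ gives a uniform collar $U$ with $|U\cap\E(0)|=0$, hence a positive distance between $\E(i_0)$ and $\E(0)$, and attaching a Cheeger set $G^*$ of $\E(0)$ to $\E(i_0)$ yields a competitor whose cost comparison gives exactly that inequality. The problem is everything after that.

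The nonstrict inequality $h(\E(i_0))\le h(\E(0))$ is not a contradiction, and your plan to upgrade it does not work. You propose to translate a scaled copy of $G^*$ into the collar $U$, but $|U\cap\E(0)|=0$ means $U$ is already (a.e.) tiled by the chambers $\E(k)$, $k\ge 1$: there is no room to insert anything without overlapping existing chambers. Compensating by carving a piece out of some $\E(j_0)$ and running a first-variation argument on $\pa\E(i_0)\cap\pa\E(j_0)$ produces only the stationarity (curvature) condition of Proposition~\ref{curvature}, not a strict improvement; nothing in that computation sees the gap $s_0$. Your treatment of the case $|\E(0)|=0$ has the same defect: perturbing a smooth interface to ``open up'' external volume gives a \emph{different}, non-minimizing cluster, so you cannot reduce to $|\E(0)|>0$ that way.

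The paper avoids this variational dead end entirely by a topological argument. It proves directly that $\pa\E(1)\setminus\big[\pa\Om\cup\bigcup_{k\ge 2}\pa\E(k)\big]\neq\emptyset$ (which is equivalent to the conclusion). Assuming otherwise, one takes $x\in\pa\E(1)\cap\pa\E(2)$ and shows that the connected component $M$ of $\pa\E(1)$ through $x$ must lie entirely inside $\pa\E(2)\cap\Om$: triple points on $\pa\Om$ or among three chambers are forbidden by the density Lemma~\ref{densita dei punti di omega}, and a connectedness argument forces $\bd_M(M\cap\pa\E(2))=\emptyset$. Thus $M$ is a closed $C^{1,\a}$ hypersurface without boundary inside $\Om$; translating $M$ until it kisses another portion of $\pa\E(1)$ or $\pa\E(2)$ creates a point of density $0$ for one of the chambers without changing $\sum_j P(\E(j))/|\E(j)|$, contradicting the regularity Theorem~\ref{mainthm1}. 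The key idea you are missing is this structural use of regularity (absence of singular points for $n\le 7$) to force a closed component of the interface and then \emph{move} it rigidly---the contradiction comes from geometry, not from a perimeter--volume inequality.
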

\begin{proof}
Without loss of generality (and for the sake of clarity) we can assume $i=1$. We note that the proof of the lemma is a consequence of the following claim.\\

\textit{Claim.} $\pa \E(1)\setminus \left[\pa\Om \cup \bigcup_{k=2}^N \pa \E(k)\right]\neq \emptyset$.\\

Indeed, if the claim is in force then there exists $x\in \pa\E(1)\cap \Om$ and $x\notin \E(k)$ for all $k\neq 1$. Since the chambers are closed we can also find a small ball $B_s(x)\cc\Om$ such that $B_s(x)\cap \E(k)=\emptyset$ for all $k\neq 1$, implying (thanks to Lemma \ref{densita dei punti di omega})
\begin{align*}
|\E(0)\cap B_s(x)|&=|B_s|-\sum_{k=1}^n|\E(i)\cap B_s|=|B_s|-|\E(1)\cap B_s|>0
\end{align*}
(because $x\in \pa \E(1)=\E(1)^{\mez}$) and achieving the proof.\\

Let us focus on the proof of the claim. Thanks to the connectedness of $\Om$ it is easy to show that $\pa \E(1)\setminus \pa\Om \neq \emptyset$. If also $\pa \E(1)\cap \pa \E(k)=\emptyset$ for $k\neq 1$ the claim trivially holds. Otherwise it must exist at least an index $j\in\{2,\ldots,N\}$ such that $\pa\E(1)\cap \pa\E(j)\neq \emptyset.$ Assume without loss of generality $j=2$:
	\[
	\pa\E(1)\cap \pa\E(2)\neq \emptyset.
	\]
Choose $x\in \pa\E(1)\cap \pa\E(2)$ and let us denote by $M$ the connected component of $\pa \E(1)$ containing $x$. Note that $x\notin \pa \Om$. Otherwise we would have $x\in \pa\E(1)\cap \pa\E(2)\cap \pa\Om$ and because of the regularity of $\Om$ and thanks to Lemma \ref{densita dei punti di omega} this leads to a contradiction:
\begin{align*}
\frac{1}{2}&=\lim_{r\rightarrow 0^+} \frac{|\Om\cap B_r(x)|}{|B_r(x)|}=\lim_{r\rightarrow 0^+} \sum_{h=0}^N \frac{|\E(h)\cap B_r(x)|}{|B_r(x)|}\\
&\geq \lim_{r\rightarrow 0^+} \frac{|\E(1)\cap B_r(x)|}{|B_r(x)|}+\frac{|\E(2)\cap B_r(x)|}{|B_r(x)|}=1.
\end{align*}
Hence the following are in force:
\begin{equation}\label{leader maximo}
M\setminus \pa \Om\neq \emptyset, \ \ \ \ \ M\cap\pa\E(2)\neq \emptyset.
\end{equation}
We now note that, if 
\begin{equation}\label{non ne posso piu di questo lemma}
M\setminus \left[\pa \Om\cup \bigcup_{k=2}^N\pa \E(k)\right]=\emptyset
\end{equation} then, necessarily $M \subseteq\pa \E(2)\cap \Om$. Indeed considered 
	\[
	y\in \overline{M\cap \pa \E(2)}\cap \overline{(M\setminus \pa\E(2) )}=\bd_M(M\cap \pa \E(2)),
	\]
since \eqref{non ne posso piu di questo lemma} is in force (and since $y\in \bd_M(M\cap \pa \E(2))$) either there exists an index $k\neq 1,2$ such that $y\in \pa\E(k)$ or $y\in \pa \Om$. In both cases we reach a contradiction because $y$ would be a point of density $\frac{1}{2}$ for three disjoint sets ($\E(1),\E(2),\E(k)$ or $\E(1),\E(2),\Om^c $). Thus the only possibility is that $\bd_M(M\cap\pa\E(2))=\emptyset$ and since \eqref{leader maximo} is in force, by applying the following (topological) fact \eqref{topological fact} we conclude that $M=M\cap \pa\E(2)\subseteq\pa\E(2)$.
\begin{equation}\label{topological fact}
\begin{array}{c}
\text{If $M\subset \R^n$ is a closed connected set and $C\subseteq M$ is a non empty}\\
\text{subset of $M$, then $\bd_M(C):=\overline{C}\cap \overline{(M\setminus C)}=\emptyset$ if and only if $M=C.$}
\end{array}
\end{equation} 
As before $M\cap \pa\E(2)\cap \pa \Om=\emptyset$ otherwise we would have a point of density $\frac{1}{2}$ for three disjoint set $(\E(1),\E(2),\Om^c)$ and hence $M\subseteq \pa\E(2)\cap \Om$. This means that $M$ must be a closed $C^{1,\a}$ surface without boundary contained in $\Om$ and disjoint from the other sets $\E(k)$ and from $\pa \Om$, which means that one of the situation of Figure \ref{movimenti} has to be in force.
\begin{figure}
\centering
 \includegraphics[scale=0.8]{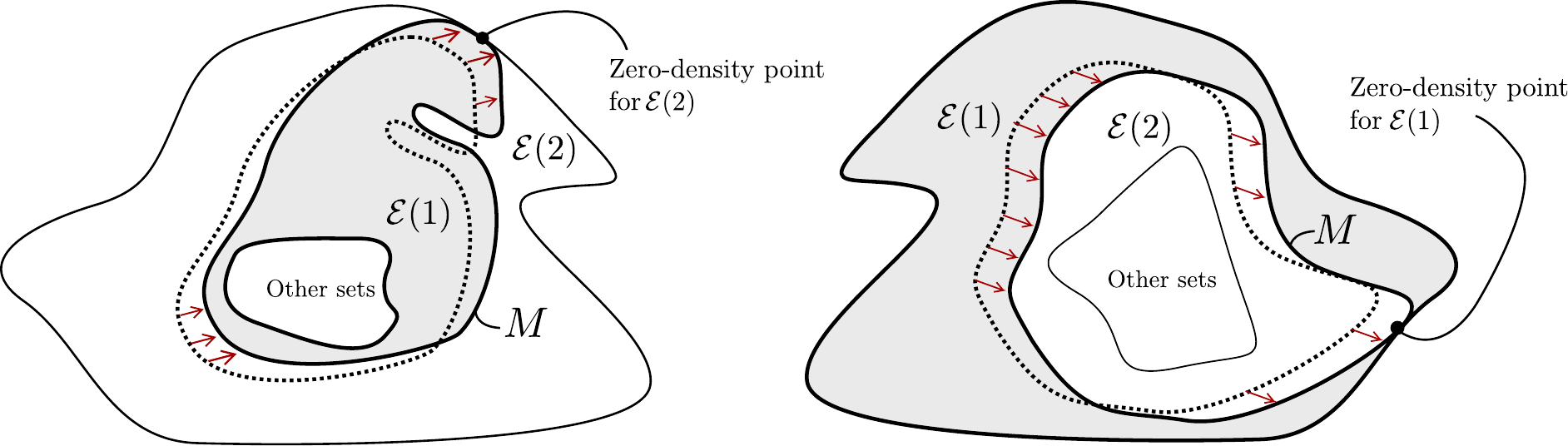}\caption{{\small If \eqref{non ne posso piu di questo lemma} holds, then one of these two situations must be in force and we can contradict regularity by simply translate $M$ until it kisses another part of the boundary yielding a not allowed point of density zero.}}\label{movimenti}
\end{figure}
We are thus able to move a little bit $M$, and whatever is bounded by $M$, inside $\Om$ as in Figure \ref{movimenti} until it kisses $\pa\E(2)$ or $\pa\E(1)$  (we easily exclude that $M$ bounds a hole of $\Om$ with a slight variation of this previous argument). In this way we produce a zero-density point for $\E(1)$ or for $\E(2)$ without changing $\sum_j\frac{P(\E(j))}{|\E(j)|}$ and this contradicts the regularity.\\

Hence \eqref{non ne posso piu di questo lemma} cannot holds and the claim is true.

\end{proof}

The next Proposition implies Assertion $(ii)$ in Theorem \ref{mainthm2}.
\begin{proposition}\label{prima caratterizazione}
Let $n\leq 7$, $N\geq 2$ and $\Om$ be an open, bounded and connected set with $C^{1}$ boundary and finite perimeter in $\R^{n}$. Let $\E$ be a Cheeger $N$-cluster of $\Om$. Then
\begin{align*}
\S(\E;\Om)&=\S(\E(0);\Om)\\
&=\{x\in \pa\E(0)\cap \Om \ | \ \vt_n(x,\E(0))=0\}\\
&=\Om\cap \bigcup_{\substack{i,j=1,\\ i\neq j}}^N \pa \E(i)\cap \pa\E(j)\cap \pa \E(0).
\end{align*}
\end{proposition}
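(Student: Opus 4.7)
My plan is to reduce everything to a density computation. First I would note that, by definition, $\S(\E;\Om)=\S(\E(0);\Om)\cup\bigcup_{i=1}^N\S(\E(i);\Om)$, and that in dimension $n\le 7$ the second union is empty: assertion (iii) of Theorem \ref{mainthm1} gives $\S(\E(i);\Om)=\emptyset$ for every $i\neq 0$. Hence the first equality $\S(\E;\Om)=\S(\E(0);\Om)$ follows for free. This already localizes the singular behavior of the cluster entirely on the reduced boundary of the external chamber.

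Next I would describe the density of the chambers at an arbitrary point $x\in\pa\E(0)\cap\Om$. Since each $\E(k)$, $k\neq 0$, is closed (Remark \ref{closed}), the external chamber $\E(0)=\Om\setminus\bigcup_{k=1}^N\E(k)$ is relatively open in $\Om$, so $\pa\E(0)\cap\Om\subseteq\bigcup_{k=1}^N\pa\E(k)$. Pick $x\in\pa\E(0)\cap\Om$: then $x\in\pa\E(k)$ for some $k\neq 0$, and Lemma \ref{densita dei punti di omega} forces $\vt_n(x,\E(k))=1/2$. Since $B_r(x)\subset\Om$ for small $r$, partitioning $B_r(x)$ among the chambers and passing to the limit yields $\sum_{h=0}^N\vt_n(x,\E(h))=1$, so $\vt_n(x,\E(0))$ exists, is equal to $1-\sum_{h=1}^N\vt_n(x,\E(h))$ and takes a value in $\{0,1/2\}$ (the three-term case $1/2+1/2+1/2$ being forbidden by the sum $=1$, and the value $1$ being ruled out by $\vt_n(x,\E(k))=1/2>0$).

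Now I would prove the equality $\S(\E(0);\Om)=\{x\in\pa\E(0)\cap\Om \ | \ \vt_n(x,\E(0))=0\}$ by examining the two cases. If $\vt_n(x,\E(0))=1/2$, then locally only $\E(k)$ and $\E(0)$ carry mass; by assertion (i) of Theorem \ref{mainthm1}, $\pared\E(k)\cap\Om$ is a $C^{1,\a}$-hypersurface through $x$, and the blow-up of $\E(k)$ at $x$ is the half-space $H_{\nu_{\E(k)}(x)}$. Since $\E(0)$ fills the complementary half-space in the blow-up, the Gauss--Green measure $\mu_{\E(0)}$ has precise density $-\nu_{\E(k)}(x)$ at $x$, so $x\in\pared\E(0)$ and hence $x\notin\S(\E(0);\Om)$. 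Conversely, if $\vt_n(x,\E(0))=0$, the general inclusion $\pared\E(0)\subseteq\E(0)^{(1/2)}$ (recalled at the beginning of Subsection \ref{subsection The structure of the Gauss-Green measure}) immediately gives $x\notin\pared\E(0)$, i.e.\ $x\in\S(\E(0);\Om)$.

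Finally I would derive the third equality. In the case $\vt_n(x,\E(0))=0$ just discussed, the density sum $\sum_{h=1}^N\vt_n(x,\E(h))=1$ together with each summand lying in $\{0,1/2,1\}$ forces the existence of an index $j\neq k$, $j\neq 0$, with $\vt_n(x,\E(j))=1/2$, whence $x\in\pa\E(0)\cap\pa\E(k)\cap\pa\E(j)$, giving the inclusion ``$\subseteq$''. The reverse inclusion is immediate: if $x\in\Om\cap\pa\E(i)\cap\pa\E(j)\cap\pa\E(0)$ with $i\neq j$ both nonzero, Lemma \ref{densita dei punti di omega} gives $\vt_n(x,\E(i))=\vt_n(x,\E(j))=1/2$, so $\vt_n(x,\E(0))=0$ and the previous step yields $x\in\S(\E(0);\Om)$. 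The main obstacle I expect is the careful justification of the first case above, i.e.\ upgrading ``$\vt_n(x,\E(0))=1/2$'' to ``$x\in\pared\E(0)$''; this is where one genuinely uses the $C^{1,\a}$ regularity of $\pared\E(k)$ provided by Theorem \ref{mainthm1}, and one must argue through the blow-up to identify the limit half-space as the reduced-boundary tangent of $\E(0)$ as well.
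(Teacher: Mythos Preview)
Your proposal is correct and close in spirit to the paper's proof, but organized differently. The paper proves the chain $B\subseteq A\subseteq\S_0\subseteq B$ (with $A$ the zero-density set and $B$ the triple-boundary set) and handles the nontrivial inclusion $\S_0\subseteq B$ by partitioning $\Om$ into the interiors $E_i$, the ``two-boundary'' pieces $F_{i,j}=\Om\cap\pa\E(i)\cap\pa\E(j)\setminus\bigcup_{k\neq i,j}\pa\E(k)$, and the triple points $G_{i,j}=\Om\cap\pa\E(i)\cap\pa\E(j)\cap\pa\E(0)$; a point of $\S_0$ cannot lie in any $E_i$, and if it lay in some $F_{i,j}$ one would find, by closedness of the chambers, a small ball meeting only $\pa\E(i)$ and $\pa\E(j)$, forcing one index to be $0$ and making $\pa\E(0)$ locally coincide with a regular $\pa\E(j)$, a contradiction. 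You instead first prove the dichotomy $\vt_n(x,\E(0))\in\{0,1/2\}$ and then identify $\S_0$ with the zero-density locus.

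The one place to tighten is exactly the obstacle you flag: upgrading $\vt_n(x,\E(0))=1/2$ to $x\in\pared\E(0)$. Your blow-up argument only yields $x\in\E(0)^{(1/2)}$, which is a priori larger than $\pared\E(0)$. The cleaner fix (and what the paper's partition argument is doing implicitly) is to use Lemma \ref{densita dei punti di omega} and closedness: $\vt_n(x,\E(h))=0$ for $h\neq 0,k$ forces $x\notin\E(h)$, hence there is a ball $B_s(x)\cc\Om$ with $B_s(x)\cap\E(h)=\emptyset$ for all such $h$; then $\E(0)\cap B_s(x)=B_s(x)\setminus\E(k)$, so $\mu_{\E(0)}\llcorner B_s(x)=-\mu_{\E(k)}\llcorner B_s(x)$ and $\pared\E(0)\cap B_s(x)=\pared\E(k)\cap B_s(x)\ni x$. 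With this replacement your argument is complete and entirely equivalent to the paper's.
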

\begin{proof}
Thanks to Proposition \ref{ognicameraconfinaconilvuoto} the set $\E(0)$ is not empty. As pointed out in Remark \ref{the singular set}, thanks to the regularity of each chambers, it is immediate that $\S(\E;\Om)=\S(\E(0);\Om)$. Let us denote (for the sake of brevity) by
	\begin{align*}
	\S_0&=\S(\E(0);\Om),\\
	A&=\{x\in \pa\E(0)\cap \Om \ | \ \vt_n(x,\E(0))=0 \}\\
	B&=\Om \cap \bigcup_{\substack{i,j=1,\\ k\neq j}}^N \pa \E(i)\cap \pa\E(j)\cap \pa \E(0).
	\end{align*}
We note that $B\subseteq A$ is immediate and also $A\subseteq \S_0$ is immediate, since if $x\in A$ then $x\notin \E(0)^{\mez}\supseteq \pared \E(0)$. We are left to show that $\S_0\subseteq B$. In order to do this we define the following family of subsets of $\Om$.
\begin{align}
E_i&:=\displaystyle \mathring{\E(i)} \ \ \ &\text{for all $0 \leq i\leq N$}\label{Partizione di omega in insiemi furbi}\\
F_{i,j}&:=\displaystyle \Om\cap \pa \E(i)\cap \pa\E(j) \setminus \left(\bigcup_{\substack{k=0, \\ k\neq i,j}}^N \pa\E(k)\right), \ \ \ &\text{for all $0\leq i<j \leq N$}\label{Partizione di omega in insiemi furbi1}\\
G_{i,j}&:=\displaystyle  \Om\cap \pa \E(i)\cap \pa\E(j) \cap \pa\E(0), \ \ \ &\text{for all $1\leq i<j \leq N$}.\label{Partizione di omega in insiemi furbi2}
\end{align}
It is easy to verify that the Borel sets defined in \eqref{Partizione di omega in insiemi furbi},\eqref{Partizione di omega in insiemi furbi1},\eqref{Partizione di omega in insiemi furbi2} form a partition of $\Om$. Now, for a given point $x\in \S_0$, clearly $x\notin E_i$ for all $i=0,\ldots,N$. Thus either $x\in F_{i,j}$  for some $0 \leq i < j\leq N$ or $x\in G_{i,j}$ for some $1 \leq i < j\leq N$. If $x\in F_{i,j}$, by closedness  there exists a small ball $B_s(x)$ such that $\pa \E(k)\cap B_s(x)=\emptyset$ for all $k\neq i,j$. This implies that either $i=0$ or $j=0$ (since we have chosen $x\in \S_0\subset \pa\E(0)$) and that $\pa \E(0)\cap B_s(x)=\pa\E(j)\cap B_s(x)$ leading to say that $\pa\E(0)$ must be regular in a small neighborhood of $x$ and contradicting $x\in \S_0$. Hence necessarily $x\in G_{i,j}$ for some  $1 \leq i < j\leq N$ and we achieve the proof: $\S_0\subseteq B$. 
\end{proof}
The following corollary is an easy consequence of Proposition \ref{prima caratterizazione}.
\begin{corollary}\label{egli e chiuso}
Let $n\leq 7$, $N\geq 2$ and $\Om$ be an open, bounded and connected set with $C^{1}$ boundary and finite perimeter in $\R^{n}$. If $\E$ is a Cheeger $N$-cluster for $\Om$, then $\S(\E(0);\Om)$ is closed.
\end{corollary}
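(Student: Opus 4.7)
My plan is to deduce the corollary directly from the last characterization provided by Proposition \ref{prima caratterizazione}, namely
\[
\S(\E(0);\Om)=\Om\cap \bigcup_{\substack{i,j=1,\\ i\neq j}}^N \pa\E(i)\cap \pa\E(j)\cap \pa\E(0).
\]
The whole point is that in this formula the only sets involved (besides $\Om$ itself) are topological boundaries of sets of finite perimeter, which are by definition closed subsets of $\R^n$.

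First I would observe that, for each pair $i\neq j$ with $i,j\in\{1,\ldots,N\}$, the set $\pa\E(i)\cap\pa\E(j)\cap\pa\E(0)$ is closed in $\R^n$, being the intersection of three closed sets. Since the index set $\{(i,j):1\le i<j\le N\}$ is finite, the union
\[
K:=\bigcup_{\substack{i,j=1,\\ i\neq j}}^N \pa\E(i)\cap \pa\E(j)\cap \pa\E(0)
\]
is itself closed in $\R^n$. By Proposition \ref{prima caratterizazione} we then have $\S(\E(0);\Om)=\Om\cap K$, which is exactly the statement that $\S(\E(0);\Om)$ is relatively closed in $\Om$ endowed with the subspace topology.

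There is really no substantive obstacle here: once Proposition \ref{prima caratterizazione} is in hand, the argument is purely set-theoretic and uses only the definition of topological boundary. The only thing worth a brief remark, for the sake of clarity, is that we are adopting the normalization convention fixed in Remark \ref{closed}, so that $\pa\E(k)=\pared\E(k)$ for $k\neq 0$ is unambiguously defined as the topological boundary of the closed representative of $\E(k)$, and $\pa\E(0)$ is the usual topological boundary of $\E(0)$. With this convention, all the pieces appearing in the formula for $\S(\E(0);\Om)$ are genuine closed subsets of $\R^n$ and the argument above applies verbatim.
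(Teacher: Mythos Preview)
Your argument correctly shows that $\S(\E(0);\Om)=\Om\cap K$ with $K$ closed in $\R^n$, hence $\S(\E(0);\Om)$ is \emph{relatively closed in $\Om$}. However, the paper actually proves the stronger statement that $\S(\E(0);\Om)$ is closed in $\R^n$, and this stronger form is what is used later (in Proposition \ref{struttura singolaritapiano} one concludes finiteness from ``closed, bounded, without accumulation points'', which fails for sets that are only relatively closed in $\Om$).

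The missing step is to rule out that a limit of points in $\S(\E(0);\Om)$ lands on $\pa\Om$. The paper does this by a density argument: if $x_k\to x$ with $x_k\in\pa\E(i)\cap\pa\E(j)\cap\pa\E(0)\cap\Om$ and $x\in\pa\Om$, then $x\in\pa\E(i)\cap\pa\E(j)\cap\pa\Om$. Since $\Om$ has $C^1$ boundary, $\vartheta_n(x,\Om^c)=\tfrac12$, while by Lemma \ref{densita dei punti di omega} also $\vartheta_n(x,\E(i))=\vartheta_n(x,\E(j))=\tfrac12$; but $\E(i),\E(j),\Om^c$ are pairwise disjoint, giving total density $\tfrac32>1$, a contradiction. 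Adding this short argument to your proof would complete it; without it, you have only established closedness in the subspace topology of $\Om$.
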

\begin{proof}
Thanks to Proposition \ref{prima caratterizazione} we have that
	\begin{equation}\label{putin}
	\S(\E;\Om)=\Om\cap \bigcup_{\substack{i,j=1,\\ i\neq j}}^N \pa \E(i)\cap \pa\E(j)\cap \pa \E(0).
	\end{equation}
Let $\{x_k\}_{k\in \N}\subseteq \S(\E;\Om)$ such that $x_k\rightarrow x$. Up to extract a subsequence we have that $\{x_k\}_{k\in \N}\subset  \Om\cap \pa \E(i)\cap \pa\E(j)\cap \pa \E(0)$ for some $1\leq i<j\leq N$ (since \eqref{putin} is in force). By closedness we obtain $x\in \pa\E(i)\cap \pa\E(j)\cap \pa\E(0)$ and we need to prove that $x\in \Om$. If $x\in \pa\Om$ we have $x\in \pa\E(i)\cap \pa\E(j)\cap \pa\Om$ which is a contradiction since $x$ would be a point of density $\frac{1}{2}$ for three disjoint sets $\E(1),\E(2),\Om^c$). Hence $x\in \Om$ and thus $x\in \S(\E(0);\Om)$.
\end{proof}
The proof of Theorem \ref{mainthm2} is now obtained as an easy consequence of the previous results.
\begin{proof}[Proof of Theorem \ref{mainthm2}]
Follows by Propositions \ref{ognicameraconfinaconilvuoto}, \ref{prima caratterizazione} and by Corollary \ref{egli e chiuso}.
\end{proof}
\section{The planar case} \label{cpt 4 the planar case}
As in the previous sections, the proof of Theorem \ref{mainthm3} is attained by combining different results that we state and prove in Subsection \ref{cpt 4 sbsct proof of mainthm3}. We premise some technical lemmas.
\subsection{Technical lemmas}

\begin{lemma}\label{buone componenti connesse}
Let $n\leq 7$, $N\geq 2$ and $\Om$ be an open, bounded and connected set with $C^{1}$ boundary and finite perimeter in $\R^{n}$. Let $\E$ be a Cheeger $N$-cluster for $\Om$. If $E$ is an indecomposable component of $\E(0)$ such that $E\cc \Om$, then there exist at least three different indexes $i,j,k\neq 0$ such that $\pa E \cap \E(i)\neq \emptyset$, $\pa E\cap \E(j)\neq \emptyset$, and $\pa E\cap \E(k)\neq \emptyset$. In particular, $E$ shares boundary at least with three different chambers.
\end{lemma}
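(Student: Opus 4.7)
The plan is to argue by contradiction: suppose $\partial E$ meets at most two chambers $\E(i),\E(j)$ with $i,j\neq 0$ (the case of a single chamber being a degenerate, easier subcase), and then construct two competitor $N$-clusters, each obtained by merging $E$ with one of $\E(i)$ or $\E(j)$. The Cheeger minimality \eqref{selfcheeger} applied to both competitors will yield two inequalities whose sum is absurd.

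The first step is to localise $\pa^* E$ on the two interfaces between $E$ and $\E(i)$, $\E(j)$. Since $E\cc\Om$, $\pa E$ is disjoint from $\pa\Om$. Each chamber $\E(k)$ with $k\neq 0$ is closed and satisfies $\pa\E(k)=\pared\E(k)$ (Remark \ref{closed}), hence the contradiction hypothesis ``$\pa E\cap\E(k)=\emptyset$ for every $k\neq 0,i,j$'' is equivalent to $\pa^* E\cap\pared\E(k)=\emptyset$ for every such $k$. Since $E$ is an indecomposable component of $\E(0)$, Theorem \ref{componenti indecomponibili} gives $|E|>0$ and, up to $\H^{n-1}$-null sets, $\pared E\subset \pared\E(0)$; combining this with Lemma \ref{tecnico} applied to the partition $\{\E(k)\}_{k=1}^N$ of $\Om\setminus\E(0)$ (together with the fact that $\Om\setminus\E(0)$ and $\E(0)$ share the same reduced boundary inside $\Om$) yields $\pared E\subset \pared\E(i)\cup\pared\E(j)$ up to $\H^{n-1}$-null sets. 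Set $S_i:=\pared E\cap\pared\E(i)$ and $S_j:=\pared E\cap\pared\E(j)$, so that $P(E)=\H^{n-1}(S_i)+\H^{n-1}(S_j)$.

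Next, I apply formula \eqref{eqn: perimetro unione} to compute the perimeter of $\E(i)\cup E$: because $|\E(i)\cap E|=0$ and $\nu_{\E(i)}=-\nu_E$ on $S_i$ (so $\{\nu_{\E(i)}=\nu_E\}$ is $\H^{n-1}$-negligible), one obtains
\[
P(\E(i)\cup E)=P(\E(i))-\H^{n-1}(S_i)+\H^{n-1}(S_j).
\]
The $N$-cluster $\E^{(i)}$ defined by $\E^{(i)}(i)=\E(i)\cup E$ and $\E^{(i)}(k)=\E(k)$ for $k\neq i$ is admissible in the definition of $H_N(\Om)$, so \eqref{selfcheeger} gives $h(\E(i))=P(\E(i))/|\E(i)|\le P(\E(i)\cup E)/(|\E(i)|+|E|)$, which simplifies to
\[
h(\E(i))\,|E|\le \H^{n-1}(S_j)-\H^{n-1}(S_i).
\]
Exchanging the roles of $i$ and $j$ produces the symmetric inequality $h(\E(j))\,|E|\le \H^{n-1}(S_i)-\H^{n-1}(S_j)$. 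Summing gives $\bigl(h(\E(i))+h(\E(j))\bigr)\,|E|\le 0$, which contradicts $|E|>0$ and the strict positivity of Cheeger constants. The degenerate case in which $\pa E$ meets only one chamber $\E(i)$ is immediate: then $S_j=\emptyset$ and $P(\E(i)\cup E)=P(\E(i))-P(E)$, so the competitor $\E^{(i)}$ already violates Cheeger minimality on its own.

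The principal obstacle I foresee is the first step, namely the clean justification that under the contradiction hypothesis $\pared E$ is $\H^{n-1}$-essentially covered by $S_i\cup S_j$. This relies on three ingredients: the convention that chambers are closed (so topological non-intersection translates into reduced-boundary non-intersection), the standard fact from Theorem \ref{componenti indecomponibili} that the reduced boundary of an indecomposable component is contained up to null sets in that of the ambient set, and the description of $\pared\E(0)\cap\Om$ afforded by Theorem \ref{mainthm2} and Lemma \ref{tecnico}. Once this is in place, the rest of the argument is a straightforward comparison based on \eqref{eqn: perimetro unione} and \eqref{selfcheeger}.
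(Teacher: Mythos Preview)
Your proof is correct and follows essentially the same approach as the paper's: merge the component $E$ into an adjacent chamber and exploit the minimality of $\E$. The only cosmetic difference is that the paper picks the single chamber with the larger shared interface (so the perimeter of the merged set does not increase) and derives the contradiction from one competitor, whereas you build both competitors $\E^{(i)},\E^{(j)}$ and sum the two resulting inequalities; both routes yield the same contradiction.

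One small comment on logical ordering: in your discussion of the ``principal obstacle'' you invoke Theorem~\ref{mainthm2} to describe $\pared\E(0)\cap\Om$, but in the paper Lemma~\ref{buone componenti connesse} precedes (and is independent of) Theorem~\ref{mainthm2}. You do not actually need Theorem~\ref{mainthm2} here: since $E\cc\Om$ and $\E(0)=\Om\setminus\bigcup_{k\ge1}\E(k)$, Lemma~\ref{tecnico} applied to $\{\E(k)\}_{k\ge1}$ (together with $\pared E\subset\pared\E(0)$ from Theorem~\ref{componenti indecomponibili}) already gives the $\H^{n-1}$-a.e.\ inclusion $\pared E\subset\bigcup_{k\ge1}\pared\E(k)$, and then the contradiction hypothesis collapses this union to $S_i\cup S_j$.
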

\begin{proof}
Let $E$ be a generic indecomposable component of $\E(0)$. Assume that $E$ shares its boundary with exactly 
two other different chambers $j,k\geq 1$ and $\partial E\cap \pa \Om=\emptyset$. Then either
$$a) \ \ \H^{n-1}(\pa^*E\cap\pa\E(j))\geq \H^{n-1}(\pa^*E\cap\pa\E(k)),$$
or
$$b) \ \ \H^{n-1}(\pa^*E\cap\pa\E(k))\geq \H^{n-1}(\pa^*E\cap\pa\E(j))$$
hold. Assume that $a)$ holds and define $\E_1(j):=\E(j)\cup E$, $\E_1(i)=\E(i)$ for $i\neq j$. Since 
$$P(\E_1(j))=P(\E(j))-\H^{n-1}(\pa^*E\cap\pa\E(j))+\H^{n-1}(\pa^*E\cap\pa\E(k))$$
we obtain:
\begin{eqnarray*}
H_N(\Om)&\leq& \sum_{i=1}^N\frac{P(\E_1(i))}{|\E_1(i)|}\\
&=&\frac{P(\E_1(j))}{|\E_1(j)|}+\sum_{i\neq j}\frac{P(\E(i))}{|\E(i)|}\\
&=&\frac{P(\E(j))-\H^{n-1}(\pa^*E\cap\pa\E(j))+\H^{n-1}(\pa^*E\cap\pa\E(k))}{|\E(j)|+|E|}+\sum_{i\neq j}\frac{P(\E(i))}{|\E(i)|}\\
&\leq&\frac{P(\E(j))}{|\E(j)|+|E|}+\sum_{i\neq j}\frac{P(\E(i))}{|\E(i)|}.
\end{eqnarray*}
If $|E|>0$ we are led to $H_N(\Om)<H_N(\Om)$ which is a contradiction, so $|E|=0$. Since $E$ is open (because $\E(0)$ is open), then $E=\emptyset$. If $E$ shares its boundary with exactly one chamber we argue in the same way. We have discovered that every decomposable component of $\E(0)$ that shares boundary with exactly one or two chambers is empty. That complete the proof.
\end{proof}

\begin{lemma}\label{nel piano!}
Let $E$ be a Cheeger set of an open bounded set $A\subset \R^{2}$. Assume that the following properties hold for $E$:
\begin{itemize}
\item[1)] $\#(\bd_{\pa A}(\pa A\cap\pa E))<+\infty$,
\item[2)] every $x\in \pa A\cap \pa E$ is a regular point for $\pa A$, namely $x\in \pa A\cap \pa E \subseteq \pared A$ ;
\end{itemize}
where
	\[
	\bd_{\pa A}(\pa A\cap\pa E)=[\pa A \cap\pa E]\cap\overline{[\pa A\setminus\pa E]}.
	\]
Then $\H^1(\pa E \cap \pa A)>0$.  
\end{lemma}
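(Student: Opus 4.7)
I would argue by contradiction, assuming that $\H^1(\pa E \cap \pa A) = 0$. A first observation is that $C := \pa E \cap \pa A$ must be a \emph{finite} set: being the intersection of two closed sets it is closed, and since $\H^1(C) = 0$ while $\pa A$ is locally $1$-rectifiable, $C$ has empty interior in $\pa A$, hence $\bd_{\pa A}(C) = C$, which is finite by hypothesis $1)$. Write $C = \{x_1, \ldots, x_k\}$, possibly with $k = 0$.

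Since $E$ is a Cheeger set of $A$, the discussion in Section \ref{cpt 4 sct 1} shows that $E$ is $(\La, r_0)$-perimeter minimizing in $A$. Theorem \ref{regularity} with $n = 2$ then gives that $\pa E \cap A$ is, for every $\a\in(0,1)$, a $C^{1,\a}$ one-dimensional manifold with no singularities; the Lagrange multiplier argument of Subsection \ref{Cpt 1 sbs: First variation of perimeter} forces its constant distributional mean curvature to equal $h(E) = h(A)$, so that $\pa E \cap A$ is a union of circular arcs of radius $r = 1/h(A)$. By hypothesis $2)$ and Proposition \ref{leo}, at each $x_i$ one has $x_i \in \pared E$ and $\nu_E(x_i) = \nu_A(x_i)$, i.e., $\pa E$ is tangent to $\pa A$ at $x_i$ with matching outward normal.

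The heart of the proof is to show that each connected component of $\pa E$ is a complete circle of radius $r$. Since $x_i \in \pared E$, the density of $E$ at $x_i$ equals $1/2$, so locally $\pa E$ passes through $x_i$ rather than ending there. The two branches of $\pa E \cap A$ approaching $x_i$ are arcs of circles of radius $r$, both tangent at $x_i$ to the common tangent line of $\pa E$ and $\pa A$, and both lying on the interior side of $\pa A$; since two circles of equal radius tangent to a common line at a common point from the same side coincide, these branches are contained in a single circle of radius $r$. Iterating along the finitely many tangent points of each component, each component of $\pa E$ is a $C^{1,\a}$ closed curve of constant curvature $h(A)$, hence a circle of radius $r$. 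Moreover, the positivity of the curvature with respect to the outward normal of $E$ rules out holes or nested configurations (a hole boundary would force negative curvature with respect to $\nu_E$, while nesting of same-radius circles is geometrically impossible), so $E$ is a disjoint union of closed disks of radius $r$.

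This produces the contradiction
\[
h(A) \;=\; \frac{P(E)}{|E|} \;=\; \frac{2}{r} \;=\; 2\,h(A),
\]
which forces $h(A) = 0$ and is incompatible with $A$ being a bounded set of positive measure (by the isoperimetric inequality \eqref{eqn: chapter intro isoperimetric} recalled earlier). The main obstacle in the argument is precisely the local analysis at the tangent points $x_i$: one must exclude cusps or several distinct arcs meeting at $x_i$ in order to conclude that the arcs glue smoothly into a single circle. This is where the combination of $\nu_E(x_i) = \nu_A(x_i)$ (from hypothesis $2)$ via Proposition \ref{leo}) with the interior $C^{1,\a}$ regularity provided by Theorem \ref{regularity} is essential, as it pins down the unique circle of radius $r$ tangent to $\pa A$ at $x_i$ from the interior side.
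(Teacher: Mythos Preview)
Your argument is correct and follows essentially the same route as the paper: assume $\H^1(\pa E\cap\pa A)=0$, deduce that the contact set is finite, use interior regularity to see that $\pa E\cap A$ consists of circular arcs of radius $1/h(A)$, exploit the tangency $\nu_E=\nu_A$ at each contact point to force all arcs through a given point to lie on a single circle, and reach the contradiction $2h(A)=h(A)$. The only minor difference is that the paper passes first to an indecomposable component $F$ of $E$ (which still satisfies $P(F)/|F|=h(A)$) and thereby avoids your separate discussion of holes and nested circles; both variants work.
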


\begin{remark}
\rm{
It seems that it is possible to generalize Lemma \ref{nel piano!} to dimension $n\geq 2$ by making use of Alexandrov's Theorem \cite{aleksandrov1962uniqueness} for the characterization of the Constant Mean Curvature (CMC) embedded hypersurface in $\R^n$. In this (more technical) framework hypothesis 1) can be weakened. Anyway, since we do not have to deal (at least here) with $n\geq 2$ and since for our purposes Lemma \ref{nel piano!} is all we need to complete the proof of Theorem \ref{mainthm3} we decide to not add this generalization.
}
\end{remark}

\begin{proof}[Proof of Lemma \ref{nel piano!}]
Assume by contradiction that $\H^1(\pa E \cap \pa A)=0$. In this case 	
	\[
	\bd_{\pa A}(\pa A\cap\pa E)=\pa E \cap \pa A.
	\] 
Let $F$ be an indecomposable component of $E$ and note that, since $E$ is a Cheeger set for $A$ it must hold
	\begin{equation}\label{e pure lui cheeger}
	\frac{P(F)}{|F|}=h(A).
	\end{equation}
Set 
	\[
	M=\bd_{\pa A}(\pa A\cap\pa F)
	\]
and $\#(M)=k<+\infty$. The well-known regularity theory for Cheeger sets, combined with the fact that $k<+\infty$ tells us that 
 	\[
 	\pa F\cap A=\bigcup_{i=1}^k \a_i
 	\] 
where each $\a_i$ is a piece of the boundary of a suitable ball $B_i$ (relatively open inside $\pa B_i$) of radius $\frac{1}{h(A)}$. The finiteness of $M$ implies that for a suitably small $r$ it holds $B_r(x)\cap M=\{x\}$ for all $x\in M$ and this means that for every $x\in M$ there exists two arcs $\a_i,\a_j$ (with possibly $i=j$) such that $x\in \overline{\a_i}\cap \overline{\a_j}$. \\
Let $B_i, B_j$ the balls from which such arcs come from: $\a_i\in \pa B_i$, $\a_j \in \pa B_j$. Hypothesis $2)$ implies that the outer unit normal to $B_i$ and to $B_j$ at $x$ must coincide with $\nu_A(x)$ and hence the balls $B_i$ and $B_j$ must coincide as well. Since $k<+\infty$, by iterating this argument we conclude that there exists only one ball $B$ of radius $\frac{1}{h(A)}$ such that $M\subset \pa B$ and $\pa F\cap A= \pa B\cap A$. In particular $\pa F$ is equal to $\pa B$ and by exploiting \eqref{e pure lui cheeger} we bump into a contradiction
\begin{eqnarray*}
\frac{P(F)}{|F|}&=&h(A)\\
\frac{P(B)}{|B|}&=&h(A)\\
\frac{\frac{2\pi}{h(A)}}{\frac{\pi}{h(A)^2}}&=&h(A)\\
\frac{\frac{2}{h(A)}}{\frac{1}{h(A)^2}}&=&h(A)\\
2h(A)&=&h(A).
\end{eqnarray*}
The contradiction comes from the fact that we have assumed $\H^1(\pa E \cap \pa A)=0$, hence $\H^1(\pa E \cap \pa A)>0$ and the proof is complete.\\
\end{proof}

\subsection{Proof of Theorem \ref{mainthm3}} \label{cpt 4 sbsct proof of mainthm3}
\begin{proposition}\label{struttura singolaritapiano}
Let $N\geq 2$ and $\Om$ be an open, bounded and connected set with $C^{1}$ boundary and finite perimeter in the plane.  Let $\E$ be a Cheeger $N$-cluster of $\Om$. Then $\S(\E;\Om)=\S(\E(0);\Om)$ is a finite union of points.
\end{proposition}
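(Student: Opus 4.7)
The plan is to use Theorem \ref{mainthm2}(ii) to decompose
\[
\S(\E(0);\Om)=\bigcup_{1\le i<j\le N}T_{i,j},\qquad T_{i,j}:=\pa\E(i)\cap\pa\E(j)\cap\pa\E(0)\cap\Om,
\]
and to prove each $T_{i,j}$ is finite. Suppose by contradiction that some $T_{i,j}$ contains a sequence $(x_n)$ of distinct points; by compactness of $\overline\Om$ we may assume $x_n\to x_0\in\overline\Om$. By Theorem \ref{mainthm1}(i), both $\pa\E(i)$ and $\pa\E(j)$ are $C^{1,\a}$ curves near each $x_n\in\Om$. Since $\E(i),\E(j)$ have disjoint interiors while $x_n\in\pa\E(0)$, a transversal meeting of the two curves at $x_n$ would force $\E(i)\cup\E(j)$ to fill a full neighborhood of $x_n$, leaving no room for $\E(0)$; hence the two curves are tangent at each $x_n$, with $\E(0)$ cusping between them. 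Since $\E$ has only finitely many chambers, fix a ball $B=B_\rho(x_0)$ with $B\cap\E(k)=\emptyset$ for every $k\notin\{0,i,j\}$.

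Next I would exploit the constant-mean-curvature structure. Property \eqref{selfcheeger} together with the disjointness of chambers shows that $\E(i)$ is a Cheeger set of the open set $A_i:=\Om\setminus\bigcup_{k\ne i,\,k\ge 1}\E(k)=\E(i)\cup\E(0)$; by the classical regularity theory for planar Cheeger sets, $\pa\E(i)\cap A_i$ is analytic of constant curvature $h(\E(i))>0$, hence a disjoint union of circular arcs of radius $1/h(\E(i))$, and analogously for $\E(j)$. The core claim is then that, for $n$ large, the open arc $\a_n^i\subset\pa\E(i)$ between two consecutive points $x_n,x_{n+1}$ of $T_{i,j}\cap B$ lies entirely in $A_i$: within $B$ the only chamber other than $\E(0)$ whose boundary $\pa\E(i)$ could touch is $\E(j)$, and any extremal point of $\a_n^i\cap\pa\E(j)$ would be a limit of points in $\pa\E(i)\cap\pa\E(0)$ (by closedness of $\pa\E(0)$), and would therefore itself lie in $T_{i,j}$, contradicting the consecutiveness of $x_n,x_{n+1}$. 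Hence $\a_n^i$ lies on a single circle of radius $1/h(\E(i))$ through $x_n,x_{n+1}$, and the corresponding arc $\a_n^j\subset\pa\E(j)$ lies on a circle of radius $1/h(\E(j))$ through the same two points. Tangency at both $x_n$ and $x_{n+1}$ forces the two circles to coincide (two distinct circles are tangent at most at one point); but then $\pa\E(i)=\pa\E(j)$ along $\a_n^i$, contradicting $\a_n^i\subset A_i$, which excludes $\pa\E(j)\subset\E(j)$. This rules out $x_0\in\Om$.

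The hardest part will be the boundary case $x_0\in\pa\Om$, which I would handle analogously by invoking Theorem \ref{mainthm1}(ii): $\pa\E(i),\pa\E(j)$ touch $\pa\Om$ only tangentially and with $\nu_{\E(i)}=\nu_{\E(j)}=\nu_\Om$, so the $C^1$-regularity of $\pa\Om$ permits the same circular-arc rigidity to apply. The delicate technical point is that arcs of $\pa\E(i)$ near $x_0$ may be interrupted by tangential contact with $\pa\Om$, so one must verify that the CMC classification still yields the two-point tangency contradiction --- either by showing that for $n$ large enough $\a_n^i,\a_n^j$ remain in the interior of $\Om$ and the argument of the previous paragraph applies verbatim, or by a localization/doubling across $\pa\Om$ which reduces the boundary case to the interior rigidity statement.
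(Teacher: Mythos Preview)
Your approach is genuinely different from the paper's. The paper never uses the constant-curvature structure of the free boundary; instead it writes $\pa\E(i)$ and $\pa\E(j)$ locally as graphs $f_1\le f_2$ over a small cube, so that each bounded component of $\{f_1<f_2\}$ is an indecomposable component of $\E(0)$ compactly contained in $\Om$ and meeting exactly two chambers. This contradicts Lemma~\ref{buone componenti connesse}, forcing at most two contact points per cube. Your curvature-rigidity argument avoids that lemma entirely, which is a nice alternative.

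Two points need tightening. First, the boundary case $x_0\in\pa\Om$ is a red herring: by Theorem~\ref{mainthm2} (more precisely Corollary~\ref{egli e chiuso}) the set $\S(\E(0);\Om)$ is already closed in $\R^2$, so any accumulation point of $T_{i,j}$ lies in $\Om$. Second, the step ``the circles coincide, hence $\pa\E(i)=\pa\E(j)$ along $\a_n^i$'' is not justified as written: two arcs of the same circle joining $p$ and $q$ could be complementary, not equal. What actually yields the contradiction is the \emph{sign} of the curvature: since $h(\E(i)),h(\E(j))>0$, each free-boundary arc curves toward its own chamber, so the common center would have to lie on the $\E(i)$-side and on the $\E(j)$-side of the tangent line at $p$ simultaneously --- impossible, as $\E(i)$ and $\E(j)$ occupy opposite half-planes there. (Equivalently, for $n$ large both arcs have length $o(1)$ while the circle has fixed circumference, so both must be the short arc and hence coincide; then your stated contradiction goes through.) You should also make precise that ``consecutive'' means endpoints of a single complementary interval of the contact set along $\pa\E(i)$, and note that such intervals exist because an interior point of an arc where $\pa\E(i)=\pa\E(j)$ cannot belong to $\pa\E(0)$.
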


\begin{proof}
We prove that $\S(\E(0);\Om)$ has no accumulation point. In this way we show that $\S(\E(0);\Om)$ is a closed (thanks to Theorem \ref{mainthm2}), bounded set of $\R^2$ (since $\Om$ is bounded) without accumulation points which means that $\S(\E(0);\Om)$ must be a finite union of points.\\

Set $\S_0=\S(\E(0);\Om)$ for the sake of brevity. Let $\xi\in \pa\E(i)\cap \pa\E(j)$ for some $1\leq i<j\leq N$ that without loss of generality we assume to be $i=1,j=2$. We can assume (up to a translation) also that $\xi=(0,0)$. Since $\pa\E(1),\pa\E(2)$ are regular up to a rotation we can find a small closed cube 
	\[
	Q_{\e}:=[-\e,\e]\times[-\e,\e]\cc\Omega
	\]
centered at $\xi=(0,0)$ and two $C^1$ functions $f_1,f_2:[-\e,\e]\rightarrow \R$ such that $f_1(x)\leq f_2(x)$ for all $x\in[-\e,\e]$ and:
\begin{align*}
\E(1)\cap Q_{\e}&=\{(x,y) \in Q_{\e} \ | \ -\e \leq y\leq  f_1(x) \},\\
\pa \E(1)\cap Q_{\e} &=\{(x,f_1(x)) \ | \ x\in [-\e,\e]\},\\
\E(2)\cap Q_{\e}&=\{(x,y) \in Q_{\e} \ | \ \  f_2(x)\leq y \leq \e  \},\\
\pa \E(2)\cap Q_{\e}&=\{(x,f_2(x)) \ | \ x\in [-\e,\e] \}\\
\pa \E(2)\cap \pa \E(1)\cap Q_{\e}&=\{(x,y) \in Q_{\e} \ | \ \ y=f_1(x)=f_2(x)\leq \e  \},\\
\E(0)\cap Q_{\e}&=\{(x,y)\in Q_{\e} \ | \ -\e\leq f_1(x)< y< f_2(x)\leq \e\},\\
\E(k)\cap Q_{\e}&=\emptyset \ \ \text{for all $k\geq 3$},
\end{align*}
(see Figure \ref{compoconne}). Since the blow-up of $\pa\E(1)\cap\pa\E(2)$ at $\xi=(0,0)$ is a line, up to further decrease $\e$, we can also assume that $\E(1)\cap Q_{\e}$ and $\E(2)\cap Q_{\e}$ are indecomposable, which is equivalent to say:
	\[
	|f_1(x)|<\e, \ \ \ |f_2(x)|<\e \ \ \ \ \forall \ x\in [-\e,\e].
	\]
We consider the set
	\begin{align*}
	E_0&:= \{ x\in [-\e,\e] \ | \ f_1(x)<f_2(x) \}.
	\end{align*}
which is relatively open inside $[-\e,\e]$ (is the counter-image of the open set $(-2\e,0)$ through the continuous function $f_1-f_2$).
\begin{figure}
\begin{center}
 \includegraphics[scale=1]{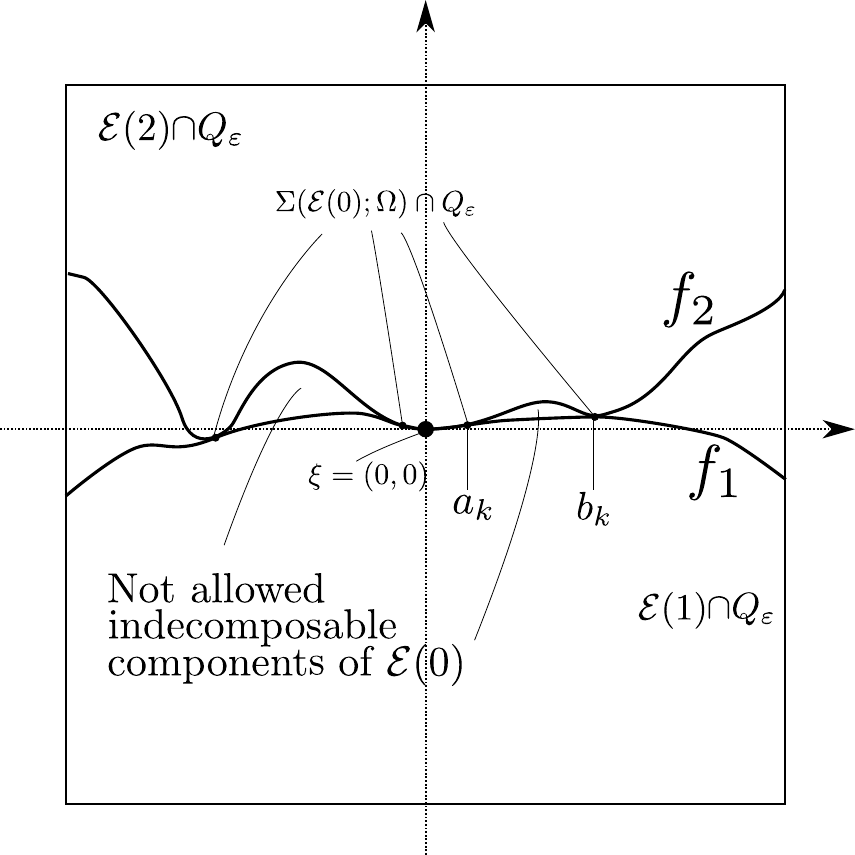}\caption{{\small This kind of behavior contradicts the minimality property of $\E$, in particular it contradicts Lemma \ref{buone componenti connesse}.}}\label{compoconne}
 \end{center}
\end{figure}
 Hence, $E_0$ must be the union of countably many disjoint (open) intervals:
	\[
	E_0=[-\e,a)\cup (b,\e] \cup \left(\bigcup_{k=2}^{+\infty}(a_k,b_k)\right)
	\]
for $\{a_k\}_{k=1}^+{\infty},\{b_k\}_{k=1}^+{\infty}\subset [-\e,\e]$  (with a slight abuse of notation we are allowing also the possible cases $a=-\e$, $b=\e$ or even $a=a_1$,$b_{\infty}=b$ as in Figure \ref{compoconne}). It is immediate that each
	\[
	A_k:=\{(x,y)\in Q_{\e} \ | \ a_k<x<b_k, \ f_1(x) < y <f_2(x) \}
	\]
is an indecomposable component of $\E(0)$. Observe that $E_k\cc Q_{\e} \cc \Om$ is an indecomposable component of $\E(0)$ that share its boundary with exactly two chambers ($\E(1), \E(2)$) and hence contradicts Lemma \ref{buone componenti connesse}. This means that the only possibility is
	\[
	E_0=[-\e,a)\cup (b,\e] 
	\]
for some $a,b\in [-\e,\e]$. By possibly decreasing $\e$ we can assume that $(-\e,f_1(-\e) ),(\e,f_1(\e) )\notin \S_0\cap Q_{\e}$. The only possibilities remained are
\begin{itemize}
\item[1)] $a=-\e$ and $b=\e$, thus $\S_0\cap Q_{\e}=\emptyset$;
\item[2)] $a\neq -\e$ and $b = \e$, thus $\S_0\cap Q_{\e}=\{(a,f_1(a))\}=\{(a,f_2(a))\}$;
\item[3)] $a=-\e$ and $b\neq \e$, thus $\S_0\cap Q_{\e}=\{(b,f_1(b))\}=\{(b,f_2(b))\}$;
\item[4)] $a\neq -\e$ and $b\neq \e$, thus $\S_0\cap Q_{\e}=\{(a,f_1(a)), (b,f_1(b))\}=\{(a,f_2(a)), (b,f_2(b))\}$.
\end{itemize} 
In particular $\#(\S_0\cap Q_{\e})\leq 2$ which means that $\S_0$ has no accumulation points.
\end{proof}

We now exploit the stationarity of Cheeger $N$-clusters in order to derive information on their structure. 

\begin{proposition} \label{curvature}
Let $N\geq 2$ and $\Om$ be an open, bounded and connected set with $C^{1}$ boundary and finite perimeter in the plane. Let $\E$ be a Cheeger $N$-cluster of $\Om$. For every $j,k=0,\ldots,N$, $k \neq j$ the set
$$E_{j,k}:=[\pa \E(j)\cap \pa \E(k) \cap\Om]\setminus \S(\E(0);\Om)$$ 
is relatively open in $\pa \E(j)\cap\pa\E(k)\cap \Om$ and is the finite union of segments and circular arcs. Moreover the set $\E(j)$ has constant curvature $C_{j,k}$ on each open set $A$ such that $A\cap \pa \E(j) \subseteq E_{j,k}$. The constant $C_{j,k}$ is equal to:
\begin{equation}
C_{j,k}= \left\{
\begin{array}{cc}
\frac{|\E(k)|h(\E(j))-|\E(j)|h(\E(k))}{|\E(j)|+|\E(k)|}, & \text{if $k\ne 0$}\\
 & \\
h(\E(j)), & \text{if $k=0$}.
\end{array}
\right.
\end{equation}
As a consequence the set $\E(k)$ has constant curvature $C_{k,j}=-C_{j,k}$ on each open set $A$ such that $A\cap \pa \E(k)\subseteq  E_{k,j}(=E_{j,k})$.
\end{proposition}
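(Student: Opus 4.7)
First, $E_{j,k}$ is relatively open in $\pa\E(j)\cap\pa\E(k)\cap\Om$ because Corollary \ref{egli e chiuso} already supplies closedness of $\S(\E(0);\Om)$ in $\Om$. For the local picture at a point $x\in E_{j,k}$, I would use Lemma \ref{densita dei punti di omega} to see $\vartheta_2(x,\E(j))=\vartheta_2(x,\E(k))=\tfrac12$, so the density of every other chamber $\E(i)$ ($i\neq j,k$) at $x$ must vanish; in particular no such $\E(i)$ can have $x$ on its reduced boundary. When $k\neq 0$, combining $x\notin \S(\E(0);\Om)$ with the characterization $\S(\E(0);\Om)=\pa\E(0)\cap\E(0)^{(0)}$ from Proposition \ref{prima caratterizazione} also rules out $\E(0)$ near $x$, so in a suitably small ball $A=B_r(x)\cc\Om$ only $\E(j)$ and $\E(k)$ are present and the arc $\pa\E(j)\cap A=\pa\E(k)\cap A$ is a single $C^{1,\a}$ curve.

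Next, I would derive the curvature formula by a first-variation argument. Fix $T\in C_c^{\infty}(A;\R^2)$ and the flow $f_t(y)=y+tT(y)$. In the case $k\ne 0$, let $\E_t(j)=f_t(\E(j))$, $\E_t(k)=f_t(\E(k))$ and $\E_t(i)=\E(i)$ otherwise; since $f_t$ is a diffeomorphism supported in $A$, this is still an $N$-cluster of $\Om$ for $|t|$ small. Setting $\phi=T\cdot\nu_{\E(j)}$ and using $\nu_{\E(k)}=-\nu_{\E(j)}$ together with the identity $H_{\E(k)}=-H_{\E(j)}$ on the shared boundary (which follows from \eqref{eqn: chapter intro distributional mean curvature} applied on both sides), Theorems \ref{teo: chapter intro first variation of the perimeter} and \ref{teo: chapter intro first variation of the potential energy} together with the Cheeger identity \eqref{selfcheeger} yield
\[
\frac{d}{dt}\bigg|_{t=0}\!\bigg[\frac{P(\E_t(j))}{|\E_t(j)|}+\frac{P(\E_t(k))}{|\E_t(k)|}\bigg]=\int_{\pa^*\E(j)\cap A}\!\phi\,\bigg[\frac{H_{\E(j)}-h(\E(j))}{|\E(j)|}+\frac{H_{\E(j)}+h(\E(k))}{|\E(k)|}\bigg]d\H^1.
\]
All other ratios $P(\E_t(i))/|\E_t(i)|$ are stationary, so minimality of $\E$ and the arbitrariness of $\phi$ force the bracket to vanish identically on $\pa^*\E(j)\cap A$, which rearranges exactly to $H_{\E(j)}=C_{j,k}$. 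In the case $k=0$ the same variation only affects $\E(j)$ and $\E(0)$; since the functional carries no term for $\E(0)$, minimality directly gives $H_{\E(j)}=h(\E(j))=C_{j,0}$.

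With $H_{\E(j)}$ constant equal to $C_{j,k}$ on every open piece of $\pa\E(j)$ contained in $E_{j,k}$, the classical characterization of planar $C^{1,1}$ curves of constant curvature identifies each connected component of $E_{j,k}$ as a segment (if $C_{j,k}=0$) or an arc of a circle of radius $1/|C_{j,k}|$, and the identity $C_{k,j}=-C_{j,k}$ is manifest from the formula upon swapping $j$ and $k$. The remaining point---and the main obstacle I anticipate---is the finiteness of the collection of components. My plan there is threefold: (i) apply the isoperimetric inequality together with \eqref{selfcheeger} to see that every indecomposable component of $\E(j)$ encloses area at least $4\pi/h(\E(j))^{2}$, hence $\E(j)$ has only finitely many indecomposable pieces; (ii) combine the $C^{1,1}$-regularity of $\pa\E(j)\cap\Om$ (from Theorem \ref{rego planar cluster} via the $(\Lambda,r_0)$-minimality of Theorem \ref{regolare}) with the $C^1$-regularity of $\pa\Om$ and the tangential-contact property of Theorem \ref{mainthm1}(ii), to conclude that the topological boundary of each such indecomposable piece is a finite union of $C^{1,1}$ Jordan curves; (iii) finally remove the finite set $\S(\E(0);\Om)$ from Proposition \ref{struttura singolaritapiano} and intersect with $\pa\E(k)$, producing at most finitely many relatively open segments or arcs.
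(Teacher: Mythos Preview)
Your strategy matches the paper's: isolate a ball where only two chambers are present, use stationarity of the ratio functional to pin down the curvature, and then read off arcs. Two points deserve attention.

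First, in your displayed first-variation identity you write $H_{\E(j)}$ as if the distributional mean curvature already exists as an $L^1$ function, and you invoke \eqref{eqn: chapter intro distributional mean curvature} for the relation $H_{\E(k)}=-H_{\E(j)}$. The paper avoids this circularity by first observing that, in the ball $A$, the interface solves an isoperimetric problem with volume constraint and hence is an \emph{analytic} CMC curve (citing \cite[Theorems 17.20, 24.4]{maggibook}); only then does it compute the value of the constant by the same first-variation computation you perform. You can repair your version in two ways: either cite the $C^{1,1}$-regularity of Theorem \ref{rego planar cluster} (which, in $n=2$, gives curvature in $L^{\infty}$ and justifies the integration-by-parts in \eqref{eqn: chapter intro distributional mean curvature}), or---more cleanly---do not introduce $H_{\E(j)}$ at all and write stationarity directly as
\[
\Big(\tfrac{1}{|\E(j)|}+\tfrac{1}{|\E(k)|}\Big)\!\int_{\pa^*\E(j)\cap A}\!\!\dive_{\E(j)}T\,d\H^1
=\Big(\tfrac{h(\E(j))}{|\E(j)|}-\tfrac{h(\E(k))}{|\E(k)|}\Big)\!\int_{\pa^*\E(j)\cap A}\!\!(T\cdot\nu_{\E(j)})\,d\H^1,
\]
valid for every $T\in C_c^\infty(A;\R^2)$; this is exactly the definition of $\E(j)$ having constant distributional curvature $C_{j,k}$ in $A$.

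Second, your step (iii) for finiteness is not yet complete: ``intersect with $\pa\E(k)$'' could in principle produce infinitely many pieces. The paper is equally terse here, but the clean argument (for $j,k\neq 0$) is this: a limit point of $\pa\E(j)\cap\pa\E(k)\cap\Om$ lying on $\pa\Om$ would, by Lemma \ref{densita dei punti di omega} and Theorem \ref{mainthm1}(ii), have density $\tfrac12$ in each of the three disjoint sets $\E(j),\E(k),\Om^{c}$---impossible. Hence $\pa\E(j)\cap\pa\E(k)\cap\Om$ is compact in $\Om$, so $E_{j,k}$ is open in the $1$-manifold $\pa\E(j)$ with $\overline{E_{j,k}}\setminus E_{j,k}\subset\S(\E(0);\Om)$, a finite set by Proposition \ref{struttura singolaritapiano}; an open subset of a circle whose relative boundary is finite has finitely many components. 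Your points (i)--(ii) take care of the fact that $\pa\E(j)$ is a finite union of circles, which is what makes this argument global. For $k=0$ the compactness step fails (the free boundary may accumulate on $\pa\Om$), and the paper simply appeals to the standard structure of planar Cheeger sets for $\E(j)$ inside $\Om\setminus\bigcup_{i\neq j}\E(i)$.
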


\begin{proof}
If $k=0$ (or $j=0$) we just notice that $\E(j)$ is a Cheeger set for 
	\[
	\Om_0=\Om\setminus \bigcup_{\substack{i=1,\\ i\neq j}}^{N} \E(i)
	\]
so the free boundary $E_{j,0}$ is the finite union of segments and circular arcs and $\E(j)$ has constant mean curvature $C_{j,0}=h(\E(j))$ on each open set $A$ such that $A\cap \pared \E(j)\subseteq E_{j,0}$.\\

Thus, we consider a couple $j,k\in \{1,\ldots, N\}$ such that 
	\[
	[\pa\E(i)\cap\pa\E(k)\cap \Om]\setminus  \S(\E(0);\Om)\neq \emptyset
	\]
(otherwise there is nothing to prove and the proposition is trivial). The set $\S(\E(0);\Om)$ is closed and is the finite union of points (thanks to Lemma \ref{struttura singolaritapiano}). Hence 
	\[
	E_{j,k}:=[\pa \E(j)\cap \pa \E(k) \cap\Om]\setminus \S(\E(0);\Om)
	\]
is relatively open in $\pa \E(j)\cap\pa\E(k) \cap \Om$. For every $x\in E_{j,k}$, by closedness, there exists a ball $B_r(x)$ such that 
	\[
	B_r(x)\cap  \E(i)=\emptyset \ \ \ \ \forall \ i=1,\ldots,N, \  \ i\neq j,k.
	\]
Note that, up to further decrease the value of $r$ it must hold as well
	\[
	B_r(x)\cap \E(0)=\emptyset.
	\]
Indeed if this is not the case, we would have that $x\in \pa \E(0)\cap \E(0)^{\zero}$ and thus (thanks to Proposition \ref{prima caratterizazione}) $x\in \S(\E(0);\Om)$ which is a contradiction since $x\in E_{j,k}$. Hence, because of the minimality of $\E$, the set $\pa\E(j)\cap \pa\E(k)\cap B_r(x)$ must solve an isoperimetric problem with volume constraint inside $B_r(x)$ and by exploiting stationarity it is possible to prove that each solution to an isoperimetric problem with volume constraint must be an analytic constant mean curvature hypersurface (\cite[Theorems 17.20, 24.4 ]{maggibook}). Set $C_{j,k}$ and $C_{k,j}$ to be respectively the value of the mean curvature of $\E(j)$ and of $\E(k)$ in $B_r(x)$. Observe that, since $\E(k)\cap B_r= \E(j)^c\cap B_r$ it holds trivially that $C_{k,j}=-C_{j,k}$. Let us compute the (constant) value of $C_{j,k}$.\\

Consider a map $T\in C^{\infty}_c(B_r;\R^2)$, define for all $|t|<\e$ the diffeomorphism $f_t(y)=y+t T(y)$ and the cluster $\E_t:=\{f_t(\E(i))\}_{i=1}^N$. Of course, for $t$ suitably small, $\E_t\Delta \E \subset\subset B_r $. Note that $\{f_t \ | \ -\e<t<\e \}$ is a local variation in $B_r$ and that $T$ is the initial velocity (according to the definitions given in Subsection \ref{Cpt 1 sbs: First variation of perimeter}). By minimality it holds: 
$$\frac{P(\E(j))}{|\E(j)|}+\frac{P(\E(k))}{|\E(k)|} \leq \frac{P(\E_t(j))}{|\E_t (j)|}+\frac{P(\E_t(k))}{|\E_t (k)|}, \ \ \ \forall \ |t|<\e.$$
Thus  
\begin{equation}\label{mi sparooo}
0\leq \frac{d}{dt}\Big{|}_{t=0}\frac{P(\E_t(j))}{|\E_t (j)|}+\frac{d}{dt}\Big{|}_{t=0} \frac{P(\E_t(k))}{|\E_t (k)|}.
\end{equation}
With some easy computations
\begin{eqnarray*}
\frac{d}{dt}\Big{|}_{t=0}\frac{P(\E_t(j))}{|\E_t (j)|} &=&\frac{|\E(j)|\frac{d}{dt}\Big{|}_{t=0}P(\E_t(j))-P(\E(j))\frac{d}{dt}\Big{|}_{t=0}|\E_t(j)|}{|\E (j)|^2},\\
\text{}\\
\frac{d}{dt}\Big{|}_{t=0}P(\E_t(j)) &=&C_{i,k} \int_{\pa \E(j)\cap \pa \E(k) \cap B_r}(T(y)\cdot \nu_{\E(j)}(y))  \d \H^1(y)\\
\frac{d}{dt}\Big{|}_{t=0}|\E_t(j)| &=& \int_{\pa \E(j)\cap \pa \E(k) \cap B_r}(T(y)\cdot \nu_{\E(j)}(y)) \d \H^1(y).
\end{eqnarray*}
where we have used formulas \eqref{eqn: chapter intro constant mean curvature variations} and \eqref{eqn: chpter intro first variaition of the lebesgue measure} combined with the fact that the mean curvature exists and it is constantly equal to $C_{j,k}$ in $B_r$ (and hence on $\pa\E(j)\cap \pa \E(k)\cap B_r$). By denoting with 
$$ f_i=\int_{\pa \E(j)\cap \pa \E(k) \cap B_r}(T(y)\cdot \nu_{\E(j)}(y))  \d \H^1(y)$$
$$ f_k=\int_{\pa \E(j)\cap \pa \E(k) \cap B_r}(T(y)\cdot \nu_{\E(k)}(y))  \d \H^1(y),$$
we can write:
\begin{eqnarray*}
\frac{d}{dt}\Big{|}_{t=0}\frac{P(\E_t(j))}{|\E_t (j)|} &=&\frac{|\E(j)| f_j C_{j,k}-P(\E(j))f_j}{|\E (j)|^2},\\
\frac{d}{dt}\Big{|}_{t=0}\frac{P(\E_t(k))}{|\E_t (k)|} &=&\frac{|\E(k)| f_k C_{k,j}-P(\E(k))f_k}{|\E (k)|^2},\\
\end{eqnarray*}
that plugged into \eqref{mi sparooo},by observing that $f_j=-f_k$, lead to the relation:
\begin{eqnarray*}
0&\leq &\frac{d}{dt}\Big{|}_{t=0}\frac{P(\E_t(j))}{|\E_t (j)|} +\frac{d}{dt}\Big{|}_{t=0}\frac{P(\E_t(k))}{|\E_t (k)|} \\
&=&\frac{|\E(i)| f_j C_{j,k}-P(\E(j))f_j}{|\E(j)|^2}+\frac{|\E(k)| f_k C_{k,j}-P(\E(k))f_k}{|\E(k)|^2}\\
&=&f_j\left[ \frac{|\E(j)|  C_{j,k}-P(\E(j))}{|\E (j)|^2}-\frac{|\E(k)|  C_{k,j}-P(\E(k))}{|\E (k)|^2}\right].
\end{eqnarray*}
By choosing a $T_1$ such that $f_j$ is positive and then a $T_2$ such that $f_j$ is negative we conclude that 
\begin{eqnarray*}
0&=&\frac{|\E(j)|  C_{j,k}-P(\E(j))}{|\E (j)|^2}-\frac{|\E(k)|  C_{k,j}-P(\E(k))}{|\E (k)|^2}.\\
\end{eqnarray*}
Finally, by exploiting $C_{j,k}=-C_{k,j}$ we rach
\begin{eqnarray*}
0&=&\frac{ C_{j,k}}{|\E (j)|}-\frac{P(\E(j))}{|\E (j)|^2}-\frac{  C_{k,j}}{|\E (k)|}+\frac{P(\E(k))}{|\E (k)|^2}\\
&=&\frac{ C_{j,k}}{|\E (j)|}-\frac{P(\E(j))}{|\E (j)|^2}+\frac{  C_{j,k}}{|\E (k)|}+\frac{P(\E(k))}{|\E (k)|^2},
\end{eqnarray*}
that can be re-arranged as:
\begin{eqnarray*}
C_{i,k}(|\E(j)|+|\E(k)|)&=&h(\E(j))|\E (k)|-h(\E(k))|\E (j)|,\\
C_{j,k}&=&\frac{h(\E(j))|\E (k)|-h(\E(k))|\E (j)|}{|\E(j)|+|\E(k)|}.
\end{eqnarray*}
In particular, since $C_{j,k}$ do not depend on $x\in E_{j,k}$ and since the ambient space dimension is $n=2$, $E_{j,k}$ must be a finite union of circular arcs or segments with curvature $|C_{j,k}|$.
\end{proof}

Our last proposition of the section put together Lemma \ref{nel piano!} Proposition \ref{struttura singolaritapiano} and Proposition \ref{curvature} and tells us that the interior chambers of a Cheeger $N$-cluster are always indecomposable. We are making strong use of Proposition \ref{struttura singolaritapiano} which does not holds on $\pa\Om$ (see Figure \ref{controfinitezza} and Remark \ref{remark controfinitezza}) and that is why we cannot extend the proof of the Proposition \ref{connessionecamere} to all the chambers. 

\begin{proposition}\label{connessionecamere}
Let $N\geq 2$ and $\Om$ be an open, bounded and connected set with $C^{1}$ boundary and finite perimeter in the plane. Let $\E$ be a Cheeger $N$-cluster for $\Om$. Then, every chamber $\E(i)\cc \Om$ for $i\neq 0$ is indecomposable. 
\end{proposition}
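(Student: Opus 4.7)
The plan is to argue by contradiction: suppose some chamber $\E(i)\cc\Om$ with $i\neq 0$ is decomposable, and let $F$ be an indecomposable component of $\E(i)$, so that $F\cc\Om$ and $0<|F|<|\E(i)|$. I will derive a contradiction in two cases according to whether $\pa F$ shares positive $\H^1$-measure with some $\pa\E(k)$, $k\neq 0,i$.

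First I would establish the following self-Cheeger identity for each indecomposable component $F_{\ell}$ of $\E(i)$: namely, $P(F_{\ell})/|F_{\ell}|=h(\E(i))$. By minimality of the cluster $\E$, any set $G\subseteq A_0:=\Om\setminus\bigcup_{k\neq 0,i}\E(k)$ with $|G|>0$ is an admissible replacement for $\E(i)$ in the cluster, yielding $P(G)/|G|\geq h(\E(i))$; in particular $P(F_{\ell})/|F_{\ell}|\geq h(\E(i))$ for every $\ell$. Since $\sum_{\ell}P(F_{\ell})=P(\E(i))$ and $\sum_{\ell}|F_{\ell}|=|\E(i)|$, the identity $h(\E(i))=P(\E(i))/|\E(i)|$ is a weighted average of these ratios, forcing equality. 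In particular $h(F)=h(\E(i))$, so that the modified $N$-cluster $\tilde\E$ defined by $\tilde\E(i)=F$ and $\tilde\E(h)=\E(h)$ for $h\neq i$ satisfies $\sum_{h}h(\tilde\E(h))=H_N(\Om)$ and is itself a Cheeger $N$-cluster of $\Om$, to which Proposition \ref{curvature} and Theorem \ref{mainthm3} apply.

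\textbf{Case 1:} there exists $k\neq 0,i$ with $\H^1(\pa F\cap\pa\E(k))>0$. On the regular part of this interface (which is nonempty since the singular set $\S(\E;\Om)\cup\S(\tilde\E;\Om)$ is finite by Proposition \ref{struttura singolaritapiano}), Proposition \ref{curvature} applied both to $\E$ and to $\tilde\E$ assigns two expressions to the same intrinsic curvature, which must therefore coincide. Using $h(F)=h(\E(i))$ this reads
\[
\frac{|\E(k)|h(\E(i))-|\E(i)|h(\E(k))}{|\E(i)|+|\E(k)|}=\frac{|\E(k)|h(\E(i))-|F|h(\E(k))}{|F|+|\E(k)|}.
\]
Cross-multiplying and cancelling the common terms (using $h(\E(i)),h(\E(k))>0$) reduces this to $|F|=|\E(i)|$, contradicting $|F|<|\E(i)|$.

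\textbf{Case 2:} $\H^1(\pa F\cap\pa\E(k))=0$ for every $k\neq 0,i$. I would first rule out singular points of $\E$ on $\pa F$. Suppose $x\in\pa F\cap\S(\E;\Om)$: by Theorem \ref{mainthm2}(ii), $x\in\pa\E(0)\cap\pa\E(i)\cap\pa\E(k)$ for some $k\neq 0,i$; since the local $\E(i)$-sector at $x$ is connected, it must lie entirely inside a single indecomposable component of $\E(i)$. If it lies in $F$, then $\pa F$ contains a positive-$\H^1$ arc of $\pa\E(k)$ near $x$, contradicting Case 2; if it lies outside $F$, then $x\notin\pa F$. Hence $\pa F$ contains no singular point of $\E$, and is, up to an $\H^1$-negligible set, a smooth $C^{1,1}$ curve on $\pa\E(0)\cap\pa\E(i)$ of constant curvature $h(\E(i))>0$ by Proposition \ref{curvature}. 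The classical rigidity of planar $C^{1,1}$ closed curves of constant curvature, combined with indecomposability of $F$ and the positivity of the curvature (which excludes annular configurations whose inner circle would carry the wrong-signed curvature), forces $F$ to be a disk of radius $1/h(\E(i))$. But then $h(F)=P(F)/|F|=2h(\E(i))\neq h(\E(i))$, a contradiction. (Alternatively, one can repackage this step by applying Lemma \ref{nel piano!} to $F$ as a Cheeger set of $A:=\Om\setminus\overline{\bigcup_{k\neq 0,i}\E(k)\cup(\E(i)\setminus F)}$; its conclusion $\H^1(\pa F\cap\pa A)>0$ directly contradicts the defining assumption of Case 2.)

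The main obstacle is the Case 2 analysis, in particular ruling out singular points on $\pa F$ while carefully treating the possible $\H^1$-negligible contact set between $F$ and the other indecomposable components of $\E(i)$; these technicalities amount, in the Lemma \ref{nel piano!} formulation, to checking hypothesis (2) at isolated contact points, where the local density-$1/2$ structure of $A$ needs to be verified from the planar regularity provided by Theorem \ref{mainthm3}.
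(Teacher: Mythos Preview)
Your proof is correct and follows essentially the same strategy as the paper: establish that each indecomposable component $F$ of $\E(i)$ satisfies $P(F)/|F|=h(\E(i))$, observe that the cluster obtained by replacing $\E(i)$ with one component is again a Cheeger $N$-cluster, and then derive a contradiction from the curvature formula of Proposition~\ref{curvature}.

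There are two differences worth noting. First, the paper does not split into cases: it applies Lemma~\ref{nel piano!} directly to a component $E_2$ (as a Cheeger set of the ambient obtained by removing the other chambers \emph{and} the remaining component $E_1$) to conclude immediately that $\H^1(\pa E_2\cap\pa\E(k))>0$ for some $k\neq 0,i$. This absorbs your Case~2 into a single lemma invocation and avoids the direct circle-rigidity argument. Second, the curvature contradiction is organized differently: the paper \emph{removes} $E_2$, so the arc $\pa E_2\cap\pa\E(k)$ becomes \emph{free boundary} of $\E(k)$ in the new cluster $\F$, forcing its curvature to equal $h(\E(k))$; comparing with $C_{k,i}$ from the original cluster yields $(h(\E(k))+h(\E(i)))|\E(k)|=0$. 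You instead \emph{keep} $F$ and compare the $(i,k)$-interface curvature in $\E$ (chamber volume $|\E(i)|$) with that in $\tilde\E$ (chamber volume $|F|$), obtaining $|F|=|\E(i)|$. Both comparisons are valid; the paper's is a touch more direct since it avoids the volume algebra, while yours has the appealing feature that the contradiction pinpoints exactly the quantity ($|F|$ versus $|\E(i)|$) at stake.
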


\begin{proof}
Assume, without loss of generality $i=1$ and let $E_1$ and $E_2$ be two different components of $\E(1)$. By minimality it must hold
	\begin{equation}\label{mi impiccoooooo}
	\frac{P(E_1)}{|E_1|}=\frac{P(E_2)}{|E_2|}=\frac{P(\E(1))}{|\E(1)|}.
	\end{equation}
The component $E_2$ is a Cheeger set for 
	\[
	A=\left(\bigcup_{j\neq 1} \Om\setminus \E(j)\right) \cup E_1
	\] 
and by Theorem \ref{mainthm1}, every $x\in \pa E_2\cap \pa A$ is a regular point for $\pa A$. Moreover $\bd_{\pa A}(\pa A \cap\pa E_2)\subseteq \S(\E(0);\Om)$ (since $\E(i)\cc \Om$) and thus, thanks to Proposition \ref{struttura singolaritapiano}, we have 
	\[
	\#(\bd_{\pa A}(\pa A \cap\pa E_2))\leq \#(\S(\E(0);\Om))<+\infty.
	\]
Therefore we can exploit Lemma \ref{nel piano!} on $E_2$ and conclude that $\H^1(\pa E_2 \cap \pa A)>0$. In particular we deduce that there exists an index $k\neq 0,1$ such that $\H^1(\pa E_2\cap \pa \E(k))>0$. Define the new cluster $\F(1)=E_1$, $\F(j)=\E(j)$ for $j\neq 1$ (see Figure \ref{lemmaindecompo}). Thanks to \eqref{mi impiccoooooo} it holds:
\begin{equation}
H_N(\Om)=\sum_{i=1}^N \frac{P(\F(i))}{|\F(i)|}.
\end{equation}
Hence $\F$ it is also a Cheeger $N$-cluster for $\Om$. Consider the piece of boundary 
	\[
	S=[\pa \E(k)\cap \pa E_2]\setminus \S(\E(0);\Om) \neq \emptyset
	\]
from the old cluster $\E$.  Proposition \ref{curvature} tells us that $S$ must be a circular arc and that $\E(k)$ must has constant mean curvature on $S$ equal to:
 $$C_{k,1}=\frac{|\E(1)|h(\E(k))-|\E(k)|h(\E(1))}{|\E(1)|+|\E(k)|}.$$
 \begin{figure}
\begin{center}
 \includegraphics[scale=2.2]{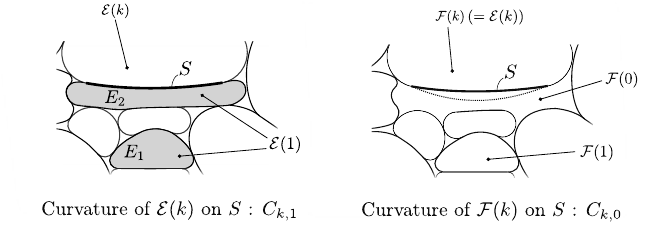}\caption{{\small }}\label{lemmaindecompo}
 \end{center}
\end{figure}
From the other side it holds $\F(k)=\E(k)$ and, since $S$ is now a part of the free boundary of $\F(k)$ (we have removed the component $E_2$), we have that $\F(k)=\E(k)$ must has constant mean curvature on $S$ also equal to:
$$C_{k,0}=h(\F(k))=h(\E(k)).$$
Thus equality $C_{k,1}=C_{k,0}$ must be in force, implying $(h(\E(k))+h(\E(1)))|\E(k)|=0$ which is impossible. 
\end{proof}

\begin{proof}[Proof of Theorem \ref{mainthm3}]
It follows from Propositions \ref{struttura singolaritapiano}, \ref{curvature} and \ref{connessionecamere}.
\end{proof}
\section{The limit of $\Lambda_N^{(p)}$ as $p$ goes to one }\label{limite}
We conclude this Chapter by focusing on the asymptotic trend of $H_N$. We first briefly state the following Theorem involving the existence of the optimal partition for problem \eqref{p-laplacian}.
\begin{theorem}
For every $1<p\leq n$ there exists an optimal partition for $\Om$ in quasi-open sets $\{\Om_i\}_{i=1}^N$ such that 
$$\Lambda_N^{(p)}(\Om)=\sum_{i=1}^N \lambda_1^{(p)}(\Om_i).$$
\end{theorem}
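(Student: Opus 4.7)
The plan is to apply the direct method of the calculus of variations, working with the first eigenfunctions of the minimizing chambers rather than with the chambers themselves. The relevant compactness is not for quasi-open sets directly (which is delicate and usually cast in the language of $\g_p$-convergence of capacitary measures), but for the associated Sobolev-space competitors, where Rellich–Kondrachov applies. First, I would take a minimizing sequence of admissible partitions $\{\Om_i^k\}_{i=1}^N$ and let $u_i^k\in W_0^{1,p}(\Om_i^k)$ be a positive first eigenfunction with $\|u_i^k\|_{L^p}=1$, so that $\int_\Om|\nabla u_i^k|^p\d x=\l_1^{(p)}(\Om_i^k)$. Extending by zero, each $u_i^k$ lies in $W_0^{1,p}(\Om)$ and the sequences $\{u_i^k\}_k$ are equibounded in $W_0^{1,p}(\Om)$ since $\sum_i \l_1^{(p)}(\Om_i^k)\to\Lambda_N^{(p)}(\Om)<+\infty$.

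Second, using that $\Om$ is bounded and $1<p\leq n$, the Rellich--Kondrachov embedding gives, up to a subsequence, $u_i^k\frecciad u_i$ in $W_0^{1,p}(\Om)$, $u_i^k\freccia u_i$ in $L^p(\Om)$ and $\H^{n}$-a.e., for some non-negative limits $u_i\in W_0^{1,p}(\Om)$. Strong $L^p$ convergence passes the normalization $\|u_i^k\|_{L^p}=1$ to the limit, so $\|u_i\|_{L^p}=1$ and, in particular, $u_i\not\equiv 0$. Moreover, the disjointness of the chambers implies $u_i^k\,u_j^k\equiv 0$ for $i\neq j$, and this survives the pointwise a.e. convergence: $u_i\,u_j=0$ almost everywhere on $\Om$.

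Third, I would define the candidate partition at the capacitary level. Choose $p$-quasi-continuous representatives $\tilde u_i$ of $u_i$ and set $\Om_i:=\{\tilde u_i>0\}$. By standard potential theory (see e.g. the monograph by Bucur--Buttazzo, \emph{Variational methods in shape optimization problems}), each $\Om_i$ is a $p$-quasi-open subset of $\Om$ and $u_i\in W_0^{1,p}(\Om_i)$; furthermore, $u_i u_j=0$ a.e. combined with the quasi-continuity yields $\Cap_p(\Om_i\cap \Om_j)=0$ for $i\neq j$, so that, up to modifying each $\Om_i$ on a set of $p$-capacity zero (which does not affect $\l_1^{(p)}$), the family $\{\Om_i\}_{i=1}^N$ is an admissible partition of $\Om$ in quasi-open sets. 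The lower semicontinuity of $v\mapsto\int_\Om|\nabla v|^p\d x$ with respect to weak $W_0^{1,p}(\Om)$-convergence, together with $\|u_i\|_{L^p}=1$ and the Rayleigh-quotient characterization of $\l_1^{(p)}$, gives
\begin{equation*}
\l_1^{(p)}(\Om_i)\leq \int_\Om |\nabla u_i|^p\d x \leq \liminf_{k\to+\infty}\int_\Om|\nabla u_i^k|^p\d x=\liminf_{k\to+\infty}\l_1^{(p)}(\Om_i^k).
\end{equation*}
Summing over $i=1,\ldots,N$ and using Fatou-type lower semicontinuity on finite sums yields $\sum_{i=1}^N\l_1^{(p)}(\Om_i)\leq\Lambda_N^{(p)}(\Om)$, while the converse inequality follows from admissibility. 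Hence $\{\Om_i\}_{i=1}^N$ is optimal.

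The main technical obstacle is Step three: transferring the a.e.\ disjointness of the $u_i$ into a bona fide partition of $\Om$ into quasi-open sets, and simultaneously checking the membership $u_i\in W_0^{1,p}(\Om_i)$. Both facts require choosing quasi-continuous representatives and invoking the fine characterization of $W_0^{1,p}$ on quasi-open sets (any $v\in W_0^{1,p}(\Om)$ belongs to $W_0^{1,p}(\{\tilde v\neq 0\})$). Once this capacitary bookkeeping is in place, the rest of the argument is the standard lower-semicontinuity scheme.
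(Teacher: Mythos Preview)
Your argument is correct and is precisely the direct-method scheme underlying the references the paper invokes. Note, however, that the paper does not actually give a proof of this statement: it simply writes that existence ``follow[s] as a simple variation of the argument in \cite{CaLi07}, or as a consequence of more general results contained in \cite{BucBuH98}, \cite{BucVel13} or \cite{BuDM93}'' and omits the details. So there is no paper-proof to compare against beyond the citations; your write-up is in effect supplying the omitted details, and it does so along the standard route (minimizing sequence of eigenfunctions, Rellich--Kondrachov, quasi-open superlevel sets of quasi-continuous representatives, lower semicontinuity of the $p$-Dirichlet energy).
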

\begin{proof}
The existence of an optimal partition for $\Lambda_N^{(p)}(\Om) $ follow as a simple variation of the argument in \cite{CaLi07}, or 
as a consequence of more general results contained in \cite{BucBuH98}, \cite{BucVel13} or \cite{BuDM93} and thus we omit the details. 
\end{proof}
In the following Proposition we compute the limit  of $\Lambda_N^{(p)}$ as $p$ goes to one.
\begin{proposition}\label{limit}
If $\Om$ is an open bounded set with $C^1$ boundary then
$$\lim_{p\rightarrow 1} \Lambda_N^{(p)}(\Om)=H_N(\Om).$$
\end{proposition}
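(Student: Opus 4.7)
The plan is to prove the limit by showing a matching liminf and limsup.

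For the liminf, I would simply pass to the limit $p\to 1^+$ in estimate \eqref{Lp lowerbound}, namely
\[
\Lambda_N^{(p)}(\Om)\geq \frac{1}{N^{p-1}}\left(\frac{H_N(\Om)}{p}\right)^{p}\,,
\]
which, since $N^{p-1}\to 1$, $p^p\to 1$ and $H_N(\Om)^p\to H_N(\Om)$ as $p\to 1^+$, gives $\liminf_{p\to 1}\Lambda_N^{(p)}(\Om)\ge H_N(\Om)$.

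For the limsup, I would construct a concrete admissible partition out of a Cheeger $N$-cluster. Let $\E$ be a Cheeger $N$-cluster of $\Om$ (whose existence is guaranteed by Theorem \ref{existence}), taken in the good representative provided by Proposition \ref{choice}, so that $\spt(\mu_{\E(i)})=\pa\E(i)$. For every $i=1,\ldots,N$ set $A_i:=\mathring{\E(i)}$; by construction the $A_i$'s are pairwise disjoint open subsets of $\Om$ (so, in particular, quasi-open subsets of $\Om$). The key claim is that
\begin{equation}\label{claim limit}
|A_i|=|\E(i)|\qquad\text{and}\qquad h(A_i)=h(\E(i))\qquad\forall\,i=1,\ldots,N.
\end{equation}
Granted \eqref{claim limit}, $\{A_i\}_{i=1}^N$ is a competitor for $\Lambda_N^{(p)}(\Om)$ and, by \eqref{eqn limite per p che tende a uno} applied to each open bounded set $A_i$,
\[
\limsup_{p\to 1}\Lambda_N^{(p)}(\Om)\le \limsup_{p\to 1}\sum_{i=1}^N\lambda_1^{(p)}(A_i)=\sum_{i=1}^N h(A_i)=\sum_{i=1}^N h(\E(i))=H_N(\Om)\,,
\]
which, combined with the liminf bound, gives the thesis.

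The main point to justify is \eqref{claim limit}. Here I would invoke Theorem \ref{mainthm1}: the reduced boundary $\pared\E(i)\cap\Om$ is a $C^{1,\a}$-hypersurface with $\H^{n-1}$-finite measure (being contained in $\spt\mu_{\E(i)}$), while the singular set $\Sigma(\E(i);\Om)$ satisfies $\H^{n-1}(\Sigma(\E(i);\Om))=0$ in every dimension (as $s>n-8$ can be chosen $\le n-1$). Moreover by assertion (ii) of Theorem \ref{mainthm1} we have $\pa\E(i)\cap\pa\Om\subseteq\pared\E(i)$, and the $C^1$-regularity of $\pa\Om$ gives $\H^{n-1}(\pa\E(i)\cap\pa\Om)<\infty$. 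Summing up, $\H^{n-1}(\pa\E(i))<\infty$ and thus $|\pa\E(i)|=0$, which yields $|A_i|=|\E(i)|$ and $|A_i\Delta\E(i)|=0$. Since the distributional perimeter depends only on the $L^1$-equivalence class (Subsection \ref{subsct: topological boundary}), we get $P(A_i)=P(\E(i))$. By the very definition of Cheeger constant and by \eqref{selfcheeger},
\[
h(\E(i))\le h(A_i)\le \frac{P(A_i)}{|A_i|}=\frac{P(\E(i))}{|\E(i)|}=h(\E(i))\,,
\]
where the first inequality follows from $A_i\subseteq\E(i)$ and the monotonicity of the Cheeger functional with respect to reverse inclusion. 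This proves \eqref{claim limit} and concludes the proof. The only delicate step is the verification of $|\pa\E(i)|=0$ in arbitrary dimension; this is the reason the $C^1$-regularity of $\pa\Om$ is needed (to control $\pa\E(i)\cap\pa\Om$), and why Theorem \ref{mainthm1} must be invoked in full strength.
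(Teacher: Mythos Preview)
Your proof is correct. The liminf half is identical to the paper's. For the limsup, the paper takes a slightly different route: it invokes an inner approximation result (citing \cite{Schmidt2015}) to produce, for each chamber $\E(i)$, open sets $\E_t(i)\cc\E(i)$ with $\E_t(i)\to\E(i)$ in $L^1$ and $P(\E_t(i))\to P(\E(i))$, and then uses these as competitors. Your approach of taking $A_i=\mathring{\E(i)}$ directly is more elementary: once Theorem~\ref{mainthm1} gives $\H^{n-1}(\pa\E(i))<\infty$ (hence $|\pa\E(i)|=0$), the interior is already an admissible open competitor with the same Cheeger constant, and no external approximation lemma is needed. The paper's route has the advantage that the $\E_t(i)$ can be taken smooth and compactly contained, so \eqref{eqn limite per p che tende a uno} is applied on sets whose regularity is unambiguous; your route trades this for a direct argument that still relies only on results already proved in the paper.
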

\begin{proof}
Let $\E$ be a Cheeger $N$-cluster for $\Om$. Since $\pa \E(i)$ is $C^1$, for every $i=1,\ldots,N$ there exists a sequence of open sets $\{\E_t(i)\}_{t>0}$ such that $\E_t(i)\cc \E(i)$ for all $t>0$ and
$$\E_{t}(i)\rightarrow \E(i) \text{  in $L^{1}$}, \ \ \ P(\E_{t}(i))\rightarrow P(\E(i)),$$ 
as $t\to 0$ (see \cite{Schmidt2015}). In this way, since $\E_t(i)$ are open sets (and thus quasi-open sets) strictly contained into $\Om$ and with disjoint closure, by exploiting \eqref{eqn limite per p che tende a uno} we reach: 
\begin{align*}
H_N(\Om)&= \lim_{t\rightarrow 0} \sum_{i=1}^{N}\frac{P(\E_{t}(i))}{|\E_{t}(i)|}\geq  \lim_{t\rightarrow 0} \sum_{i=1}^{N}h(\E_{t}(i))\\
&\geq  \lim_{t\rightarrow 0} \limsup_{p\rightarrow 1}\sum_{i=1}^{N}\lambda_N^{(p)}(\E_{t}(i))\geq \limsup_{p\rightarrow 1}\Lambda_N^{(p)}(\Om).
\end{align*}
On the other hand, thanks to \eqref{lb p-eig} and to Jensen's inequality we get \eqref{Lp lowerbound}:
\begin{align*}
\sum_{j=1}^N \lambda_1^{(p)}(\E(i))&\geq  \sum_{j=1}^N \left(\frac{h(\E(i))}{p}\right)^{p} \geq \frac{1}{N^{p-1}}\left(\sum_{j=1}^N \frac{h(\E(i))}{p}\right)^{p}\\
&\geq\frac{1}{N^{p-1}}\left( \frac{H_N(\Om)}{p}\right)^{p}
\end{align*}

which completes the proof.
\end{proof}
\subsection{On the asymptotic behavior of $H_N$ in dimension $n=2$}\label{asymptoyc of HN}

\begin{theorem}\label{asymptotic behavior}
Denote with $B$ a ball of unit radius and with $H$ a unit-area regular hexagon. Let $N\geq 2$ and $\Om$ be an open, bounded and connected set with $C^{1}$ boundary and finite perimeter in the plane. Then the following assertions hold true:
\begin{itemize}
\item[1)] If $\E$ is a Cheeger $(N+1)$-cluster for $\Om$ then:
	\[
	|\E(i)|\geq \frac{h(B)^2\pi}{(H_{N+1}(\Om)-H_N(\Om))^2}  \ \ \ \forall \ i=1,\ldots,N+1;
	\]
\item[2)] $\displaystyle H_N(\Omega)+\frac{h(B)\sqrt{\pi}}{\sqrt{|\Omega|}}\sqrt{N+1}\leq H_{N+1}(\Omega),$  for all $N\in \N$;
\item[3)] for every $0\leq \e<\frac{1}{2}$ there exists $N_0(\Om,\e)$ such that:
	\[
	\frac{\sqrt{\pi} h(B)}{\sqrt{|\Omega|}}N^{\frac{3}{2}}\leq H_N(\Omega)\leq \frac{h(H)}{\sqrt{|\Omega|}}N^{\frac{3}{2}}+ N^{\frac{3}{2}-\e} \ \ \ \  \text{for all $N\geq N_0(\Om,\e)$}.
	\]
\end{itemize}
\end{theorem}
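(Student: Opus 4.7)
My plan is to handle the three assertions in sequence, as each builds on the previous.

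For assertion (1), I will use a simple comparison: given a Cheeger $(N+1)$-cluster $\E$, removing any single chamber $\E(i)$ leaves an admissible $N$-cluster, and by the self-Cheeger property \eqref{selfcheeger} this comparison yields $H_{N+1}(\Om)-H_N(\Om)\geq h(\E(i))$ for every $i=1,\ldots,N+1$. Combined with the planar Cheeger inequality \eqref{eqn: chapter intro cheeger inequality}, which reads $h(\E(i))\geq \sqrt{\pi}\,h(B)/\sqrt{|\E(i)|}$, this immediately gives the stated volume lower bound. Assertion (2) then follows by applying (1) to the smallest chamber: since $\sum_{i=1}^{N+1}|\E(i)|\leq |\Om|$, some chamber must have area at most $|\Om|/(N+1)$, which when inserted in the Cheeger bound produces exactly the claimed telescoping increment.

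For the lower bound in assertion (3), the plan is to combine the identity $H_N(\Om)=\sum_i h(\E(i))$ with the Cheeger inequality to get $H_N(\Om)\geq \sqrt{\pi}\,h(B)\sum_i |\E(i)|^{-1/2}$; then a one-line Cauchy--Schwarz computation (using $N^2=(\sum 1)^2\leq (\sum |\E(i)|^{1/2})(\sum |\E(i)|^{-1/2})$ together with $\sum |\E(i)|^{1/2}\leq \sqrt{N|\Om|}$) gives $\sum_i |\E(i)|^{-1/2}\geq N^{3/2}/\sqrt{|\Om|}$, which is exactly what is needed. Note that this is cleaner than iterating (2), which would only produce the right order but with a $2/3$ factor.

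The upper bound in (3) is the real work. The plan is to exhibit an explicit competitor using Cheeger sets of small hexagons. Set $\de_N=(1-C_\Om/\sqrt{N})|\Om|/N$ for a large $\Om$-dependent constant $C_\Om$, and consider the regular hexagonal tiling $\H_{\de_N}$ of $\R^2$ by hexagons of area $\de_N$. Exactly as in Proposition \ref{trivial estimate}, the $C^1$ regularity of $\pa\Om$ gives the boundary-strip estimate $|\{x\in\Om:\dist(x,\pa\Om)<2\sqrt{\de_N}\}|\leq C'\sqrt{\de_N}\,P(\Om)$, so choosing $C_\Om$ large enough guarantees that at least $N$ hexagons $H_1,\ldots,H_N$ of $\H_{\de_N}$ are compactly contained in $\Om$. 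Let $E_j$ be the Cheeger set of $H_j$: these are pairwise disjoint subsets of $\Om$, and by the scale covariance of the Cheeger constant, $P(E_j)/|E_j|=h(H_j)=h(H)/\sqrt{\de_N}$. The cluster $\{E_j\}_{j=1}^{N}$ is thus admissible for $H_N(\Om)$, giving
\[
H_N(\Om)\leq \frac{N\,h(H)}{\sqrt{\de_N}}=\frac{h(H)\,N^{3/2}}{\sqrt{|\Om|}}\cdot\frac{1}{\sqrt{1-C_\Om/\sqrt{N}}}=\frac{h(H)\,N^{3/2}}{\sqrt{|\Om|}}+O(N),
\]
and the $O(N)$ correction is dominated by $N^{3/2-\e}$ as soon as $N\geq N_0(\Om,\e)$ for any $\e<1/2$.

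The main obstacle is the packing step: one must rigorously verify that $N$ hexagons of area $\de_N$ fit compactly inside $\Om$. This is precisely where the $C^1$-hypothesis on $\pa\Om$ enters, through the boundary-strip volume estimate. Once this geometric fact is in place, the rest is a direct scaling computation. Everything else in the proof is a routine assembly of the Cheeger inequality, the self-Cheeger property of the chambers, and elementary convexity.
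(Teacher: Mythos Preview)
Your proposal is correct and follows essentially the same route as the paper: assertion~(1) via removing a chamber plus the planar Cheeger inequality, assertion~(2) by combining (1) with $\sum_i|\E(i)|\le|\Om|$, the lower bound in (3) via the power-mean inequality $\sum_i|\E(i)|^{-1/2}\ge N^{3/2}(\sum_i|\E(i)|)^{-1/2}$ (which you prove by Cauchy--Schwarz, the paper simply states it), and the upper bound in (3) by packing small hexagons in $\Om$ using the boundary-strip estimate. Your upper-bound variant is marginally cleaner: you take $\de_N=(1-C_\Om/\sqrt{N})|\Om|/N$ with a large $C_\Om$ and use exactly $N$ hexagons as a competitor, obtaining directly an $O(N)$ remainder, whereas the paper parametrizes $\de(N)=|\Om|/N-|\Om|/N^{\a}$ with $1+\e<\a<3/2$, shows $k(N)\ge N$, and invokes the monotonicity from (2); the two arguments are interchangeable.
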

\begin{proof}
Thanks to the planar Cheeger inequality (\ref{eqn: chapter intro cheeger inequality})
\begin{equation}\label{CheegerInequality}
h(E)\geq \sqrt{\pi}\frac{h(B)}{\sqrt{|E|}}
\end{equation}
we observe that, given $\E$ a Cheeger $(N+1)$-cluster of $\Om$, it holds:
\begin{align*}
H_{N+1}(\Om)&= \sum_{i=1}^{N+1} \frac{P(\E(i))}{|\E(i)|} \geq\frac{P(\E(j))}{|\E(j)|} +\sum_{i=1, i\neq j}^{N+1} \frac{P(\E(i))}{|\E(i)|}\\
&\geq h(\E(j))+H_{N}(\Om)\geq \frac{\sqrt{\pi}h(B)}{\sqrt{|\E(j)|}}+H_{N}(\Om)
\end{align*}
which, implies Property 1).\\

Property 2) follows from Property 1):
\begin{eqnarray*}
|\Om|-\frac{(N+1)h(B)^2\pi }{(H_{N+1}(\Om)-H_N({\Om}))^2} &\geq&|\Om|-\sum_{i=1}^{N+1}|\E(i)|\geq 0
\end{eqnarray*}
and so
\begin{eqnarray*}
|\Om|&\geq& \frac{(N+1)h(B)^2\pi}{(H_{N+1}(\Om)-H_N({\Om}))^2},
\end{eqnarray*}
which implies
\begin{eqnarray*}
(H_{N+1}(\Om)-H_N({\Om}))^2&\geq& \frac{(N+1)h(B)^2\pi}{|\Om|}
\end{eqnarray*}
and thus
\begin{eqnarray*}
H_{N+1}(\Om)&\geq& H_N({\Om})+ \sqrt{(N+1)}\frac{\sqrt{\pi}h(B)}{\sqrt{|\Om|}}.
\end{eqnarray*}
Let us prove Property 3). Let $\E$ be a Cheeger $N$-cluster for $\Om$. We exploit again  the Cheeger inequality \eqref{CheegerInequality} and
we obtain the lower bound
\begin{align*}
H_N(\Om)&=\sum_{i=1}^{N}h(\E(i))\geq  \sqrt{\pi} h(B)\sum_{i=1}^{N}\frac{1}{\sqrt{|\E(i)|}}\\
&\geq   \sqrt{\pi} h(B) N^{\frac{3}{2}}\left(\frac{1}{\sum_{i=1}^{N}|\E(i)|}\right)^{\frac{1}{2}}\geq   \sqrt{\pi} \frac{h(B)}{\sqrt{|\Om |}} N^{\frac{3}{2}}. 
\end{align*} 
Here we have used the inequality
$$\sum_{i=1}\frac{1}{x_i^{\frac{1}{n}}}\geq N^{\frac{n+1}{n}}\left(\frac{1}{\sum_{i=1}^Nx_i}\right)^{\frac{1}{n}}, \ \ \forall \ N,n\geq 2, \  x_i >0.$$
Let us focus on the upper bound. Let $\H_{\de}$ be the standard hexagonal grid of the plane, made by hexagons of area $\de$ (the one depicted in Figure \ref{fig reference} in Chapter 2). Define
	\begin{align*}
	I(\de)&:=\{i\in \N \ | \ \H_{\de}(i)\cc \Om\}.\\
	k(\de)&:=\#(I(\de) ).
	\end{align*}
Up to a relabeling, let us assume that $I(\de)=\{1,\ldots,k(\de)\}$. Note that since $\H_{\de}(i)\cc  \Om$ we get
	\[
	H_{k(\de)}(\Om)\leq\sum_{i=1}^{k(\de)} h(\sqrt{\delta} H)=\frac{k(\de)}{\sqrt{\de}}h(H).
	\]
From $\H_{\de}(i) \subset \Om$ for all $i=1,\ldots,k(\de)$ it follows
$$k(\de) \leq \frac{|\Om|}{\de }.$$
If we set $\de(N)=\frac{|\Om|}{N}-\frac{|\Om|}{N^{\a}}$ for some $\a>1$ to be chosen, we are led to 
	\begin{equation}\label{forse stasera finisco}
	H_{k(N)}(\Om)\leq \frac{N^{\frac{3}{2}}}{\sqrt{|\Om|}\left(1-N^{1-\a}\right)^{\frac{3}{2}}} h(H).
	\end{equation}
where $k(N)=k(\de(N))$.
Note that, by setting
	\[
	(\pa \Omega)_{r(N)}:=\pa\Om+B_{r(N)}
	\]
where $r(N)=\sqrt{\de(N)}\diam(H)$, it must hold 
$$\left(\Om\setminus \bigcup_{i=1}^{k(N)} \H(i)\right)\subseteq (\pa \Omega)_{r(N)}.$$
Since $\Om$ has Lipschitz boundary, for $N$ bigger than $N_0(\Om)$, it also holds that
	\[
	| (\pa \Omega)_{r(N)}|\leq 4 r(N) P(\Om)
	\]
and so:
\begin{align*}
|\Om|-\de(N) k(N) &\leq  |(\pa \Omega)_{r(N)}| \leq  4r(N)P(\Om) = 4\sqrt{\de(N)}\diam(H)P(\Om),\\
\end{align*}
that imply
\begin{align*}
 k(N) &\geq  \frac{N}{1-N^{1-\a}} - 4\sqrt{N}\frac{P(\Om)\diam(H)}{\sqrt{|\Om|} \sqrt{1-N^{1-\a}}} .
\end{align*}
For all $N$ bigger than some fixed $N_0$ depending only on $\Om$. It is easy to show that, for all $\a<\frac{3}{2}$, up to further increase $N_0$ in dependence only on $\Om$ and $\a$, it holds 
	\[
	\frac{N}{1-N^{1-\a}} - 4\sqrt{N}\diam(H)\frac{P(\Om)}{\sqrt{|\Om|} \sqrt{1-N^{1-\a}}} \geq N.
	\]
Hence by choosing $\a <\frac{3}{2}$ we obtain
	\[
	k(N)\geq N \ \ \ \ \ \forall \ N\geq N_0,
	\]
and, thanks to the monotonicity given by Property 2) and to \eqref{forse stasera finisco}, provided also $\a>1+\e$ we reach:
	\begin{align*}
	H_N(\Om) &\leq H_{k(N)}(\Om)\\
	&\leq \frac{N^{\frac{3}{2}}}{\sqrt{|\Om|}\left(1-N^{1-\a}\right)^{\frac{3}{2}}} h(H) \\
	&\leq \frac{h(H)}{\sqrt{|\Om|}}  (N^{\frac{3}{2}}+N^{\frac{3}{2}-\e})  \ \ \ \ \ \text{for all $N>N_0(\Om,\e)$}.
	\end{align*}
\end{proof}

\chapter{Final remarks and future research interests}

We conclude our discussion about the isoperimetric properties of $N$-clusters with some remarks and ideas for possible future investigations.\\

As a first remark we highlight that it could be interesting to explore whether the topics contained in Chapter Two can be extended to a dimension higher than $n=2$. A first apparent obstacle arises when one tries to generalize the arguments contained in the proofs of Theorems \ref{Equi dia} and \ref{Equi indeco}: so far there is no theorem in the spirit of Hales's Theorem \ref{teo: chapter intro Hales piano} that holds in dimension $n>2$. Another interesting direction could be to weaken the hypothesis involving indecomposability and boundedness of the chambers. For example we can try to implement the argument exposed in Chapter Two with the assumption that each chamber can split into a controlled number of pieces $p(N)\leq N^{a}$ for $a$ that varies in $[0,1)$. We can ask how to combine the result contained in Chapter Three, about the quantitative version of the hexagonal honeycomb theorem, with the energetic estimates of Chapter two. In particular: 
\begin{multline}\label{domanda fondamentale}
\text{\textit{Is there a way, starting from an energetic estimate,}}\\
\text{\textit{to obtain information about the shape of the chambers?}}
\end{multline}

As far as Chapter Three is concerned, we could investigate how the constant $\kappa(N)$ in Theorem \ref{thm main periodic} depends on $N$. This could provide an answer to \eqref{domanda fondamentale}. For example it is reasonable to expect that we can combine an energetic estimate of the type of the one appearing in Theorem \ref{Equi indeco} with the quantitative version of the Hexagonal Honeycomb Theorem for conclude that each chamber of an indecomposable minimizing $N$-cluster of $\Om$ converges in the $L^1$ sense to a regular hexagon. In order to do that we need to understand what is the dependence of the constant $\kappa(N)$ in Theorem \ref{thm main periodic} with respect to $N$.\\

Chapter Four instead left us with many open questions; for example:
\begin{multline}\label{domanda fondamentale 2}
\text{\textit{Is there a way to prove the Caffarelli and Lin's conjecture}}\\
\text{\textit{in the case $p=1$, by importing tools and instrument}}\\
\text{\textit{from the Hales's proof of Hexagonal honeycom Theorem?}}
\end{multline}
Is this easier than the Caffarelli and Lin's conjecture for $p=2$? It could be interesting to understand whether the approach proposed in Chapter One can be adapted to the context of Cheeger $N$-cluster in order to obtain a sort of periodicity (from an energetic point of view) in the asymptotic behavior of Cheeger $N$-clusters.\\

Let us conclude this work, that is the result of three years of efforts and study at the University of Pisa, by remarking that these are just a few questions, the more natural that occur when we deal with such problems, that will lead our research in the future, together with many (and we hope fruitful) others interesting queries.
\bibliography{referencescopia}
\bibliographystyle{alpha}

\addcontentsline{toc}{chapter}{Bibliography} 

\end{document}